\numberwithin{equation}{section}
\patchcmd{\l@chapter}{1.0em}{0.8em}{}{}
\title{Centralizers and conjugacy classes in finite classical groups}
\author{Giovanni De Franceschi}
\renewcommand{\maketitle}{

}
\begin{document}
\newpagestyle{main}{

  \sethead
  [\thepage][][\slshape\thechapter.\enspace\chaptertitle]
  {\slshape\thesection.\enspace\sectiontitle}{}{\thepage}
}
\newpagestyle{intro}{

  \setfoot
  [][\thepage][]
  {}{\thepage}{}
}
\newpagestyle{end}{

  \sethead
  [\thepage][][\slshape\chaptertitle]
  {\slshape\sectiontitle}{}{TESTESTESTESTEST\thepage}
}

\pagenumbering{gobble}

\newtheorem{theorem}[equation]{Theorem}
\newcommand{\GL}{\mathrm{GL}}
\newcommand{\SL}{\mathrm{SL}}
\newcommand{\U}{\mathrm{U}}
\newcommand{\Sp}{\mathrm{Sp}}
\newcommand{\Or}{\mathrm{O}}
\newcommand{\SO}{\mathrm{SO}}
\newcommand{\SU}{\mathrm{SU}}
\newcommand{\N}{\mathrm{N}}
\newcommand{\C}{\mathscr{C}}
\newcommand{\im}{\mathrm{im}}
\newcommand{\Spec}{\mathscr{S}}
\newcommand{\tr}{\mathrm{t}}
\newcommand{\rk}{\mathrm{rk}\,}
\newcommand{\ged}{generalized elementary divisor }
\newcommand{\geds}{generalized elementary divisors }
\newcommand\lreqn[2]{\noindent\makebox[\textwidth]{$\displaystyle#1$\hfill(#2)}\vspace{2ex}}
\newtheorem{lemma}[equation]{Lemma}
\newtheorem{definition}[equation]{Definition}
\newtheorem{proposition}[equation]{Proposition}
\newtheorem{corollary}[equation]{Corollary}
\theoremstyle{definition}
\newtheorem{remark}[equation]{Remark}
\newtheorem{example}[equation]{Example}

\begin{center}
\hrule
       \vspace{10mm}
       {\huge Centralizers and conjugacy classes in finite classical groups \par}%
       
       \vspace{10mm}
              {\Large \sc Giovanni De Franceschi \par }
       \vspace{10mm}
{Department of Mathematics, Technische Universit\"{a}t, Kaiserslautern, Germany}
       \vspace{10mm}

       \hrule
\end{center}

\chapter*{Abstract}
\addcontentsline{toc}{chapter}{Abstract}
Let $\C$ be a classical group defined over a finite field. We present comprehensive theoretical solutions to the following closely related problems:
\begin{itemize}
\item List a representative for each conjugacy class of $\C$.
\item Given $x \in \C$, describe the centralizer $C_{\C}(x)$ of $x$ in $\C$, by giving its group structure and a generating set.
\item Given $x,y \in \C$, establish whether $x$ and $y$ are conjugate in $\C$ and, if they are, find explicit $z \in \C$ such that $z^{-1}xz = y$.
\end{itemize}
We also formulate practical algorithms to solve these problems and have implemented them in {\sc Magma}.

\tableofcontents

\mainmatter
\pagestyle{main}
\chapter{Introduction and Background}   \label{IntroductionChapter}

Let $\C$ be a classical group defined over a finite field. We consider the following closely related problems:
\begin{itemize}
\item List a representative for each conjugacy class of $\C$.
\item Given $x \in \C$, describe the centralizer $C_{\C}(x)$ of $x$ in $\C$, by giving its group structure and a generating set.
\item Given $x,y \in \C$, establish whether $x$ and $y$ are conjugate in $\C$ and, if they are, find explicit $z \in \C$ such that $z^{-1}xz = y$.
\end{itemize}
We present comprehensive theoretical solutions to all three problems. Their solution is often a necessary and vital component of algorithms for computational group theory: as just one example, all three are vital in computing the character table of a classical group.  Hence we seek explicit solutions which can be employed widely. To achieve this outcome, we use our theoretical solutions to formulate practical algorithms to solve the problems.  In parallel to our theoretical work, we have developed in \textsc{Magma} \cite{MAGMA} complete implementations of these algorithms.

\section{Summary of results}   \label{Sect12}
In Chapter \ref{LinearChapter} we introduce classical groups and fix the notation. Before analyzing the classical groups, we discuss the three problems for linear groups. Although most of the results are well known, this is important to introduce the approach used in the rest of the article. We show that the three problems can be solved separately for semisimple and unipotent elements, and the solution in the general case can be obtained from the solutions in these two particular cases.

One of the most significant papers on conjugacy classes in classical groups is that of Wall \cite{GEW}. There he solved the problems in a more general context, considering isometry groups over division rings, and he applied the results of his analysis to the finite groups of isometries. In particular, he established whether $x \in \GL(n,q)$ belongs to a certain group $\C$ of isometries, decided whether two elements are conjugate in $\C$, computed the cardinality of each class, and the total number of conjugacy classes in $\C$. In exploiting his results for our purposes, we face two difficulties. The conjugacy problem is discussed in terms of hermitian invariants of forms (see \cite[\S 2.4]{GEW}); these are hard to compute explicitly. Moreover, the analysis in even characteristic differs from subsequent works.

We use a different approach inspired by other work on this topic. Our first step is to establish which conjugacy classes in $\GL(n,q)$ have elements in a certain group $\C$ of isometries. This problem was considered by Britnell \cite[Chap.\ 5]{JBRIT}, Milnor \cite{Milnor} and Wall \cite[\S 2.6,\,\S 3.7]{GEW}. We mainly refer to \cite[Chap.\ 5]{JBRIT}, where Britnell solves the membership problem in symplectic and orthogonal groups of odd characteristic. In Chapter \ref{membershipChapter} we report his results, extending them to hermitian and quadratic forms in every characteristic. 

In Chapter \ref{semisimpleChap} we analyze in detail the semisimple case. Our starting point is the brief paper of Wall \cite{SCCSS} where he discusses the semisimple conjugacy classes in symplectic groups of odd characteristic. In Section \ref{semisimpleconjclasses} we extend this work to all sesquilinear and quadratic forms and then to special and Omega groups. The structure of the centralizer appears in \cite[Chap.\ 3]{PGPG} and \cite[\S 1]{FS}.

The unipotent case has been studied extensively by Liebeck \& Seitz in \cite{LS}. Gonshaw, Liebeck \& O'Brien \cite{GLOB} list explicitly representatives for the unipotent conjugacy classes in classical groups. In Chapter \ref{unipotentChap} we summarize both the results of \cite{GLOB} and the centralizer structures listed in \cite[Chap.\ 7]{LS}, listing only information essential for our task.

In Chapter \ref{generalChap} we use the information about semisimple and unipotent classes to solve the three problems in the general case. Section \ref{SectListGen} describes how to list all conjugacy classes in every classical group. In Section \ref{SectCentrGen} we use  our theoretical analysis to describe explicitly how to get a generating set for the centralizer of every element in a classical group. In Section \ref{SectConjGen} we describe when two elements in a classical group are conjugate and show how to build explicitly a conjugating element.

We expect to soon make publicly available our {\sc Magma} implementation of the resulting algorithms.

\section{Extending our results to an arbitrary finite group}            \label{Sect14}
We expect that our solution to these problems for classical groups will be useful in solving them for an arbitrary finite group $G$. Existing algorithms follow the \textquotedblleft soluble radical model\textquotedblright \, \cite[Chap.\ 10]{lifting}. A practical algorithm to construct the necessary data structure for this model is described in \cite{BHLO} and its  highly optimised implementation is available in {\sc Magma}. Recall that $G$ has a characteristic series
$$
1 \leqslant L \leqslant S \leqslant P \leqslant G,
$$
where
\begin{itemize}
\item $L$ is the solvable radical of $G$;
\item $S/L$ is the socle of $G/L$ and $S/L \cong \prod_i T_i^{d_i}$, where the $T_i$ are non-abelian non-isomorphic simple groups;
\item $P/S \leqslant \prod_i \mathrm{Out}(T_i)^{d_i}$ is solvable;
\item ${G/P \leqslant \prod_i\mathrm{Sym}(d_i)}$.
\end{itemize}
Observe that $G/L$ is a direct product $\prod_i W_i$, with $W_i = \mathrm{Aut}(T_i) \wr \mathrm{Sym}(d_i)$. The solution of the three problems for the $T_i$ allows us to solve them in $G$.
\begin{itemize}
\item The problem to extend the solutions from $T_i$ to $\mathrm{Aut}(T_i) \wr \mathrm{Sym}(d_i)$, and then to $G/L$, was solved by Hulpke \cite{Hulpke2000} and Cannon \& Holt \cite{CannonHolt}.
\item Since $L$ is solvable, there exists a series
$$
L = N_1 \vartriangleright N_2 \vartriangleright \cdots \vartriangleright N_r=1
$$
with $N_i/N_{i+1}$ elementary abelian. The solution of the problem in $G/N_{i+1}$ can be obtained from that in $G/N_i$. This procedure is described in \cite{Hulpke2000, Hulpke2013} for conjugacy classes and \cite[\S 8.8]{lifting} for centralizers.
\end{itemize}
Thus solving the problems in finite groups reduces to their solution for finite simple groups. Constructive recognition algorithms, for example \cite{black}, allow us to map elements from the classical group to its central quotient, so it is readily feasible to use our results to solve the problems in the corresponding finite simple group.

\section*{Acknowledgments}
The content of this article was part of my PhD thesis \cite{thesis}. I am extremely grateful to my supervisors Jianbei An and Eamonn O'Brien for their patience, their guidance and the thoughtful attention they gave me. I thank John Britnell, Alexander Hulpke, Martin Liebeck, Cheryl Praeger and Don Taylor for useful discussions.

\chapter{Preliminaries}   \label{LinearChapter}
In this chapter we define the classical groups over finite fields and discuss centralizers and conjugacy classes in the general linear and special linear groups. This serves to fix both the notation and the general approach used in the rest of the article. The only independent result is the exhibition of a generating set for the centralizer of a unipotent element. All the rest is well known, so it is just sketched.
\section{Finite classical groups}
\subsection{Sesquilinear and quadratic forms}
Let $F= \mathbb{F}_q$ or $\mathbb{F}_{q^2}$ and let $\overline{\lambda} := \lambda^q$ for all $\lambda \in F$. So, $\lambda \mapsto \overline{\lambda}$\index{$ a \mapsto \overline{a}$} is an automorphism of $F$ of order $1 $ or $2$. Let $V$ be an $n$-dimensional $F$-vector space and let $G = \GL(n,F)$. For $v \in V$, we denote by $\overline{v} \in V$ the vector obtained by applying the automorphism $\lambda \mapsto \overline{\lambda}$ to all entries of $v$. The same definition holds for $\overline{X}$, with $X \in M_n(F)$.

A \textit{reflexive sesquilinear form}\index{form!sesquilinear} is a function $\beta: V \times V \rightarrow F$ such that:
\begin{itemize}
\item $\beta(u_1+u_2,v) = \beta(u_1,v)+\beta(u_2,v)$ for all $u_1,u_2,v \in V$;
\item $\beta(u,v_1+v_2) = \beta(u,v_1)+\beta(u,v_2)$ for all $u,v_1,v_2 \in V$;
\item $\beta(au,bv) = a\overline{b}\beta(u,v)$ for all $u,v \in V$ and $a,b \in F$;
\item $\beta(u,v)=0$ if, and only if, $\beta(v,u)=0$.
\end{itemize}
If the automorphism $\lambda \mapsto \overline{\lambda}$ is the identity, then the form is \textit{bilinear}\index{form!bilinear}. The \textit{Gram matrix} of $\beta$ is the matrix $B = (\beta(v_i,v_j))$, where $v_1, \dots, v_n$ is any basis for $V$.

For every non-empty subset $S \subseteq V$, denote $S^{\bot} = \{ v \in V \, | \, \beta(u,v) =0 \, \forall u \in S\}$. If $W$ is a subspace of $V$, then $W$ is \textit{non-degenerate}\index{subspace!non-degenerate} if $W \cap W^{\bot} = \{0\}$, namely if for every non-zero $u \in W$ there exists $v \in W$ such that $\beta(u,v) \neq 0$. The subspace $W$ is \textit{totally isotropic}\index{subspace!totally isotropic} if $W \subseteq W^{\bot}$, namely if $\beta(u,v)=0$ for all $u,v \in W$. The form $\beta$ is \textit{non-degenerate}\index{form!non-degenerate} if $V^{\bot} = \{ 0 \}$. This is equivalent to the condition $\det{B} \neq 0$.\\

We consider on $V$ three types of non-degenerate sesquilinear forms:
\begin{itemize}
\item \textit{Alternating}\index{form!alternating}: $\lambda = \overline{\lambda}$ and $\beta(v,v)=0$ for all $\lambda \in F$ and $v \in V$. In this case $B = -B^{\tr}$ and $B$ has a zero diagonal.
\item \textit{Symmetric}\index{form!symmetric}: $\lambda = \overline{\lambda}$ and $\beta(u,v)=\beta(v,u)$ for all $\lambda \in F$ and $u,v \in V$. In this case $B = B^{\tr}$.
\item \textit{Hermitian}\index{form!hermitian}: $\lambda \mapsto \overline{\lambda}$ has order $2$ and $\beta(u,v) = \overline{\beta(v,u)}$ for all $u,v \in V$. In this case $B = {\overline{B}}^{\tr}$.
\end{itemize}
From now on, in either case $\lambda \mapsto \overline{\lambda}$ has order 1 or 2, we indicate the matrix ${\overline{B}}^{\tr}$ with $B^*$. \label{Tstarsymbol}\index{$B^*$}
\begin{remark} \label{sumofforms} Let $V_1, \dots, V_k$ be vector spaces and let $\beta_i$ be an alternating form on $V_i$ with matrix $B_i$ for every $i=1, \dots, k$. Let $V = V_1 \oplus \cdots \oplus V_k$. Each $v \in V$ can be written uniquely as $v = v_1+\cdots+v_k$, with $v_i \in V_i$. The form $\beta = \beta_1 \oplus \cdots \oplus \beta_k$ on $V$ defined by $\beta(u,v) = \sum_{i=1}^k \beta_i(u_i,v_i)$ is an alternating form on $V$ with matrix $B_1 \oplus \cdots \oplus B_k$, where we denote by $A \oplus B$ the block diagonal join of the matrices $A$ and $B$\index{$A\oplus B$}. Moreover, $\beta$ is non-degenerate if, and only if, each $\beta_i$ is; in such a case, $V_i^{\bot} = \bigoplus_{j \neq i} V_j$ for every $i$. The same holds on replacing alternating forms by symmetric or hermitian forms.
\end{remark}
A \textit{quadratic form} is a function $Q: V \rightarrow F$ satisfying the following conditions:
\begin{itemize}
\item $Q(av) = a^2Q(v)$ for every $a \in F$ and $v \in V$;
\item the function $\beta_Q(u,v):=Q(u+v)-Q(u)-Q(v)$ is a bilinear form.
\end{itemize}
The form $\beta_Q$ \label{betaQsymbol}\index{$\beta_Q$} is the \textit{bilinear form associated to}\index{form!bilinear associated to $Q$} $Q$. Given a $n \times n$ matrix $A$, the function $Q(v) := vAv^{\tr}$ also defines a quadratic form. In such a case, the matrix of $\beta_Q$ is $A+A^{\tr}$. Note that two matrices $A_1, A_2$ define the same quadratic form if, and only if, $A_1-A_2$ is an alternating matrix (that is, it is skew-symmetric and the diagonal is zero). Moreover, a symmetric form $\beta$ is the bilinear form associated to a unique quadratic form $Q$ if, and only if, $q$ is odd. In such a case, $Q(v) = \frac{1}{2}\beta(v,v)$. A subspace of $V$ is \textit{non-degenerate} (resp.\ \textit{totally isotropic}) if it is non-degenerate (resp.\ totally isotropic) for $\beta_Q$. A subspace $W$ is \textit{totally singular}\index{subspace!totally singular} if $Q(v)=0$ for all $v \in V$. A totally singular subspace is always totally isotropic, while the reverse implication holds only in odd characteristic. A quadratic form $Q$ is \textit{non-degenerate} or \textit{non-singular}\index{form!non-singular} if $\beta_Q$ is non-degenerate.\\

Two sesquilinear forms $\beta_1$, $\beta_2$ on $V$ are \textit{congruent}\index{forms!congruent} if there exists $T \in \GL(V)$ such that $\beta_1(uT,vT) = \beta_2(u,v)$ for every $u,v \in V$. In terms of matrices, if $B_1$ and $B_2$ are the matrices of $\beta_1$ and $\beta_2$ respectively, this conditions is equivalent to the existence of $T \in \GL(V)$ such that $TB_1T^* = B_2$. A similar definition holds for quadratic forms: $Q_1$ and $Q_2$ are \textit{congruent} if there exists $T \in \GL(V)$ such that $Q_1(vT) = Q_2(v)$ for all $v \in V$. An important result is the following.

\begin{theorem}\label{basis}
Let $V \cong F^n$.
\begin{itemize}
\item If $n$ is even, then there is one congruence class of non-degenerate alternating forms on $V$. If $n$ is odd, then there are no non-degenerate alternating forms on $V$.
\item All non-degenerate hermitian forms on $V$ are congruent.
\item If $n$ is odd and $q$ is odd, then there are two congruence classes of non-singular quadratic forms on $V$. If $Q$ is a representative for one of the congruence classes, then $\lambda Q$ is a representative for the other class, where $\lambda$ is a non-square in $F$. If $n$ is odd and $q$ is even, then there are no non-singular quadratic forms on $V$.
\item If $n=2k$ is even, then there are two congruence classes of non-singular quadratic forms on $V$. The two classes are distinguished by the dimension of a maximal totally singular subspace of $V$, that is $k$ in one case and $k-1$ in the second case.
\end{itemize}
\end{theorem}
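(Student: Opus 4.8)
I would prove all four statements by a single mechanism: induction on $n=\dim V$, at each step splitting off a non-degenerate subspace of small dimension via Remark~\ref{sumofforms} (so that the form on $V$ is the orthogonal sum of its restriction to that subspace and to its complement), the only genuinely field-specific inputs being surjectivity of the norm and trace maps of $\mathbb{F}_{q^2}/\mathbb{F}_q$ and two elementary counting arguments. For a non-degenerate alternating $\beta$: pick $0\ne v$; non-degeneracy yields $w$ with $\beta(v,w)\ne 0$, and after rescaling $\beta(v,w)=1$, so $\langle v,w\rangle$ has Gram matrix $\bigl(\begin{smallmatrix}0&1\\-1&0\end{smallmatrix}\bigr)$ and is non-degenerate; split it off and induct. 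This produces a basis in which $B=\bigoplus\bigl(\begin{smallmatrix}0&1\\-1&0\end{smallmatrix}\bigr)$, forcing $n$ even and giving a unique congruence class; for $n$ odd the process cannot terminate, so no non-degenerate alternating form exists.

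\emph{Hermitian forms.} The one new point is the existence of $v$ with $\beta(v,v)\ne 0$ (note $\beta(v,v)\in\mathbb{F}_q$, being its own conjugate). If $\beta(v,v)=0$ for every $v$, choose $u,v$ with $\beta(u,v)=c\ne 0$; a short computation gives $\beta(u+av,u+av)=\mathrm{Tr}_{\mathbb{F}_{q^2}/\mathbb{F}_q}(a\overline{c})$, so surjectivity of the trace furnishes $a$ making this nonzero, a contradiction. Given such a $v$, surjectivity of the norm $\mathbb{F}_{q^2}^{\times}\to\mathbb{F}_q^{\times}$ lets us rescale so that $\beta(v,v)=1$; split off $\langle v\rangle$ and induct, reaching an orthonormal basis and hence a single congruence class.

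\emph{Quadratic forms with $n$ odd.} If $q$ is even then $\beta_Q$ is alternating, hence degenerate for $n$ odd by the alternating case, so no non-singular $Q$ exists. If $q$ is odd then $Q\mapsto\beta_Q$ is a congruence-preserving bijection onto non-degenerate symmetric bilinear forms (with inverse $\beta\mapsto\tfrac12\beta(v,v)$), so it suffices to classify the latter: Gram--Schmidt gives $B\cong\mathrm{diag}(a_1,\dots,a_n)$, and the observation that for $a,b,c\in\mathbb{F}_q^{\times}$ the two $\tfrac{q+1}{2}$-element sets $\{ax^2\}$ and $\{c-by^2\}$ must intersect shows $\mathrm{diag}(a,b)\cong\mathrm{diag}(1,ab)$, whence $B\cong\mathrm{diag}(1,\dots,1,d)$ with $d=\prod_i a_i$. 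Since $\det(TBT^{\tr})=\det(T)^2\det B$, the class of $d$ in $\mathbb{F}_q^{\times}/(\mathbb{F}_q^{\times})^2$ is a complete invariant attaining both values, so there are exactly two congruence classes; and replacing $Q$ by $\lambda Q$ scales $d$ by $\lambda^{n}$, a non-square as $n$ is odd, so $\lambda Q$ lies in the other class.

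\emph{Quadratic forms with $n=2k$.} This is the substantive case, and it is uniform in the characteristic. If $V$ contains a nonzero singular vector $e$, pick $g\notin\langle e\rangle^{\bot}$ with $\beta_Q(e,g)=1$ and set $f=g-Q(g)e$; a short computation gives $Q(f)=0$ and $\beta_Q(e,f)=1$, so $H=\langle e,f\rangle$ is a non-degenerate hyperbolic plane, which we split off. Iterating, $V=H_1\perp\cdots\perp H_m\perp V_0$ with $V_0$ anisotropic. The key finite-field input is that an anisotropic quadratic form over $\mathbb{F}_q$ has dimension at most $2$ (by Chevalley--Warning, or a direct count, a non-singular form in at least $3$ variables has a nontrivial zero), that in dimension $2$ the anisotropic form is unique up to congruence and scaling (it is the norm form of $\mathbb{F}_{q^2}/\mathbb{F}_q$), and that a $1$-dimensional anisotropic form exists if and only if $q$ is odd. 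Hence for $n=2k$ the parity of $\dim V_0=2k-2m$ forces $\dim V_0\in\{0,2\}$, so $Q$ is congruent either to $H^{\perp k}$ or to $H^{\perp(k-1)}\perp V_0$, and these are inequivalent because the dimension of a maximal totally singular subspace is $k$ in the first case and $k-1$ in the second, this dimension being a congruence invariant since congruences carry totally singular subspaces to totally singular subspaces. I expect the main obstacle to be precisely the anisotropic classification --- bounding its dimension by $2$ and identifying the unique binary anisotropic form; once that is secured the rest is the bookkeeping supplied by Remark~\ref{sumofforms}, although to see cleanly that $H^{\perp(k-1)}\perp V_0$ has Witt index $k-1$ rather than $k$ one wants Witt's cancellation theorem (or a short direct argument).
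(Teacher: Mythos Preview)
Your proof sketch is correct and essentially complete. The paper, however, does not prove this theorem at all: its ``proof'' consists solely of citations to Taylor's \emph{The Geometry of the Classical Groups} and Kleidman--Liebeck, so there is no approach to compare against. What you have written is the standard self-contained argument those references carry out, organized exactly as one would expect: hyperbolic splitting for the alternating case, anisotropic-vector splitting plus norm surjectivity for the hermitian case, diagonalization and the discriminant for odd-characteristic symmetric forms, and Witt decomposition with the Chevalley--Warning bound on anisotropic dimension for the even-dimensional quadratic case.

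One small point of phrasing: in the $n=2k$ case you write that the binary anisotropic form is ``unique up to congruence and scaling''. What you actually need, and what is true, is that it is unique up to congruence alone --- for $q$ odd this follows from your discriminant classification (anisotropic forces $-d$ to be a non-square, pinning down the square-class of $d$), and for $q$ even one checks directly that any $ax^2+bxy+cy^2$ with $b\ne 0$ is congruent to $x^2+xy+c'y^2$ with $\mathrm{Tr}(c')=1$, and that the substitution $x\mapsto x+ey$ moves $c'$ through its Artin--Schreier coset. You clearly know this is the crux, since you flag it as the expected obstacle; just tighten the statement. Your appeal to Witt cancellation to separate the two Witt-index values is the right move and is exactly what the cited references do.
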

\begin{proof}
For alternating forms, see \cite[p.\ 69]{PG}. For hermitian forms, see \cite[p.\ 116]{PG}. For quadratic forms, see \cite[\S 2.5.3]{KL} or \cite[p.\ 139]{PG}.
\end{proof}
Let $Q$ be a non-singular quadratic form on a vector space $V$ of dimension $n=2k$. The dimension of a totally singular subspace is the \textit{Witt index}, and it is well-defined (see \cite[7.4]{PG}). If the Witt index of $Q$ is $k$, then $Q$ has \textit{plus type}; if the Witt index of $Q$ is $k-1$, then $Q$ has \textit{minus type}.

\subsection{Isometry groups}
Let $V$ be a vector space with a sesquilinear form $\beta$. A matrix $X \in \GL(V)$ is an \textit{isometry}\index{isometry} for $\beta$ if $\beta(uX,vX) = \beta(u,v)$ for every $u,v \in V$. If $B$ is the matrix of $\beta$, then  $X \in \GL(V)$ is an isometry if, and only if, $XBX^* = B$.

If $Q$ is a quadratic form on the vector space $V$, then an \textit{isometry} for $Q$ is $X \in \GL(V)$ such that $Q(vX)=Q(v)$ for every $v \in V$. If $A$ is a matrix of $Q$, then $X \in \GL(V)$ is an isometry for $Q$ if, and only if, $XAX^{\tr}-A$ is an alternating matrix.

The set of isometries for a sesquilinear or a quadratic form is a subgroup of $\GL(V)$, denoted by $\C(\beta)$, $\C(B)$, $\C(Q)$ or $\C(A)$, \label{groupofisometriessymbol}\index{$\C$}with $\beta,B,Q,A$ as above. Isometries can be defined for every sesquilinear or quadratic form, but we will consider non-degenerate reflexive or non-singular forms only.

A simple computation shows that, for every $T \in \GL(V)$, if $X \in \C(B)$, then $TXT^{-1} \in \C(TBT^*)$; in other words, isometry groups for congruent forms are conjugate subgroups in $\GL(V)$. This leads to the definition of the classical groups.

\begin{definition}
Let $V \cong F^n$.
\begin{itemize}
\item Let $F = \mathbb{F}_q$. The group of linear isomorphisms of $V$ is the \emph{linear group} and it is denoted by $\GL(n,F)$ or $\GL(n,q)$.
\item Let $F = \mathbb{F}_q$. If $\beta$ is a non-degenerate alternating form on $V$, then $\C(\beta)$ is the \emph{symplectic group}\index{group!symplectic} and it is denoted by $\Sp(n,F)$ or $\Sp(n,q)$\label{symplecticsymbol}\index{$\Sp(n,q)$}. By Theorem $\ref{basis}$, all symplectic groups are conjugate in $\GL(V)$, so in particular they are isomorphic. Clearly, these are defined only if $n$ is even.
\item Let $F = \mathbb{F}_{q^2}$. If $\beta$ is a non-degenerate hermitian form on $V$, then $\C(\beta)$ is the \emph{unitary group}\index{group!unitary} and it is denoted by $\U(n,F)$ or $\U(n,q)$\label{unitarysymbol}\index{$\U(n,q)$}. By Theorem $\ref{basis}$, all unitary groups are conjugate in $\GL(V)$, so in particular they are isomorphic.
\item Let $F = \mathbb{F}_q$. If $Q$ is a non-singular quadratic form on $V$, then $\C(Q)$ is the \emph{orthogonal group}\index{group!orthogonal}. We use different notation according to their type in Theorem $\ref{basis}$. If $Q$ has plus type, then $\C(Q)$ is denoted by $\Or^+(n,F)$ or $\Or^+(n,q)$\label{orthogonalplussymbol}\index{$\Or^{\epsilon}(n,q)$}. If $Q$ has minus type, then $\C(Q)$ is denoted by $\Or^-(n,F)$ or $\Or^-(n,q)$\label{orthogonalminussymbol}. If $V$ has odd dimension, then $\C(Q)$ is denoted by $\Or(n,F)$, $\Or(n,q)$, $\Or^{\circ}(n,F)$ or $\Or^{\circ}(n,q)$\label{orthogonalsymbol}. We do not need to distinguish between the two congruence classes of $Q$ because $\C(Q) = \C(\lambda Q)$ for every $\lambda \in F$. The groups $\Or^+(n,q)$ and $\Or^-(n,q)$ are not isomorphic. We often refer to these groups as $\Or^{\epsilon}(n,q)$ with $\epsilon \in \{+,-,\circ\}$. 
\end{itemize}
The groups $\Sp(n,q)$, $\U(n,q)$ and $\Or^{\epsilon}(n,q)$ are the \emph{isometry groups}.
\end{definition}
Note that, if $Q$ is a quadratic form, then $\C(Q) \leqslant \C(\beta_Q)$. If $q$ is odd, then equality holds, and $\C(\beta_Q)$ is an orthogonal group. If $q$ is even, then $\C(Q)$ is strictly contained in $\C(\beta_Q)$, and $\C(\beta_Q)$ is a symplectic group.

\begin{definition}
 The \emph{special linear group}\index{group!special linear} $\SL(n,q)$\label{speciallinearsymbol} is the subset of all elements of $\GL(n,q)$ having determinant $1$.
 
The \emph{special orthogonal group}\index{group!special orthogonal} $\SO^{\epsilon}(n,q)$\label{specialorthogonalsymbol}\index{$\SO^{\epsilon}(n,q)$} is the subset of all elements of $\Or^{\epsilon}(n,q)$ having determinant $1$.

The \emph{special unitary group}\index{group!special unitary} $\SU(n,q)$\label{specialunitarysymbol}\index{$\SU(n,q)$} is the subset of all elements of $\U(n,q)$ having {determinant}~{$1$}.
\end{definition}
It is well known that $\SO^{\epsilon}(n,q)$ is a normal subgroup of $\Or^{\epsilon}(n,q)$ of index 1 (if $q$ is even) or 2 (if $q$ is odd) and $\SU(n,q)$ is a normal subgroup of $\U(n,q)$ of index $q+1$. Every element of $\Sp(n,q)$ has determinant $1$.

We introduce the spinor norm\index{spinor norm} $\theta$ on orthogonal groups. For $x \in \Or^{\epsilon}(n,q)$, let $V_x \leqslant V$\label{Vxsymbol}\index{$V_x$} be the image of $\mathbf{1}_V-x$, where $\mathbf{1}_V$ is the identity on $V$\index{$\mathbf{1}_V$}.
\begin{itemize}
\item If $q$ is even, then the \emph{spinor norm} is defined as $\theta(x) := \dim{V_x} \pmod{2}$.
\item Let $q$ be odd. For $x \in \Or^{\epsilon}(n,q)$, its \textit{Wall form}\index{Wall form} is the form on $V_x$ defined by
\begin{eqnarray}      \label{WallForm}
\chi_x(u,v) := \beta(w,v),
\end{eqnarray}
where $w \in V$ is such that $u=w-wx$. If $x \in \Or^{\epsilon}(n,q)$, then $\chi_x$ is a non-degenerate bilinear form on $V_x$, and it does not depend on the choice of $w$ (see \cite[11.32]{PG}). Let $B_x$ be the matrix of $\chi_x$. For $x \in \Or^{\epsilon}(n,q)$, the \emph{spinor norm} $\theta(x)$ is defined as $\det{B_x} \pmod{\mathbb{F}_q^{*2}}$, namely the discriminant of $\chi_x$. For consistency with even characteristic, we set $\theta(x) \in \mathbb{F}_2$, with $\theta(x)=0$ if, and only if, $\det{B_x}$ is a square in $\mathbb{F}_q^*$.
\end{itemize}
The spinor norm is a surjective homomorphism from $\SO^{\epsilon}(n,q)$ to $\mathbb{F}_2$ (see \cite[11.43 and 11.50]{PG}).
\begin{definition}  \label{Omegadef}\index{group!Omega}
The \emph{Omega group} $\Omega^{\epsilon}(n,q)$ is the kernel of the spinor norm on $\SO^{\epsilon}(n,q)$.
\end{definition}
The group $\Omega^{\epsilon}(n,q)$\label{Omegasymbol}\index{$\Omega^{\epsilon}(n,q)$} is the unique subgroup of $\SO^{\epsilon}(n,q)$ of index 2, except for $\SO^+(4,2)$, which has three subgroups of index 2 (see \cite[2.5.7]{KL}).

\section{Conjugacy classes in $\GL(n,q)$}
Let $V$ be an $n$-dimensional vector space on $F = \mathbb{F}_q$ and let $X \in \GL(n,q)$. Let $f_1(t)^{e_1} \cdots f_h(t)^{e_h}$ be the minimal polynomial of $X$, where $f_i(t) \neq t$ are distinct monic irreducible polynomials. We can write
$$
V = V_1 \oplus \cdots \oplus V_h,
$$
where $V_i = \ker{f_i(X)^{e_i}}$ is the \textit{eigenspace} corresponding to $f_i(t)$ for every $i$. By \cite[4.5.1]{PFGG}, every $V_i$ can be written as a direct sum of $F[t]$-submodules
\begin{eqnarray}\label{decompCSM}
V_i = V_{i,1} \oplus \cdots \oplus V_{i,k_i},
\end{eqnarray}
where $X$ acts cyclically on $V_{i,j}$ with minimal polynomial $f_i(t)^{e_{i,j}}$, for some $1 \leq e_{i,j} \leq e_i$. The polynomials $f_1(t)^{e_{1,1}}, \dots, f_1(t)^{e_{1,k_1}}, \dots, f_h(t)^{e_{h,1}}, \dots, f_h(t)^{e_{h,k_h}}$ are the \textit{elementary divisors} of $X$. If $f$ is a power of an irreducible polynomial, then the \textit{multiplicity} of $f$ as an elementary divisor of $X$ is the number of times $f$ appears in the list of elementary divisors of $X$. Since the decomposition in (\ref{decompCSM}) is unique up to rearranging the factors, the list of elementary divisors of $X$ is well-defined. The main result is the following.
\begin{theorem}
Two matrices $X,Y \in \GL(n,q)$ are conjugate if, and only if, they have the same elementary divisors.
\end{theorem}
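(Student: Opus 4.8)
The plan is to reinterpret conjugacy in $\GL(n,q)$ as isomorphism of $F[t]$-modules and then invoke the structure theorem for finitely generated modules over the principal ideal domain $F[t]$.

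First I would endow $V \cong F^n$ with the structure of an $F[t]$-module $V_X$ by letting $t$ act as $X$ (so $f(t)\cdot v = vf(X)$ for $f \in F[t]$ and $v \in V$). The crucial observation is that an $F$-linear map $Z\colon V \to V$ is an $F[t]$-module homomorphism $V_X \to V_Y$ precisely when $ZY = XZ$, that is, when $Z$ intertwines $X$ and $Y$; hence $V_X \cong V_Y$ as $F[t]$-modules if and only if there is an invertible such $Z$, which is exactly the assertion that $X$ and $Y$ are conjugate in $\GL(n,q)$. Thus the theorem reduces to the claim that $V_X \cong V_Y$ if and only if $X$ and $Y$ have the same elementary divisors.

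Next, since $V$ is finite-dimensional, $V_X$ is a torsion $F[t]$-module (every $v$ is annihilated by the characteristic polynomial of $X$), so the structure theorem — the primary cyclic decomposition already recorded in (\ref{decompCSM}) via \cite[4.5.1]{PFGG} — expresses $V_X$ as a direct sum of cyclic modules $F[t]/(f_i(t)^{e_{i,j}})$ with the $f_i$ irreducible, and the multiset of these prime-power factors, namely the elementary divisors, is uniquely determined by $V_X$. For the ``if'' direction: if $X$ and $Y$ have the same elementary divisors, then $V_X$ and $V_Y$ are each isomorphic to the same explicit direct sum $\bigoplus F[t]/(f_i(t)^{e_{i,j}})$, hence to one another, and the first paragraph then produces a conjugating element. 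For the ``only if'' direction: the elementary divisors are read off from any primary cyclic decomposition, so by the uniqueness clause of the structure theorem any isomorphism $V_X \cong V_Y$ forces the two lists to coincide.

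There is no genuinely hard step here; everything is classical. The only point demanding a little care is the uniqueness half of the structure theorem — that the multiset of prime-power cyclic summands, not merely the module up to isomorphism, is an invariant — but this is precisely what \cite[4.5.1]{PFGG} supplies, and the well-definedness of the elementary divisor list was already noted after (\ref{decompCSM}). If a self-contained argument were wanted, uniqueness can be recovered by computing, for each irreducible $f_i$ and each $k\geq 0$, the dimension of $f_i(X)^k V \cap V_i$, which pins down the number of elementary divisors equal to $f_i(t)^e$ for every $e$.
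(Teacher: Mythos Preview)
Your proposal is correct and is the standard module-theoretic argument. The paper itself gives no proof at all, merely citing \cite[6.7.3]{PFGG}; your write-up is essentially the content one would find behind that citation, so there is nothing to compare.
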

\begin{proof}
See \cite[6.7.3]{PFGG}.
\end{proof}
As a representative for each conjugacy class of $\GL(n,q)$, we choose the Jordan form. For every monic irreducible polynomial $f$ of degree $d$ and positive integer $e$, the \textit{Jordan block}\index{Jordan block} of order $e$ relative to $f$ is the block matrix
$$
\begin{pmatrix}
C & \mathbb{I} && \\ & \ddots & \ddots & \\ && \ddots & \mathbb{I} \\ &&& C
\end{pmatrix},
$$
where $C$ is the companion matrix of $f$, $\mathbb{I}$\index{$\mathbb{I}$, $\mathbb{I}_n$} is the $d \times d$ identity matrix and $C$ appears $e$ times. A Jordan block is \textit{unipotent}\index{Jordan block!unipotent} if $f(t)=t-1$. For every $V_{i,j}$ as in (\ref{decompCSM}) there is a basis such that the matrix of the restriction of $X$ to $V_{i,j}$ is the Jordan block of order $e_{i,j}$ relative to $f_i$; hence, there exists a basis of $V$ such that the matrix of $X$ is a diagonal join of Jordan blocks. This matrix is the \textit{Jordan form} of $X$.

\section{Centralizers in $\GL(n,q)$}
In this section we show how the centralizer of every matrix in $\GL(n,q)$ is computed. The results are all well-known, except for the generation of centralizers of unipotent elements; our work is motivated by that of Murray \cite{Murray}, but our generating set is independent. The strategy will be useful in the next chapters.\\

For every $x,z \in G=\GL(n,q)$, the relation $C_G(x^z) = C_G(x)^z$ implies that to compute the centralizer of an element in $G$ it is sufficient to compute the centralizer of any element in its conjugacy class. We choose the Jordan form.\\

\begin{lemma}     \label{SplitCentr}
Every element of $C_G(x)$ fixes the subspaces $V_i$.
\end{lemma}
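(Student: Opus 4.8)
The plan is to use the primary decomposition $V = V_1 \oplus \cdots \oplus V_h$ into generalized eigenspaces $V_i = \ker f_i(X)^{e_i}$, and to show that any $y \in C_G(x)$ preserves each $V_i$. The key observation is that each $V_i$ has an \emph{intrinsic} characterization in terms of $x$ alone: namely $V_i = \ker f_i(x)^{e_i}$, or even more robustly $V_i = \{ v \in V : f_i(x)^m v = 0 \text{ for some } m \geq 0 \}$, the set of vectors annihilated by a power of $f_i(x)$. I would state this as the first step, recalling from the cited primary decomposition theorem that this set is exactly $V_i$ (since the $f_i$ are the distinct irreducible factors of the minimal polynomial, the vectors killed by a power of $f_i(x)$ form precisely the $f_i$-primary component).

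Next, I would carry out the routine commutation argument. Let $y \in C_G(x)$, so $yx = xy$, hence $y$ commutes with $f_i(x)^m$ for every $i$ and every $m$ (any polynomial in $x$ commutes with $y$). Now take $v \in V_i$, so $f_i(x)^{e_i} v = 0$. Then $f_i(x)^{e_i}(yv) = y\bigl(f_i(x)^{e_i} v\bigr) = y \cdot 0 = 0$, so $yv \in \ker f_i(x)^{e_i} = V_i$. Thus $y V_i \subseteq V_i$ for each $i$; since $y$ is invertible and the $V_i$ are finite-dimensional, $y V_i = V_i$, i.e. $y$ fixes each $V_i$ setwise. That completes the proof.

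There is essentially no obstacle here: the statement is a standard consequence of the fact that generalized eigenspaces are kernels of polynomials in $x$, and polynomials in $x$ commute with everything in $C_G(x)$. The only point requiring a moment's care is the identification $V_i = \ker f_i(x)^{e_i}$ as opposed to merely $V_i \supseteq$ something — but this is exactly the content of the primary decomposition \cite[4.5.1]{PFGG} already invoked in the excerpt, so it may be cited rather than reproved. I would keep the proof to three or four lines: recall the characterization of $V_i$, note $y$ commutes with $f_i(x)^{e_i}$, apply it to an arbitrary $v \in V_i$, and conclude invariance (hence equality, by invertibility).
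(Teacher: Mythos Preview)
Your proof is correct and follows essentially the same approach as the paper: both use the characterization $V_i = \ker f_i(x)^{e_i}$ and the commutation of $y$ with polynomials in $x$ to show $vy \in V_i$ whenever $v \in V_i$. Your version is in fact slightly more careful than the paper's, which omits the exponent $e_i$ in its computation.
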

\begin{proof}
If $v \in V_i$, then $vf_i(x)=0$. Hence, for every $y \in C_G(x)$,
$$
(vy)f_i(x) = v(yf_i(x)) = v(f_i(x)y) = (vf_i(x))y = 0y=0,
$$
using the fact that $y$ commutes with $f_i(x)$. Thus $vy$ belongs to $\ker{f_i(x)} = V_i$.
\end{proof}
The previous lemma implies that if $X = \bigoplus_{i=1}^h X_i$, where $X_i$ is the restriction of $X$ to $V_i$, then $C_{\GL(n,q)}(X)$ consists of all matrices of the form $Y=\bigoplus_{i=1}^h Y_i$, where $Y_i$ commutes with $X_i$.

\begin{definition}
	Let $G = \GL(n,q)$, with $n$ a positive integer and $q=p^a$ prime power, and let $r$ be the order of $x \in G$. If $\gcd(r,p)=1$, then $x$ is \emph{semisimple}\index{semisimple}. If $r$ is a $p$-power, then $x$ is \emph{unipotent}\index{unipotent}.
\end{definition}
\begin{lemma} \label{splitSU}
	Every $x \in G$ can be written uniquely as a product of a semisimple and a unipotent element which commute with each other. In symbols $x = su=us$ for $s$ semisimple and $u$ unipotent.
\end{lemma}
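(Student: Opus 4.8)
The plan is to produce $s$ and $u$ explicitly from the primary decomposition $V = V_1 \oplus \cdots \oplus V_h$ already set up in the previous section, and then to verify uniqueness by a commutation argument. First I would handle \emph{existence}. On each eigenspace $V_i = \ker f_i(x)^{e_i}$, the element $x$ acts with the single irreducible factor $f_i$. My first step is to reduce to this case: it suffices to find, for each $i$, commuting semisimple $s_i$ and unipotent $u_i$ on $V_i$ with $x|_{V_i} = s_i u_i$, and then set $s = \bigoplus_i s_i$, $u = \bigoplus_i u_i$; these commute because they do so blockwise, $su = us = x$, $s$ is semisimple and $u$ is unipotent since these properties can be checked blockwise (the order of a block sum is the lcm of the orders). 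Now on a single $V_i$, let $f = f_i$ have degree $d$; then $f(t)^{e_i}$ annihilates $x|_{V_i}$, so the multiplicative order of $x|_{V_i}$ divides that of any root of $f$ in $\overline{\mathbb{F}_q}$, which has order coprime to $p$ times a $p$-power — more precisely, I would argue via the splitting over $\overline{F}$: over $\overline{F}$ the restriction of $x$ to $V_i\otimes\overline F$ is conjugate to a sum of Jordan blocks with a common eigenvalue $\mu$ (a root of $f$), and such a Jordan block $J$ factors canonically as $J = (\mu \mathbb{I})(\mu^{-1}J)$ where $\mu\mathbb{I}$ is semisimple and $\mu^{-1}J$ is unipotent, and these commute. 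Descending back to $F$: the key point is that both the semisimple part and the unipotent part lie in $\mathbb{F}_q[x|_{V_i}]$, hence are defined over $F$. Concretely, since $\gcd(f(t)^{e_i}, \text{(the }p'\text{-part})) $ — better: I would invoke that $\mathbb{F}_q[x]$ is a finite commutative ring, so the multiplicative subgroup generated by $x$ is finite and splits as a product of its $p$-part and $p'$-part; the $p'$-part is $s$ and multiplying $x$ by the inverse of the $p$-part gives $u$. Both are polynomials in $x$, hence commute with everything that commutes with $x$, in particular with each other.

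For \emph{uniqueness}, suppose $x = s u = u s = s' u' = u' s'$ with $s, s'$ semisimple, $u, u'$ unipotent. I would first note that $s'$ and $u'$, being powers of $x$ (from the existence construction $s'$ is a power of $x$ and $u' = s'^{-1}x$, also a polynomial in $x$), commute with $s$ and $u$. Then $s^{-1}s' = u u'^{-1}$; the left side is semisimple (product of commuting semisimple elements) and the right side is unipotent (product of commuting unipotent elements), being a common element it lies in a subgroup whose order is simultaneously coprime to $p$ and a $p$-power, so it is the identity. Hence $s = s'$ and $u = u'$. The one subtlety here is justifying that a product of two commuting semisimple elements is semisimple and likewise for unipotent — this follows because commuting semisimple matrices are simultaneously diagonalizable over $\overline F$, so their product is diagonalizable, i.e. has order prime to $p$; dually, commuting unipotent matrices lie in a common conjugate of the upper unitriangular group, whose exponent is a $p$-power.

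The main obstacle I anticipate is the descent from $\overline F$ to $F$ in the existence part: one must be sure that when $x|_{V_i}$ is written over the algebraic closure as $(\text{scalar})\cdot(\text{unipotent})$ on each generalized eigenspace, the resulting global semisimple and unipotent parts are actually $\mathbb{F}_q$-rational. The cleanest way around this — and the route I would take in the write-up — is purely ring-theoretic and avoids $\overline F$ altogether: work in the finite commutative ring $R = \mathbb{F}_q[x] \subseteq M_n(\mathbb{F}_q)$, observe $x \in R^\times$, and use that any finite abelian group (here $\langle x\rangle$) is the internal direct product of its Sylow $p$-subgroup and its $p'$-part, writing $x = u \cdot s$ accordingly with $s, u \in \langle x \rangle \subseteq R$. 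Everything — commutativity, the semisimple/unipotent property via orders, and uniqueness — then falls out of elementary group theory, with the only input from matrix theory being the two facts about products of commuting semisimple (resp. unipotent) matrices quoted above, or, if one prefers, the observation that membership in $R$ already forces $s$ to be semisimple (its minimal polynomial divides the radical of $x$'s) and $u$ unipotent. I would present the ring-theoretic version as the proof and relegate the Jordan-block picture to a remark for intuition.
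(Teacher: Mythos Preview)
Your proposal is correct, and the ring-theoretic route you ultimately advocate is precisely the paper's approach: the paper writes $|x| = p^{\alpha} m$ with $\gcd(m,p)=1$, picks $k$ via the Chinese Remainder Theorem with $p^{\alpha} \mid k$ and $k \equiv 1 \pmod m$, and sets $s = x^k$, $u = x^{|x|+1-k}$, which is exactly the explicit form of splitting the cyclic group $\langle x\rangle$ into its $p'$- and $p$-parts. Your uniqueness argument via $s^{-1}s' = uu'^{-1}$ is also the paper's, though you are more careful than the paper in justifying why the two sides are respectively semisimple and unipotent (the paper simply asserts this without noting that it requires the commutativity you establish).
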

\begin{proof}
	If $x$ has order $p^{\alpha}m$ for $\gcd(m,p)=1$, then take $s=x^k$ and $u=x^{p^{\alpha}m+1-k}$, where $k$ is an integer such that $p^{\alpha}|k$ and $k \equiv 1 \pmod{m}$.
	For the uniqueness, if $x=s_1u_1 = s_2u_2$ are two decompositions of $x$, then we would have $s_2^{-1}s_1 = u_2u_1^{-1}$, where the left hand side has order coprime to $p$ and the right hand side has order a power of $p$. The only possibility is $s_1=s_2$ and $u_1=u_2$.
\end{proof}
\begin{definition}
	We call $x=su=us$ the \emph{Jordan decomposition}\index{Jordan decomposition} of $x$.
\end{definition}
For example, if $x$ is a Jordan block,
$$
x = \left( \begin{array}{cccc}
C & \mathbb{I} & & \\
& \ddots & \ddots & \\
& & \ddots & \mathbb{I}\\
& & & C \end{array}\right),
$$
then the Jordan decomposition of $x$ is $x=su=us$, where
$$
s = \left( \begin{array}{cccc}
C &  & & \\
& \ddots & & \\
& & \ddots & \\
& & & C \end{array}\right) \: \mbox{ and } \: u = \left( \begin{array}{cccc}
\mathbb{I} & C^{-1} & & \\
& \ddots & \ddots & \\
& & \ddots & C^{-1}\\
& & & \mathbb{I} \end{array}\right).
$$
If $x$ is a diagonal join of Jordan blocks, then $s$ and $u$ are the diagonal joins of the semisimple and the unipotent parts of every single block.

Clearly $C_G(x) = C_G(s) \cap C_G(u)$: the containment $\supseteq$ is obvious because if an element centralizes $s$ and $u$, then it also centralizes their product $su = x$, and $\subseteq$ follows since $s$ and $u$ commute each other. Thus, the problem to find the centralizer of $x \in G$ can be solved separately for unipotent and semisimple elements.

Since intersection is a difficult operation, it is convenient to compute $C_G(s)$ and then, using the fact that $u \in C_G(s)$, compute $C_{C_G(s)}(u)$, as we will see below.

\subsection{Centralizer of a semisimple element}
Let $x \in \GL(n,q)$ be a semisimple element. By Lemma \ref{SplitCentr}, we can assume that $x$ has a unique elementary divisor $f(t) \in \mathbb{F}_q[t]$.

If $\deg{f}=1$, then $x$ is a scalar matrix, so every matrix commutes with $x$.

Now suppose that $f(t) \in \mathbb{F}_q[t]$ is irreducible of degree $r>1$. Let $E = \mathbb{F}_q[t]/(f)$ be the splitting field of $f$ over $\mathbb{F}_q$ and let $\lambda \in E$ be a root of $f$. Every element of $E$ can be described as $\phi(\lambda)$ for some polynomial $\phi(t) \in \mathbb{F}_q[t]$ of degree smaller than $r$. For every positive integer $m$, there is a canonical embedding of $\GL(m,q^r)$ into $\GL(mr,q)$ sending the matrix $(\phi_{ij}(\lambda))$ into the block matrix $(\phi_{ij}(C))$, where $C$ is the companion matrix of $f$ (see \cite[2.1.4]{PGPG}). This embedding is important in the next chapters.

\begin{example}
	Let $J$ be the Jordan block of order $e$ relative to the polynomial $f$. If $\lambda$ is a root of $f$ in its splitting field $E$ over $\mathbb{F}_q$, then $J$ is the embedding into $\GL(er,q)$ of the matrix
	$$
	\begin{pmatrix}
	\lambda & 1 & & \\ & \ddots & \ddots & \\ & & \ddots & 1 \\ & & & \lambda
	\end{pmatrix} \in \GL(e,E),
	$$
	where $r = \deg{f}$.
\end{example}
Now, let us go back to the case where $x$ is semisimple with a unique elementary divisor $f(t)$ of degree $r>1$ and multiplicity $m$. We can suppose that the matrix of $x$ is a diagonal join of $m$ copies of $C$, the companion matrix of $f$. If $\lambda \in E$ is a root for $f$, then $x$ is the embedding into $\GL(mr,q)$ of the scalar matrix $\lambda \mathbb{I}_m \in \GL(m,E)$. Every matrix of $\GL(m,E)$ commutes with $\lambda \mathbb{I}_m$, so its embedding into $\GL(mr,q)$ commutes with $x$. On the other hand, these are the only matrices in $\GL(mr,q)$ that commute with $x$ (see \cite[3.1.9]{PGPG}). This leads to the following theorem for semisimple elements.
\begin{theorem} \label{largerfield}
	Let $x \in \GL(n,q)$ be semisimple with characteristic polynomial $f_1(t)^{e_1} \cdots f_h(t)^{e_h}$, for $f_i \in \mathbb{F}_q[t]$ irreducible. Then
	$$
	C_{\GL(n,q)}(x) \cong \bigoplus_{j=1}^{h} \GL(e_j,q^{r_j}),
	$$
	with $r_j = \deg{f_j}$.
\end{theorem}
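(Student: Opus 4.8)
The plan is to reduce to the case of a single elementary divisor and then identify the centralizer with a matrix algebra over an extension field via the $\mathbb{F}_q[x]$-module structure on $V$. First, since $x$ is semisimple, write $V = \bigoplus_{i=1}^h V_i$ with $V_i = \ker f_i(x)$ as in the primary decomposition (the exponents in the minimal polynomial are all $1$, so the $e_i$ are just multiplicities). By Lemma~\ref{SplitCentr} every element of $C_{\GL(n,q)}(x)$ preserves each $V_i$, so $C_{\GL(n,q)}(x) = \bigoplus_{i=1}^h C_{\GL(V_i)}(x|_{V_i})$ with $\dim V_i = e_i r_i$. Thus it suffices to prove the theorem when $h=1$, i.e.\ when $x$ has characteristic polynomial $f^e$ for a single monic irreducible $f \in \mathbb{F}_q[t]$ of degree $r$, so that $n = er$.

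Next, regard $V = \mathbb{F}_q^n$ as a module over $\mathbb{F}_q[x]$; then the centralizer of $x$ in the full matrix algebra $M_n(\mathbb{F}_q)$ is precisely $\mathrm{End}_{\mathbb{F}_q[x]}(V)$. Because $x$ is semisimple, its order is coprime to $p$, so $x$ satisfies $t^{|x|}-1$, which is separable over $\mathbb{F}_q$; hence the minimal polynomial of $x$ is squarefree, and when $h=1$ this forces it to equal $f$ itself. Therefore $\mathbb{F}_q[x] \cong \mathbb{F}_q[t]/(f)$ is a field, namely $E := \mathbb{F}_{q^r}$, and $V$ acquires the structure of an $E$-vector space; comparing $\mathbb{F}_q$-dimensions gives $\dim_E V = e$.

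Finally, an $\mathbb{F}_q[x]$-linear endomorphism of $V$ is exactly an $E$-linear one, so $C_{M_n(\mathbb{F}_q)}(x) = \mathrm{End}_E(V) \cong M_e(\mathbb{F}_{q^r})$, and passing to invertible elements yields $C_{\GL(n,q)}(x) \cong \GL(e, q^{r})$. Reassembling the blocks $V_i$ gives $C_{\GL(n,q)}(x) \cong \bigoplus_{j=1}^h \GL(e_j, q^{r_j})$, as claimed. Equivalently, and more concretely, one may conjugate $x$ so that its matrix is the diagonal join of $e$ copies of the companion matrix of $f$; under the embedding $\GL(e, q^r) \hookrightarrow \GL(er, q)$ recalled above this matrix is the image of the scalar $\lambda \mathbb{I}_e$, the image of $\GL(e,q^r)$ visibly centralizes $x$, and the module computation identifies this image with the whole centralizer (cf.\ \cite[3.1.9]{PGPG}).

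The only genuine content is the observation that semisimplicity of $x$ — that is, $\gcd(|x|,p)=1$ — is exactly what makes its minimal polynomial squarefree, hence makes $\mathbb{F}_q[x]$ a field (in the single-divisor case) rather than a truncated polynomial ring $\mathbb{F}_q[t]/(f^e)$; everything else is routine bookkeeping with dimensions. So I expect no real obstacle beyond stating this semisimplicity point cleanly.
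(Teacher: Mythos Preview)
Your proof is correct and follows essentially the same approach as the paper: both reduce via Lemma~\ref{SplitCentr} to a single irreducible $f$, identify $\mathbb{F}_q[x]\cong\mathbb{F}_{q^r}$, and read off the centralizer as $\GL(e,q^r)$ either via the module-theoretic identification $C_{M_n(\mathbb{F}_q)}(x)=\mathrm{End}_E(V)$ or, equivalently, via the explicit embedding $\GL(e,q^r)\hookrightarrow\GL(er,q)$ sending $\lambda\mathbb{I}_e$ to the diagonal join of companion matrices (citing \cite[3.1.9]{PGPG} for the fact that this exhausts the centralizer). Your added remark that semisimplicity is precisely what forces the minimal polynomial to be squarefree is a helpful clarification the paper leaves implicit.
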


\subsection{Centralizer of a unipotent element} \label{Cue}
Let $x \in G=\GL(n,q)$ be unipotent of order $p^b$. Following the approach of \cite[\S 2.3]{Murray}, it is convenient to work in the matrix algebra $M = M_n(\mathbb{F}_q)$\index{$M_n(F)$} and find the centralizer $C_M(x)$ of $x$ in $M$. The centralizer of $x$ in $G$ is the set of invertible elements of $C_M(x)$.

Since the function $a \mapsto a^p$ is an automorphism of $\mathbb{F}_q$, the unique eigenvalue of $x$ is $1$, so the Jordan form of $x$ is
$$
\left(\begin{array}{cccc}
J_{\lambda_1} & & & \\
& J_{\lambda_2} & & \\
& & \ddots & \\
& & & J_{\lambda_k}
\end{array}\right),
$$
where $J_{\lambda_i}$ is the unipotent Jordan block of dimension $\lambda_i$ and $\lambda_1 + \cdots + \lambda_k = n$. We suppose $\lambda_1 \leq \lambda_2 \leq \cdots \leq \lambda_k$.

Take an element $y$ centralizing $x$ and write the matrix of $y$ as
\begin{eqnarray}  \label{BlocksBij}
\begin{pmatrix}
B_{11} & \cdots & B_{1k}\\
\vdots & \ddots & \vdots \\
B_{k1} & \cdots & B_{kk}
\end{pmatrix},
\end{eqnarray}
where $B_{ij}$ is a block of dimension $\lambda_i \times \lambda_j$ for every $i,j$.

Denote by $X_{c\times d}^a$ the $c \times d$ matrix whose $(i,j)$-entry is 1 if $j-i = a$ and 0 otherwise. A straightforward computation shows that the conditions $xy=yx$ is equivalent to $B_{ij} = \sum_{a=\lambda_j-\lambda_i}^{\lambda_j-1} b_aX_{\lambda_i\times \lambda_j}^a$ (resp.\ $\sum_{a=0}^{\lambda_j-1}b_aX_{\lambda_i\times \lambda_j}^a$) if $\lambda_i\leq \lambda_j$ (resp.\ $\lambda_i > \lambda_j$) for every $i,j$, where the $b_a$ run over $\mathbb{F}_q$.

For every integer $\lambda_s$, let $\mathbb{F}_q[t]_{\lambda_s} = \mathbb{F}_q[t]/(t^{\lambda_s})$ be the truncated polynomial algebra. The multiplication
$$
{\mathbb{F}_q[t]_{\lambda_s} \times \mathbb{F}_q[t]_{\lambda_{s'}} \rightarrow \mathbb{F}_q[t]_{\lambda_{s'}}}
$$
is defined by multiplying the two polynomials and removing all of the monomials of degree greater than $\lambda_{s'}$. There is an algebra isomorphism between the quotient algebra
$$
\mathbb{F}_q[t]_{\lambda} = \left( \begin{array}{cccc}
\mathbb{F}_q[t]_{\lambda_1} & t^{\lambda_2-\lambda_1}\mathbb{F}_q[t]_{\lambda_2} & \cdots & t^{\lambda_k-\lambda_1}\mathbb{F}_q[t]_{\lambda_k} \\
\mathbb{F}_q[t]_{\lambda_1} & \mathbb{F}_q[t]_{\lambda_2} & \cdots & t^{\lambda_k-\lambda_2}\mathbb{F}_q[t]_{\lambda_k}\\
\vdots & \vdots & \ddots & \vdots \\
\mathbb{F}_q[t]_{\lambda_1} & \mathbb{F}_q[t]_{\lambda_2} & \cdots & \mathbb{F}_q[t]_{\lambda_k}
\end{array} \right),
$$
and $C_M(x)$ defined by sending $t^a$ in the $(i,j)$-entry into $X^a_{c \times d}$ in the $(i,j)$-block and extending by linearity. This allows us to work on $\mathbb{F}_q[t]_{\lambda}$ to describe the centralizer of $x$.

Searching for invertible elements of $C_M(x)$ is equivalent to searching for invertible elements of $\mathbb{F}_q[t]_{\lambda}$. The $(\mu,\nu)$-entry of an arbitrary element of $C_M(x)$ corresponds to a pair $(\lambda_{\mu},\lambda_{\nu})$. Since the $\lambda_{\mu}$ are not necessarly distinct, it is convenient to introduce the following notation: let $h = |\{\lambda_1, \dots, \lambda_k\}|$ and let $l_i$ be the multiplicity of $\lambda_i$ in the set $\{\lambda_1, \dots, \lambda_k\}$, for $1 \leq i \leq h$. Let us assemble the entries sharing the same values $(\lambda_i,\lambda_j)$ in a unique $l_i\times l_j$ block and write an arbitrary element of $\mathbb{F}_q[t]_{\lambda}$ as a block matrix
\begin{eqnarray}\label{LF}
A = \left(\begin{array}{ccc} A_{11} & \cdots & A_{1h}\\ \vdots & \ddots & \vdots \\ A_{h1} & \cdots & A_{hh}\end{array}\right)
\end{eqnarray}
with $A_{ij} = \sum_{s=\lambda_j-\lambda_i}^{\lambda_j-1} t^sA_{ij}^{(s)}$, where the  $A_{ij}^{(s)}$ are $l_i \times l_j$ matrices with entries in $\mathbb{F}_q$ and $t^s=0$ whenever $s<0$.

If we write an arbitrary $B \in \mathbb{F}_q[t]_{\lambda}$ as $B=B_0+tB_1+\cdots + t^{\lambda_k-1}B_{\lambda_k-1}$ with
$$
B_s = \left(\begin{array}{ccc} A_{11}^{(s)} & \cdots & A_{1h}^{(s)}\\ \vdots & \ddots & \vdots \\ A_{h1}^{(s)} & \cdots & A_{hh}^{(s)}\end{array}\right),
$$
then it is clear that $B$ is invertible if and only if $B_0$ is invertible. Since
$$
B_0 = \left(\begin{array}{ccc} A_{11}^{(0)} & \cdots & A_{1h}^{(0)}\\  &  \ddots & \vdots \\ 0 &  & A_{hh}^{(0)}\end{array}\right),
$$
$B$ is invertible if and only if $A_{ii}^{(0)}$ is invertible, equivalently $A_{ii}^{(0)} \in \GL(l_i,q)$.

This allows us to get more information about the structure of $C_G(x)$ for unipotent $x$. Let $R$ be the subgroup of $\mathbb{F}_q[t]_{\lambda}^*$ consisting of the matrices of the form
$$
\left(\begin{array}{ccc} A_{11}^{(0)} &  & \\  &  \ddots & \\  &  & A_{hh}^{(0)}\end{array}\right)
$$
with $A_{ii}^{(0)} \in \GL(l_i,q)$. Let $U$ be the subgroup of $\mathbb{F}_q[t]_{\lambda}^*$ consisting of the matrices of the form
$$
\left(\begin{array}{cccc}
1+tU_{11} & U_{12} & \cdots & U_{1h}\\
U_{21} & \ddots & \ddots & \vdots \\
\vdots & \ddots & \ddots & U_{h-1,h}\\
U_{h1} & \cdots & U_{h,h-1} & 1+tU_{hh} \end{array}\right),
$$
where $U_{ij}$ is an arbitrary $l_i\times l_j$ block with entries in $t^{\lambda_j-\lambda_i}\mathbb{F}_q[t]_{\lambda_i}$ (with $t^{\lambda_j-\lambda_i}=1$ if $\lambda_i>\lambda_j$), equivalently the matrices of the form of Equation (\ref{LF}) with identical constant term in the blocks on the diagonal. It is easy to see that $R \cap U$ is trivial. Moreover, an arbitrary element of $\mathbb{F}_q[t]_{\lambda}$ can be written as a product of an element of $R$ and an element of $U$, since the multiplication by an element of $R$ acts on rows or columns by multiplying them by an invertible matrix. Finally, $U$ is normal in the unit group of $\mathbb{F}_q[t]_{\lambda}$. To check this, write an element of $U$ as $B_0 + tB_1$, where $B_0$ is a constant matrix and $B_1$ is a matrix with coefficients in $\mathbb{F}_q[t]$; now $A \in R$ is an upper triangular block matrix and $B_0$ is an upper unitriangular block matrix, so $AB_0A^{-1}$ is an upper unitriangular block matrix and thus $A(B_0+tB_1)A^{-1} = AB_0A^{-1}+t(AB_1A^{-1})$ is an element of $U$. Thus we have proved the following.
\begin{theorem} \label{Card}
	Let $x = \bigoplus_{i=1}^h J_{\lambda_i}^{\oplus l_i}$ be a unipotent element of $\GL(n,q)$, where $J_{\lambda_i}^{\oplus l_i}$ is the direct sum of $l_i$ copies of $J_{\lambda_i}$, with $\lambda_1 < \cdots < \lambda_h$. Then $C_{\GL(n,q)}(x) =U \rtimes R$, where $R \cong \prod_{i=1}^h \GL(l_i,q)$ and
	\begin{eqnarray} \label{CentStruct}
	|U| = q^{\gamma} \mbox{ with } \gamma = 2\sum_{i<j}\lambda_il_il_j + \sum_i (\lambda_i-1)l_i^2.
	\end{eqnarray}
\end{theorem}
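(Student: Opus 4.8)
\emph{Plan.} The argument is essentially an assembly of the structural facts proved in the discussion just above the statement, together with one parameter count, so I would split it into a group‑theoretic part and a counting part. First I would recall that $C_{\GL(n,q)}(x)$ is the unit group of $C_M(x)$, and that the displayed algebra isomorphism identifies it with $\mathbb{F}_q[t]_\lambda^{*}$. It has already been checked that $R$ and $U$ are subgroups of $\mathbb{F}_q[t]_\lambda^{*}$, that $R\cap U=1$, that $U$ is normal in $\mathbb{F}_q[t]_\lambda^{*}$, and that every unit factors as an element of $R$ times an element of $U$ (given a unit $B$, multiply it on the left by the block‑diagonal element of $R$ whose $i$-th block is $(A_{ii}^{(0)})^{-1}$, which is invertible exactly because $B$ is a unit, to land in $U$). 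The standard criterion for an internal semidirect product, namely $G=NH$ with $N\cap H=1$ and $N\trianglelefteq G$, then gives $\mathbb{F}_q[t]_\lambda^{*}=U\rtimes R$, hence $C_{\GL(n,q)}(x)=U\rtimes R$. The isomorphism $R\cong\prod_{i=1}^{h}\GL(l_i,q)$ is immediate from the definition of $R$: an element of $R$ is block‑diagonal with $i$-th block an arbitrary member of $\GL(l_i,q)$ placed in constant degree, and multiplication of such matrices is just the product of the blocks.

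\emph{Counting $|U|$.} This is the only genuinely computational step. Since the constraint defining $U$ inside the normal form (\ref{LF}) is exactly that the constant term of each diagonal block be $\mathbb{I}_{l_i}$, I would count the free $\mathbb{F}_q$-parameters block by block. In the $(i,j)$ block $A_{ij}=\sum_s t^s A_{ij}^{(s)}$, with $s$ running from $\lambda_j-\lambda_i$ to $\lambda_j-1$ and terms with $s<0$ suppressed, there are $\lambda_i$ coefficient matrices of size $l_i\times l_j$ when $i<j$, there are $\lambda_j$ of them when $i>j$, and when $i=j$ there are $\lambda_i$ of size $l_i\times l_i$ of which the constant one is fixed. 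Hence
\[
\gamma=\sum_i(\lambda_i-1)l_i^{2}+\sum_{i<j}\lambda_i l_i l_j+\sum_{i>j}\lambda_j l_i l_j .
\]
Relabelling the summation indices in the last sum rewrites it as $\sum_{i<j}\lambda_i l_i l_j$, so the two off‑diagonal sums collapse to $2\sum_{i<j}\lambda_i l_i l_j$, which is exactly the claimed value of $\gamma$. (Alternatively, one computes $\dim_{\mathbb{F}_q}\mathbb{F}_q[t]_\lambda=2\sum_{i<j}\lambda_i l_i l_j+\sum_i\lambda_i l_i^{2}$; a general element is a unit iff the constant terms of its $h$ diagonal blocks, involving $\sum_i l_i^{2}$ entries altogether, are invertible matrices, so $|\mathbb{F}_q[t]_\lambda^{*}|=q^{\,\dim-\sum_i l_i^{2}}\prod_i|\GL(l_i,q)|$, and dividing by $|R|=\prod_i|\GL(l_i,q)|$ leaves $|U|=q^{\gamma}$.)

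\emph{Main difficulty.} There is no real obstacle here, since the statement is a summary of what precedes it; the only place where care is needed is the blockwise count, specifically getting the index range $s=\lambda_j-\lambda_i,\dots,\lambda_j-1$ right on each side of the diagonal so that the super‑diagonal block contributes $\lambda_i$ and the sub‑diagonal block $\lambda_j$ coefficients, and then performing the relabelling that symmetrises the off‑diagonal terms into $2\sum_{i<j}\lambda_i l_i l_j$. Everything else is immediate from the structural facts already established.
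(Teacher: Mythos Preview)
Your proposal is correct and follows essentially the same approach as the paper: both assemble the semidirect product from the structural facts established in the preceding discussion and then count $|U|$ by tallying the free $\mathbb{F}_q$-parameters in each $(i,j)$ block, obtaining $(\lambda_i-1)l_i^2$ on the diagonal and $\lambda_{\min(i,j)}l_il_j$ off it. Your alternative dimension count for $|U|$ is a minor extra, but the core argument is the same.
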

\begin{proof}
	Working in $\mathbb{F}_q[t]_{\lambda}$ instead of $C_G(x)$, the groups $R$ and $U$ are the subgroups described above, and we have proved that $C_G(x) \cong U \rtimes R$. The equality $R \cong \prod_{i=1}^h \GL(l_i,q)$ is trivial by the definition of $R$. It remains to compute the cardinality of $U$, namely how many choices are there for the $U_{ij}$. The term $1+tU_{ii}$ equals $1+tA_{ii}^{(1)} + \cdots + t^{\lambda_i-1}A_{ii}^{(\lambda_i-1)}$ and $A_{ii}^{(s)}$ is a matrix in $M_{l_i}(\mathbb{F}_q)$ which can be chosen arbitrarily; thus for $U_{ii}$ there are $q^{(\lambda_i-1)l_i^2}$ choices and these give the second sum in (\ref{CentStruct}).
	
	If $i \neq j$, then $U_{ij}$ is $t^{\lambda_j-\lambda_i}A_{ij}^{(\lambda_j-\lambda_i)} + \cdots + t_{\lambda_j-1}A_{ij}^{(\lambda_j-1)}$ if $i<j$ or $A_{ij}^{(0)} + \cdots + t^{\lambda_j-1}A_{ij}^{(\lambda_j-1)}$, if $j>i$, and every matrix $A_{ij}^{(s)}$ is an arbitrary $l_i\times l_j$ matrix with entries in $\mathbb{F}_q$. Thus the number of choices is $q^{\lambda_il_il_j}$ if $i<j$, or $q^{\lambda_jl_il_j}$ if $i>j$. Summing over all $i \neq j$, we get the first sum in  (\ref{CentStruct}).
\end{proof}
Let $x = \bigoplus_{i=1}^k J_{\lambda_i}^{\oplus a_i}$, where $J_{\lambda_i}$ is the unipotent Jordan block of dimension $\lambda_i$. We now describe a generating set for $C_G(x)$. It is easier to work in the truncated polynomial algebra $\mathbb{F}_q[t]_{\lambda}$.

We proceed to construct a generic element of $\mathbb{F}_q[t]_{\lambda}$. Let $\omega$ be a primitive element of $\mathbb{F}_q$.
\begin{enumerate}
	\item We need to generate the subgroup $R = \prod_{i=1}^k \GL(a_i,q)$. Generators for the direct factors are described in \cite{DET}.
	
	\item The unit group of $\mathbb{F}_q[t]_{\lambda_k}$ is generated by
	$$
	\{\omega\} \cup \{ 1+\omega^it^j: 0 \leq i \leq [\mathbb{F}_q:\mathbb{F}_p], 1 \leq j \leq \lambda_k-1\}.
	$$
	Let $y = \mu_0 + \mu_1t + \cdots + \mu_{\lambda_k-1}t^{\lambda_k-1}$, with $\mu_i = \sum_{j=0}^{r-1} b_{ij}\omega^j$ for $b_{ij} \in \mathbb{F}_p$ and $\mu_0 \neq 0$. We can suppose that $\mu_0 = 1$, since $y = \mu_0 (1+(\mu_1/\mu_0)t + \cdots (\mu_{\lambda_k-1}/\mu_0)t^{\lambda_k-1})$ and $\mu_0$ is a power of $\omega$. Thus
	$$
	\prod_{j=0}^{r-1} (1+\omega^jt)^{b_{1j}} = 1 + \mu_1t + y_1t^2
	$$
	for some $y_1 \in \mathbb{F}_q[t]_{\lambda_k}$. If $y_1 = (a_0 +a_1t + \cdots)$, with $a_0 \neq 0$, and $\mu_2' = \mu_2/a_0 = \sum_{j=0}^{r-1} b_{2j}'\omega^j$, then
	$$
	(1 + \mu_1t + y_1t^2)\prod_{j=0}^{r-1}(1+\omega^jt^2)^{b_{2j}'} = 1+\mu_1t+\mu_2t^2+y_2t^3
	$$
	for some $y_2 \in \mathbb{F}_q[t]_{\lambda_k}$. We proceed in this way acting every time on the coefficient of largest degree and we get the complete expression for $y$.
	
	\item The elements in a single block $A_{ii}$ in (\ref{LF}) are generated by the generators of $\GL(\lambda_i,q)$ and by elements of the form
	$$
	\left(\begin{array}{cccc}
	y(t) & & & \\
	& 1 & & \\
	& & \ddots & \\
	& & & 1 \end{array} \right),
	$$
	where $y(t)$ is a generator of $\mathbb{F}_p[t]_{\lambda_i}^*$ as in the previous point. We prove that these elements are sufficient in the case $\lambda_i = 2$; the other cases follow the arguments for block matrices (the case $\lambda_i=1$ is trivial). Conjugating elements of the form
	$$
	\left( \begin{array}{cc} f & 0 \\ 0 & 1 \end{array}\right)
	$$
    by $c = \scriptsize{\left( \begin{array}{cc} 0 & 1 \\ 1 & 0 \end{array}\right)} \in \GL(2,q)$, we get the elements of the form
	$$
	\left( \begin{array}{cc} 1 & 0 \\ 0 & f \end{array}\right),
	$$
	with $f = f(t) \in \mathbb{F}_q[t]_2^*$. For every $f,g \in \mathbb{F}_q[t]_2^*$, 
	$$
	\left( \begin{array}{cc} f & 1 \\ 1 & g \end{array}\right) = \left( \begin{array}{cc} f & 0 \\ 0 & 1 \end{array}\right)\left( \begin{array}{cc} 1 & 0 \\ 1 & 1 \end{array}\right)\left( \begin{array}{cc} 1 & 0 \\ 0 & 1-fg \end{array}\right)\left( \begin{array}{cc} 1 & -1 \\ 0 & 1 \end{array}\right)\left( \begin{array}{cc} 1 & 0 \\ 0 & -f^{-1} \end{array}\right).
	$$
	Finally, a generic matrix can be written as
	$$
	\left( \begin{array}{cc} f & g \\ h & l \end{array}\right)= \left( \begin{array}{cc} fh^{-1} & 1 \\ 1 & lg^{-1} \end{array}\right)\left( \begin{array}{cc} h & 0 \\ 0 & 1 \end{array}\right)\left( \begin{array}{cc} 1 & 0 \\ 0 & g \end{array}\right)
	$$
	if $h$ and $g$ are invertible; otherwise we reduce to this case by conjugating with $c$.
	
	\item Now we construct the blocks $B_{ij}$ described in (\ref{BlocksBij}) with $\lambda_i \neq \lambda_j$. Let us consider only upper unitriangular matrices $y$, namely the block matrices where $B_{ii}=1$ for every $i=1, \dots, k$ and $B_{ij}=0$ for every $i>j$. In the notation of the algebra $\mathbb{F}_q[t]_{\lambda}$,
	\begin{eqnarray}   \label{Formy}
	y = \left(\begin{array}{ccc} 1 & & *\\ & \ddots & \\ 0 & & 1 \end{array}\right),
	\end{eqnarray}
	where the entry in position $(i,j)$ is a polynomial in $\mathbb{F}_q[t]_{\lambda_j}$ for any $i<j$. To generate this subgroup it is sufficient to take those elements with 1 on the diagonal and in the position $(i,i+1)$ for exactly one $i$ and 0 elsewhere. It is sufficient to add to our generating set only such elements where $\lambda_i \neq \lambda_{i+1}$, because the others are already generated as shown in the previous discussion. For convenience denote by $E_{ij}$ the matrix with the polynomial 1 in the entry $(i,j)$ and $0$ elsewhere, and write $\mathbb{I}$ for the identity matrix in $\mathbb{F}_q[t]_{\lambda}$. We have generators of the form $\sum_{j=1}^k f_j(t)E_{jj}$ for $f_j(t) \in \mathbb{F}_q[t]_{\lambda_j}$ and of the form $\mathbb{I} + E_{i,i+1}$ for $i = 1, \dots, k-1$.
	
	For every $i >j$ and $f(t) \in \mathbb{F}_q[t]_{\lambda_j}$, the matrix $\mathbb{I} + E_{ij}$ can be written as the iterated commutator 
	$$
	\mathbb{I} + E_{ij} = [[\dots[[\mathbb{I}+E_{i,i+1},\mathbb{I}+E_{i+1,i+2}],\mathbb{I}+E_{i+2,i+3}]\dots],\mathbb{I}+E_{j-1,j}],
	$$
	where $[x,y] = x^{-1}y^{-1}xy$. For $f(t)$ invertible, the element $\mathbb{I}+f(t)E_{ij}$ can be obtained by conjugation as follows:
	$$
	\mathbb{I}+f(t)E_{ij} = (\mathbb{I}+(f(t)-1)E_{ii})(\mathbb{I}+E_{ij})(\mathbb{I}+(f(t)-1)E_{ii})^{-1}.
	$$
	If $f(t)$ is not invertible, then it can be written as $f(t)=f_1(t)+f_2(t)$ with $f_1(t)$ and $f_2(t)$ invertible (it is sufficient to choose $f_1$ and $f_2$ with non-zero constant term), so $\mathbb{I}+f(t)E_{ij} = (\mathbb{I}+f_1(t)E_{ij})(\mathbb{I}+f_2(t)E_{ij})$. A generic element of the form (\ref{Formy}) can be written as
	$$
	\mathbb{I}+\sum_{i<j}f_{ij}(t)E_{ij} = \left(\prod_{i=1}^{k-1} (\mathbb{I}+f_{ik}(t)E_{ik})\right)\prod_{i=2}^{k-1}\left(\prod_{j=1}^{i-1} (\mathbb{I}+f_{k-i,k-j}(t)E_{k-i,k-j}) \right).
	$$
	It is trivial to get a generic upper triangular matrix starting from an upper unitriangular matrix and a diagonal matrix. Finally, to get a generic matrix we proceed inductively on the dimension. A matrix of dimension 2 can be obtained as follows:
	\begin{eqnarray} \label{above}
	\left(\begin{array}{cc} a & b\\ c & d \end{array}\right) = \frac{1}{ad} \left(\begin{array}{cc} ad-bc & ab\\ 0 & ad \end{array}\right)\left(\begin{array}{cc}a & 0\\ c & d \end{array}\right)
	\end{eqnarray}
	under the hypothesis $ad \neq 0$ (otherwise we work with the conjugate by the matrix $\scriptsize{\left( \begin{array}{cc} 0 & 1 \\ 1 & 0 \end{array}\right)}$). Thus we proceed by induction using Equation (\ref{above}) with block matrices and using the conjugation mentioned above.
\end{enumerate}
\begin{remark} \label{differentUni}
Write $x = 1 + N$, where 1 is the identity matrix in $\GL(n,q)$ and $N$ is the \emph{nilpotent part} of $x$. Given a nonzero $\mu \in \mathbb{F}_q^*$, the previous computation gives the same result if we replace $N$ by $\mu N$, so the Jordan blocks of $x$ have the form
	$$
	\left(\begin{array}{cccc}
	1 & \mu & & 0\\
	& \ddots & \ddots & \\
	& & \ddots & \mu \\
	& & & 1
	\end{array}\right).
	$$
	In fact, for each matrix $y$ the following holds:
	$$
	(1+\mu N)y = y(1+\mu N) \Leftrightarrow \mu Ny = y\mu N \Leftrightarrow Ny = yN \Leftrightarrow (1+N)y = y(1+N) \Leftrightarrow xy = yx,
	$$
	so the elements of the form $1+\mu N$ for $\mu \in \mathbb{F}_q^*$ have the same centralizer. This will be useful in the next section.
\end{remark}

\subsection{Centralizer of a generic element}
Using the result of the two previous sections, we can compute the centralizer of an arbitrary $x \in \GL(n,q)$ in its Jordan form.
\begin{theorem}   \label{GLCentralizer}
Let $x \in \GL(n,q)$ with minimal polynomial $f_1(t)^{e_1} \cdots f_h(t)^{e_h}$, with $f_i(t) \in \mathbb{F}_q[t]$ irreducible of degree $d_i$. Suppose $X$ has Jordan form
$$
x = \bigoplus_{i=1}^h ( B_{\lambda_{i,1}}^{\oplus l_{i,1}} \oplus \cdots \oplus B_{\lambda_{i,k_i}}^{\oplus l_{i,k_i}}),
$$
where $B_{\lambda_{i,j}}$ is the Jordan block of dimension $\lambda_{i,j}$ relative to $f_i(t)$, with $\lambda_{i,1} < \cdots < \lambda_{i,k_i} = e_i$. Then $C_G(X) = U \rtimes R$, where $R \cong \prod_{i=1}^h \left( \prod_{j=1}^{k_i} \GL(l_{i,j},q^{d_i})\right)$ and $|U| = q^{\gamma}$ with 
$$
\gamma = \sum_{i=1}^h d_i\left( 2\sum_{j<\ell} \lambda_{i,j}l_{i,j}l_{i,\ell} + \sum_j (\lambda_{i,j}-1)l_{i,j}^2\right).
$$
\end{theorem}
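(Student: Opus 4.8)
The plan is to chain together the three reductions already set up in this section. Write $G=\GL(n,q)$ and $V=\bigoplus_{i=1}^h V_i$ with $V_i=\ker f_i(x)^{e_i}$. By Lemma \ref{SplitCentr}, $C_G(x)=\bigoplus_{i=1}^h C_{\GL(V_i)}(x_i)$, where $x_i=x|_{V_i}$, so it suffices to treat each $i$ separately, i.e.\ to assume $x$ has a single irreducible factor $f=f_i$ of degree $d=d_i$ and Jordan blocks $B_{\lambda_{i,1}}^{\oplus l_{i,1}}\oplus\cdots\oplus B_{\lambda_{i,k_i}}^{\oplus l_{i,k_i}}$. I would set $m_i=\sum_{j=1}^{k_i}\lambda_{i,j}l_{i,j}$, so that $\dim V_i=m_id_i$, and note that $x_i$ is the image of a diagonal join $\hat x$ of Jordan blocks over $E=\mathbb{F}_{q^{d_i}}$ (of dimensions $\lambda_{i,j}$, eigenvalue $\lambda$ a root of $f$) under the field embedding $\iota\colon\GL(m_i,q^{d_i})\hookrightarrow\GL(m_id_i,q)$, by the Example following Theorem \ref{largerfield}.

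First I would dispose of the semisimple part. By Lemma \ref{splitSU}, write $x_i=su=us$. Then $s$ has the single elementary divisor $f$ of multiplicity $m_i$, so Theorem \ref{largerfield} (together with the discussion preceding it) gives $C_{\GL(V_i)}(s)=\iota(\GL(m_i,q^{d_i}))$ as a literal equality of subgroups. Since $\iota$ is an injective ring homomorphism it respects the Jordan decomposition, hence $s=\iota(\hat s)$, $u=\iota(\hat u)$ with $\hat x=\hat s\hat u$ the Jordan decomposition of $\hat x$; here $\hat s=\lambda\,\mathbb{I}_{m_i}$ is scalar, so $\hat u=\lambda^{-1}\hat x=1+\lambda^{-1}N$ where $N$ is the nilpotent part of the unipotent element $u_0:=\bigoplus_j J_{\lambda_{i,j}}^{\oplus l_{i,j}}$ of $\GL(m_i,q^{d_i})$. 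By Remark \ref{differentUni} applied over $E$ (with $\mu=\lambda^{-1}\in E^{*}$), $C_{\GL(m_i,q^{d_i})}(\hat u)=C_{\GL(m_i,q^{d_i})}(u_0)$.

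Then I would intersect. Since $s$ and $u$ commute, $C_{\GL(V_i)}(x_i)=C_{\GL(V_i)}(s)\cap C_{\GL(V_i)}(u)$; intersecting with $\iota(\GL(m_i,q^{d_i}))$ and using injectivity of $\iota$ gives $C_{\GL(V_i)}(x_i)=\iota\big(C_{\GL(m_i,q^{d_i})}(u_0)\big)$. Now I apply Theorem \ref{Card} with $E=\mathbb{F}_{q^{d_i}}$ in place of $\mathbb{F}_q$ (its proof is field-agnostic) to $u_0$, whose distinct block sizes are $\lambda_{i,1}<\cdots<\lambda_{i,k_i}$ with multiplicities $l_{i,j}$: this yields $C_{\GL(m_i,q^{d_i})}(u_0)=U_i\rtimes R_i$ with $R_i\cong\prod_{j=1}^{k_i}\GL(l_{i,j},q^{d_i})$ and $|U_i|=(q^{d_i})^{\gamma_i}$, $\gamma_i=2\sum_{j<\ell}\lambda_{i,j}l_{i,j}l_{i,\ell}+\sum_j(\lambda_{i,j}-1)l_{i,j}^2$. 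Recombining over $i$ via Lemma \ref{SplitCentr}: $R:=\prod_i R_i\cong\prod_{i=1}^h\prod_{j=1}^{k_i}\GL(l_{i,j},q^{d_i})$, and $U:=\prod_i U_i$ is a normal complement with $C_G(x)=U\rtimes R$ and $|U|=\prod_i q^{d_i\gamma_i}=q^{\gamma}$, $\gamma=\sum_{i=1}^h d_i\big(2\sum_{j<\ell}\lambda_{i,j}l_{i,j}l_{i,\ell}+\sum_j(\lambda_{i,j}-1)l_{i,j}^2\big)$, as claimed.

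The step I expect to be the main obstacle is the middle paragraph: correctly identifying the image of the unipotent part $u$ inside $C_G(s)\cong\GL(m_i,q^{d_i})$ as a \emph{unipotent} element of Jordan type $\bigoplus_j J_{\lambda_{i,j}}^{\oplus l_{i,j}}$ over the larger field. Once that identification is made precise (using the Example after Theorem \ref{largerfield} to transport the Jordan blocks, and Remark \ref{differentUni} to absorb the scalar $\lambda^{-1}$), the rest is a direct application of Theorems \ref{largerfield} and \ref{Card} plus bookkeeping of exponents, and the formula for $|U|$ drops out.
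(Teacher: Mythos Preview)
Your proposal is correct and follows essentially the same route as the paper's proof: split via Lemma~\ref{SplitCentr}, pass to the semisimple centralizer via Theorem~\ref{largerfield}, identify the unipotent part over the extension field using Remark~\ref{differentUni}, and invoke Theorem~\ref{Card} there before recombining. You are in fact somewhat more careful than the paper about the middle step (the paper's proof writes $\GL(e_i,q^{d_i})$ where your $m_i=\sum_j\lambda_{i,j}l_{i,j}$ is the correct dimension), and your explicit use of the embedding $\iota$ and of $\hat u=1+\lambda^{-1}N$ makes the identification precise.
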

\begin{proof}
Let $x_i$ be the restriction of $x$ to $\ker(f_i(x)^{e_i})$. By Lemma \ref{SplitCentr},
$$
C_{\GL(n,q)}(x) = \prod_{i=1}^h C_{G_i}(x_i),
$$
where $G_i \cong \GL(e_i,q^{d_i})$. Let $x_i = s_iu_i$ be the Jordan decomposition of $x_i$. Applying Theorem \ref{largerfield}, we deduce that $C_{G_i}(x_i) = C_{C_{G_i}(s_i)}(u_i)$, and so is isomorphic to $C_{\GL(e_i,q^{d_i})}(\tilde{u}_i)$, where $u_i$ is the embedding into $\GL(e_id_i,q)$ of a unipotent $\tilde{u}_i \in \GL(e_i,q^{d_i})$. Now, using Theorem \ref{Card} and Remark \ref{differentUni}, we find that $C_{G_i}(x_i) = U_i \rtimes  R_i$ with $R_i = \prod_{j=1}^{k_i} \GL(l_{i,j},q^{d_i})$ and $|U_i| = {(q^{d_i})}^{\gamma_i}$, where
$$
\gamma_i = 2\sum_{j<\ell} \lambda_{i,j}l_{i,j}l_{i,\ell} + \sum_j (\lambda_{i,j}-1)l_{i,j}^2.
$$
The claim follows from the fact that $\prod_i \left( U_i \rtimes R_i \right) = \left(\prod_i U_i \right) \rtimes \left(\prod_i R_i \right)$.
\end{proof}

\section{Centralizers and conjugacy classes in $\SL(n,q)$}     \label{SLconjclasses}
For completeness we include the solutions for the three problems for the special linear group. Some of the results are well-known (see, for example, \cite[\S 3.2.3]{PGPG} for the conjugacy problem), so we just state them and sketch the proofs. We discuss in more detail how to generate the centralizer of an arbitrary element in $\SL(n,q)$.
\begin{theorem} \label{MainResult2}
Write $S=\SL(n,q)$ and $G=\GL(n,q)$. Let $x \in \SL(n,q)$ with minimal polynomial $f_1(t)^{e_1} \cdots f_h(t)^{e_h}$, where $f_i(t) \in \mathbb{F}_q[t]$ is irreducible of degree $d_i$. Suppose $x$ has Jordan form
$$
x = \bigoplus_{i=1}^h (B_{\lambda_{i,1}}^{l_{i,1}} \oplus \cdots \oplus B_{\lambda_{i,k_i}}^{l_{i,k_i}}),
$$
where $B_{\lambda_{i,j}}$ is the Jordan block of dimension $\lambda_{i,j}$ relative to $f_i(t)$, with $\lambda_{i,1} < \cdots < \lambda_{i,k_i} = e_i$. Now $C_S(x) = U \rtimes (R \cap S)$, where $U$ and $R$ are as in Theorem $\ref{GLCentralizer}$. Moreover
$$
|C_S(x)| = \frac{d}{q-1}|C_G(x)|
$$
with $d = \mathrm{gcd}(\lambda_{1,1}, \dots, \lambda_{h,k_h}, q-1)$.
\end{theorem}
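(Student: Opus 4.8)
The plan is to read off everything from the decomposition $C_G(x) = U \rtimes R$ of Theorem~\ref{GLCentralizer} by restricting it along the determinant map.

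First I would note that $C_S(x) = C_G(x) \cap S$, and that $U \leqslant \SL(n,q)$: indeed $|U| = q^{\gamma}$ is a power of $p$, so $U$ is a $p$-group, hence consists of unipotent elements, which have determinant $1$. Writing a general element of $C_G(x) = U \rtimes R$ uniquely as $ur$ with $u \in U$ and $r \in R$, we have $\det(ur) = \det(r)$, so $ur \in S$ exactly when $r \in R \cap S$. Since $U$ is normal in $C_G(x)$, this gives the group-theoretic statement $C_S(x) = U \rtimes (R \cap S)$ at once. As $R \cap S$ is precisely the kernel of $\det\colon R \to \mathbb{F}_q^*$, we get $|R| = |R \cap S|\,|\det(R)|$, and hence
$$
\frac{|C_S(x)|}{|C_G(x)|} = \frac{|U|\,|R \cap S|}{|U|\,|R|} = \frac{1}{|\det(R)|}.
$$
So the whole problem reduces to computing the image $\det(R) \leqslant \mathbb{F}_q^*$.

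Next I would determine how $R$ sits inside $\GL(n,q)$ well enough to compute this image. By Theorem~\ref{GLCentralizer} we have $R \cong \prod_{i=1}^{h}\prod_{j=1}^{k_i}\GL(l_{i,j},q^{d_i})$, and from its proof --- via Theorem~\ref{Card} for the unipotent part and the companion-matrix embedding $\GL(m,q^{r}) \hookrightarrow \GL(mr,q)$ used in Theorem~\ref{largerfield} for the semisimple part --- the factor $\GL(l_{i,j},q^{d_i})$ acts, after extending scalars to $\mathbb{F}_{q^{d_i}}$, as $A \mapsto A \otimes \mathbb{I}_{\lambda_{i,j}}$ on the span of the $l_{i,j}$ copies of the $\lambda_{i,j}$-dimensional block, and this $\mathbb{F}_{q^{d_i}}$-matrix is then carried into $\GL(n,q)$ by the companion-matrix embedding. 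Combining the two standard facts that $\det(A \otimes \mathbb{I}_{\lambda_{i,j}}) = (\det A)^{\lambda_{i,j}}$ and that $\det_{\mathbb{F}_q}$ of a companion-matrix embedding equals $\mathrm{N}_{\mathbb{F}_{q^{d_i}}/\mathbb{F}_q}$ of the underlying $\mathbb{F}_{q^{d_i}}$-determinant, an element $A$ of this factor contributes $\mathrm{N}_{\mathbb{F}_{q^{d_i}}/\mathbb{F}_q}(\det A)^{\lambda_{i,j}}$ to $\det_{\mathbb{F}_q}$ of the corresponding element of $R$. As both $\det\colon \GL(l_{i,j},q^{d_i}) \to \mathbb{F}_{q^{d_i}}^*$ and the norm $\mathbb{F}_{q^{d_i}}^* \to \mathbb{F}_q^*$ are surjective, the image of this single factor under $\det|_R$ is exactly $\{c^{\lambda_{i,j}} : c \in \mathbb{F}_q^*\}$.

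Finally, since $R$ is the direct product of these factors, $\det(R)$ is the join of all the subgroups $\{c^{\lambda_{i,j}}\}$ inside the cyclic group $\mathbb{F}_q^*$ of order $q-1$; this join is the subgroup of $d$-th powers, where $d = \gcd(\lambda_{1,1},\dots,\lambda_{h,k_h},q-1)$, so $|\det(R)| = (q-1)/d$. Plugging this into the displayed identity gives $|C_S(x)| = \frac{d}{q-1}|C_G(x)|$. I expect the only delicate point to be the bookkeeping in the third paragraph: one must pin down precisely how each $\GL(l_{i,j},q^{d_i})$ embeds in $\GL(n,q)$ so that the exponent appearing is $\lambda_{i,j}$ and not $\lambda_{i,j}d_i$ --- surjectivity of the norm is exactly what absorbs the residual factor $d_i$.
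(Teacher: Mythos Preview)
Your argument is correct and follows essentially the same route as the paper: both observe that $U$ consists of determinant-$1$ elements, reduce to computing the image $\det(R)\leqslant\mathbb{F}_q^*$, use the formula $\det = \prod_{i,j}\mathrm{N}_{\mathbb{F}_{q^{d_i}}/\mathbb{F}_q}(\det\widetilde{x}_{i,j})^{\lambda_{i,j}}$ together with surjectivity of the norm, and conclude that this image has order $(q-1)/d$. Your presentation is slightly more explicit (phrasing things via $\det(R)$ and the join of cyclic subgroups rather than, as the paper does, via the kernel of the induced map $\mathbb{Z}_{q-1}^k\to\mathbb{Z}_{q-1}$), but the two are dual formulations of the same computation.
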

\begin{proof}
Every element of $U$ has determinant 1, so we only need to compute $R \cap S$. Every $x \in R$ can be identified by $(\widetilde{x}_{i,j})$ for $\widetilde{x}_{i,j} \in \GL(l_{i,j},q^{d_i})$, and a straightforward computation shows that $\det{x} = \prod_{i,j} \N_i (\det{\widetilde{x}_{i,j}})^{\lambda_{i,j}}$, where $\N_i$ is the norm $\mathbb{F}_{q^{d_i}}^* \rightarrow \mathbb{F}_q^*$. Let $\omega$ be a primitive element for $\mathbb{F}_q$. We write $\N_i(\det{\widetilde{x}_{i,j}})$ as $\omega^{a_{i,j}}$ for some $a_{i,j} \in \mathbb{Z}_{q-1}$. Note that, by surjectivity of the norm, $a_{i,j}$ assumes all values in $\mathbb{Z}_{q-1}$ when $\widetilde{x}_{i,j}$ runs over $\GL(l_{i,j},q^{d_i})$. Let $k = \sum_i k_i$. The index $|C_G(x):C_S(x)|$ is equal to the index in $\mathbb{Z}_{q-1}^k$ (as an additive group) of the kernel of the homomorphism defined by $(a_{1,1}, \dots, a_{h,k_h}) \rightarrow \sum_{i,j}a_{i,j}\lambda_{i,j}$, that is equal to $(q-1)/d$.
\end{proof}
\begin{theorem}
Let $x \in \SL(n,q)$ and let $d = \gcd(\lambda_1, \dots, \lambda_k,q-1)$, where $\lambda_1, \dots, \lambda_k$ are the dimensions of the Jordan blocks of $x$. Let $\omega$ be a primitive element of $\mathbb{F}_q$ and let $z \in \GL(n,q)$ have determinant $\omega$. The conjugacy class of $x$ in $\GL(n,q)$ splits into $d$ distinct classes in $\SL(n,q)$, whose representatives are $x, x^z, x^{z^2}, \dots, x^{z^{d-1}}$. Moreover, $x^{y_1}$ and $x^{y_2}$ are conjugate in $\SL(n,q)$ if, and only if, $\det{(y_1^{-1}y_2)}$ is a power of $\omega^d$ in $\mathbb{F}_q$.
\end{theorem}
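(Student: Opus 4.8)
The plan is to apply the standard description of how a $G$-conjugacy class decomposes into $N$-classes for a normal subgroup $N$, in the case $S := \SL(n,q) \trianglelefteq G := \GL(n,q)$ with $G/S \cong \mathbb{F}_q^*$ via $\det$. Set $C := C_G(x)$. Since $x \in S$, every conjugate $x^y$ with $y \in G$ again has determinant $1$, so everything takes place inside $S$. The first step is the reduction: for $y_1,y_2 \in G$, the elements $x^{y_1}$ and $x^{y_2}$ are $S$-conjugate if and only if $y_1 y_2^{-1} \in CS$. Indeed, $x^{y_1 s} = x^{y_2}$ for some $s \in S$ is equivalent to $y_1 s y_2^{-1} \in C$; since $S \trianglelefteq G$ gives $y_1 S y_2^{-1} = (y_1 y_2^{-1})S$, such an $s$ exists precisely when the coset $(y_1 y_2^{-1})S$ meets $C$, i.e.\ when $y_1 y_2^{-1} \in CS$.

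Next I would pass to determinants. As $CS$ is a subgroup containing $S = \ker\det$, we have $CS = \det^{-1}(\det C)$, so membership in $CS$ depends only on the determinant; hence $x^{y_1}$ and $x^{y_2}$ are $S$-conjugate iff $\det(y_1)\det(y_2)^{-1} \in \det C$. Now $\det C$ is a subgroup of the cyclic group $\mathbb{F}_q^* = \langle\omega\rangle$, and its order is pinned down by Theorem \ref{MainResult2}: from $|C_S(x)| = \tfrac{d}{q-1}|C|$ we obtain $|\det C| = [C : C \cap S] = [C : C_S(x)] = (q-1)/d$, so $\det C$ is the unique subgroup of $\mathbb{F}_q^*$ of that order, namely $\langle\omega^d\rangle$. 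Since $\det(y_1^{-1}y_2) = \det(y_1 y_2^{-1})^{-1}$ and $\langle\omega^d\rangle$ is a group, this yields the final assertion: $x^{y_1}$ and $x^{y_2}$ are $S$-conjugate if and only if $\det(y_1^{-1}y_2) \in \langle\omega^d\rangle$.

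The count and the explicit representatives then follow formally. Every element of the $G$-class of $x$ has the form $x^y$ with $y \in G$, and by the criterion just proved $x^y \mapsto \det(y)\langle\omega^d\rangle$ is a well-defined bijection from the set of $S$-classes contained in the $G$-class of $x$ onto $\mathbb{F}_q^*/\langle\omega^d\rangle$; as $\det\colon G \to \mathbb{F}_q^*$ is surjective, this set has cardinality $[\mathbb{F}_q^* : \langle\omega^d\rangle] = d$, so the $G$-class splits into exactly $d$ classes in $S$. The powers $z^0, z^1, \dots, z^{d-1}$ have determinants $\omega^0, \omega^1, \dots, \omega^{d-1}$, which lie in $d$ pairwise distinct cosets of $\langle\omega^d\rangle$ (because $\omega^{i-j} \in \langle\omega^d\rangle$ forces $d \mid i - j$, impossible for $0 \le i \ne j \le d-1$); hence $x, x^z, \dots, x^{z^{d-1}}$ represent $d$ distinct $S$-classes and therefore all of them.

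Granting Theorem \ref{MainResult2}, there is no real obstacle: the argument is pure coset bookkeeping in the cyclic group $\mathbb{F}_q^*$. If one wished to avoid that theorem, the crux would instead be the direct computation of $\det(C_G(x))$: writing $C_G(x) = U \rtimes R$ with $U$ of determinant $1$ and $R \cong \prod_{i,j}\GL(l_{i,j},q^{d_i})$, the image $\det(C_G(x))$ is generated by the elements $\N_{\mathbb{F}_{q^{d_i}}/\mathbb{F}_q}(\det\widetilde{x}_{i,j})^{\lambda_{i,j}}$, and surjectivity of the norm maps forces this subgroup to have index $\gcd(\lambda_{1,1},\dots,\lambda_{h,k_h},q-1) = d$ in $\mathbb{F}_q^*$ — which is precisely the content extracted in the proof of Theorem \ref{MainResult2}.
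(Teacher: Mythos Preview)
Your proof is correct and takes essentially the same approach as the paper: both rely on Theorem~\ref{MainResult2} to obtain $[C_G(x):C_S(x)] = (q-1)/d$, and then the rest is the standard coset argument in the cyclic group $\mathbb{F}_q^*$. The paper phrases the count via the class-size identity $|x^S| = |x^G|/d$ and proves only the ``if'' direction of the conjugacy criterion explicitly (the ``only if'' direction then following by counting), whereas you work directly through the $CS$-coset criterion and the bijection with $\mathbb{F}_q^*/\langle\omega^d\rangle$; but these are two presentations of the same underlying argument.
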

\begin{proof}
The previous theorem implies
\begin{eqnarray}  \label{splitsGLSL}
|x^S| = \frac{|S|}{|C_S(x)|} = \frac{|G|/(q-1)}{d|C_G(x)|/(q-1)} = \frac{|G|}{d|C_G(x)|} = \frac{|x^G|}{d}.
\end{eqnarray}
Now, if $\det{(y_1^{-1}y_2)}$ is a power of $\omega^d$ in $\mathbb{F}_q$, then there exists $z$ in the centralizer of $x$ such that $\det{z}^{-1} = \det{(y_1^{-1}y_2)}$. It follows that $y = y_1^{-1}zy_2 \in S$ and $(x^{y_1})^y = x^{y_2}$.
\end{proof}
The generation of $C_S(x)$ is similar to that of $C_G(x)$ described in Section \ref{Cue}.
\begin{enumerate}
\item 
Let us describe how to generate $R \cap S$. For each $i,j$, let $\widetilde{G}_{i,j}$ be the unique subgroup of $\GL(l_{i,j},q^{d_i})$ of index $q-1$ and let $G_{i,j}=\varphi(\widetilde{G}_{i,j})$, where $\varphi$ is the homomorphism between $\prod_{i=1}^h \left( \prod_{j=1}^{k_i} \GL(l_{i,j},q^{d_i})\right)$ and $R$. The direct product of the $G_{i,j}$ can be generated using the generators of the $\widetilde{G}_{i,j}$. Now take $\widetilde{H}_{i,j} \in \GL(l_{i,j},q^{d_i})$ having determinant of maximum order and define $H_{i,j} = \varphi(\widetilde{H}_{i,j})$ and $\omega^{d_{i,j}}=\det(H_{i,j})$. Let $k = \sum_i k_i$ and let $Z(0)$ be the subgroup of $\mathbb{Z}_{q-1}^k$ of the solutions $(x_{1,1}, \dots, x_{h,k_h})$ of the equation $\sum_{i,j} x_{i,j}d_{i,j}=0$ in $\mathbb{Z}_{q-1}$. If $Z(0)$ is generated by $\{ (a_{\mu,1,1}, \dots, a_{\mu,h,k_h}) \, : \, 1 \leq \mu \leq r\}$, then we add to the generating set for $R \cap S$ the matrices
$$
\bigoplus_{i,j} H_{i,j}^{a_{\mu,i,j}}, \quad 1 \leq \mu \leq r.
$$

\item Step 2 is identical to that in the case $C_G(x)$.

\item Following the argument for the case $C_G(x)$, we need to prove that every element of determinant 1 of the form $\binom{f \: g}{h \: l}$ can be obtained by using elementary operations on the elements of the form $\binom{f \: 0}{0 \: 1}$ and elements of $\SL(2,q)$. Recall that $f,g,h,l$ are polynomials in $\mathbb{F}_q[t]_{\lambda}$ and the determinant depends only on the constant terms. For convenience, write a polynomial as $f = f_0+f_1$, where $f_0 \in \mathbb{F}_q$ is the constant term and $f_1 \in t\mathbb{F}_q[t]_{\lambda}$.

The element
$$
\left(\begin{array}{cc} 1 & \\ & f\end{array}\right)
$$
with $f_0 =1$ is obtained by conjugating $\binom{f \: 0}{0 \: 1}$ by the matrix $\binom{0 \: -\!1}{1 \:\: 0} \in \SL(2,q)$.\\
For $f=f_0+f_1$,
$$
\left(\begin{array}{cc} 1 & f\\ 0 & 1\end{array}\right) = \left(\begin{array}{cc} 1 & -1\\ 0 & 1\end{array}\right) \left(\begin{array}{cc} 1+f_1 & 0\\ 0 & 1\end{array}\right)\left(\begin{array}{cc} 1 & 1\\ 0 & 1\end{array}\right)\left(\begin{array}{cc} (1+f_1)^{-1} & 0\\ 0 & 1\end{array}\right)\left(\begin{array}{cc} 1 & f_0\\ 0 & 1\end{array}\right)
$$
and
$$
\left(\begin{array}{cc} f & -1\\ 1 & 0\end{array}\right) = \left(\begin{array}{cc} 1 & f\\ 0 & 1\end{array}\right)\left(\begin{array}{cc} 0 & -1\\ 1 & 0\end{array}\right).
$$
For polynomials $f,g,l$,
$$
\left(\begin{array}{cc} f & g\\ 0 & l\end{array}\right) = \left(\begin{array}{cc} 1 & gl^{-1}\\ 0 & 1\end{array}\right)\left(\begin{array}{cc} 1+f_1/f_0 & 0\\ 0 & 1\end{array}\right)\left(\begin{array}{cc} 1 & 0\\ 0 & 1+l_1/l_0\end{array}\right)\left(\begin{array}{cc} f_0 & 0\\ 0 & l_0\end{array}\right).
$$
Finally, a generic element $\binom{f \: g}{h \: l}$ with $f_0 \neq 0$ and $f_0l_0-h_0g_0 = 1$ can be written as
$$
\left(\begin{array}{cc} f & g\\ h & l\end{array}\right) = \left(\begin{array}{cc} f & 0\\ h & l-ghf^{-1}\end{array}\right)\left(\begin{array}{cc} 1 & gf^{-1}\\ 0 & 1\end{array}\right).
$$

\item By contrast with $C_G(x)$, we cannot obtain the elements of the form $\mathbb{I}+f(t)E_{ij}$ by conjugation, because we do not have the elements $\mathbb{I} +(f(t)-1)E_{ii}$ (the determinant is not 1 usually), so we need to add generators of the form $\mathbb{I}+\omega^jE_{i,i+1}$ for $1 \leq j \leq \deg{\mathbb{F}_q}$.
\end{enumerate}

\newpage
\chapter{Membership of classical groups}   \label{membershipChapter}
In Chapter \ref{LinearChapter} we described centralizers and conjugacy classes in linear groups. We now focus on the (remaining) classical groups: symplectic, orthogonal and unitary. Let $\C$ be a classical group. The first step in listing conjugacy classes of $\C$ is to determine which classes of $\GL(V)$ have elements in $\C$. This follows the approach of  Britnell \cite[\S 5.1]{JBRIT} and Milnor ${\cite[\S 3]{Milnor}}$.
\section{Membership of classical groups} \label{membership}
Let $F = \mathbb{F}_q$ in the orthogonal and symplectic case, $F = \mathbb{F}_{q^2}$ in the unitary case. Let $\lambda \mapsto \overline{\lambda}$ be a field automorphism of order 1 (in symplectic and orthogonal case) or 2 (in unitary case). Let $V \cong F^n$ be a vector space.

Let $f(t) = a_0 + a_1t+ \cdots + a_{d-1}t^{d-1}+t^d \in F[t]$ be monic with $a_0 \neq 0$. Denote by $\overline{f}(t)$ the polynomial $\overline{a_0} + \overline{a_1}t + \cdots + \overline{a_{d-1}}t^{d-1}+t^d$. The \textit{dual polynomial}\index{dual polynomial} $f^*$ of $f$ is defined by \index{$f^*$}
$$
f^*(t) := \overline{a_0}^{-1}t^d\overline{f}(t^{-1}). \label{dualpolsymbol}
$$
If $\lambda_1, \dots, \lambda_d$ are the roots of $f$ (in an extension of $F$), then the roots of $f^*$ are $\overline{\lambda_1}^{-1}, \dots, \overline{\lambda_d}^{-1}$. An easy computation shows that $(f^*)^* = f$ and $(fg)^* = f^*g^*$ for monic polynomials $f$ and $g$. In particular, $f$ is irreducible if, and only if, $f^*$ is.

The next two results are essentially \cite[5.1 and 5.2]{JBRIT}. They investigate the elementary divisors and the $F[t]$-module structure of $V$ induced by an element of a classical group. The statements are proved for sesquilinear forms, but they can be easily extended to quadratic forms of even characteristic by recalling that $\Or(n,q) \leqslant \Sp(n,q)$ for $q$ even. 
\begin{theorem} \label{necessarycondition}
	Let $\beta$ be a non-degenerate alternating, hermitian or symmetric form on $V$. Let $X \in \C(\beta)$ and let $U$ be an $X$-invariant subspace of $V$ with $U \cap U^{\bot}=\{0\}$. If $X$ has minimal polynomial $f$ on $U$, then $f=f^*$.
\end{theorem}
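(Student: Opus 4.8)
The plan is to show that the dual polynomial $f^*$ annihilates $X|_U$; since $f$ is the minimal polynomial of $X|_U$ and $f^*$ is monic of the same degree as $f$, this forces $f \mid f^*$ and hence $f=f^*$. First I would record the routine reductions. Because $U\cap U^{\bot}=\{0\}$, the form $\beta$ restricts to a non-degenerate form $\beta|_U$ on $U$, and since $X$ is an isometry leaving $U$ invariant, $X|_U \in \C(\beta|_U)$; as $X$ is invertible, $X|_U$ is invertible, so $U$ is also $X^{-1}$-invariant, and the minimal polynomial $f$ of $X|_U$ satisfies $f(0)\neq 0$. Hence $f^*(t)=\overline{a_0}^{-1}t^{d}\overline{f}(t^{-1})$ is defined and is monic of degree $d=\deg f$. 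For a quadratic form $Q$ in even characteristic, $\C(Q)\leqslant\C(\beta_Q)=\Sp(n,q)$ and "$U\cap U^{\bot}=\{0\}$" refers to $\beta_Q$, so that case is subsumed by the alternating case.

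The key computational step is an adjoint identity: for every polynomial $g(t)=\sum_i b_i t^{i}\in F[t]$ and all $u,v\in V$,
\[
\beta\bigl(u\,g(X),\,v\bigr)=\beta\bigl(u,\,v\,\overline{g}(X^{-1})\bigr),
\qquad \overline{g}(t):=\sum_i \overline{b_i}\,t^{i}.
\]
This follows by linearity from the single relation $\beta(uX,v)=\beta(u,vX^{-1})$ — itself obtained from $\beta(uX,vX)=\beta(u,v)$ by replacing $v$ with $vX^{-1}$ — iterated to give $\beta(uX^{i},v)=\beta(u,vX^{-i})$, combined with the sesquilinearity rule $b\,\beta(u,w)=\beta(u,\overline{b}\,w)$. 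In the alternating and symmetric cases the bar is trivial, so there $\overline{g}=g$.

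Now I would apply this with $g=f$. For $u\in U$ we have $u\,f(X)=0$ since $f$ kills $X|_U$, so $\beta\bigl(u,\,v\,\overline{f}(X^{-1})\bigr)=0$ for all $u\in U$ and all $v\in V$. Restricting $v$ to $U$, which is $X^{-1}$-invariant, gives $v\,\overline{f}(X^{-1})\in U$, and non-degeneracy of $\beta|_U$ forces $\overline{f}(X^{-1})|_U=0$. Multiplying by the invertible operator $X^{d}$ and reading off coefficients ($a_d=1$), the operator $X^{d}\overline{f}(X^{-1})$ is the evaluation at $X$ of $t^{d}\overline{f}(t^{-1})=\overline{a_0}\,f^*(t)$; hence $f^*(X)|_U=0$. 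Since $f$ is the minimal polynomial of $X|_U$ we get $f\mid f^*$, and comparing degrees of these two monic polynomials yields $f=f^*$.

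I expect the only real subtlety to be bookkeeping with the bar automorphism in the hermitian case — making sure the conjugation lands on the correct argument in the adjoint identity — together with the small observation that $U$ (being finite-dimensional and $X$-invariant with $X$ invertible) is stable under $X^{-1}$; once these are in place the argument is purely formal.
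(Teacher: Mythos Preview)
Your proof is correct and follows essentially the same approach as the paper's: both establish the adjoint identity $\beta(u\,g(X),v)=\beta(u,v\,\overline{g}(X^{-1}))$, apply it with $g=f$, and use non-degeneracy of $\beta|_U$ to conclude $f^*(X)|_U=0$ and hence $f=f^*$. The only cosmetic difference is that the paper packages the whole computation as a chain of equalities on the subspace level (inserting $\overline{a_0}^{-1}UX^d=U$ midway), whereas you work with vectors and multiply by $X^d$ as a separate final step.
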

\begin{proof}
	First observe that for every $i \in \mathbb{N}$, $UX^i = U$, since $X_{|U}$ is bijective. Moreover, $\alpha U = U$ for every $\alpha \in F^*$. Now $\beta(uX,v)=\beta(u,vX^{-1})$ for every $u,v \in U$. Write $f(t) = \sum_{i=0}^d a_it^i$, with $a_0 \neq 0$ and $a_d=1$. By definition of the minimal polynomial, $Uf(X)=\{0\}$, so
	\begin{eqnarray*}
		\{0\} & = & \beta(Uf(X),U)\\
		& = & \beta(U(\sum_i a_iX^i),U) \\
		& = & \beta(U,U\sum_i \overline{a_i}X^{-i}) \\
		& = & \beta(U,\overline{a}_0^{-1}UX^d\sum_i \overline{a_i}X^{-i}) \\
		& = & \beta(U,Uf^*(X)).
	\end{eqnarray*}
	Since $\beta$ is non-degenerate on $U$, the identity $\{0\} = \beta(U,Uf^*(X))$ implies $Uf^*(X)=\{0\}$, so $f$ divides $f^*$. Since $f$ and $f^*$ are both monic of degree $d$, it follows $f=f^*$.
\end{proof}
\begin{theorem} \label{orthogonaldecomposition}
	Let $\beta$ be non-degenerate on $V$ and let $X \in \C(\beta)$. There exists a decomposition
	\begin{eqnarray*}
		V = \bigoplus_i U_i
	\end{eqnarray*}
	where $U_i \bot U_j$ for every $i \neq j$. For every $i$, either $X$ acts cyclically on $U_i$ with minimal polynomial $f^e$ and $f=f^*$ irreducible, or $U_i = W \oplus W^*$ and $X$ acts cyclically on $W$ (resp.\ $W^*$) with minimal polynomial $f^e$ (resp.\ $f^{*e}$) and $f$ irreducible.
\end{theorem}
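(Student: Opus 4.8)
The plan is to read an orthogonal decomposition of $V$ off the $F[t]$-module structure that $X$ induces, refining the primary (eigenspace) decomposition of $V$ into cyclic pieces of the two shapes in the statement.

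The one computation I would record at the outset is the adjunction $\beta(f(X)u,v)=0\iff\beta(u,f^*(X)v)=0$ for all $u,v\in V$ and all polynomials $f$ with $f(0)\neq 0$; this follows from $\beta(Xu,v)=\beta(u,X^{-1}v)$ (valid because $X$ is an isometry) together with the definition of the dual polynomial $f^*$. Next I decompose $V=\bigoplus_f V_f$ into primary components $V_f=\ker f(X)^{e_f}$, over the distinct monic irreducible factors $f$ of the minimal polynomial of $X$; each $V_f$ is an $X$-invariant $F[t]$-submodule. Iterating the adjunction and using that $f^{*e_f}$ is coprime to $g^{e_g}$ whenever $g\neq f^*$ is irreducible, one gets $\beta(V_f,V_g)=0$ unless $g=f^*$. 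Since $\beta$ is non-degenerate on $V$, this forces: if $f=f^*$ then $V_f$ is non-degenerate; if $f\neq f^*$ then $V_f$ and $V_{f^*}$ are both totally isotropic, $V_f\oplus V_{f^*}$ is non-degenerate, and $\beta$ puts $V_f$ and $V_{f^*}$ in perfect duality. Thus $V$ is the orthogonal direct sum of the non-degenerate subspaces $V_f$ (one for each $f=f^*$) and $V_f\oplus V_{f^*}$ (one for each unordered pair $\{f,f^*\}$ with $f\neq f^*$), and it remains to split each summand into cyclic pieces.

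For a pair $f\neq f^*$, write $V'=V_f$, $V''=V_{f^*}$, and decompose $V'$ as an $F[t]$-module into cyclic submodules $V'=\bigoplus_m W_m$, with $X$ acting on $W_m$ with minimal polynomial $f^{e_m}$. Set $W_m^*=\{w\in V'':\beta(W_{m'},w)=0\text{ for all }m'\neq m\}$. Since the pairing $V'\times V''\to F$ is perfect and each $W_{m'}$ is $X$-invariant, a routine dual-basis argument gives that $W_m^*$ is $X$-invariant with $\dim W_m^*=\dim W_m$, that $V''=\bigoplus_m W_m^*$, that $\beta$ restricts to a perfect pairing of $W_m$ with $W_m^*$, and that the $W_m\oplus W_m^*$ are pairwise orthogonal; in particular each $W_m\oplus W_m^*$ is non-degenerate. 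Theorem \ref{necessarycondition} applied to $W_m\oplus W_m^*$ then says the minimal polynomial of $X$ there is self-dual; it is $f^{e_m}$ on $W_m$ and a power of $f^*$ on $W_m^*$, hence equals $f^{*e_m}$ on $W_m^*$, and the dimension count forces $W_m^*$ to be cyclic. This produces the summands of the form $W\oplus W^*$.

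What is left is to split a non-degenerate $X$-invariant space — call it $V$ again — on which $X$ has minimal polynomial $f^e$ with $f=f^*$ irreducible, into orthogonal cyclic submodules, and here I would induct on $\dim V$. The inductive step reduces to the claim: \emph{if $V\neq 0$ it contains a non-degenerate $X$-invariant cyclic submodule $U$}. Granting it, $V=U\oplus U^\bot$ is an $X$-invariant orthogonal decomposition, $U$ is a cyclic summand of the first type (with $f=f^*$), and induction on $\dim U^\bot$ concludes. To prove the claim I would pick, among all nonzero cyclic submodules $F[X]v$, one whose radical $F[X]v\cap(F[X]v)^\bot$ has smallest $F$-dimension, and assume for contradiction it is nonzero; being an $X$-invariant submodule of the cyclic module $F[X]v$, it equals $f(X)^kF[X]v$ for some $k<s$, where $f^s$ is the minimal polynomial of $X$ on $F[X]v$, and by non-degeneracy of $\beta$ on $V$ some vector of $V$ pairs nontrivially with $f(X)^kF[X]v$. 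The contradiction should come from perturbing the generator $v$ — adding a suitable element of $F[X]$ applied to that vector, or passing to the cyclic module it generates — so as to strictly shrink the radical. I expect this perturbation to be the main obstacle: one must control precisely how the radical of a cyclic submodule, equivalently the Wall form restricted to it, changes under a change of generator. A variant that sidesteps the delicate choice is to induct on the exponent $e$ instead: for $e=1$, $V$ is a non-degenerate formed space over the extension field $K=F[t]/(f)$ (with $X$ acting by scalar multiplication), and an orthogonal (respectively hyperbolic) basis over $K$ exhibits the cyclic decomposition at once, while the passage to smaller exponents goes through $f(X)V$ and an $X$-invariant complement and is again bookkeeping with the Wall form.
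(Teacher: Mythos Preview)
Your treatment of the primary decomposition and of the pairs $f\neq f^*$ is clean and correct (and actually yields more than the theorem asks: your $W_m$ and $W_m^*$ come out totally isotropic). The problem is the $f=f^*$ step.

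You read the theorem as saying that the primary component $V_f$ with $f=f^*$ always decomposes into pairwise orthogonal non-degenerate \emph{cyclic} pieces. It does not say this, and it is false. The second alternative in the statement, $U_i=W\oplus W^*$ with $X$ cyclic on $W$ and on $W^*$ with minimal polynomials $f^e$ and $f^{*e}$, does not require $f\neq f^*$; it is needed precisely for self-dual $f$ in situations where no non-degenerate cyclic submodule exists. Two standard obstructions:
\begin{itemize}
\item $\beta$ symmetric, $q$ odd, elementary divisor $(t-1)^{2k}$. By the computation that later becomes Proposition~\ref{Prop414}, no cyclic module with minimal polynomial $(t-1)^{2k}$ carries a non-degenerate symmetric form. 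A quick concrete instance: $V=F^4$, $W=\langle e_1,e_2\rangle$, $W^*=\langle e_3,e_4\rangle$ totally isotropic with $\beta(e_1,e_3)=\beta(e_2,e_4)=1$; let $X$ be $J_2$ on $W$ and its contragredient on $W^*$. The $1$-eigenspace is $\langle e_1,e_4\rangle$, which is totally isotropic, so no $1$-dimensional cyclic submodule is non-degenerate either. Your ``pick a cyclic submodule with smallest radical'' cannot terminate at a zero radical here.
\item $\beta$ alternating, elementary divisor $(t-1)^{2k+1}$ (even your base case $e=1$ of the alternative induction). Every $1$-dimensional subspace is degenerate for an alternating form, so an ``orthogonal basis over $K$'' is not available; these blocks must also be packaged as $W\oplus W^*$.
\end{itemize}

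The paper's argument avoids this by not separating the cases $f=f^*$ and $f\neq f^*$ at all. It takes \emph{any} cyclic summand $U$ in an $F[t]$-decomposition of $V$, chooses an $X$-invariant complement $W$ with $V=U\oplus W$, and sets $U^*:=W^\bot$. One then checks directly that $U^*\cap U^\bot=0$, and a short case analysis shows that either $U$ itself is non-degenerate (then $f=f^*$ by Theorem~\ref{necessarycondition}), or $U\oplus U^*$ is non-degenerate with $X$ cyclic on $U^*$ of minimal polynomial $f^{*e}$. Split off the resulting $U_1$ and induct on $\dim V$. The point is that the ``dual'' cyclic piece $U^*$ is produced uniformly, without any prior knowledge of whether $f=f^*$, and the $W\oplus W^*$ shape is allowed to occur for self-dual $f$ as well.

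So: keep your handling of $f\neq f^*$ if you like, but for $f=f^*$ you must drop the claim that a non-degenerate cyclic submodule exists and instead aim to peel off either a non-degenerate cyclic or a non-degenerate sum of two cyclics; the construction $U^*=W^\bot$ (with $W$ an $X$-invariant complement of a chosen cyclic $U$) is exactly the missing idea.
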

\begin{proof}
	Let $U$ be one of the summands in the decomposition of $V$ into cyclic submodules given in (\ref{SplitCentr}). Let $X$ act cyclically on $U$ with minimal polynomial $f^e$, where $f$ is irreducible and $e \in \mathbb{N}$. Let $W$ be an $X$-invariant subspace of $V$ such that $V = U \oplus W$, and let $U^* = W^{\bot}$. Since $\beta$ is non-degenerate, $U^* \cap U^{\bot}= (U^{\bot} \cap W^{\bot}) = (U \oplus W)^{\bot}=V^{\bot} = \{0\}$, so for each non-zero $v \in U^*$ there exists $u \in U$ such that $\beta(u,v) \neq 0$. Moreover, for every non-zero $u \in U$, there exists $v \in U^*$ such that $\beta(u,v) \neq 0$ (otherwise $u \in U^{*\bot} = W$, a contradiction). Let us distinguish two cases.
	\begin{enumerate}
		\item $U \cap U^* \neq \{0\}$. Since $U \cap U^*$ is $X$-invariant, $Uf(X)^{e-1} \subseteq U \cap U^*$ (because $X$ acts cyclically on $U$ with minimal polynomial $f^e$). If $U \cap U^{\bot} \neq \{0\}$, then for the same reason $Uf(X)^{e-1} \subseteq U \cap U^{\bot}$, so $Uf(X)^{e-1} \subseteq U^* \cap U^{\bot} = \{ 0 \}$, a contradiction. Thus $U \cap U^{\bot} = \{0\}$ and $\beta$ is non-degenerate on $U$, so $f=f^*$ by Theorem \ref{necessarycondition}.
		\item $U \cap U^{\bot} \neq \{0\}$, so $U \cap U^*=\{0\}$ by previous case. We want to show that $\beta$ is non-degenerate on $U \oplus U^*$, namely for every $u \in U$ and $v \in U^*$ with $u+v \neq 0$ there exists $z \in U \oplus U^*$ with $\beta(u+v,z) \neq 0$. If $v=0$, take $z \in U^*$ such that $\beta(u,z) \neq 0$; if $v \neq 0$ and $u \in U^{\bot}$, take $z \in U$ such that $\beta(z,v) \neq 0$ (it exists in both cases since $U^{\bot} \cap U^* = \{0\}$). Suppose $u \notin U^{\bot}$ and $v \neq 0$. Let $a = \min\{n \in \mathbb{N} \, | \, uf(X)^n \in U^{\bot}\}$. By our choice of $u$, $1 \leq a \leq e$. Consider
		$$
		(u+v)f(X)^a = uf(X)^a+vf(X)^a.
		$$
		If $uf(X)^a=0$, then $uf(X)^{a-1} \in \ker{f(X)} = Uf(X)^{e-1} \subseteq U \cap U^{\bot}$, contradicting the minimality of $a$. If $vf(X)^a=0$, then there exists $w \in U^*$ such that $\beta(uf(X)^a,w) \neq 0$, so $\beta((u+v)f(X)^a,w) \neq 0$, and choose $z = w\overline{f}(X^{-1})^a$. If $vf(X)^a \neq 0$, then $vf(X)^a \in U^*$ and there exists $w \in U$ such that $\beta(w,v\overline{f}(X^{-1})^a) \neq 0$. Since $uf(X)^a \in U^{\bot}$, it follows that $\beta((u+v)f(X)^a,w) \neq 0$, so again choose $z = w{\overline{f}}(X^{-1})^a$.
		
		Now if $\beta$ is non-degenerate on $U$, then choose $U_1=U$. The minimal polynomial of $X$ on $U$ is $f^e$, and it satisfies $f=f^*$ by Theorem \ref{necessarycondition}.
		
		If $\beta$ is non-degenerate on $U \oplus U^*$, then choose $U_1 = U \oplus U^*$. For every $n$, 
		\begin{eqnarray*}
			\beta(Uf(X)^n,U^*)=\{0\} & \Leftrightarrow & \beta(U,U^*f^*(X)^n) = \{0\}\\
			Uf(X)^n = \{0\} & \Leftrightarrow & U^*f^*(X)^n = \{0\},
		\end{eqnarray*}
		using $U^{\bot} \cap U^* = \{0\}$. This proves that the minimal polynomial of $X$ on $U^*$ is $f^{*e}$, and the action of $X$ on $U^*$ is cyclic because $\dim{U} = \dim{U^*}$, so we are in the second case of the hypothesis.
	\end{enumerate}
	Finally, we can write $V = U_1 \oplus U_1^{\bot}$. Since $U_1^{\bot}$ is $X$-invariant, we can repeat the argument on $U_1^{\bot}$, and by induction on $\dim{V}$ we establish the claim.
\end{proof}
From Theorems \ref{necessarycondition} and \ref{orthogonaldecomposition}, we get information about the elementary divisors of $X \in \C(\beta)$ and the $F[t]$-module structure induced on $V$. Given $X \in \C(\beta)$, let $V = \bigoplus_{i=1}^k V_i$ be the decomposition of $V$ into cyclic $F[t]$-submodules described in (\ref{decompCSM}). Here, $X$ acts cyclically on $V_i$ with minimal polynomial $f_i^{e_i}$, where $f_i$ is irreducible and $e_i \in \mathbb{N}$. By Theorem \ref{orthogonaldecomposition}, we can suppose that, for every $i=1, \dots, k$, either $V_i \bot V_j$ for all $j \neq i$ or there exists $i' \neq i$ such that $(V_i \oplus V_{i'}) \bot V_j$ for all $j \neq i,i'$. In the first case, put $i' = i$. With this notation, $f_i = f_{i'}^*$ and $e_i = e_{i'}$. Each of the $U_i$ in Theorem \ref{orthogonaldecomposition} coincides with a certain $V_i$ (if $i =i'$) or with $V_i \oplus V_{i'}$ (if $i \neq i'$). An immediate consequence is that if $f^e$ is an elementary divisor of $X$ of multiplicity $m$, with $f \neq f^*$ and $e$ a positive integer, then $f^{*e}$ is also an elementary divisor of $X$ of multiplicity $m$.
\begin{remark} In general, a direct sum decomposition of $V$ into cyclic submodules $\bigoplus_i V_i$ need not satisfy the property of Theorem \ref{orthogonaldecomposition}. This theorem only ensures that such a decomposition exists.
\end{remark}

Following \cite{Cikunov, FS, Milnor}, we introduce the following notation.
\begin{definition}  \label{ThreeCases}\index{$\Phi$}\index{$\Phi_1, \Phi_2, \Phi_3$}
	Let $F = \mathbb{F}_{q^2}$ in the unitary case or $\mathbb{F}_q$ in the symplectic and orthogonal cases. Define:
	\begin{eqnarray*}
		\Phi_1 & := & \{ f: \, f \in F[t] \; | \; f=f^* \mbox{ monic irreducible}, \, \deg{f}=1 \};\\
		\Phi_2 & := & \{ f: \, f \in F[t] \; | \; f = gg^*, \, g \neq g^*, \, g \mbox{ monic irreducible} \};\\
		\Phi_3 & := & \{ f: \, f \in F[t] \; | \; f=f^* \mbox{ monic irreducible}, \, \deg{f}>1 \}.
	\end{eqnarray*}
	Define $\Phi := \Phi_1 \cup \Phi_2 \cup \Phi_3$. Given $X \in \C(\beta)$, $f \in \Phi$ and $m$ a positive integer, $f^m$ is a \emph{generalized elementary divisor} of $X$ if either $f \in \Phi_1 \cup \Phi_3$ and $f^m$ is an elementary divisor of $X$, or $f \in \Phi_2$, $f=gg^*$ and $g^m$ is an elementary divisor of $X$ (and so ${g^*}^m$ is). \index{generalized elementary divisor}
\end{definition}
\begin{remark} Note that $\Phi_1 = \{ t \pm 1 \}$ in the symplectic and orthogonal case and $\Phi_1$ is the set of polynomials of the form $t-\lambda^{q-1}$ with $\lambda \in F$ in the unitary case.
\end{remark}
\begin{remark} If $f \in \Phi_3$, then $f$ has even degree in the symplectic and orthogonal cases and odd degree in the unitary case. In the symplectic and orthogonal cases each root $\lambda$ of $f$ can be paired with $\lambda^{-1}$, so the number of roots of $f$ is clearly even ($\lambda \neq \lambda^{-1}$ because $f \neq t\pm 1$).
	
	Suppose we are in the unitary case. The map $\alpha \mapsto \alpha^{-q}$ acts as a permutation on the set $Z(f)$ of the roots of $f$. Consider an orbit $\mathcal{O} = \{ \lambda, \lambda^{-q}, \lambda^{q^2}, \dots, \lambda^{(-q)^r} \}$ under this action and its suborbit $\mathcal{O'} = \{\lambda, \lambda^{q^2}, \lambda^{q^4}, \dots \}$. The polynomial
	$$
	g(t) = \prod_{\mu \in \mathcal{O'}} (t-\mu) = (t-\lambda)(t-\lambda^{q^2})(t-\lambda^{q^4}) \cdots 
	$$
	is a divisor of $f$ and belongs to $\mathbb{F}_{q^2}[t]$ because its coefficients are fixed by the field automorphism $\alpha \mapsto \alpha^{2q}$. Since $f$ is irreducible, the only possibility is that $g=f$ and $\mathcal{O'}=Z(f)$. It follows that $\mathcal{O}$ is the unique orbit in $Z(f)$, and $\mathcal{O}=\mathcal{O'}$ if, and only if, $|\mathcal{O}|$ is odd.
\end{remark}
By analogy with the linear case, our analysis of conjugacy classes and centralizers in classical groups can be simplified to the situation where $X$ has minimal polynomial $f^e$, for $e$ a positive integer, and $f$ satisfies one of the three cases in Definition \ref{ThreeCases}.

Now we state the main result of this section. Recall that matrices $X,Y$ are \textit{similar}\index{similar elements} if they are conjugate in $\GL(V)$, namely if they have the same elementary divisors with the same multiplicity; in such a case we write $X \sim Y$\label{similarSymbol}\index{$x \sim y$}.
\begin{theorem} \label{elementsofCG}
	Let $X \in \GL(V)$. 
	\begin{itemize}
		\item There exists a non-degenerate hermitian form $\beta$ such that $X \in \C(\beta)$ if, and only if, $X \sim X^{*-1}$.
		\item There exists a non-degenerate alternating form $\beta$ such that $X \in \C(\beta)$ if, and only if, $X \sim X^{-1}$ and every elementary divisor $(t\pm 1)^{2k+1}$ with $k$ a non-negative integer has even multiplicity.
		\item If $q$ is odd, then there exists a non-singular quadratic form $Q$ such that $X \in \C(Q)$ if, and only if, $X \sim X^{-1}$ and every elementary divisor $(t\pm 1)^{2k}$ with $k$ a positive integer has even multiplicity. Moreover, in even dimension, $Q$ can be of both plus type and minus type if, and only if, $X$ has at least one elementary divisor $(t \pm 1)^{2k+1}$ for some non-negative integer $k$. If this is not the case, then $Q$ has plus type (resp.\ minus type) if, and only if, $\sum_{f,e}e m(f^e)$ is even (resp.\ odd), where the sum runs over all polynomials $f\in \Phi_3$ and $e \in \mathbb{N}^+$, and $m(f^e)$ is the multiplicity of $f^e$ as an elementary divisor of $X$.
		\item If $q$ is even, then there exists a non-singular quadratic form $Q$ such that $X \in \C(Q)$ if, and only if, $X \sim X^{-1}$ and every elementary divisor $(t+ 1)^{2k+1}$ with $k$ a non-negative integer has even multiplicity. Moreover, $Q$ can be of both plus type and minus type if, and only if, $X$ has at least one elementary divisor $(t+1)^k$ for some positive integer $k$. If this is not the case, then $Q$ has plus type (resp.\ minus type) if, and only if, $\sum_{f,e}e m(f^e)$ is even (resp.\ odd), where the sum runs over all polynomials $f\in \Phi_3$ and $e \in \mathbb{N}^+$, and $m(f^e)$ is the multiplicity of $f^e$ as an elementary divisor of $X$.
	\end{itemize}
\end{theorem}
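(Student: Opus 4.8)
The plan is to prove necessity and sufficiency separately, in each case reducing to the orthogonal building blocks produced by Theorem~\ref{orthogonaldecomposition}, and then to read off the plus/minus type from a discriminant count.

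\emph{Necessity.} If $X\in\C(\beta)$ for a non-degenerate sesquilinear form with Gram matrix $B$, then $XBX^{*}=B$ gives $X^{*}=B^{-1}X^{-1}B$, so $X\sim X^{*-1}$; when $\lambda\mapsto\overline\lambda$ is trivial we have $X^{*}=X^{\tr}\sim X$, so this reads $X\sim X^{-1}$. For a non-singular $Q$ one has $X\in\C(\beta_{Q})$ with $\beta_{Q}$ non-degenerate (symmetric if $q$ is odd, alternating if $q$ is even), so the same computation applied to $\beta_{Q}$ gives $X\sim X^{-1}$. For the multiplicity conditions, apply Theorem~\ref{orthogonaldecomposition} to write $V=\bigoplus_i U_i$ with the $U_i$ pairwise orthogonal, each $U_i$ either a cyclic summand with minimal polynomial $f^{e}$, $f=f^{*}$ irreducible, or $W\oplus W^{*}$ with $f\neq f^{*}$; since $\beta$ is non-degenerate and the $U_i$ are mutually orthogonal, $\beta$ is non-degenerate on each $U_i$. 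An elementary divisor of the shape $(t-\epsilon)^{k}$, $\epsilon=\pm1$, can only come from a cyclic summand with $f=t-\epsilon$, on which $X$ is a single Jordan block $J_{k}(\epsilon)$ that is an isometry of the non-degenerate form $\beta|_{U_i}$ of the relevant type. Everything thus reduces to the linear-algebra fact that $J_{k}(\epsilon)$ is an isometry of some non-degenerate alternating form iff $k$ is even; of some non-degenerate symmetric form ($q$ odd) iff $k$ is odd; of some non-singular quadratic form ($q$ even) iff $k$ is even; and of some non-degenerate hermitian form for every $k$. I would prove this directly in the Jordan basis (the characteristic-$2$ quadratic case needs the zero-diagonal condition imposed on top of the alternating-form condition), or cite \cite{GEW}. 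Granting it, the ``bad'' sizes cannot occur as an isolated summand, so the corresponding elementary divisors pair off, i.e.\ have even multiplicity.

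\emph{Sufficiency and type.} Conversely, assume $X$ has the stated invariants. Decompose $V=\bigoplus_{i}V_i$ into cyclic $F[t]$-modules as in~(\ref{decompCSM}); the hypothesis $X\sim X^{*-1}$ (or $X\sim X^{-1}$) forces the $\Phi_2$-type submodules to occur in pairs with minimal polynomials $g^{e},g^{*e}$. Now build $\beta$ (or $Q$) as an orthogonal direct sum (Remark~\ref{sumofforms}): on each pair $V'\oplus V''$ of $\Phi_2$-type modules put the hyperbolic form coming from an $X$-invariant perfect pairing $V'\times V''\to F$, which exists because $V''$ is the appropriate dual module of $V'$; on a cyclic $\Phi_3$-type module of minimal polynomial $f^{e}$ put a non-degenerate invariant form of the right type (existence is classical, e.g.\ via the trace form of $\mathbb{F}_q[t]/(f)$); and for the $(t-\epsilon)$-primary part $\bigoplus_{c}J_{c}(\epsilon)^{m_c}$, pair the ``bad'' Jordan blocks (possible since the relevant $m_c$ are even) into hyperbolic summands on $U\oplus U$ and give each ``good'' block the invariant form supplied by the linear-algebra fact. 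This yields a non-degenerate form of the required type with $X$ an isometry; in the unitary case there are no bad blocks and no parity hypothesis, so $X\sim X^{*-1}$ alone suffices. For the plus/minus statements, compute the Witt index of the form just built: every hyperbolic summand ($\Phi_2$-pairs, paired ``bad'' $(t\pm1)$-blocks) contributes plus type, while a cyclic $\Phi_3$-module with minimal polynomial $f^{e}$ contributes a form of type $(-1)^{e}$ (a discriminant computation with the trace form), so when $X$ has no ``free'' block the total type is $(-1)^{\sum e\,m(f^{e})}$ summed over $f\in\Phi_3$. If $X$ does have a free block --- $J_{2k+1}(\epsilon)$ when $q$ is odd, or any $J_{k}(1)$ when $q$ is even --- then multiplying the invariant form on that block by a non-square of $F$ leaves $X$ an isometry (scaling a form does not change its isometry group) but switches the type of that block, hence of $V$; in characteristic~$2$ the extra freedom on an arbitrary $(t+1)$-block comes instead from adjusting the quadratic form (not the bilinear one) on that block, which toggles which type it has.

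\emph{Main obstacle.} The real work is the two building-block computations: (i) exactly which single Jordan blocks $J_{k}(\pm1)$ are isometries of each kind of form --- delicate in the characteristic-$2$ quadratic case, where the bilinear condition must be supplemented by the zero-diagonal condition --- and (ii) the plus/minus type of the standard invariant form on a $\Phi_3$-cyclic module, together with the type-toggle on a $(t+1)$-block in characteristic~$2$. Once these are in hand, assembling the global form via Theorem~\ref{orthogonaldecomposition} and Remark~\ref{sumofforms} and reading off the type are routine.
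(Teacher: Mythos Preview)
Your overall architecture matches the paper's: reduce via Theorem~\ref{orthogonaldecomposition} to building blocks, handle $\Phi_1,\Phi_2,\Phi_3$ separately, then assemble. The necessity argument and the $\Phi_2$ construction are essentially the same. However, there are two real gaps.

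\textbf{The type statement is only half-proved.} When $X$ has no free $(t\pm1)$-block, the theorem asserts that \emph{every} non-singular $Q$ with $X\in\C(Q)$ has the type given by the $\Phi_3$-count. You only compute the type of the particular $Q$ you built; you do not show that no $Q$ of the other type can exist. For this you must argue that, for an \emph{arbitrary} invariant form, the restriction to each building block of Theorem~\ref{orthogonaldecomposition} has forced type. On a paired summand $W\oplus W^*$ (a $\Phi_2$-pair, or two copies of a ``bad'' $(t\pm1)^{2k}$-block), Theorem~\ref{orthogonaldecomposition} does \emph{not} assert that $W,W^*$ are totally isotropic; the paper invokes Huppert's result (Proposition~\ref{HuppertResults}) together with Proposition~\ref{Prop414} to force this, and hence plus type. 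On a cyclic $\Phi_3$-summand, the paper proves (Proposition~\ref{typePhi3}, via the centraliser order $|\U(1,q^{d/2})|=q^{d/2}+1$ not dividing $|\Or^+(d,q)|$) that the $e=1$ case is necessarily minus type, then passes to general $e$ via the semisimple part. Your ``discriminant computation with the trace form'' would at best show the type of your chosen form; and in characteristic~$2$ the discriminant does not distinguish plus from minus at all, so that route cannot work there.

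\textbf{The $\Phi_3$ existence step is not as immediate as you suggest.} The trace form of $\mathbb{F}_q[t]/(f)$ handles $e=1$, but your cyclic module has minimal polynomial $f^e$. You need a non-degenerate invariant form on $F[t]/(f^e)$ of the correct (symmetric/alternating/hermitian) flavour; this is not the same object, and you have not explained the lift. The paper's Case~3 argument is a genuine piece of work: it uses the already-established $\Phi_2$-type form on $U_1\oplus U_2$ (two copies of the cyclic module), locates a vector $v$ with $\beta(v,vX^if(X)^{e-1})\neq 0$, and shows the cyclic submodule it generates carries a non-degenerate invariant form; the residual case where no such $v$ exists is ruled out by a divisibility argument on $f$. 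Either reproduce such an argument or give a precise reference (Milnor or Wall) for the lifted trace-form construction on $F[t]/(f^e)$.
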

\textit{}\\
The proof of this theorem requires several steps. We first prove sufficiency.

We have already proved in Theorem \ref{necessarycondition} that if $X \in \C(\beta)$ or $\C(Q)$, then $X \sim X^{*-1}$. So, we need to prove that if $X$ is an element of a symplectic group (resp.\ orthogonal group of odd characteristic), then every elementary divisor $(t \pm 1)^{2k+1}$ (resp.\ $(t \pm 1)^{2k}$) has even multiplicity.\\

In the symplectic case, let $\beta$ be a non-degenerate alternating form and let $k$ be a fixed non-negative integer. Let $U_1, \dots, U_{\ell}$ be the $F[t]$-submodules of $V$ on which $X$ acts with minimal polynomial $(t-1)^{2k+1}$, and $\ell$ is the multiplicity of $(t-1)^{2k+1}$ as an elementary divisor of $X$. Let $U = U_1 \oplus \cdots \oplus U_{\ell}$. If $\ell$ is odd, then by Theorem \ref{orthogonaldecomposition} $\beta_U$ would be a non-degenerate alternating form on $U$, that is impossible because $U$ has odd dimension. The case $(t+1)^{2k+1}$ is analogous. If $q$ is even, then $\Or(n,q) \leqslant \Sp(n,q)$, so the same condition holds for elements of $\Or(n,q)$.

Now consider the orthogonal case in odd characteristic. Let $\beta$ be a non-degenerate symmetric form and $k$ a fixed positive integer. Similar to the symplectic case, if $U_1, \dots, U_{\ell}$ are the $F[t]$-submodules of $V$ on which $X$ acts with minimal polynomial $(t-1)^{2k}$, then $\beta$ must be non-degenerate on $U = U_1 \oplus \cdots \oplus U_{\ell}$. By Theorem \ref{orthogonaldecomposition}, we can suppose that for every $i$, either $U_i \bot U_j$ for all $j \neq i$, or there exists $i'$ such that $(U_i \oplus U_{i'}) \bot U_j$ for all $j \neq i,i'$. So, if $\ell$ is odd, there must exist at least one $U_i$ such that $\beta_{|U_i}$ is non-degenerate, and we can consider just the case $\ell=1$ to exclude the odd multiplicity. The following proposition, due to Huppert \cite[2.2]{Huppert}, concludes the proof of the sufficiency in Theorem \ref{elementsofCG} (the case $(t+1)^{2k}$ is analogous).
\begin{proposition}\label{Prop414}
	If $X$ acts cyclically on $V$ with minimal polynomial $(t-1)^{2k}$, then there are no non-degenerate symmetric forms $\beta$ such that $X \in \C(\beta)$.
\end{proposition}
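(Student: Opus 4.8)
The plan is to suppose, for contradiction, that $X$ acts cyclically on $V$ with minimal polynomial $(t-1)^{2k}$ and that there exists a non-degenerate symmetric bilinear form $\beta$ with $X \in \C(\beta)$. Write $X = 1 + N$, where $N$ is nilpotent of nilpotency degree $2k$ (so $N^{2k} = 0$ but $N^{2k-1} \neq 0$), and $V$ is a cyclic $F[N]$-module of dimension $2k$. Fix a cyclic vector $v$, so that $v, vN, \dots, vN^{2k-1}$ is a basis of $V$. The key observation is that $X \in \C(\beta)$ translates into a relation linking $N$ and the adjoint of $N$ with respect to $\beta$; I would make this explicit and extract a contradiction from the parity of $2k$.

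First I would compute the adjoint. Since $\beta(uX, wX) = \beta(u,w)$ for all $u,w$, expanding $X = 1+N$ gives $\beta(uN, w) + \beta(u, wN) + \beta(uN, wN) = 0$, i.e.\ $N^{\dagger} = -(1+N^{\dagger})N = -(1+N^{\dagger})N$ where $N^{\dagger}$ denotes the adjoint of $N$ with respect to $\beta$ (defined by $\beta(uN, w) = \beta(u, wN^{\dagger})$; this makes sense as $\beta$ is non-degenerate). Rearranging, $N^{\dagger}(1+N) = -N$, so $N^{\dagger} = -N(1+N)^{-1}$, which is a well-defined element of $F[N]$ since $1+N$ is invertible. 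In particular $N^{\dagger} = -N + N^2 - N^3 + \cdots$, so $N^{\dagger} \equiv -N \pmod{N^2}$ and, crucially, $N^{\dagger}$ and $N$ generate the same ideals: $(N^{\dagger})^j V = N^j V$ for every $j$, and $N^{\dagger}$ also has nilpotency degree exactly $2k$. The second step is to consider the nondegenerate pairing restricted to the chain of subspaces $V \supsetneq NV \supsetneq N^2 V \supsetneq \cdots \supsetneq N^{2k-1}V \supsetneq 0$, with $\dim N^j V = 2k - j$. For the bottom piece $W := N^{2k-1}V$, which is $1$-dimensional and spanned by $w_0 := vN^{2k-1}$, I would show $W$ is totally isotropic: for any $u \in V$, $\beta(vN^{2k-1}, u) = \beta(v, u (N^{\dagger})^{2k-1})$, and since $(N^{\dagger})^{2k-1}V = N^{2k-1}V = W$ is spanned by a single vector, one gets $\beta(w_0, w_0)$ essentially forced — more carefully, $\beta(w_0, w_0) = \beta(vN^{2k-1}, vN^{2k-1}) = \beta(v, vN^{2k-1}(N^{\dagger})^{2k-1})$, and $N^{2k-1}(N^{\dagger})^{2k-1} = 0$ because the total degree $4k-2 \geq 2k$ for $k \geq 1$; hence $\beta(w_0, w_0) = 0$. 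So $W \subseteq W^{\perp}$; since $\dim W^{\perp} = 2k-1$, $W$ induces a nondegenerate symmetric form on the $(2k-2)$-dimensional space $W^{\perp}/W$, on which the induced map (also unipotent, with minimal polynomial $(t-1)^{2k-2}$ — one checks the induced action is again cyclic) again lies in an orthogonal group.

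This suggests an induction on $k$: the base case $k=1$ is the assertion that a single unipotent Jordan block of size $2$ cannot preserve a non-degenerate symmetric form, which is a direct $2\times 2$ computation (if $\beta$ has matrix $\left(\begin{smallmatrix} a & b \\ b & c\end{smallmatrix}\right)$ and $X = \left(\begin{smallmatrix}1 & 1 \\ 0 & 1\end{smallmatrix}\right)$, then $X^{\tr} B X = B$ forces $a = 0$, making $B$ singular since then $\det B = -b^2$ and the only way this vanishes would require $b=0$ too — wait, $\det B = ac - b^2 = -b^2$; this is nonzero for $b \neq 0$). So the base case needs a slightly finer argument: I would instead directly analyze the fixed space. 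The map $1-X = -N$ has image $NV$ of dimension $2k-1$, which is odd, and one shows that $\beta$ restricted to $NV = \operatorname{im}(1-X)$ is alternating, not merely symmetric — this is the Wall form phenomenon, cf.\ the spinor norm discussion in the excerpt. Concretely: for $u, u' \in V$, $\beta((1-X)u, u') = \beta(u, u') - \beta(Xu, u') = \beta(u, u') - \beta(u, X^{-1}u') = \beta(u, (1 - X^{-1})u')$, and setting $u' = (1-X)^{-1}$-preimages appropriately one derives $\beta(Nu, Nu') = -\beta(Nu', Nu)$ on $NV$ after using $X \in \C(\beta)$ to transfer; so the restriction of $\beta$ to $\operatorname{im}(1-X)$ is antisymmetric, and in odd characteristic an antisymmetric form on an odd-dimensional space is degenerate — but it must be nondegenerate since $\operatorname{im}(1-X) = \operatorname{im}(N)$ is nondegenerate (as $\operatorname{im}(N) = \ker(N)^{\perp}$... ).

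\emph{The main obstacle} I anticipate is getting the parity bookkeeping exactly right: showing cleanly that $\beta$ restricted to $\operatorname{im}(1-X)$ is alternating (antisymmetric with zero diagonal) rather than merely antisymmetric, since in odd characteristic these coincide, but one must verify $\operatorname{im}(1-X)$ is itself $\beta$-nondegenerate so that "alternating on an odd-dimensional space" yields the contradiction. I would establish nondegeneracy of $\beta|_{\operatorname{im}(1-X)}$ by identifying $\operatorname{im}(1-X) = \operatorname{im}(N)$ with $\ker(N)^{\perp} = \ker(N^{\dagger})^{\perp}$ via the adjoint relation $N^{\dagger} = -N(1+N)^{-1}$ (so $\ker N^{\dagger} = \ker N$, the $1$-dimensional socle), whence $\operatorname{im}(N)^{\perp} = \ker(N^{\dagger}) = \ker(N) \subseteq \operatorname{im}(N)$ for $k \geq 1$; thus $\operatorname{im}(N) \cap \operatorname{im}(N)^{\perp} = \ker N$ has dimension $1$, so $\beta|_{\operatorname{im}(N)}$ has a $1$-dimensional radical. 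That forces working instead on $\operatorname{im}(N)/\ker(N)$, dimension $2k-2$, and iterating — which loops back to the induction. So the cleanest route is: induct on $k$, reduce from dimension $2k$ to dimension $2k-2$ via the radical quotient $\operatorname{im}(N)/(\operatorname{im}(N)\cap \operatorname{im}(N)^{\perp})$ on which the induced unipotent element has minimal polynomial $(t-1)^{2k-2}$ and preserves the induced nondegenerate symmetric form, terminating at $k=0$ where the space is $0$-dimensional yet must carry a unipotent element with minimal polynomial $(t-1)^0 = 1$ acting cyclically on a positive-dimensional space — the contradiction. I would present the argument in this inductive form, with the single-block reduction lemma (that $\operatorname{rad}$ of $\beta|_{\operatorname{im}(N)}$ is exactly $\ker N$, $1$-dimensional) as the technical heart.
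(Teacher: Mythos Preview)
Your inductive strategy (pass from $V$ to $NV/\ker N$, dropping the dimension by $2$) is a reasonable idea and the reduction step is essentially correct: the induced form on $NV/\ker N$ is nondegenerate symmetric, and the induced unipotent is again cyclic with minimal polynomial $(t-1)^{2k-2}$. But the argument as written has two genuine gaps at the boundary.

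First, your base case $k=1$ is not finished. With $X=\left(\begin{smallmatrix}1&1\\0&1\end{smallmatrix}\right)$ and $B=\left(\begin{smallmatrix}a&b\\b&c\end{smallmatrix}\right)$, the equation $X^{\tr}BX=B$ gives \emph{two} constraints: $a=0$ \emph{and} $a+2b+c=c$, i.e.\ $2b=0$. In odd characteristic the second forces $b=0$, so $B$ is degenerate. You stopped after $a=0$, computed $\det B=-b^{2}$, noticed this can be nonzero, and abandoned the approach. Had you kept the second equation the base case would have closed immediately. Second, your proposed termination ``at $k=0$'' is not a contradiction: a $0$-dimensional space with the identity map and the zero form satisfies all the hypotheses vacuously. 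The induction must bottom out at $k=1$, which therefore needs a correct direct proof.

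The detour through ``$\beta$ restricted to $\operatorname{im}(1-X)$ is alternating'' does not work as stated: you yourself observe that this restriction has radical $\ker N$, so it is degenerate and the odd-dimension parity argument does not apply. That line can be dropped.

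For comparison, the paper avoids induction entirely with a one-line parity trick. Choose a cyclic basis $v_1,\dots,v_m$ (with $m=2k$, $v_1$ fixed by $X$, $v_i(X-1)=v_{i-1}$). Non-degeneracy forces $\beta(v_1,v_m)\neq 0$, since $\langle v_1\rangle^{\perp}$ is the unique $X$-invariant hyperplane $\langle v_1,\dots,v_{m-1}\rangle$. Then, moving $(X-1)^{m-1}$ across via $\beta(u(X-1),w)=\beta(u,w(X^{-1}-1))$ and using $(X^{-1}-1)^{m-1}=(-1)^{m-1}X^{-(m-1)}(X-1)^{m-1}$, one obtains $\beta(v_1,v_m)=(-1)^{m-1}\beta(v_m,v_1)=-\beta(v_1,v_m)$, contradicting $\mathrm{char}\,F\neq 2$. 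This captures exactly the parity phenomenon you were reaching for, without any induction or quotient construction.
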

\begin{proof}
	Put $m=2k$ and choose a basis $v_1, \dots, v_m$ for $V$ such that $v_1X=v_1$ and $v_iX = v_{i-1}+v_i$ for all $i=2, \dots, m$. Suppose by contradiction that $\beta$ is non-degenerate on $V$. Hence $\langle v_1 \rangle^{\bot}$ is an invariant $F[t]$-submodule of $V$ of dimension $m-1$, and the only possibility is $\langle v_1 \rangle^{\bot} = \langle v_1, \dots, v_{m-1} \rangle$; this implies $\beta(v_1,v_m) \neq 0$. Now,
	\begin{eqnarray*}
		0 \neq \beta(v_1,v_m) & = & \beta(v_1,v_1(X-1)^{m-1}) = \beta(v_1(X^{-1}-1)^{m-1},v_1)\\
		& = & \beta((-1)^{m-1}v_1X^{-m+1}(X-1)^{m-1},v_1)\\
		& = & (-1)^{m-1} \beta(v_mX^{-m+1},v_1)\\
		& = & (-1)^{m-1}\beta(v_m,v_1)\\
		& = & -\beta(v_1,v_m),
	\end{eqnarray*}
	that is impossible in odd characteristic.
\end{proof}
Now let us prove the reverse implication in Theorem \ref{elementsofCG}. Recall from Remark \ref{sumofforms} that if $\beta_1, \dots, \beta_k$ are sesquilinear forms of the same type and $X_i \in \C(\beta_i)$, then $X_1 \oplus \cdots \oplus X_k \in \C(\beta_1 \oplus \cdots \oplus \beta_k)$, and the same holds for quadratic forms. So, to prove that there exists a sesquilinear or quadratic form preserved by $X$, we suppose that $X$ has a unique \ged of the type described in Definition \ref{ThreeCases}. Write simply $\C$ for $\C(\beta)$ or $\C(Q)$. \\ \label{paginacasi}
\\
\textbf{Case 1:} $f \in \Phi_1$. Now $X$ has a unique (generalized) elementary divisor of the form $(t-\lambda)^e$ of multiplicity $m$, for $\lambda \in F$ and $e,m$ positive integers. Clearly it is sufficient to consider the case $\lambda=1$ because if $X \in \C$ acts with minimal polynomial $(t-1)^e$, then $\lambda X$ acts with minimal polynomial $(t-\lambda)^e$ and belongs to $\C$. We exhibit explicit unipotent elements of $\C$ in Chapter \ref{unipotentChap}.

\textit{}\\
\textbf{Case 2:} $f \in \Phi_2$, $f=gg^*$. Let $d= \deg{g}$. In an appropriate basis, $X$ has block diagonal matrix $X = Y \oplus Y^{*-1}$, with $Y$ a Jordan block relative to $g^e$. It is immediate that $X$ is an isometry for the form with matrix $B$ or, in the orthogonal case, for the quadratic form of matrix $A$ with
$$
B = \left(\begin{array}{cc} \mathbb{O} & \mathbb{I} \\ \varepsilon \mathbb{I} & \mathbb{O} \end{array}\right), \quad A = \left(\begin{array}{cc} \mathbb{O} & \mathbb{I} \\ \mathbb{O} & \mathbb{O} \end{array}\right),
$$
where $\mathbb{O}$\index{$\mathbb{O}$, $\mathbb{O}_n$} is the $de \times de$ zero matrix, $\mathbb{I}$ is the $de \times de$ identity matrix, and $\varepsilon = -1$ in the symplectic case and $1$ otherwise. 

Note that $V= U \oplus W$, where $X$ acts cyclically on $U$ (resp.\ $W$) with minimal polynomial $g^e$ (resp.\ $g^{*e}$), and $U$ and $W$ are totally isotropic.\\
\\
\textbf{Case 3:} $f \in \Phi_3$. We extend \cite[Thm.\ 5.4]{JBRIT} to the unitary case and even characteristic. Let $X$ have a unique (generalized) elementary divisor $f^e$, with $f=f^*$ irreducible of degree $d$. In this case $X$ acts cyclically on $V$ with minimal polynomial $f^e$. Let $U_1,U_2 \cong V$ and write $U = U_1 \oplus U_2$. Using the same argument as in Case 2, where we did not use the fact $f \neq f^*$, we can suppose that there exists a non-degenerate form $\beta$ on $U$ such that $X \in \C(\beta)$ and $U_1, U_2$ are totally isotropic. Now suppose there exists $v \in U$ and $i \in \mathbb{Z}$ such that $\beta(v,vX^if(X)^{e-1}) \neq 0$. Let $W$ be the cyclic $F[t]$-submodule of $U$ generated by $v$. The minimal polynomial of $X$ on $W$ is $f^h$ for some $h \leq e$. If $h < e$, then $vf(X)^{e-1}=0$, so $\beta(v,vX^if(X)^{e-1})=\beta(v,0)=0$, a contradiction. Moreover, $W$ is non-degenerate. Assume not: if $vg(X) \in W^{\bot}$ for some $g \in F[t]$, then write $g(t) = f(t)^m g'(t)$ for $m \leq e-1$ and $g'$ coprime to $f$. Since $W \cap W^{\bot}$ is $X$-invariant, $vg(X)f(X)^{e-1-m}X^i \in W \cap W^{\bot}$, but in this case $0 \neq \beta(v, vX^if(X)^{e-1}) = \beta(vg'(X),vg(X)f(X)^{e-1-m}X^i)=0$, a contradiction. Thus $X \in \C(\beta_{|W})$, but $W$ is isomorphic to $V$ as $F[t]$-module, so such a form must exist on $V$.

Therefore, the case $f \in \Phi_3$ is solved except when $\beta(v,vX^if^{e-1}(X))=0$ for every $v \in V$ and $i \in \mathbb{Z}$. Suppose this is the case. Write $v = v_1+v_2$ with $v_i \in U_i$. By Case 2, we can assume that $U_1$ and $U_2$ are totally isotropic, so the condition is equivalent to
$$
\beta(v_1,v_2X^if(X)^{e-1})+\beta(v_2,v_1X^if(X)^{e-1})=0
$$
for all $v_1 \in U_1$, $v_2 \in U_2$ and $i \in \mathbb{Z}$. By a straightforward computation, this condition implies the following sequence of identities:
\begin{eqnarray*}
	\beta(v_1,v_2[X^if(X)^{e-1}+\varepsilon X^{-i}\overline{f}(X^{-1})^{e-1}]) & = & 0,\\
	\beta(v_1,v_2[X^if(X)^{e-1}+\varepsilon \overline{f(0)}^{e-1}X^{-i-d(e-1)}f(X)^{e-1}]) & = & 0, \quad (\mbox{using } f=f^*)\\
	\beta(v_1,v_2f(X)^{e-1}[X^i+\varepsilon \overline{f(0)}^{e-1}X^{-i-d(e-1)}]) & = & 0,\\
	\beta(v_1,v_2f(X)^{e-1}[X^{d(e-1)+2i}+\varepsilon \overline{f(0)}^{e-1}]) & = & 0.
\end{eqnarray*}
Since $\beta$ is non-degenerate on $U_1 \oplus U_2$, the left hand side must vanish on all occasions. This is the case only if $f(t)$ divides $t^{d(e-1)+2i}+\varepsilon \overline{f(0)}^{e-1}$ for every $i$ such that $d(e-1)+2i$ is non-negative. But $i$ can be chosen such that $d(e-1)+2i \leq 2$, so this never occurs in the unitary case, because $\deg{f} \geq 3$. In the orthogonal and symplectic cases $d$ is even, so choosing $i = 1-d(e-1)/2$ we get $f(t)$ divides $t+\varepsilon \overline{f(0)}^{e-1}$ of degree 1, which is impossible. This concludes the proof of the existence of $\beta$ when $f \in \Phi_3$. In the quadratic case, the argument is the same, by considering the quadratic form $Q$ on the module $W$ and $\beta_Q$ instead of $\beta$.

Before we prove Theorem \ref{elementsofCG} we need some more facts.
\begin{proposition}  \label{typePhi3}
	Let $f \in F[t]$ be an irreducible polynomial with $f= f^*$ and $\deg{f}>1$, and let $X$ be its companion matrix. If $Q$ is a non-singular quadratic form such that $X \in \C(Q)$, then $Q$ has minus type.
\end{proposition}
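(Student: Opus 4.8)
The plan is to make everything explicit over a field extension. Identify $V$ with $F[t]/(f)\cong E:=\mathbb{F}_{q^d}$, where $d=\deg f$; by the remark following Definition~\ref{ThreeCases}, $d=2m$ is even. Under this identification $X$ is multiplication by a fixed root $\lambda$ of $f$. Since $f=f^*$ and $f\neq t\pm1$, the element $\lambda^{-1}$ is a root of $f$ with $\lambda^{-1}\neq\lambda$; writing $\lambda^{-1}=\lambda^{q^j}$ and applying the Frobenius $x\mapsto x^q$ gives $\lambda=\lambda^{q^{2j}}$, so $d\mid 2j$ and $\lambda^{q^m}=\lambda^{-1}$. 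Let $\sigma\colon x\mapsto x^{q^m}$, an involutory $F$-automorphism of $E$ with fixed field $E_0=\mathbb{F}_{q^m}\supseteq F$ and $\sigma(\lambda)=\lambda^{-1}$.

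First I would pin down the polar form. Because the trace pairing $(u,v)\mapsto\mathrm{Tr}_{E/F}(uv)$ is non-degenerate, write $\beta_Q(u,v)=\mathrm{Tr}_{E/F}(u\,\phi(v))$ for a unique additive $\phi\colon E\to E$. $F$-bilinearity of $\beta_Q$ makes $\phi$ an $F$-linear map, and $X\in\C(Q)\leqslant\C(\beta_Q)$ forces $\phi(\lambda v)=\lambda^{-1}\phi(v)$. Since $f=f^*$, the rule $\lambda\mapsto\lambda^{-1}$ extends to an $F$-automorphism of $E=F(\lambda)$, necessarily $\sigma$; hence $\phi(\mu v)=\sigma(\mu)\phi(v)$ for all $\mu,v$, so $\phi(v)=c\,\sigma(v)$ with $c:=\phi(1)\in E^*$, i.e.
\[
\beta_Q(u,v)=\mathrm{Tr}_{E/F}\!\bigl(c\,u\,v^{q^m}\bigr).
\]
Applying $\sigma$ (which fixes $F$ and commutes with $\mathrm{Tr}_{E/F}$) to $\beta_Q(v,u)$ and comparing with $\beta_Q(u,v)$ shows $\sigma(c)=c$, i.e.\ $c\in E_0^*$.

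Next I would show $Q$ itself is determined, uniformly in the characteristic. Set $Q_0(v):=\mathrm{Tr}_{E_0/F}\!\bigl(c\,\N_{E/E_0}(v)\bigr)$ with $\N_{E/E_0}(v)=v^{1+q^m}$. From $\N_{E/E_0}(u+v)=\N_{E/E_0}(u)+\N_{E/E_0}(v)+\mathrm{Tr}_{E/E_0}(u v^{q^m})$ one gets $\beta_{Q_0}=\beta_Q$, and $\N_{E/E_0}(\lambda)=\lambda^{1+q^m}=1$ gives $Q_0(\lambda v)=Q_0(v)$, i.e.\ $X\in\C(Q_0)$. Then $\psi:=Q-Q_0$ satisfies $\beta_\psi=\beta_Q-\beta_{Q_0}=0$, so $\psi$ is additive, and $\psi(\lambda v)=\psi(v)$ for all $v$; additivity then yields $\psi\!\bigl((\lambda-1)v\bigr)=0$ for all $v$, and $\lambda-1\in E^*$, so $\psi\equiv 0$ and $Q=Q_0$.

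Finally I would count singular vectors and read off the type. A non-zero $v$ is singular iff $c\,\N_{E/E_0}(v)\in\ker\mathrm{Tr}_{E_0/F}$; since $\N_{E/E_0}\colon E^*\to E_0^*$ is surjective with kernel of order $(q^{2m}-1)/(q^m-1)=q^m+1$ and $\ker\mathrm{Tr}_{E_0/F}$ has $q^{m-1}-1$ non-zero elements, $Q$ has exactly $(q^m+1)(q^{m-1}-1)$ non-zero singular vectors. A non-singular quadratic form in dimension $2m$ has $(q^m-1)(q^{m-1}+1)$ non-zero singular vectors in the plus-type case and $(q^m+1)(q^{m-1}-1)$ in the minus-type case (a standard count; see, e.g., \cite[\S 2.5]{KL}), and these are distinct, so $Q$ must have minus type. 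The main obstacle is the middle two paragraphs: establishing the explicit trace/norm description of $\beta_Q$ and $Q$ — the essential point being that $f=f^*$ is exactly what promotes $\lambda\mapsto\lambda^{-1}$ to a field automorphism (making $\phi$ genuinely $\sigma$-semilinear), together with the rigidity argument $Q=Q_0$. Once the form is in this shape, the count is routine.
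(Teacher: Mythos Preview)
Your proof is correct and takes a genuinely different route from the paper's argument.

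The paper's proof is indirect: it invokes the centralizer structure $C_{\C(Q)}(X)\cong\U(1,q^{d/2})$ (established only in the following chapter), observes that $|C_{\C(Q)}(X)|=q^{d/2}+1$, and then argues arithmetically that this order does not divide $|\Or^+(d,q)|$---except for the two small cases $(d,q)=(2,3)$ and $(6,2)$, which are checked by hand. So the paper's proof relies on a forward reference and has ad hoc exceptional cases.

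Your approach is more direct and self-contained. By identifying $V$ with $E=\mathbb{F}_{q^d}$ and exploiting that $f=f^*$ forces $\lambda\mapsto\lambda^{-1}$ to be the field automorphism $\sigma\colon x\mapsto x^{q^m}$, you pin down $\beta_Q$ and then $Q$ itself as the explicit trace--norm form $Q(v)=\mathrm{Tr}_{E_0/F}(c\,\N_{E/E_0}(v))$; the rigidity step $Q=Q_0$ via the additive defect $\psi$ is clean and handles both characteristics uniformly. The final singular-vector count then reads off the minus type without exceptions. What you gain is a proof that is elementary, works uniformly in all $(d,q)$, avoids circularity with later material, and actually exhibits the form. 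What the paper's argument gains is brevity, provided one is willing to borrow the centralizer description from Chapter~\ref{semisimpleChap}.
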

\begin{proof}
	We will show in the next chapter that $C_{\C(Q)}(X) \cong \U(1,q^{d/2})$, where $d = \deg(f)$. In particular, $|C_{\C(Q)}(X)| = 1+q^{d/2}$. If $Q$ has plus type, then $C_{\C(Q)}(X)$ is a subgroup of $\Or^+(d,q)$. But, with the exception of $(d,q) = (2,3)$ and $(6,2)$, this is impossible because $1+q^{d/2}$ does not divide the cardinality of $\Or^+(d,q)$ (see \cite[Chapter 11]{PG}). The only polynomials for $(2,3)$ and $(6,2)$ are $t^2+1 \in \mathbb{F}_3[t]$ and $t^6+t^3+1 \in \mathbb{F}_2[t]$; in both cases we check directly that their companion matrices preserve only non-singular quadratic forms of minus type.
\end{proof}
The next result is due to Huppert \cite{Huppert}. We just sketch the proof, omitting details.
\begin{proposition}\label{HuppertResults}
	Let $q$ be odd. Let $\beta$ be a non-degenerate symmetric form on $V$ and $X \in \C(\beta)$. Let $U$ be one of the direct summands in the decomposition of $V$ described in Theorem $\ref{orthogonaldecomposition}$. If $U = W \oplus W^*$, where $X$ acts cyclically on $W$ (resp.\ $W^*$) with minimal polynomial $f^e$ (resp.\ $f^{*e}$), then we can suppose that $W$ and $W^*$ are totally isotropic.
\end{proposition}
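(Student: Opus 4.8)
The plan is to dispose of the trivial possibility $f\neq f^{*}$ and then reduce the case $f=f^{*}$ to a statement about Hermitian forms over the local ring $R=F[t]/(f^{e})$, following \cite{Huppert}. Suppose first $f\neq f^{*}$. Then $f^{*}$ is coprime to the minimal polynomial $f^{e}$ of $X$ on $W$, so $f^{*}(X)^{e}$ and $X$ are invertible on $W$. Writing $f^{e}(t)=\sum_{i=0}^{m}a_{i}t^{i}$ with $m=de$ and $a_{0}\neq0$, the relation $\beta(uX,v)=\beta(u,vX^{-1})$ (the field automorphism being trivial, as $\beta$ is symmetric) gives $\beta(uf^{e}(X),v)=a_{0}\,\beta(u,vX^{-m}f^{*}(X)^{e})$ for all $u,v\in U$; taking $u\in W$ and letting $v$ range over $W$, the vector $vX^{-m}f^{*}(X)^{e}$ ranges over all of $W$ while $uf^{e}(X)=0$, so $\beta(W,W)=\{0\}$, and likewise $\beta(W^{*},W^{*})=\{0\}$. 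Thus $W$ and $W^{*}$ are already totally isotropic, and we may assume $f=f^{*}$; by the second Remark after Definition \ref{ThreeCases} its degree $d$ is then even (and $f\in\Phi_{3}$ in the application).

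Assume now $f=f^{*}$ and put $R=F[t]/(f^{e})$, a local Frobenius $F$-algebra with maximal ideal $\mathfrak{m}=(f(X))$ and residue field $\mathbb{F}_{q^{d}}$. From $f=f^{*}$ one checks that $f(t^{-1})$ is a unit multiple of $f(t)$, so $t\mapsto t^{-1}$ induces an $F$-algebra involution $\sigma$ of $R$; modulo $\mathfrak{m}$ it acts as the order-$2$ Frobenius over $\mathbb{F}_{q^{d/2}}$. Since $U=W\oplus W^{*}$ with each summand a cyclic $F[t]$-module of minimal polynomial $f^{e}$, $U$ is a free $R$-module of rank $2$, and an $X$-invariant submodule is cyclic with minimal polynomial $f^{e}$ precisely when it is a free rank-$1$ $R$-summand of $U$ (any generator of such a submodule is automatically unimodular). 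By Wall's correspondence \cite{GEW}, $(\beta,X)$ determines a non-degenerate $\varepsilon$-Hermitian form $\mathfrak{h}\colon U\times U\to R$ relative to $\sigma$; as $q$ is odd we may rescale $\mathfrak{h}$ by a unit $\delta\in R^{\times}$ with $\sigma(\delta)=-\delta$ so that $\varepsilon=1$. Under this correspondence a free rank-$1$ summand $vR$ is totally isotropic for $\beta$ if and only if $\mathfrak{h}(v,v)=0$, so the proposition is equivalent to the assertion that $\mathfrak{h}$ has a hyperbolic $R$-basis $v,v'$; then $W_{1}:=vR$ and $W_{1}^{*}:=v'R$ give the required totally isotropic decomposition $U=W_{1}\oplus W_{1}^{*}$.

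To build the hyperbolic basis, reduce $\mathfrak{h}$ modulo $\mathfrak{m}$: this is a non-degenerate Hermitian form of rank $2$ over $\mathbb{F}_{q^{d}}/\mathbb{F}_{q^{d/2}}$, hence by Theorem \ref{basis} the unique one, which is hyperbolic; it therefore has a unimodular isotropic vector, lifting to a unimodular $v\in U$ with $\mathfrak{h}(v,v)\in\mathfrak{m}$. Correct $v$ $\mathfrak{m}$-adically: if $\mathfrak{h}(v,v)\in\mathfrak{m}^{k}\setminus\{0\}$, pick $w$ with $\mathfrak{h}(v,w)\in R^{\times}$ (possible by non-degeneracy of $\mathfrak{h}$ and unimodularity of $v$) and replace $v$ by $v+cw$ with $c\in\mathfrak{m}^{k}$; since $k\geq1$ the self-pairing changes by $c\,\mathfrak{h}(w,v)+\sigma(c)\,\mathfrak{h}(v,w)$ modulo $\mathfrak{m}^{k+1}$, and this correction, read in $\mathfrak{m}^{k}/\mathfrak{m}^{k+1}\cong\mathbb{F}_{q^{d}}$, amounts after a bijective substitution to the relative trace $\mathbb{F}_{q^{d}}\to\mathbb{F}_{q^{d/2}}$, whose image $\mathbb{F}_{q^{d/2}}$ contains the ($\sigma$-fixed) reduction of $\mathfrak{h}(v,v)$; here $q$ odd rules out any characteristic-$2$ obstruction. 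Hence the degree-$k$ term can be killed, and since $\mathfrak{m}^{e}=0$ the process stops with $\mathfrak{h}(v,v)=0$. Finally choose $v'$ with $\mathfrak{h}(v,v')=1$ and replace it by $v'+av$ with $a+\sigma(a)=-\mathfrak{h}(v',v')$ (solvable by the same surjectivity); then $v,v'$ is a hyperbolic pair, and it is an $R$-basis because its reduction modulo $\mathfrak{m}$ is.

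The main obstacle is precisely this passage to the non-semisimple ring $R$ when $e>1$, together with the $\mathfrak{m}$-adic lifting; for $e=1$ the argument collapses to the uniqueness of Hermitian forms over $\mathbb{F}_{q^{d}}$ via Theorem \ref{basis}. Alternatively, one can bypass the lifting by invoking Wall's classification: the non-degenerate $X$-invariant symmetric forms on $U$ form a single orbit under $C_{\GL(U)}(X)$, so the totally isotropic decomposition of $U$ already exhibited in the treatment of Case~3 ($f\in\Phi_{3}$) can simply be transported to $\beta$.
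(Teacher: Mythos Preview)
Your argument for $f\neq f^{*}$ is correct and is essentially the content of Lemma~\ref{L26w}, as the paper notes. The gap is in the passage to $f=f^{*}$: you claim that then $\deg f$ is even and $f\in\Phi_3$, but this ignores $f\in\Phi_1$, i.e.\ $f=t\pm 1$. The proposition is applied in the proof of Theorem~\ref{elementsofCG} precisely in that case (for the elementary divisors $(t\pm 1)^{2k}$), so the omission is not cosmetic. Moreover your method genuinely breaks down there: for $f=t\pm 1$ the involution $\sigma$ on $R=F[t]/(f^{e})$ is trivial on the residue field $R/\mathfrak{m}\cong\mathbb{F}_q$, so the reduced form $\bar{\mathfrak{h}}$ is symmetric bilinear over $\mathbb{F}_q$ rather than Hermitian over a quadratic extension, and a rank-$2$ symmetric form over $\mathbb{F}_q$ need not be hyperbolic. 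Worse, your rescaling step fails outright: a unit $\delta\in R^{\times}$ with $\sigma(\delta)=-\delta$ would have to satisfy $\bar\delta=-\bar\delta$ in $R/\mathfrak{m}$, forcing $\bar\delta=0$ since $q$ is odd, contradicting invertibility. The paper handles $f^{e}=(t\pm 1)^{2k}$ by a separate citation to \cite[2.4]{Huppert}.

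For $f\in\Phi_3$ your approach is sound but differs from the paper's. You lift a hyperbolic pair $\mathfrak{m}$-adically through $R$, producing literally totally isotropic free rank-$1$ summands $W_1,W_1^{*}$. The paper instead invokes \cite[2.1]{Huppert} to locate a cyclic submodule $W_1\leqslant U$ on which $\beta$ is \emph{non-degenerate}, then writes $U=W_1\perp(U\cap W_1^{\bot})$ and replaces the single summand $U$ in Theorem~\ref{orthogonaldecomposition} by two summands of the first kind; the phrase ``we can suppose $W,W^{*}$ totally isotropic'' is thus realised by refining the decomposition rather than by literally achieving it inside $U$. Your route is more self-contained and gives the stronger literal conclusion; the paper's route is shorter given Huppert's results and is what the downstream application (splitting $V$ into orthogonal cyclic pieces for the semisimple part) actually needs.
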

\begin{proof}
	If $f \neq f^*$, then the proposition follows directly from Lemma \ref{L26w} (whose proof is independent of the results of this chapter). Suppose $f=f^*$. If $f(t) = (t \pm 1)^{2k}$ for some integer $k$, then see \cite[2.4]{Huppert}. In the other cases, from \cite[2.1]{Huppert} there exists an $F[t]$-submodule $W_1$ of $U$ on which $X$ acts with minimal polynomial $f^e$ and such that the restriction of $\beta$ on $W_1$ is non-degenerate. In such a case we can write $U = W_1 \oplus W_2$ with $W_2 = U \cap W_1^{\bot}$, and we can replace $U$ by two distinct direct summands in Theorem \ref{orthogonaldecomposition}.
\end{proof}
\begin{proof}[Proof of Theorem $\ref{elementsofCG}$]
	We have proved the theorem in the symplectic and unitary case. We have also proved that there exists a quadratic form $Q$ such that $X \in \C(Q)$ if, and only if, $X \sim X^{-1}$ and, for every positive integer $k$, the polynomials $(t\pm 1)^{2k}$ (resp.\ $(t\pm1)^{2k-1}$) have even multiplicity as an elementary divisors of $X$ if $q$ is odd (resp.\ even). It remains to prove the last assertion about the type of $Q$. Recall that, if $Q_1, \dots, Q_k$ are quadratic forms of even dimensions, then $\bigoplus_i Q_i$ has plus type (resp.\ minus type) if, and only if, the number of $Q_i$ having minus type is even (resp.\ odd).
	
	Let $V$ have even dimension. In the analysis of Case 1, we proved that if $X$ has elementary divisors $(t \pm 1)^{2k+1}$ (in the case $q$ odd) or $(t \pm 1)^k$ (case $q$ even), then a non-singular quadratic form of both types can be defined on the submodules of $V$ relative to these elementary divisors. Thus $X$ belongs to appropriate copies of both $\Or^+(V)$ and $\Or^-(V)$. If $q$ is odd, then an elementary divisor $(t \pm 1)^{2k}$ does not affect the type of $Q$: in fact, if $U_1, \dots, U_{2m}$ are the cyclic submodules of $V$ on which $X$ acts with minimal polynomial $(t \pm 1)^{2k}$ and $U = \bigoplus_{i=1}^{2m} U_i$, then the restriction of $\beta$ to each $U_i$ is degenerate by Proposition \ref{Prop414}, so, up to index rearrangements, we can suppose that $\beta$ is non-degenerate on $U_i \oplus U_{i+m}$ and $(U_i \oplus U_{i+m}) \bot (U_j\oplus U_{j+m})$ for every $i \neq j$. By Proposition \ref{HuppertResults}, we can suppose that the $U_i$ are totally singular. So $U_1 \oplus \cdots \oplus U_m$ is a totally singular subspace of dimension $\frac{1}{2} \dim{U}$. This proves that every quadratic form on $U$ preserved by $X$ has plus type.
	
	The same argument holds for elementary divisors of the form $f^e$ with $f \neq f^*$. If $U = \ker{f(X)^e}$ and $U^*=\ker{f^*(X)^e}$, then we can assume that $U$ and $U^*$ are totally singular by Proposition \ref{HuppertResults}, so every quadratic form on $U \oplus U^*$ preserved by $X$ has plus type.
	
	Suppose finally that $X$ acts cyclically on $V$ with minimal polynomial $f^e$, with $f = f^*$ irreducible of degree $d$. If $X$ belongs to $\C(Q)$ for some quadratic form $Q$, so does its semisimple part $S$. In such a case $V$ is the direct sum of cyclic $F[t]$-submodules $U_1, \dots, U_e$ on which $S$ acts cyclically with minimal polynomial $f$. By Proposition \ref{HuppertResults} we can assume that $U_i \bot U_j$ for every $i \neq j$ and that every $Q_i$ is non-singular, where $Q_i = Q_{|U_i}$. By Proposition \ref{typePhi3}, every $Q_i$ has minus type, so it is easy to see that $Q$ has plus type (resp.\ minus type) if, and only if, $e$ is even (resp.\ odd).
\end{proof}

\newpage
\chapter{Centralizers and conjugacy classes of semisimple elements}   \label{semisimpleChap}
Our strategy to investigate conjugacy classes in a classical group $\C$ is similar to the linear case, so we analyze separately semisimple and unipotent elements. In this chapter we consider the semisimple case. In Chapter \ref{membershipChapter} we established which conjugacy classes of $\GL(V)$ have elements in $\C$, so to list conjugacy classes of $\C$ it remains to establish whether each class of $\GL(V)$ splits into more classes in $\C$ and to show explicit representatives. This extends the work of Wall \cite{SCCSS} to all groups of isometries.

\section{Semisimple conjugacy classes in classical groups}     \label{semisimpleconjclasses}
\subsection{Conjugacy classes in isometry groups of sesquilinear forms} \label{semisimplesesquilinear}
Let $V \cong F^n$ be a vector space and let $\beta$ be a non-degenerate alternating, symmetric or hermitian form on $V$, where $F = \mathbb{F}_{q^2}$ if $\beta$ is hermitian, $F=\mathbb{F}_q$ otherwise. We assume that $q$ is odd if $\beta$ is symmetric. Let $\C = \C(\beta)$ be the isometry group of $\beta$.

The aim of this section is to decide whether or not two similar semisimple elements of $\C$ are conjugate in $\C$. We use the strategy of Wall \cite{SCCSS} for the symplectic case of odd characteristic and extend it to all sesquilinear forms.
\begin{theorem}\label{MainConjinC}
	Let $\beta$ be a reflexive sesquilinear form, let $\C=\C(\beta)$ be its group of isometries and let $X_1, X_2 \in \C$ be semisimple. If $\C$ is a symplectic or a unitary group, then $X_1$ and $X_2$ are conjugate in $\C$ if, and only if, they are similar. If $\C$ is an orthogonal group, then $X_1$ and $X_2$ are conjugate in $\C$ if, and only if, they are similar and the forms induced by $\beta$ on the eigenspaces of the eigenvalues $+1$ and $-1$ have the same type.
\end{theorem}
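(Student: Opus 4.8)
The plan is to reduce to the case where $X_1 = X_2 = X$ is a single semisimple element with a unique generalized elementary divisor $f \in \Phi$, using the orthogonal decomposition of Theorem~\ref{orthogonaldecomposition} together with Remark~\ref{sumofforms}. Indeed, since $X_1 \sim X_2$, we may fix a common similarity type and write $V = \bigoplus_{f \in \Phi} V_f$, where $V_f$ is the sum of the cyclic submodules whose minimal polynomial is a power of $f$ (or of $g$, $g^*$ when $f = gg^* \in \Phi_2$). Each $V_f$ is $X$-invariant and the $V_f$ are mutually perpendicular and non-degenerate; a conjugating element may be built blockwise provided it can be built on each $V_f$, so it suffices to analyze an isometry of a form on $V_f$ with a single generalized elementary divisor. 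The point of the "same type'' hypothesis in the orthogonal case is precisely that this blockwise reconstruction must respect the congruence type of the restricted form on the $\pm 1$-eigenspaces, which is an invariant of the $\C$-class but not of the $\GL$-class.

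First I would dispatch the cases $f \in \Phi_2$ and $f \in \Phi_3$, where conjugacy should follow from similarity with no extra condition. When $f = gg^* \in \Phi_2$, the module $V_f = U \oplus U^*$ with $U, U^*$ totally isotropic and $X$ acting on $U$ with a prescribed similarity type; here the isometry type is forced (hyperbolic), and two such configurations with the same action on $U$ are isometric by an isometry commuting with $X$ — one builds it from an arbitrary $\GL$-conjugacy on $U$ and its "dual'' on $U^*$. When $f \in \Phi_3$, $X$ restricted to $V_f$ is semisimple with $f = f^*$ irreducible of degree $d$; the centralizer $C_{\C}(X)$ on such a block is (as the paper notes, and will show) a unitary group $\U(m, q^{d/2})$ in the symplectic/orthogonal case, or $\U(m,q)$-type in the unitary case, and the congruence class of the induced form is uniquely determined (minus type, by Proposition~\ref{typePhi3}, in the orthogonal case), so again similarity suffices — essentially a Witt-type extension argument over the larger field. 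For $f \in \Phi_1$ with $f = t - \lambda$ and $\lambda \neq \pm 1$ in the symplectic/orthogonal cases, the eigenspaces for $\lambda$ and $\lambda^{-1} = \bar\lambda^{-1}$ are paired and totally isotropic, so this behaves like the $\Phi_2$ case; in the unitary case, $\lambda = \mu^{q-1}$ and the $\lambda$-eigenspace carries a non-degenerate hermitian form, all such being congruent by Theorem~\ref{basis}, so similarity again suffices.

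The remaining, and genuinely delicate, case is $f \in \Phi_1$ with $\lambda = \pm 1$ in the orthogonal setting: the $+1$- and $-1$-eigenspaces $V_{\pm 1}$ carry non-degenerate symmetric (equivalently quadratic, as $q$ is odd) forms, and by Theorem~\ref{basis} there are two congruence classes in each even dimension. Two elements $X_1, X_2$ that are similar induce isometric forms on $V_{\pm 1}$ for $X_1$ and $X_2$ respectively exactly when those forms have the same type; if so, one assembles a conjugating isometry by taking, on each generalized-eigenspace block, an isometry intertwining the two configurations — these exist by the cases above on the $\Phi_2, \Phi_3$ blocks and by Witt's theorem (an isometry of non-degenerate quadratic spaces of the same dimension and type extends/exists) on the $V_{\pm 1}$ blocks — and then forming the direct sum via Remark~\ref{sumofforms}. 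Conversely, if $X_1, X_2$ are $\C$-conjugate, the conjugating element restricts to an isometry $V_{\epsilon}(X_1) \to V_{\epsilon}(X_2)$ for $\epsilon = \pm 1$, forcing equal types. The main obstacle is the bookkeeping for $\Phi_2$ and $\Phi_3$: one must check that the freedom in choosing the $\GL$-conjugacy on the "$U$-part'' of a hyperbolic or field-extension block can always be exercised without disturbing the form, i.e.\ that the natural map from the relevant $\GL$ (or $\U$ over the extension field) into the block isometry group is onto the set of intertwiners — this is where Wall's argument for the symplectic case must be adapted, and where even characteristic and the hermitian case need separate verification.
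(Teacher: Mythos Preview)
Your overall strategy---decompose $V$ into orthogonal pieces indexed by $\Phi$, handle each block type separately, and reassemble---matches the paper's. The paper, however, organizes the argument via a clean reformulation: rather than directly constructing a conjugating isometry, it fixes a standard $X$ and shows (via the lemma $|L|=|K|$) that the number of $\C$-classes inside the $\GL$-class of $X$ equals the number of $C_G(X)$-orbits on forms congruent to $\beta$. This translates ``are $X_1,X_2$ conjugate in $\C$?'' into ``are the two forms $B_1,B_2$ obtained by pulling $\beta$ back along $\GL$-conjugacies congruent under $C_G(X)$?'', which is easier to analyze block by block and avoids the bookkeeping you flag in your last paragraph.

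The place where your sketch is genuinely incomplete is the $\Phi_3$ case. Knowing that the form on $V_f$ is forced to have minus type (Proposition~\ref{typePhi3}) only gives $\GL$-congruence of the two restricted forms, not congruence under $C_G(X)$; your ``Witt-type extension over the larger field'' is gesturing at the right idea but does not supply the mechanism. The paper's device is this: choose $T$ with $R^* = T^{-1}R^{-1}T$ and $T = \varepsilon T^*$ (Lemma~\ref{L210w}), write $B = H\mathcal{T}$ with $\mathcal{T} = T^{\oplus m}$, and show (Theorem~\ref{L29w}) that $B = \varepsilon B^*$ iff $H = H^{\dag}$ and that the $C_G(X)$-action on $B$ becomes $H \mapsto YHY^{\dag}$. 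Thus the problem becomes hermitian congruence over the extension field $F[t]/(f)$, which has a unique class by Theorem~\ref{basis}. This is exactly the ``freedom on the $U$-part without disturbing the form'' that you correctly identified as the obstacle.

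One minor slip: in the symplectic and orthogonal cases $\Phi_1 = \{t \pm 1\}$, so there is no ``$f \in \Phi_1$ with $\lambda \neq \pm 1$''; what you describe there is already a $\Phi_2$ block.
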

The proof of this theorem requires some preliminary work. For every form $\beta$ and every $T \in G:=\GL(n,F)$, define the form $\beta T$ by
$$
\beta T(u,v) := \beta (uT^{-1},vT^{-1})
$$
for all $u,v \in V$. It is easy to check that $\beta (ST) = (\beta S)T$ for all $S,T \in G$ and that $\beta T = \beta$ if, and only if, $T \in \C(\beta)$. For fixed $\beta$ and $X \in \C(\beta)$, consider the set $\mathscr{L}$ of all forms $\gamma$ congruent to $\beta$. The centralizer $C_G(X)$ of $X$ in $G$ acts on $\mathscr{L}$ through the action $(Y,\gamma) \mapsto \gamma Y$, so we consider the set of orbits $L = \{ [\beta_1 = \beta], [\beta_2], \dots, [\beta_h]\}$ of $\mathscr{L}$ under the action of $C_G(X)$.

Also $\C$ acts on the conjugacy class $X^G$ of $X$ in $G$ via the action $(g, X^h) \mapsto X^{hg}$, and we consider the set of orbits $K = \{ [X^{T_1=1}], [X^{T_2}], \dots, [X^{T_k}] \}$ of $X^G$ under the action of $\C$.
\begin{lemma}
	$|L| = |K|$.
\end{lemma}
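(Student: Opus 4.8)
The plan is to recognize both $L$ and $K$ as sets of double cosets in $G = \GL(n,F)$ and then produce a bijection between those double coset spaces via inversion.

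First I would identify the $G$-set $\mathscr{L}$ with the coset space $\C\backslash G$. If $\gamma$ is congruent to $\beta$, witnessed by $T$ with $\gamma(uT,vT)=\beta(u,v)$, then $\gamma = \beta T$; conversely every $\beta T$ is congruent to $\beta$. Thus $\mathscr{L} = \{\beta T : T \in G\}$. Using the facts recalled in the text that $\beta(ST) = (\beta S)T$ and that $\beta T = \beta$ if and only if $T \in \C$, the assignment $\C T \mapsto \beta T$ is a well-defined bijection $\C\backslash G \to \mathscr{L}$, and it intertwines right multiplication by $Y$ on $\C\backslash G$ with the action $\gamma \mapsto \gamma Y$ on $\mathscr{L}$. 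Hence the $C_G(X)$-orbits on $\mathscr{L}$ correspond exactly to the double cosets $\C\, T\, C_G(X)$, so $|L|$ is the number of such double cosets.

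Second, I would treat $K$ analogously: the conjugacy class $X^G$ is identified with $C_G(X)\backslash G$ through $C_G(X) T \mapsto X^T = T^{-1}XT$ (this is well defined and injective because $X^{T_1}=X^{T_2}$ forces $T_1T_2^{-1}\in C_G(X)$), and this bijection intertwines right multiplication on $C_G(X)\backslash G$ with the action $X^h \mapsto X^{hg}$ of $\C$ on $X^G$. Therefore the $\C$-orbits on $X^G$ correspond to the double cosets $C_G(X)\, T\, \C$, so $|K|$ is the number of such double cosets. Finally, the map $g \mapsto g^{-1}$ sends $\C\, T\, C_G(X)$ to $C_G(X)\, T^{-1}\, \C$ and is a bijection $\C\backslash G/C_G(X) \to C_G(X)\backslash G/\C$, whence $|L| = |K|$.

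The only place where care is genuinely required — and where a sign or side error is easy to make — is bookkeeping the sides of the various actions, so that both orbit spaces really do become double coset spaces for the pair $(\C, C_G(X))$, and checking that the two identifications are equivariant for the intended actions. None of this is deep; it is simply a matter of doing the left/right bookkeeping consistently.
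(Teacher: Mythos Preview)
Your argument is correct. You identify $\mathscr{L}$ with $\C\backslash G$ and $X^G$ with $C_G(X)\backslash G$ as right $G$-sets, read off $|L|=|\C\backslash G/C_G(X)|$ and $|K|=|C_G(X)\backslash G/\C|$, and then match these via $g\mapsto g^{-1}$. The left/right bookkeeping you flag is indeed the only delicate point, and it checks out: $\beta T_1=\beta T_2$ iff $T_1T_2^{-1}\in\C$, so the cosets are $\C T$, and the action $\gamma\mapsto\gamma Y$ becomes right multiplication; likewise for $X^G$.

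The paper takes a different route: it introduces the $G$-set $\mathcal{M}=\{(\gamma,Y)\in\mathscr{L}\times X^G : Y\in\C(\gamma)\}$ with the diagonal action, and shows that both $L$ and $K$ are in bijection with the orbit set $M$ (by projecting onto the slice $\{(\gamma,X)\}$ and the slice $\{(\beta,Y)\}$ respectively). Your double-coset formulation is essentially a coordinatization of the same picture: the $G$-orbits on $\mathcal{M}$ are another avatar of the double cosets, and the paper's two projections correspond to your two coset identifications. What your approach buys is directness and an explicit bijection (inversion) between $L$ and $K$, rather than passing through a third object; what the paper's approach buys is that one never has to name the coset spaces or worry about sides --- the symmetry is packaged into the single $G$-action on $\mathcal{M}$. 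Either way the content is the same.
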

\begin{proof}
	Define $\mathcal{M} = \{ (\gamma,Y) \in \mathscr{L} \times X^G \, | \, Y \in \mathscr{C}(\gamma) \}$. The group $G$ acts on $\mathcal{M}$ through the action $(T, (\gamma,Y)) \mapsto (\gamma T, Y^T)$. Let $M$ be the set of orbits of $\mathcal{M}$ under the action of $G$. We define two functions $M \rightarrow L$ and $L \rightarrow M$ by
	$$
	\begin{array}{ccc}
	M & \longrightarrow & L \\
	\Delta & \longmapsto & \{ \gamma \in \mathscr{L} \, | \, (\gamma, X) \in \Delta \};
	\end{array} \quad \mbox{ and } \quad \begin{array}{ccc}
	L & \longrightarrow & M \\
	\{ \gamma_1, \dots, \gamma_{\ell} \} & \longmapsto & \mbox{orbit containing } (\gamma_1,X).
	\end{array}
	$$
	These two functions are mutual inverses; hence, there is a bijection between $M$ and $L$, so $|M| = |L|$. A similar argument applied to the functions
	$$
	\begin{array}{ccc}
	M & \longrightarrow & K \\
	\Delta & \longmapsto & \{ Y \in X^G \, | \, (\beta, Y) \in \Delta \};
	\end{array} \quad \mbox{ and } \quad \begin{array}{ccc}
	K & \longrightarrow & M \\
	\{ Y_1, \dots, Y_k \} & \longmapsto & \mbox{orbit containing } (\beta,Y_1).
	\end{array}
	$$
	shows that $|M|=|K|$. In conclusion, $|L|=|M|=|K|$.
\end{proof}
The lemma allows us to rewrite the problem of conjugacy classes in $\C$ in terms of congruence classes in $\mathscr{L}$. Given $X \in \C$, we will analyze all possible forms preserved by $X$ and their congruence classes in $\mathscr{L}$. \\

By Theorem \ref{orthogonaldecomposition}, if $[f_1, \dots, f_h]$ is the list of elementary divisors of $X$, then $[f_1, \dots, f_h] = [f_1^*, \dots, f_h^*]$, so if $f_i \neq f_i^*$, then $f_i$ and $f_i^*$ have the same multiplicity. Hence, the list of elementary divisors of $X$ can be rewritten to ensure $f_i \in \Phi$, as in Definition \ref{ThreeCases}. \\

Let $f(t) = f_1(t)^{e_1} \cdots f_h(t)^{e_h}$ be the minimal polynomial of $X$. By choosing an appropriate basis for $V$, we suppose that $X$ has a shape
$$
\left( \begin{array}{cccc} X_1 & & & \\ & X_2 & & \\ & & \ddots & \\ & & & X_h \end{array}\right),
$$
where $X_i$ is the matrix of the restriction of $X$ to $\ker(f_i(X)^{e_i})$. The matrix $B$ of the form preserved by $X$ is 
$$
\left(\begin{array}{ccc} B_{11} & \cdots & B_{1h} \\ \vdots & \ddots & \vdots \\ B_{h1} & \cdots & B_{hh} \end{array}\right),
$$
where $X_iB_{ij}X_j^* = B_{ij}$, and $B_{ij} = \varepsilon B_{ji}^*$ for every $1 \leq i,j \leq h$, $\varepsilon= -1$ in the symplectic case and $\varepsilon=1$ in the other cases. More generally,
\begin{eqnarray}\label{7w}
g(X_i)B_{ij} = B_{ij}g(X_j^{*-1})
\end{eqnarray}
for every $g \in F[t]$.
\begin{lemma}\label{L26w}
	Let $X_i$ be the block corresponding to the elementary divisor $f_i$. If $f_i \neq f_j^*$, then $B_{ij}=0$.
\end{lemma}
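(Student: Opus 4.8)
The plan is to exploit the relation (\ref{7w}) together with the fact that $f_i^{e_i}$ is the minimal polynomial of $X_i$. Substituting $g = f_i^{e_i}$ into (\ref{7w}) gives $f_i(X_i)^{e_i}B_{ij} = B_{ij}f_i(X_j^{*-1})^{e_i}$; the left-hand side vanishes, so $B_{ij}\,f_i(X_j^{*-1})^{e_i} = 0$. It therefore suffices to prove that $f_i(X_j^{*-1})^{e_i}$ is invertible, since then $B_{ij}=0$ follows at once.

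To this end I would first determine the minimal polynomial of $X_j^{*-1}$. As $X_j$ has minimal polynomial $f_j^{e_j}$, applying the field automorphism shows $\overline{X_j}$ has minimal polynomial $\overline{f_j}^{\,e_j}$; transposition does not change the minimal polynomial; and passing to the inverse of an invertible matrix turns a minimal polynomial $p$ (with $p(0)\neq 0$) into $p(0)^{-1}t^{\deg p}p(1/t)$. Composing these three operations and using $(gh)^*=g^*h^*$, the minimal polynomial of $X_j^{*-1}$ is exactly $(f_j^{e_j})^* = (f_j^*)^{e_j}$.

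Next I would show that $f_i^{e_i}$ and $(f_j^*)^{e_j}$ are coprime whenever $f_i\neq f_j^*$, using $f_i,f_j\in\Phi$. If $f_i,f_j\in\Phi_1\cup\Phi_3$ this is clear, since then $f_i$ and $f_j^*=f_j$ are distinct monic irreducibles. If one or both of $f_i,f_j$ lies in $\Phi_2$, say $f_i=g_ig_i^*$ (resp.\ $f_j=g_jg_j^*$) with $g_i,g_j$ irreducible, then $f_i$ and $f_j^*$ are self-dual polynomials each factoring into one or two distinct irreducibles, so by unique factorization any two of them are either equal or share no irreducible factor; since $f_i\neq f_j^*$, they are coprime. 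Hence in all cases $\gcd(f_i^{e_i},(f_j^*)^{e_j})=1$. A polynomial coprime to the minimal polynomial of a matrix evaluates to an invertible matrix on it, so $f_i(X_j^{*-1})^{e_i}$ is invertible, completing the argument.

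The only delicate point is the case analysis in the last step, together with keeping straight — when $f_i$ or $f_j$ lies in $\Phi_2$ — that the block ``corresponding to $f_i$'' has minimal polynomial $f_i^{e_i}$ (a product of two coprime prime powers rather than a single prime power, the equality of exponents being guaranteed by Theorem \ref{orthogonaldecomposition}) and that $f_j^*=f_j$ in that situation. Everything else is a routine manipulation of (\ref{7w}) and standard facts about minimal polynomials under transpose, the field automorphism, and inversion.
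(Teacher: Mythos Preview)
Your proof is correct and follows essentially the same route as the paper: plug $g=f_i^{e_i}$ into \eqref{7w}, get $B_{ij}f_i(X_j^{*-1})^{e_i}=0$, and conclude by showing $f_i(X_j^{*-1})^{e_i}$ is invertible via coprimality with the minimal polynomial of $X_j^{*-1}$. The only difference is that the paper takes the $f_i$ to be the \emph{irreducible} factors of the minimal polynomial (as the phrase ``Since $f_i$ and $f_j^*$ are irreducible, they are coprime'' in its proof makes explicit), so the coprimality is immediate and your $\Phi_2$ case analysis is unnecessary in that setting; your more careful reading, allowing $f_i\in\Phi$, still yields a valid argument.
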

\begin{proof}
	The polynomial $f_i^{e_i}$ is the minimal polynomial of $X_i$, so, taking $f=f_i$ in (\ref{7w}), we get $0 = f_i(X_i)^{e_i}B_{ij} = B_{ij}f_i(X_j^{*-1})^{e_i}$. Since $f_i$ and $f_j^*$ are irreducible, they are coprime, so $f_i(X_j^{*-1})^{e_i}$ is non-singular. Hence $B_{ij}f_i(X_j^{*-1})^{e_i}=0$ implies $B_{ij}=0$.
\end{proof}

The previous lemma implies that the matrix of $B$ has block diagonal shape
$$
\begin{pmatrix} B_1 & & \\ & \ddots & \\ & & B_h \end{pmatrix},
$$
and the problem can be solved separately for individual blocks $X_i$. We may therefore assume that there exists $f \in \Phi$ such that every \ged of $X$ is a power of $f$.
Hence, suppose that $X$ is a semisimple element of $\C(\beta)$, so $e_i=1$ for every $i$.\\
\\
\textbf{Case 1}: $f \in \Phi_1$. The matrix $X$ is a scalar matrix, so it commutes with every other matrix, and forms $B_1$ and $B_2$ preserved by $X$ are congruent in $C_G(X)$ if and only if they are congruent in $G$.\\
\\
\textbf{Case 2}: $f \in \Phi_2$, $f=gg^*$. In such a case, $V = \ker(g(X)) \oplus \ker(g^*(X))$. If $X_1$ is the restriction of $X$ to $\ker(g(X))$ and $X_2$ is the restriction of $X$ to $\ker(g^*(X))$, then $X_2$ is similar to $X_1^{*-1}$ and we can suppose that
\begin{eqnarray} \label{case2}
X = \left( \begin{array}{cc} X_1 & 0 \\ 0 & X_1^{*-1} \end{array}\right), \quad B = \left( \begin{array}{cc} B_{11} & B_{12}\\ \varepsilon B_{12}^* & B_{22} \end{array}\right).
\end{eqnarray}
By Lemma \ref{L26w}, $B_{11} = B_{22}=0$. The identity $XBX^*=B$ implies $X_1B_{12}X_1^{-1}=B_{12}$, so $X_1$ commutes with $B_{12}$. If we take
$$
Y = \left( \begin{array}{cc} B_{12} & 0 \\ 0 & \mathbb{I} \end{array}\right), \quad J = \left(\begin{array}{cc} 0 & \mathbb{I} \\ \varepsilon \mathbb{I} & 0 \end{array} \right),
$$
where $\mathbb{I}$ is the identity matrix of the same dimension as $X_1$, then $Y$ commutes with $X$ and $YJY^* = B$. The matrix $J$ does not depend on $B$, so given forms $B$ and $B'$ preserved by $X$ and $Y,Y'$ constructed as above, $B = (YJY^*) = (YY'^{-1})B'(YY'^{-1})^*$ and $YY'^{-1} \in C_G(X)$. This proves that all forms are mutually congruent and the conjugacy class of $X$ in $G$ remains a unique conjugacy class in $\C$.

To compute the centralizer of $X$ in $\C$, we suppose that the form is $J$ and $X_1$ is a block diagonal matrix
$$
\left( \begin{array}{ccc} C & & \\ & \ddots & \\ & & C\end{array} \right),
$$
where $C$ is the companion matrix of $f$. By Lemma \ref{SplitCentr}, $Y \in C_G(X)$ has the shape
$$
\left( \begin{array}{cc} Y_1 & 0 \\ 0 & Y_2 \end{array}\right),
$$
where $Y_1$ commutes with $X_1$ and $Y_2$ with $X_1^{*-1}$. The centralizer of $X_1$ is isomorphic to $\GL(h,q^d)$, where $h$ is the multiplicity of $f$ as a \ged of $X_1$ and $d = \deg(f)$. The condition $Y \in \C$ implies $YBY^*=B$, so, by a straightforward calculation, $Y_2 = Y_1^{*-1}$. Hence $Y_1$ can be an arbitrary element of the centralizer of $X_1$ and it determines uniquely $Y_2$. Thus, if $f \in \Phi_2$, then $C_{\C}(X) \cong \GL(h,q^{2d})$ (in the unitary case) or $\GL(h,q^d)$ (in the symplectic and orthogonal case).\\
\\
\textbf{Case 3}: $f \in \Phi_3$. If the unique \ged $f$ has multiplicity $h$, then we assume that $X$ has a block diagonal shape
$$
\left( \begin{array}{ccc} R & & \\ & \ddots & \\ & & R\end{array}  \right),
$$
where $R$ has minimal polynomial $f$ (e.g.\ $R$ can be the companion matrix of $f$) and appears $h$ times. By Theorem \ref{largerfield}, the centralizer of $X$ in $G$ is isomorphic to $\GL(h,\mathbb{F}_{q^d})$, with $d = \deg{f}$ (in the orthogonal and symplectic case) or $d=2\deg{f}$ (in the unitary case), and $Y$ commutes with $X$ if and only if $Y$ is non-singular and has the block matrix shape
$$
\left( \begin{array}{ccc} & \vdots & \\ \cdots & f_{ij}(R) & \cdots \\ & \vdots & \end{array} \right)
$$
for some $f_{ij} \in F[t]$. The matrix of the form is a block matrix
$$
B = \left( \begin{array}{ccc} B_{11} & \cdots & B_{1h} \\ \vdots & \ddots & \vdots \\ B_{h1} & \cdots & B_{hh} \end{array} \right).
$$
The equation $XBX^*=B$ is equivalent to
\begin{eqnarray}\label{15w}
RB_{ij}R^* = B_{ij} \mbox{ for } 1 \leq i,j \leq h.
\end{eqnarray}
Since $f = f^*$, $R^*$ is similar to $R^{-1}$, so there exists $T \in \GL(n/h,F)$ such that
\begin{eqnarray}\label{16w}
R^* = T^{-1}R^{-1}T.
\end{eqnarray}
Thus (\ref{15w}) may be rewritten as
$$
R(B_{ij}T^{-1}) = (B_{ij}T^{-1})R.
$$
This shows that $B_{ij}T^{-1}$ belongs to the centralizer of $R$ and $B_{ij} = f_{ij}(R)T$ for a certain polynomial $f_{ij} \in F[t]$. We obtain the equation
\begin{eqnarray}\label{18w}
B = H \mathcal{T},
\end{eqnarray}
where $H = (f_{ij}(R)) \in C_G(X)$ and $\mathcal{T} = T \oplus \cdots \oplus T$.
\begin{lemma}\label{L210w}
	The matrix $T$ can be chosen such that $T = \varepsilon T^*$.
\end{lemma}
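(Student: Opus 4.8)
The plan is to use that the solution set of equation (\ref{16w}) is a coset of the centralizer of $R$: any valid $T$ may be multiplied on the left by a unit of the algebra $F[R]$ and still satisfy (\ref{16w}), so it suffices to find a multiplier which additionally forces $T^{*} = \varepsilon T$. Because $R$ is cyclic with irreducible minimal polynomial $f$, the matrices commuting with $R$ form the field $F[R] = F[R^{-1}]$, whose group of units I write $F[R]^{*}$; moreover, if $B$ satisfies $BR^{*} = R^{-1}B$, then (\ref{16w}) gives $BT^{-1}R^{-1} = R^{-1}BT^{-1}$, so $BT^{-1} \in F[R]$, and it lies in $F[R]^{*}$ when $B$ is invertible.

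First I would note that $T^{*}$ itself satisfies the same type of relation. Indeed (\ref{16w}) rewrites as $TR^{*} = R^{-1}T$, and applying $A \mapsto A^{*}$, together with $(R^{*})^{*} = R$ and $f = f^{*}$, yields $T^{*}R^{*} = R^{-1}T^{*}$. Hence $S := T^{*}T^{-1} \in F[R]^{*}$, and the whole problem can be transported into the finite field $F[R]$. Next I would introduce the map $\sigma \colon F[R] \to F[R]$, $\sigma(c) := Tc^{*}T^{-1}$, and check the routine facts: it is well defined because $c \mapsto c^{*}$ sends $F[R]$ into $F[R^{*}]$, which conjugation by $T$ carries onto $F[R^{-1}] = F[R]$ since $TR^{*}T^{-1} = R^{-1}$ by (\ref{16w}); $\sigma$ is a $(\lambda \mapsto \overline{\lambda})$-semilinear ring automorphism of $F[R]$ with $\sigma(R) = R^{-1}$; substituting $T^{*} = ST$ gives $\sigma^{2}(c) = S^{-1}cS = c$, so $\sigma^{2} = \mathrm{id}$; and the same substitution in $\sigma(S) = TS^{*}T^{-1}$ gives the key identity $\sigma(S) = S^{-1}$. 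Since $f \in \Phi_{3}$ has degree $>1$ we have $R \neq R^{-1}$, so $\sigma \neq \mathrm{id}$; being of order $2$, $\sigma$ makes $F[R]/K_{0}$ a cyclic extension of degree $2$ with Galois group $\langle\sigma\rangle$, where $K_{0}$ is the fixed subfield of $\sigma$.

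Finally, writing $c^{*} = T^{-1}\sigma(c)T$, a one-line computation gives $(cT)^{*} = T^{*}c^{*} = S\,\sigma(c)\,T$ for $c \in F[R]^{*}$, so the requirement $(cT)^{*} = \varepsilon(cT)$ is equivalent to $\sigma(c)\,c^{-1} = \varepsilon S^{-1}$. By Hilbert's Theorem 90 for the cyclic degree-$2$ extension $F[R]/K_{0}$, such a $c$ exists if and only if $\N_{F[R]/K_{0}}(\varepsilon S^{-1}) = 1$; and indeed, using $\varepsilon^{2} = 1$, $\sigma(\varepsilon) = \varepsilon$ and $\sigma(S) = S^{-1}$, one gets $\N_{F[R]/K_{0}}(\varepsilon S^{-1}) = (\varepsilon S^{-1})\,\sigma(\varepsilon S^{-1}) = \varepsilon^{2}S^{-1}\sigma(S)^{-1} = S^{-1}S = 1$. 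Choosing such a $c$ and setting $T' := cT$ then gives $(T')^{*} = \varepsilon T'$, while $T'$ still satisfies (\ref{16w}) because $c$ commutes with $R^{-1}$; replacing $T$ by $T'$ proves the lemma.

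I expect the only genuine obstacle to be isolating the right involution $\sigma$ and verifying $\sigma(S) = S^{-1}$: it is exactly this identity that makes the Hilbert-90 norm obstruction vanish automatically, turning the existence of $c$ into a formality. The remaining work is bookkeeping of the semilinearity in the unitary case, where $\lambda \mapsto \overline{\lambda}$ is a genuine order-$2$ automorphism of $F$, and the observation that $R \neq R^{-1}$ — which is what makes $\sigma$ nontrivial — relies on $\deg f > 1$, i.e.\ on $f \in \Phi_{3}$ rather than $f \in \Phi_{1}$.
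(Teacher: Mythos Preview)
Your proof is correct and takes a genuinely different route from the paper's. The paper argues directly and constructively: from $T^{*}=\phi(R)T$ it shows that at least one of $T+\varepsilon T^{*}=(1+\varepsilon\phi(R))T$ and $RT+\varepsilon(RT)^{*}=(R+\varepsilon R^{-1}\phi(R))T$ is invertible (both cannot vanish since $R\neq R^{-1}$), and either candidate visibly satisfies the required $\varepsilon$-symmetry. This is an explicit ``averaging'' trick and hands you a concrete $T$. Your argument instead transports the problem into the field $F[R]$, reformulates the condition as $\sigma(c)c^{-1}=\varepsilon S^{-1}$ for the Galois involution $\sigma$, and invokes Hilbert's Theorem~90 after checking the norm obstruction $\mathrm{N}(\varepsilon S^{-1})=1$ via $\sigma(S)=S^{-1}$. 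This is cleaner conceptually and explains \emph{why} the lemma holds (the obstruction is a norm that automatically vanishes), but is non-constructive. Both approaches rely on the same two ingredients you correctly isolate: that $F[R]$ is a field (so nonzero $\Rightarrow$ invertible) and that $R\neq R^{-1}$ since $f\in\Phi_{3}$.
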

\begin{proof}
	If $R^* = T^{-1}R^{-1}T$, then $T$ can be replaced by $g(R)T$ for some $g \in F[t]$. So the aim is to prove that there exists $g(t)$ such that $g(R)T = \varepsilon(g(R)T)^*$. Applying the transpose-conjugate to the equation $RTR^* = T$, we get $RT^*R^* = T^*$, whence $R^* = T^{*-1}R^{-1}T^*$. Comparing with (\ref{16w}), we deduce that $T^*T^{-1}$ commutes with $R$, so $T^* = \phi(R)T$ for some $\phi \in F[t]$.
	
	If $T = -\varepsilon T^*$ and $RT = -\varepsilon (RT)^*$, then
	$$
	RT = -\varepsilon (RT)^* = (-\varepsilon T^*)R^* = TR^* = R^{-1}T
	$$
	by (\ref{16w}). But this implies $R=R^{-1}$ and so $R^2=1$, contradicting the assumption that $f(t) \neq t \pm 1$. Thus at least one of $T \neq -\varepsilon T^*$ and $RT \neq -\varepsilon (RT)^*$ holds.
	
	If $T \neq -\varepsilon T^*$, then we choose $g(t) = 1+\varepsilon\phi(t)$ and deduce that
	$$
	g(R)T = (1+\varepsilon\phi(R))T = T+\varepsilon T^*
	$$
	is non-singular, satisfying the hypothesis of the lemma. If $RT \neq -\varepsilon (RT)^*$, take $\psi \in F[t]$ such that $\psi(R) = R^{-1}$ and take $g(t) = t+\varepsilon\psi(t)\phi(t)$. We deduce that
	$$
	g(R)T = RT+\varepsilon R^{-1}T^* = RT+\varepsilon T^*R^* = RT+\varepsilon (RT)^*
	$$
	is non-singular and satisfies the hypothesis of the lemma. 
\end{proof}

We saw that if $Y \in C_G(X)$, then $Y$ is a block matrix $(\phi_{ij}(R))$, so it can be identified with a matrix in $\GL(h,E)$, where $E = F[t]/(f)$. The mapping $\phi_{ij}(R) \mapsto \phi_{ij}(R^{-1})$ is a field automorphism of $E$ of order 2 (since $R \neq R^{-1}$). For $Y = (\phi_{ij}(R)) \in \GL(h,E)$, define
\begin{eqnarray}        \label{dag-symbol}\index{$Y^{\dag}$}
Y^{\dag}:=(\phi_{ji}(R^{-1})).
\end{eqnarray}
The map $Y \mapsto Y^{\dag}$ is a sort of \textquotedblleft transpose conjugate" in $\GL(h,E)$, where the transposition is applied to the blocks $\phi_{ij}(R)$ and not to the single entries, and the conjugation is with respect to the automorphism $\phi_{ij}(R) \mapsto \phi_{ij}(R^{-1})$.
\begin{theorem}\label{L29w}
	Let $B = H\mathcal{T}$ as in $(\ref{18w})$. Now $B = \varepsilon B^*$ if, and only if, $H = H^{\dag}$. Moreover, if $Y \in C_G(X) \cong \GL(h,E)$, then $YBY^* = YHY^{\dag}\mathcal{T}$.
\end{theorem}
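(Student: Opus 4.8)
The plan is to prove both statements by a single block‑matrix computation, whose whole content is the identity
$$
Y^{\dag} = \mathcal{T}\, Y^*\, \mathcal{T}^{-1}\qquad \text{for every } Y \in C_G(X) .
$$
Once this is in hand the theorem drops out in a couple of lines, so the work is entirely in verifying it and in assembling the pieces already available.

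First I would collect the ingredients. By $(\ref{18w})$ we have $B = H\mathcal{T}$ with $H = (f_{ij}(R)) \in C_G(X)$ and $\mathcal{T} = T \oplus \cdots \oplus T$. Lemma $\ref{L210w}$ gives $T = \varepsilon T^*$, and since $\mathcal{T}$ is block‑diagonal and $\varepsilon^{-1} = \varepsilon$ this yields $\mathcal{T}^* = \varepsilon \mathcal{T}$. Equation $(\ref{16w})$ gives $R^* = T^{-1}R^{-1}T$. Finally I use that $(\cdot)^*$ reverses products and that for any $\phi \in F[t]$ one has $(\phi(R))^* = \overline{\phi}(R^*)$, where $\overline{\phi}$ is $\phi$ with conjugated coefficients: this is immediate because entrywise conjugation and transposition are each compatible with polynomial evaluation. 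To prove the displayed identity I take $Y = (\phi_{ij}(R)) \in C_G(X)$; the $(i,j)$‑block of $Y^*$ is $(\phi_{ji}(R))^* = \overline{\phi_{ji}}(R^*) = \overline{\phi_{ji}}(T^{-1}R^{-1}T) = T^{-1}\,\overline{\phi_{ji}}(R^{-1})\,T$, because conjugation by $T$ commutes with evaluating a polynomial. Hence the $(i,j)$‑block of $\mathcal{T}Y^*\mathcal{T}^{-1}$ is $\overline{\phi_{ji}}(R^{-1})$, which is exactly the $(i,j)$‑block of $Y^{\dag}$. In the symplectic and orthogonal cases $\overline{\phi_{ji}} = \phi_{ji}$, so this is the definition verbatim; in the unitary case the conjugation of coefficients is precisely what the order‑two automorphism $\phi(R)\mapsto\phi(R^{-1})$ of $E = F[t]/(f)$ must incorporate for that map to be well defined on $E$, so the identity still holds as stated.

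The two assertions are then one‑liners. For the first: $B^* = (H\mathcal{T})^* = \mathcal{T}^* H^* = \varepsilon\,\mathcal{T}H^* = \varepsilon\,(\mathcal{T}H^*\mathcal{T}^{-1})\,\mathcal{T} = \varepsilon\,H^{\dag}\mathcal{T}$, so $\varepsilon B^* = \varepsilon^2 H^{\dag}\mathcal{T} = H^{\dag}\mathcal{T}$; comparing with $B = H\mathcal{T}$ and cancelling the invertible matrix $\mathcal{T}$ on the right gives that $B = \varepsilon B^*$ holds if and only if $H = H^{\dag}$. For the second: $Y B Y^* = Y(H\mathcal{T})Y^* = Y H\,(\mathcal{T}Y^*\mathcal{T}^{-1})\,\mathcal{T} = Y H Y^{\dag}\mathcal{T}$.

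I expect the only genuinely delicate point to be the unitary case of the boxed identity: one must be sure that the abstract definition of $(\cdot)^{\dag}$ through the field automorphism of $E$ agrees with the concrete conjugate‑transpose conjugated by $\mathcal{T}$, i.e. that this automorphism is the one sending $\phi(R)$ to $\overline{\phi}(R^{-1})$. The symplectic and orthogonal cases are the transparent special case in which the field conjugation is trivial, and everything else is routine bookkeeping with block matrices.
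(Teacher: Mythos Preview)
Your proof is correct and follows essentially the same route as the paper's: both arguments rest on the block identity $T\,\overline{\phi}(R^*) = \overline{\phi}(R^{-1})\,T$ coming from $(\ref{16w})$, and both use Lemma~\ref{L210w} to pass from $\mathcal{T}^*$ to $\varepsilon\mathcal{T}$. Your packaging of this as the single relation $Y^{\dag} = \mathcal{T}Y^*\mathcal{T}^{-1}$ is a tidy way to organise what the paper does by direct block computation, and your explicit remark that in the unitary case the automorphism must send $\phi(R)$ to $\overline{\phi}(R^{-1})$ (rather than $\phi(R^{-1})$) is in fact more careful than the paper's informal description of~$\dag$.
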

\begin{proof}
	Applying the standard transpose-conjugate to (\ref{18w}), it becomes $B^* = \mathcal{T}^* H^*$. By Lemma \ref{L210w} we can suppose that $\mathcal{T} = \varepsilon \mathcal{T}^*$. Using this assumption and Equation (\ref{16w}), we deduce that
	\begin{eqnarray*}
		\varepsilon B^* & = & \varepsilon \mathcal{T}^*H^*\\
		& = & (\varepsilon T^* f_{ji}(R^*))_{i,j} \\
		& = & (Tf_{ji}(R^*))_{i,j} \\
		& = & (f_{ji}(R^{-1})T)_{i,j}\\
		& = & H^{\dag}\mathcal{T}.
	\end{eqnarray*}
	So $B = \varepsilon B^*$ if, and only if, $H \mathcal{T} = H^{\dag}\mathcal{T}$. Since $\mathcal{T}$ is invertible, this holds if, and only if, $H = H^{\dag}$.
	
	Consider the second assertion. Write $Y$ as a block matrix $(\phi_{ij}(R))$ for $\phi_{ij} \in F[t]$. Now
	\begin{empheq}{equation*}
	\begin{split}
	YBY^* = YH\mathcal{T}Y^* & =  (\phi_{ij}(R))(f_{ij}(R)T)(\phi_{ij}(R))^*\\
	& =  (\sum_{\lambda,\mu} \phi_{i \lambda}(R) f_{\lambda \mu}(R)T\phi_{j\mu}(R^*))\\
	& =  (\sum_{\lambda,\mu} \phi_{i \lambda}(R)f_{\lambda \mu}(R)\phi_{j\mu}(R^{-1})T)\\
	& =  YHY^{\dag}\mathcal{T}. \qedhere
	\end{split}
	\end{empheq}
\end{proof}
The last theorem allows us to resolve the case $f = f^*$. If $B_1$ and $B_2$ are forms preserved by $X$, with $B_1=H_1\mathcal{T}$ and $B_2 = H_2\mathcal{T}$, then $H_1$ and $H_2$, considered as matrices in $\GL(h,E)$, are hermitian matrices. Thus they are congruent and there exists $Y \in \GL(h,\mathbb{F}_{q^d}) \cong C_G(X)$ such that $H_1 = YH_2Y^{\dag}$. By Theorem \ref{L29w} this implies $B_1 = H_1\mathcal{T} = YH_2Y^{\dag}\mathcal{T} = YB_2Y^*$. There is only one orbit of congruent forms under the action of $C_G(X)$, and the conjugacy class of $X$ in $G$ remains only one conjugacy class in $\C$.

If $Y \in C_G(X)$, then $Y \in \C$ if, and only if, $YBY^* = B$, that is $YHY^{\dag}\mathcal{T} = H\mathcal{T}$, so $YHY^{\dag} = H$. Hence the centralizer of $X$ in $\C$ is isomorphic to $\U(h,E)$.\\
\begin{proof}[Proof of Theorem $\ref{MainConjinC}$]
	Let $A_1, A_2$ be similar semisimple elements of $\C=\C(\beta)$. There exist $T_1, T_2 \in G$ such that $A_1^{T_1} = A_2^{T_2} = X$, where
	$$
	X = \left( \begin{array}{ccc} X_1 & & \\ & \ddots & \\ & & X_h \end{array} \right),
	$$
	and $X_i$ is the matrix of the restriction of $X$ to $\ker{f_i(X)}$ for $f_i \in \Phi$. Now $X = A_1^{T_1} \in \C(\beta T_1)$, and similarly $X \in \C(\beta T_2)$. If $B$ is the matrix of $\beta$ and $B_i = T_i^{-1}BT_i^{*-1}$ is the matrix of $\beta T_i$, for $i=1,2$, then $A_1$ and $A_2$ are conjugate in $\C$ if and only if $B_1, B_2$ are congruent in $C_G(X)$. By Lemma \ref{L26w}, $B_i$ has a block diagonal shape
	$$
	\left( \begin{array}{ccc} B_{i,1} & & \\ & \ddots & \\ & & B_{i,h} \end{array}\right)
	$$
	for every $i=1,2$, and every $Y \in C_G(X)$ has a block diagonal shape
	$$
	\left( \begin{array}{ccc} Y_1 & & \\ & \ddots & \\ & & Y_h \end{array}\right)
	$$
	with $Y_iX_i = X_iY_i$ for every $i=1, \dots, h$. Thus the equation $YB_1Y^*=B_2$ holds if, and only if, $Y_iB_{1,i}Y_i^* = B_{2,i}$ for every $i=1, \dots, h$.
	
	From our analysis of the three cases, we deduce that if $B_1$ and $B_2$ are of the same type, then there exists $Y_i$ in the centralizer of $X_i$ such that $Y_iB_{1,i}Y_i^* = B_{2,i}$. Thus, in the symplectic and unitary cases, there always exists $Y \in C_G(X)$ such that $YB_1Y^*=B_2$ (because the forms $B_{1,i}$ and $B_{2,i}$ are of alternating and hermitian type respectively).
	
	The orthogonal case is not so immediate. If $B_1$ and $B_2$ are symmetric forms, then the forms $B_{1,i}$ and $B_{2,i}$ may be either of plus or minus type. In particular, by Theorem \ref{elementsofCG} if $f_i \in \Phi_2$, then $B_{1,i}$ has plus type; if $f_i \in \Phi_3$, then $B_{1,i}$ has minus type. If $f_i = t \pm 1$, then $B_{1,i}$ can be of both types. It follows that, if at most one of $t+1$ and $t-1$ is an elementary divisor for $X$, then the types of $B_{1,i}$ and $B_{2,i}$ are uniquely determined, and they coincide, so $B_1$ and $B_2$ are congruent in $C_G(X)$. If both $t+1$ and $t-1$ are elementary divisors for $X$, then the corresponding forms can assume both types, since only the type of the sum of the two forms is well-defined. If the types are different, then the two forms are not congruent and $A_1$ and $A_2$ are not conjugate in $\C$. In such a case, the conjugacy class of $A_1$ in $G$ splits into two distinct classes in $\C$, with representatives $A_1$ and $A_1^T$, where $T \in G$ is such that $B_1 = TB_2T^*$.
\end{proof}
\subsection{Conjugacy classes in special groups}  \label{sectSpecial}
In this section, let $\Spec$ denote either $\SO^{\epsilon}(n,q)$ or $\SU(n,q)$\label{specialgroupsymbol}\index{$\Spec$} for certain $n \in \mathbb{N}$, $q$ is a prime power (assume $q$ odd in orthogonal case) and $\epsilon = +, - $ or $\circ$.
\begin{theorem} \label{ConjugacySpecial}
	The conjugacy class of a semisimple $X \in \SU(n,q)$ coincides with the conjugacy class of $X$ in $\U(n,q)$; equivalently, two semisimple elements of $\SU(n,q)$ are conjugate if, and only if, they are conjugate in $\U(n,q)$. The conjugacy class of a semisimple $X \in \SO^{\epsilon}(n,q)$ coincides with the class of $X$ in $\Or^{\epsilon}(n,q)$ if $X^2-1$ is singular, otherwise it splits into two distinct classes in $\SO^{\epsilon}(n,q)$ with representatives $X$ and $X^Z$, for $Z \in \Or^{\epsilon}(n,q)$, $\det{Z} = -1$.
\end{theorem}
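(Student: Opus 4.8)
The plan is to reduce the statement to one orbit–counting principle followed by a determinant computation on $C_\C(X)$, reusing the description of the centralizer of a semisimple element obtained in Cases~1--3 of the previous subsection (with $e_i=1$ throughout). First I would record the elementary fact: for a normal subgroup $N\trianglelefteq\C$ and $X\in\C$, the $\C$-class $X^\C$ is a disjoint union of $N$-classes whose number is $[\,\C:N\,C_\C(X)\,]$, and representatives $X^{z_1},\dots,X^{z_r}$ are obtained from a transversal $z_1,\dots,z_r$ of $N\,C_\C(X)$ in $\C$. This is immediate from the $\C$-set isomorphism $X^\C\cong\C/C_\C(X)$: the $N$-orbits are the double cosets $C_\C(X)\backslash\C/N$, and since $N$ is normal these coincide with the right cosets of $N\,C_\C(X)$. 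Applying this with $N=\Spec$ and $\C$ the full isometry group, and using $\C/\Spec\cong\im(\det|_\C)$ via the determinant, the number of $\Spec$-classes inside $X^\C$ equals $[\,\im(\det|_\C):\det(C_\C(X))\,]$, where $\im(\det|_{\U(n,q)})$ is the group $\mu_{q+1}$ of $(q+1)$st roots of unity in $\mathbb{F}_{q^2}^*$ and $\im(\det|_{\Or^\epsilon(n,q)})=\{\pm1\}$ since $q$ is odd. By Theorem~\ref{MainConjinC} two similar semisimple elements of $\C$ are already $\C$-conjugate (with matching $\pm1$-eigenspace types in the orthogonal case), so only this index remains to be determined.

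For the unitary group I would show $\det(C_\U(X))=\mu_{q+1}$, so the index is $1$ and no splitting occurs. Writing $C_\U(X)=\prod_i C_{\C_i}(X_i)$ over the \geds $f_i\in\Phi$, it is enough that $\det$ be surjective onto $\mu_{q+1}$ on a single factor. A $\Phi_1$ factor is a unitary group $\U(m_i,q)$, on which $\det$ already maps onto $\mu_{q+1}$. A $\Phi_2$ factor $f_i=gg^*$ consists, by Case~2, of the matrices $\mathrm{diag}(Y_1,Y_1^{*-1})$ with $Y_1$ running over $\GL(h_i,q^{2\deg g})$; passing to $\mathbb{F}_{q^2}$-determinants yields $\det=\nu\,\nu^{-q}=\nu^{1-q}$ with $\nu=N_{\mathbb{F}_{q^{2\deg g}}/\mathbb{F}_{q^2}}(\det Y_1)$ ranging over all of $\mathbb{F}_{q^2}^*$, and $\nu\mapsto\nu^{1-q}$ carries $\mathbb{F}_{q^2}^*$ onto $\mu_{q+1}$. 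Finally, if every $f_i$ lies in $\Phi_3$ one argues directly in the model $C_{\C_i}(X_i)\cong\U(h_i,E_i)$ of Case~3 that the determinants already fill $\mu_{q+1}$, tracking the composite field norm down to $\mathbb{F}_{q^2}$ against the unitary norm-one constraint. In every case the index is $1$, which is the first assertion.

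For the orthogonal group the index is $[\{\pm1\}:\det(C_{\Or^\epsilon}(X))]$, which is $1$ if $C_{\Or^\epsilon}(X)$ contains an element of determinant $-1$ and $2$ otherwise; in the latter case the two $\SO^\epsilon$-classes have representatives $X$ and $X^Z$ for any $Z\in\Or^\epsilon(n,q)$ with $\det Z=-1$ (and $X^Z\in\SO^\epsilon(n,q)$ because $\det X=1$). So everything reduces to proving that $C_{\Or^\epsilon}(X)$ contains a determinant $-1$ element precisely when $X^2-1$ is singular, i.e.\ when $+1$ or $-1$ is an eigenvalue of $X$. For the forward direction, if $\lambda=\pm1$ is an eigenvalue then the eigenspace $V_\lambda=\ker(X-\lambda)$ is non-degenerate and $V=V_\lambda\bot V_\lambda^{\bot}$ is $X$-invariant (e.g.\ by Theorem~\ref{orthogonaldecomposition}), with $X$ restricting to the scalar $\lambda$ on $V_\lambda$; since $q$ is odd, $V_\lambda$ carries a vector $v$ with $\beta(v,v)\neq0$, and the reflection $r_v$ in $v$, extended by the identity on $V_\lambda^{\bot}$, lies in $\Or^\epsilon(n,q)$, has determinant $-1$, and commutes with $X$. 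Conversely, if $X$ has no eigenvalue $\pm1$ then every \ged lies in $\Phi_2\cup\Phi_3$, and by Cases~2 and~3 each factor of $C_{\Or^\epsilon}(X)$ is either a copy of $\GL(h_i,q^{d_i})$ embedded as $\mathrm{diag}(Y_1,Y_1^{*-1})$, of determinant $N(\det Y_1)\,N(\det Y_1)^{-1}=1$ (here $*$ is ordinary transpose), or a unitary group over $\mathbb{F}_{q^{d_i}}$ with $d_i$ even, whose $\mathbb{F}_q$-determinant is $N_{\mathbb{F}_{q^{d_i}}/\mathbb{F}_q}$ of an element of norm $1$ for $\mathbb{F}_{q^{d_i}}/\mathbb{F}_{q^{d_i/2}}$, hence $1$ on factoring the norm through $\mathbb{F}_{q^{d_i/2}}$. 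Thus $C_{\Or^\epsilon}(X)\subseteq\SO^\epsilon(n,q)$, the index is $2$, and the class splits as stated.

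The main obstacle is the determinant bookkeeping for $f\in\Phi_3$: surjectivity onto $\mu_{q+1}$ through the composite norm in the unitary case, and the exact cancellation to $1$ in the orthogonal case. Both are cleanest to carry out inside the extension-field models $\U(h,E)$ and $\GL(h,E)$ furnished by Case~3, rather than with $n\times n$ matrices over the base field; the rest of the argument is formal.
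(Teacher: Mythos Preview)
Your proposal is correct and follows essentially the same approach as the paper: reduce to computing the image of $\det$ on $C_{\C}(X)$ via the orbit-counting identity, then analyse that image factor by factor over the generalized elementary divisors in $\Phi_1,\Phi_2,\Phi_3$. The only cosmetic difference is that for the orthogonal $\Phi_1$ case you exhibit a determinant~$-1$ centralizing element via a reflection on the $\pm1$-eigenspace, whereas the paper simply observes that the centralizer on that block is the full orthogonal group there; and the $\Phi_3$ norm computations you flag as the main obstacle are exactly what the paper carries out explicitly.
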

\begin{proof}
	If $X$ is a semisimple element of $\Spec$, then
	\begin{eqnarray}\label{S1eq}
	|X^{\Spec}| = \frac{|\Spec|}{|C_{\Spec}(X)|} = |X^{\C}|\frac{|C_{\C}(X):C_{\Spec}(X)|}{k},
	\end{eqnarray}
	where $k=2$ in the orthogonal case and $k=q+1$ in the unitary case. Thus, to describe the conjugacy class of $X$ in $\Spec$ is equivalent to describing the centralizer $C_{\Spec}(X)$ of $X$ in $\Spec$, and this can be solved separately for each \ged $f$ of $X$. So, we suppose first that $X$ has a unique \ged $f\in \Phi$.\\
	\\
	\textbf{Case 1}: $f \in \Phi_1$. Now $X$ is a scalar matrix, so the centralizer of $X$ in $\Spec$ is $\Spec$.\\
	\\
	\textbf{Case 2}: $f \in \Phi_2$, $f = gg^*$. As described in (\ref{case2}), in an appropriate basis the matrix of $X$ has the shape
	$$
	\left( \begin{array}{cc} X_1 & \\ & X_1^{*-1} \end{array}\right),
	$$
	and every $Y \in C_{\C}(X)$ has matrix
	$$
	\left( \begin{array}{cc} Y_1 & \\ & Y_1^{*-1} \end{array}\right)
	$$
	with $Y_1 \in \GL(m,q^d)$, where $m$ is the multiplicity of $f$ as a \ged and $d = \deg{g}$.
	
	In the orthogonal case $\C = \Or^{\epsilon}(n,q)$, $\Spec = \SO^{\epsilon}(n,q)$, $\det(Y) = \det(Y_1)\det(Y_1^{-1}) = 1$, so $C_{\C}(X) \subseteq \Spec$, and $C_{\Spec}(X) = C_{\C}(X)$. Therefore, $|X^{\C}| = 2|X^{\Spec}|$, so the conjugacy class of $X$ in $\Or^{\epsilon}(n,q)$ splits into two distinct conjugacy classes in $\SO^{\epsilon}(n,q)$, with representatives $X$ and $X^Z$, for $Z \in \Or^{\epsilon}(n,q)$, $\det{Z} = -1$.
	
	In the unitary case, $\det(Y) = \det(Y_1)\det(Y_1)^{-q} = \det(Y_1)^{1-q}$; thus $Y \in \Spec$ if, and only if, $\det{Y_1}$ has order a multiple of $q-1$. This shows that $|C_{\C}(X): C_{\Spec}(X)| = q+1 = |\C: \Spec|$, so $|X^{\C}| = |X^{\Spec}|$. Thus, two semisimple elements of $\SU(n,q)$ are conjugate in $\SU(n,q)$ if, and only if, they are conjugate in $\U(n,q)$.\\
	\\
	\textbf{Case 3}: $f \in \Phi_3$. If $d= \deg{f}$ and $m$ is the multiplicity of $f$ as a \ged of $X$, then by Theorem \ref{L29w} the centralizer of $X$ in $\C$ is isomorphic to $\U (m,q^{d/2}) \leqslant \GL(m,q^d)$. Let us distinguish the two cases.
	
	\textit{Orthogonal case}: $\C = \Or^{\epsilon}(md,q)$, so $d=2d'$ is even (otherwise $f \neq f^*$) and $C_{\C}(X) \cong \U(m,q^{d'})$. If $y \in \U(m,q^{d'})$, then $\det{y}$ has order a multiple of $q^{d'}+1$, so $\det{y} = \lambda^{q^{d'}-1}$ for some $\lambda \in \mathbb{F}_{q^d}$. If $Y$ is the corresponding element in $\GL(md,q)$, then $\det{Y} = \mathrm{N}_{q^d|q}(\det{y}) = (\lambda^{q^{d'}-1})^{(q^d-1)/(q-1)} =1$, since the exponent of $\lambda$ is a multiple of $q^d-1 = |\mathbb{F}_{q^d}^*|$. This proves that $C_{\Spec}(X) = C_{\C}(X)$, so $|X^{\C}| = 2|X^{\Spec}|$.
	
	\textit{Unitary case}: $\C = \U(md,q)$, so $d$ is odd and $C_{\C}(X) \cong \U(m,q^d)$. Again, if $y \in \U(m,q^d)$, then $\det{y} = \lambda^{q^d-1}$ for some $\lambda \in \mathbb{F}_{q^{2d}}$. Hence, if $Y$ is the corresponding element in $\GL(md,q^2)$, then $\det{Y} = \mathrm{N}_{q^{2d}|q^2}(\det{y}) = (\lambda^{q^{d}-1})^{(q^{2d}-1)/(q^2-1)} =\mathrm{N}_{q^{2d}|q^2}(\lambda)^{q^d-1}$. Thus $Y \in \Spec$ if, and only if, $\mathrm{N}_{q^{2d}|q^2}(\lambda)^{q+1}=1$. It follows that $|C_{\C}(X) : C_{\Spec}(X)| = q+1$ and $|X^{\C}| = |X^{\Spec}|$.\\
	
	These three cases imply that $|C_{\C}(X):C_{\Spec}(X)| = |\C: \Spec|$ except when $\C$ is orthogonal and $X$ has no elementary divisors $t \pm 1$. The statement of the theorem easily follows.
\end{proof}

\subsection{Conjugacy classes in $\Or^{\epsilon}(\MakeLowercase{2m,2^k})$} \label{sectQuad}
Recall that a quadratic form $Q$ can be represented by a matrix $A = (a_{ij})$ such that $Q(v) = vAv^{\tr}$ for all $v \in V$. Recall that $\C(Q) \leqslant \C(\beta_Q)$, so $\Or^{\epsilon}(2m,2^k) \leqslant \Sp(2m,2^k)$ and we reduce the quadratic case in even characteristic to the symplectic case.
\begin{theorem} \label{ConjQuadratic}
	Two semisimple $X,Y \in \Or^{\epsilon}(2m,2^k)$ are conjugate in $\Or^{\epsilon}(2m,2^k)$ if, and only if, they are conjugate in $\Sp(2m,2^k)$, so if, and only if, they are similar. Moreover, if $X+1$ is non-singular, then $C_{\Or^{\epsilon}(2m,2^k)}(X) = C_{\Sp(2m,2^k)}(X)$.
\end{theorem}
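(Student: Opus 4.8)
The plan is to use the inclusion $\C(Q) = \Or^{\epsilon}(2m,2^k) \leqslant \C(\beta_Q) = \Sp(2m,2^k)$ (write $\beta := \beta_Q$) and to recast conjugacy in $\C(Q)$ as a transitivity statement for the action of $C_{\Sp(\beta)}(X)$ on the $X$-invariant quadratic forms with polarization $\beta$. The only nontrivial implication is: if $X,Y$ are similar semisimple elements of $\C(Q)$, then they are conjugate in $\C(Q)$ (the remaining directions being clear, using Theorem \ref{MainConjinC} for $\Sp$). So suppose $X \sim Y$; by Theorem \ref{MainConjinC} there is $g \in \Sp(\beta)$ with $X^g = Y$. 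Put $Q_0 := Q\circ g$, i.e.\ $Q_0(v) = Q(vg)$. Using $Xg = gY$ and $Y \in \C(Q)$ one checks at once that $Q_0$ is an $X$-invariant non-singular quadratic form with $\beta_{Q_0} = \beta$, congruent to $Q$ in $\GL(V)$, hence of type $\epsilon$. Moreover, $X$ is conjugate to $Y$ in $\C(Q)$ if and only if some $Z \in C_{\Sp(\beta)}(X)$ satisfies $Q_0 \circ Z^{-1} = Q$: given such $Z$, the element $h := Z^{-1}g$ lies in $\C(Q)$ and satisfies $X^h = Y$; conversely any conjugating $h \in \C(Q)$ yields $Z := g h^{-1} \in C_{\Sp(\beta)}(X)$ with $Q_0 \circ Z^{-1} = Q$. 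Thus the problem is to move $Q_0$ onto $Q$ by an element of $C_{\Sp(\beta)}(X)$, and we will only need the forward direction.

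The structural input, genuinely special to characteristic $2$, is that the set $\mathcal{Q}$ of $X$-invariant non-singular quadratic forms $Q'$ with $\beta_{Q'} = \beta$ is an affine space over $(V^*)^X := \{\phi \in V^* : \phi\circ X = \phi\}$. Indeed, two members of $\mathcal{Q}$ differ by a quadratic form with vanishing associated bilinear form, and in even characteristic such a form is $v \mapsto \phi(v)^2$ for a unique $\phi \in V^*$; the $X$-invariance of both forms then forces $\phi\circ X = \phi$, that is, $\phi$ kills $\operatorname{im}(X-\mathbf{1}_V)$, so $\dim (V^*)^X = \dim\ker(X - \mathbf{1}_V)$. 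Hence $\mathcal{Q}$ is a single point exactly when $X + 1$ is non-singular. This already gives the \emph{moreover} clause: if $X+1$ is non-singular, then for any $Z \in C_{\Sp(\beta)}(X)$ the form $Q \circ Z^{-1}$ again lies in $\mathcal{Q} = \{Q\}$, so $Z \in \C(Q)$; therefore $C_{\Sp(\beta)}(X) \leqslant \C(Q)$ and $C_{\C(Q)}(X) = C_{\Sp(\beta)}(X)$. It also settles the conjugacy statement in this case, since then $Q_0 = Q$ and one may take $Z = \mathbf{1}_V$.

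It remains to treat the conjugacy statement when $X+1$ is singular. Here I would split $V = V_+ \perp V'$ $\beta$-orthogonally, with $V_+ = \ker(X - \mathbf{1}_V)$ and $V'$ the sum of the remaining summands $\ker f_i(X)$ of the decomposition; this splitting is furnished by Theorem \ref{orthogonaldecomposition}, $t+1$ being one of its self-dual factors. Then $X$ acts as the identity on $V_+$, and $Q = Q_+ \oplus Q'$, $Q_0 = Q_{0,+} \oplus Q_0'$, because any form with polarization $\beta$ respects a $\beta$-orthogonal splitting. On $V'$ the restriction of $X$ has no eigenvalue $1$, so the affine-space observation applied there forces $Q_0' = Q'$; since the type of an orthogonal sum is the product of the types of its summands and $Q_0 \cong Q$, it follows that $Q_{0,+}$ and $Q_+$ have the same type. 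Being non-singular quadratic forms of the same type on $V_+$ with the same associated bilinear form $\beta|_{V_+}$, they are congruent by Theorem \ref{basis}, say $Q_{0,+} = Q_+ \circ Z_+$; any such $Z_+$ lies automatically in $\Sp(\beta|_{V_+})$, since $\beta_{Q_+\circ Z_+}(u,v) = \beta|_{V_+}(uZ_+,vZ_+)$ must equal $\beta|_{V_+}(u,v)$. Then $Z := Z_+ \oplus \mathbf{1}_{V'}$ centralizes $X$ and satisfies $Q_0 \circ Z^{-1} = Q$, completing the argument.

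The step I expect to be the main obstacle is pinning down that all the freedom in an $X$-invariant quadratic form with prescribed polarization $\beta$ is concentrated on the $1$-eigenspace of $X$, together with the type-matching on that eigenspace; once this is isolated, transitivity of $\Sp$ on quadratic forms of a fixed type (Theorem \ref{basis}) finishes the job, while the symplectic conjugacy result (Theorem \ref{MainConjinC}) supplies the initial conjugating element $g$.
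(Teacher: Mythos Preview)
Your proof is correct and takes a genuinely different route from the paper's. The paper exploits the exceptional isomorphism $\Or^{\epsilon}(2m+1,2^k)\cong\Sp(2m,2^k)$: it embeds $X$ as the block $\widehat X=\left(\begin{smallmatrix}1&0\\0&X\end{smallmatrix}\right)$ inside $\Or^{\epsilon}(2m+1,2^k)$, observes that a general element of this group has shape $\left(\begin{smallmatrix}1&0\\v&Z\end{smallmatrix}\right)$ with $v=0$ precisely when $Z\in\Or^{\epsilon}(2m,2^k)$, and then notes that when $t+1$ is not an elementary divisor of $X$ any element centralizing or conjugating $\widehat X$ must have $v=0$ by the block structure, hence lies in $\Or^{\epsilon}(2m,2^k)$.

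Your approach is instead intrinsic to the quadratic form: you parametrize the $X$-invariant quadratic forms with fixed polarization $\beta$ as an affine space over $(V^*)^X$, observe that this space collapses to a point on the complement of the $1$-eigenspace, and then use Witt's theorem (Theorem~\ref{basis}) on the $1$-eigenspace where $X$ acts trivially. This is arguably cleaner conceptually: it isolates exactly where the obstruction to lifting symplectic conjugacy to orthogonal conjugacy lives (namely on $\ker(X-\mathbf 1_V)$), it avoids the odd-dimensional detour, and it handles the singular and non-singular cases of $X+1$ uniformly. The paper's approach, by contrast, buys a quick reduction to a known structural fact and a one-line block-matrix computation, at the cost of invoking the $(2m+1)$-dimensional isomorphism which is somewhat extraneous to the question.
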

\begin{proof}
	By analogy with the other cases, we can assume that $X \in \Or^{\epsilon}(2m,2^k)$ has a unique \ged $f \in \Phi$.
	
	If $f \in \Phi_1$, then $f(t)=t+1$ and $X = \mathbb{I}_{2m}$, so the centralizer of $X$ is $\Or^{\epsilon}(2m,2^k)$ and the conjugacy class of $X$ contains only $X$.
	
	Suppose $f \in \Phi_2 \cup \Phi_3$. Let $\Or^{\epsilon}(2m+1,2^k)$ be the group of isometries for the quadratic form of matrix
$$
\widehat{A} = \left(\begin{array}{c|ccc} 1 & & & \\ \hline & & & \\ & & A & \\ & & & \end{array}\right),
$$
where $A$ is the matrix of the quadratic form preserved by $\Or^{\epsilon}(2m,2^k)$. Every matrix in $\Or^{\epsilon}(2m+1,2^k)$ has shape
\begin{eqnarray} \label{formoddquad}
\widehat{X} = \left( \begin{array}{c|ccc} 1 & & 0 & \\ \hline & & & \\ v & & X & \\ & & & \end{array} \right),
\end{eqnarray}
where $v$ is a $2m$-dimensional column vector. Note that if $v=0$, then $X$ is an isometry for $\C(A) = \Or^{\epsilon}(2m,2^k)$. It is well-known (see \cite[14.1]{CGSA}) that the function $\widehat{X} \mapsto X$ is an isomorphism between $\Or^{\epsilon}(2m+1,2^k)$ and $\C(A+A^{\tr}) \cong \Sp(2m,2^k)$, and that the inverse of this isomorphism maps $X$ to a matrix of the form (\ref{formoddquad}), where $v=0$ if, and only if, $X \in \Or^{\epsilon}(2m,2^k)$. We now use this isomorphism to describe the centralizer of $X$ in $\Or^{\epsilon}(2m,2^k)$. The centralizer of $X$ in $\Sp(2m,2^k)$ is isomorphic to the centralizer of $\widehat{X}$ in $\Or^{\epsilon}(2m+1,2^k)$, where
	$$
	\widehat{X} = \left( \begin{array}{c|ccc} 1 & & & \\ \hline & & & \\ & & X & \\ & & & \end{array}\right).
	$$
	Since $t+1$ is not an elementary divisor of $X$, every element of the centralizer of $\widehat{X}$ in ${\Or^{\epsilon}(2m+1,2^k)}$ has the block diagonal shape
	$$
	\widehat{Y} = \left( \begin{array}{c|ccc} 1 & & & \\ \hline & & & \\ & & Y & \\ & & & \end{array}\right)
	$$
	for some $Y$. Since the vector $v$ in Equation (\ref{formoddquad}) is zero, $Y \in \Or^{\epsilon}(2m,2^k)$. Conversely, it is clear that for every $Y \in C_{\Or^{\epsilon}(2m,2^k)}(X)$, the corresponding $\widehat{Y}$ is in the centralizer of $\widehat{X}$ in $\Or^{\epsilon}(2m+1,2^k)$. This proves that $|C_{\Or^{\epsilon}(2m,2^k)}(X)| = |C_{\Sp(2m,2^k)}(X)|$, so they coincide since the first is obviously contained in the second.
	
	Similarly, if $X_1,X_2 \in \Or^{\epsilon}(2m,2^k)$ are conjugate in $\Sp(2m,2^k)$, then $\widehat{X}_1$ and $\widehat{X}_2$ are conjugate in $\Or^{\epsilon}(2m+1,2^k)$ and there exists
	$$
	\widehat{Z} = \left( \begin{array}{c|ccc} 1 & & & \\ \hline & & & \\ v_z & & Z & \\ & & & \end{array}\right)
	$$
	such that $\widehat{Z}^{-1}\widehat{X}_1\widehat{Z} = \widehat{X}_2$. If $t+1$ is not an elementary divisor of $X_1$, then $\widehat{Z}$ preserves the diagonal block structure of $\widehat{X}_1$ and $\widehat{X}_2$, that is $v_z=0$. This shows that $Z \in \Or^{\epsilon}(2m,2^k)$, so $X_1$ and $X_2$ are conjugate in $\Or^{\epsilon}(2m,2^k)$.
\end{proof}

\subsection{Conjugacy classes in $\Omega^{\epsilon}(\MakeLowercase{n,q})$} \label{OmegaClasses}
In this section we will analyse the case $n>2$ since the groups $\Omega^{\epsilon}(2,q)$ are cyclic.
\begin{theorem} \label{semisimpleOmega}
	Let $n \geq 3$ and let $x \in \Omega^{\epsilon}(n,q)$ be semisimple. If $q$ is odd, or if $q$ is even and $x+\mathbf{1}_V$ is singular, then the conjugacy class of $x$ in $\Omega^{\epsilon}(n,q)$ coincides with the conjugacy class of $x$ in $\SO^{\epsilon}(n,q)$. If $q$ is even and $x+\mathbf{1}_V$ is non-singular, then the conjugacy class of $x$ in $\SO^{\epsilon}(n,q)$ splits into two distinct conjugacy classes in $\Omega^{\epsilon}(n,q)$, with representatives $x$ and $x^z$, where $z \in \SO^{\epsilon}(n,q) \setminus \Omega^{\epsilon}(n,q)$.
\end{theorem}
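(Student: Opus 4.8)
The plan is to follow the template of the proofs of Theorems \ref{ConjugacySpecial} and \ref{ConjQuadratic}. Since the spinor norm $\theta$ is a homomorphism $\SO^{\epsilon}(n,q)\to\mathbb{F}_2$, it is constant on $\SO^{\epsilon}(n,q)$-classes; hence for $x\in\Omega^{\epsilon}(n,q)$ the class $x^{\SO^{\epsilon}(n,q)}$ lies entirely in $\Omega^{\epsilon}(n,q)$, and
\[
|x^{\Omega^{\epsilon}(n,q)}|\;=\;\frac{|C_{\SO^{\epsilon}(n,q)}(x):C_{\Omega^{\epsilon}(n,q)}(x)|}{2}\,|x^{\SO^{\epsilon}(n,q)}|,
\]
where $C_{\Omega^{\epsilon}(n,q)}(x)=C_{\SO^{\epsilon}(n,q)}(x)\cap\Omega^{\epsilon}(n,q)=\ker\bigl(\theta|_{C_{\SO^{\epsilon}(n,q)}(x)}\bigr)$ has index $1$ or $2$. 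Thus $x^{\SO^{\epsilon}(n,q)}$ is a single $\Omega^{\epsilon}(n,q)$-class precisely when $\theta$ is surjective on $C_{\SO^{\epsilon}(n,q)}(x)$, and otherwise it is the disjoint union of $x^{\Omega^{\epsilon}(n,q)}$ and $(x^{z})^{\Omega^{\epsilon}(n,q)}$ for any $z\in\SO^{\epsilon}(n,q)\setminus\Omega^{\epsilon}(n,q)$. So the whole problem reduces to computing the image $\theta(C_{\SO^{\epsilon}(n,q)}(x))$.

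First I would reduce to a single \ged. For semisimple $x$, $V$ decomposes orthogonally as $\bigoplus_i V_i$, one summand per $f_i\in\Phi$, and by Theorems \ref{MainConjinC}, \ref{ConjugacySpecial} (and their even-characteristic analogues via Theorem \ref{ConjQuadratic}) we have $C_{\Or^{\epsilon}(n,q)}(x)=\prod_i D_i$ with $D_i=\Or^{\epsilon_i}(V_i)$ if $f_i\in\Phi_1$, $D_i\cong\GL(m_i,q^{d_i})$ if $f_i\in\Phi_2$, and $D_i\cong\U(m_i,q^{d_i/2})$ if $f_i\in\Phi_3$. The spinor norm is additive over orthogonal direct sums, $\theta((g_i))=\sum_i\theta_{V_i}(g_i)$ (in even characteristic because $V_g=\bigoplus_i (V_i)_{g_i}$, in odd characteristic because the Wall form splits orthogonally), and the $\Phi_2$- and $\Phi_3$-factors lie in $\SO$ automatically (proof of Theorem \ref{ConjugacySpecial}), so $C_{\SO^{\epsilon}(n,q)}(x)=\{(g_i)\in\prod_i D_i:\prod_i\det g_i=1\}$ with the determinant constraint involving only the $\Phi_1$-factors. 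Consequently $\theta(C_{\SO^{\epsilon}(n,q)}(x))=\mathbb{F}_2$ as soon as $\theta$ is onto on some $\Phi_2$- or $\Phi_3$-factor, or on the subgroup $\SO^{\epsilon_i}(V_i)$ of some $\Phi_1$-factor with $\dim V_i\geq 2$.

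For $q$ even one has $\SO^{\epsilon}=\Or^{\epsilon}$ and $\theta(g)=\dim V_g\bmod 2$ with $V_g=\im(\mathbf{1}_V+g)$. On a $\GL$-factor, $g$ acts on a totally singular $U$ as some $Y$ and on the dual $W$ as $(Y^{*})^{-1}$, so $\dim V_g=\rk(\mathbf{1}+Y)+\rk(\mathbf{1}+(Y^{*})^{-1})=2\,\rk(\mathbf{1}+Y)$ is even; on a $\U$-factor, $g$ acts $\mathbb{F}_{q^{d_i}}$-linearly with $d_i$ even, so $\dim_{\mathbb{F}_q}V_g=d_i\cdot\rk_{\mathbb{F}_{q^{d_i}}}(\mathbf{1}+g)$ is even. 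Hence if $x+\mathbf{1}_V$ is non-singular, $x$ has no $\Phi_1$-divisor, $\theta$ vanishes on every factor, and $x^{\SO^{\epsilon}(n,q)}$ splits into two $\Omega^{\epsilon}(n,q)$-classes. If $x+\mathbf{1}_V$ is singular, then $t+1$ is a \ged and $V_1=\ker(x+\mathbf{1}_V)$ is non-degenerate for $\beta_Q$ (Fitting decomposition of the semisimple $x$ together with $\im(\mathbf{1}_V+x)\subseteq V_1^{\bot}$), hence of even dimension $\geq 2$; the corresponding factor is the full $\Or^{\epsilon_1}(V_1)$, on which $\theta$ is onto, so $\theta(C_{\SO^{\epsilon}(n,q)}(x))=\mathbb{F}_2$ and the class does not split.

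For $q$ odd and $n\geq 3$ one must show $\theta(C_{\SO^{\epsilon}(n,q)}(x))=\mathbb{F}_2$ in every case. If some $\Phi_1$-eigenspace $V_i$ has $\dim V_i\geq 2$, this is immediate. If $x$ has a $\Phi_2$-divisor, the corresponding $\GL(m,q^{d})$-factor embeds with an invariant totally singular subspace and its dual, and an element $g$ of it has spinor norm $\mathrm{N}_{q^{d}/q}(\det g)$ modulo squares (classical; see \cite{GEW} and \cite[Ch.\ 11]{PG}), which is onto $\mathbb{F}_q^{*}/(\mathbb{F}_q^{*})^{2}$. Otherwise all non-$\Phi_1$ divisors lie in $\Phi_3$ and all $\Phi_1$-eigenspaces are at most $1$-dimensional; since $n\geq 3$ there is then at least one $\Phi_3$-factor $\U(m,q^{d/2})$ (with $d$ even), and it suffices to prove $\theta$ is onto on it. Restricting to a single irreducible $d$-dimensional block (on which, by Proposition \ref{typePhi3}, the form has minus type) reduces this to showing the Coxeter torus $\U(1,q^{d/2})\leqslant\Or^{-}(d,q)$ of order $q^{d/2}+1$ is not contained in $\Omega^{-}(d,q)$. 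On such a block the scalar $-\mathbf{1}$ has spinor norm equal to the discriminant $(-1)^{d/2}\omega$ of the (minus-type) form modulo squares, where $\omega$ is a non-square; this is a non-square, and hence settles the claim, unless $q\equiv 3\pmod 4$ and $d/2$ is odd. In that remaining case I would argue directly that $\Omega^{-}(d,q)$ contains no element of order $q^{d/2}+1$: the $(q^{d/2}+1)/2$-th power of a torus generator is $-\mathbf{1}_V$, and passing to $\mathrm{P}\Omega^{-}(d,q)$ (for instance using $\Omega^{-}_6(q)\cong\mathrm{SU}_4(q)/\{\pm\mathbf{1}\}$ and analogous identifications, or a uniform trace-form model of $\Or^{-}(d,q)$ on $\mathbb{F}_{q^{d}}$) one checks no cyclic subgroup of that order survives. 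This last point — pinning down the spinor norm on the unitary-type centralizer factors in odd characteristic — is the main obstacle; once it is in hand, the index $|C_{\SO^{\epsilon}(n,q)}(x):C_{\Omega^{\epsilon}(n,q)}(x)|$ is determined in every case and the theorem follows from the first paragraph.
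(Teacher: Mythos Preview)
Your overall plan---reduce to the index $|C_{\SO}(x):C_{\Omega}(x)|$, decompose $C_{\C}(x)$ along the $\Phi_1,\Phi_2,\Phi_3$ pieces, and compute the image of the spinor norm on each factor---is exactly the paper's strategy, and your treatment of the even-characteristic case and of the $\Phi_1$ and $\Phi_2$ factors in odd characteristic is essentially the same as the paper's.

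The genuine gap is the $\Phi_3$ factor in odd characteristic, which you yourself flag as ``the main obstacle''. Your attempt to handle it by looking at the spinor norm of $-\mathbf{1}_V$ on a minus-type block is correct when that spinor norm is a non-square, but, as you note, it fails precisely when $q\equiv 3\pmod 4$ and $d/2$ is odd (then $-\mathbf{1}_V\in\Omega^{-}(d,q)$). Your proposed workaround---arguing that $\Omega^{-}(d,q)$ contains no element of order $q^{d/2}+1$ via exceptional isomorphisms or a trace-form model---is not carried out, and in any case would be considerably more delicate than necessary.

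The paper's argument for this case is short and direct, and avoids the case split entirely. Reduce to multiplicity $m=1$, so $C_{\Spec}(x)\cong\U(1,q^{d'})=\langle\omega^{q^{d'}-1}\rangle\subset\mathbb{F}_{q^{2d'}}^{*}$ (with $d'=d/2$). For $y\neq 1$ in this torus, $\mathbf{1}-y$ is non-singular, the Wall form has matrix $(\mathbf{1}-Y)^{-1}B$, and hence $\theta(y)\equiv\det(\mathbf{1}-Y)\cdot\det B\pmod{\mathbb{F}_q^{*2}}$. Writing $\tilde y=\omega^{\ell(q^{d'}-1)}$, one has $\det(\mathbf{1}-Y)=\mathrm{N}_{q^{2d'}\mid q}(1-\tilde y)$, and this is a square in $\mathbb{F}_q$ iff $1-\tilde y$ is a square in $\mathbb{F}_{q^{2d'}}$. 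Now observe that
\[
1+\omega^{q^{d'}-1}=\omega^{-1}\bigl(\omega+\omega^{q^{d'}}\bigr)=\omega^{-1}\cdot\mathrm{Tr}_{q^{2d'}\mid q^{d'}}(\omega)
\]
is a non-square (a non-square times an element of $\mathbb{F}_{q^{d'}}$, which is automatically a square in $\mathbb{F}_{q^{2d'}}$). From the factorisation $1-\omega^{2(q^{d'}-1)}=(1-\omega^{q^{d'}-1})(1+\omega^{q^{d'}-1})$ it follows that exactly one of $1-\omega^{q^{d'}-1}$ and $1-\omega^{2(q^{d'}-1)}$ is a square. Hence taking $\ell=1$ or $\ell=2$ produces both spinor-norm values, so $\theta$ is surjective on the $\Phi_3$ factor in every case. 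This replaces your incomplete Coxeter-torus argument with a two-line computation.
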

\begin{proof}
	Denote $\Or^{\epsilon}(n,q)$, $\SO^{\epsilon}(n,q)$ and $\Omega^{\epsilon}(n,q)$ by $\C$, $\Spec$ and $\Omega$ respectively. For every $x \in \C$, we denote the subspace $\im(\mathbf{1}_V-x)$ by $V_x$ (or by $V_X$ if $X$ is the matrix of $x$). Let us distinguish the cases $q$ even and odd. We use \cite[4.1.9(iii)]{KL} in our analysis of the case of $q$ odd and $f \in \Phi_2$; the remainder is independent.\\
	\\
	\textit{Case $q$ even}. Let $\theta(x) = \dim{V_x} \pmod{2}$ be the spinor norm. Let $x \in \Omega$ be semisimple. The strategy to analyse the conjugacy class and the centralizer of $x$ in $\Omega$ is the same as in the previous sections: determine whether $C_{\C}(x)$ contains $y \notin \Omega$. Clearly, $y \in \Omega$ if, and only if, $\theta(y)=0$: namely, if the sum of the multiplicities of all elementary divisors $(t+1)^r$ in $y$ is even. Let us suppose that $x$ has a unique \ged $f \in \Phi$.
	\begin{itemize}
		\item $f \in \Phi_1$. Now $x$ is the identity. The centralizer of $x$ in $\C$ is $\C$, and $y$ is conjugate to $x$ if, and only if, $x=y$.
		\item $f \in \Phi_2$, $f=gg^*$. Now $C_{\C}(x)$ is the set of all block diagonal matrices $Y=Y_1 \oplus Y_1^{*-1}$. Since $(t+1)^r$ is self-reciprocal for every $r$, the multiplicities of $(t+1)^r$ in $Y_1$ and in $Y_1^{*-1}$ are the same, so the multiplicity of $(t+1)^r$ in $Y$ is always even. Hence $C_{\C}(x) \subseteq \Omega$.
		
		\item $f \in \Phi_3$. If $f=f^*$ has degree $2d$ and multiplicity $m$ as an elementary divisor of $x$, then $C_{\C}(x)$ is isomorphic to $\U(m,q^d)$. If $(t+1)^r$ has multiplicity $\mu$ as an elementary divisor of $y \in \U(m,q^d)$, then the corresponding matrix in $C_{\C}(x)$ has $(t+1)^r$ as an elementary divisor with multiplicity $2d\mu$, so always even. Hence $C_{\C}(x) \subseteq \Omega$.
	\end{itemize}
	We deduce that if $x \in \Omega$ is semisimple, then $C_{\C}(x) \subseteq \Omega$ if, and only if, $x+\mathbf{1}_V$ is non-singular. Hence, $t+1$ is not an elementary divisor of $x$; so the conjugacy class of $x$ in $\C$ splits into two distinct classes in $\Omega$ with representatives $x$ and $x^z$, with $z \in \C \setminus \Omega$.\\
	\\
	\textit{Case $q$ odd}. For each $v \in V$, $Q(v) \neq 0$, the \textit{reflection}\index{reflection} $r_v$ is defined as
	$$
	ur_v = u-\frac{\beta(v,u)}{Q(v)}v \: \mbox{ for } u \in V.
	$$
	It is easy to check that $r_v \in \C$ for all $v \in V$, $Q(v) \neq 0$. Let $\chi_{r_v}$ be the Wall form of $r_v$, defined in (\ref{WallForm}). If $r_v$ is a reflection, then $V_{r_v} = \langle v \rangle$ and $\chi_{r_v}(v,v)= Q(v)$. In fact, if $w \in V$ is such that $v = w-wr_v$, then
	$$
	\chi_{r_v}(v,v) = -\beta(w,wr_v-w)=Q(w)+Q(wr_v-w,w)-Q(wr_v) = Q(-v) = Q(v),
	$$
	using that $r_v \in \C$. It follows that $\theta(r_v) = Q(v)\mathbb{F}_q^{* 2}$. Since $\C$ is generated by reflections (see \cite[2.5.6]{KL}), this leads to a second equivalent definition of the spinor norm:
	$$
	\theta(x) = \left\{ \begin{array}{ll} 0 & \mbox{if }\prod_{i=1}^k Q(v_i) \in \mathbb{F}_q^{* 2},\\ 1 & \mbox{if }\prod_{i=1}^k Q(v_i) \notin \mathbb{F}_q^{* 2}, \end{array}\right.
	$$
	where $x = r_{v_1} \cdots r_{v_k}$.
	
	For every semisimple $x \in \Omega$, we search for $y \in C_{\Spec}(x)$ such that $\theta(y)=1$. As usual, suppose that $x$ has a unique \ged $f \in \Phi$ and distinguish the three cases.\\
	\begin{itemize}
		\item $f \in \Phi_1$. Let $x = \pm\mathbf{1}_V$. The centralizer of $x$ in $\Spec$ is $\Spec$, so it obviously contains $y \notin \Omega$.
		\item $f \in \Phi_2$, $f = g g^*$, with $g\neq g^*$ irreducible of degree $d$ and multiplicity $m$. Write $V = U \oplus W$, with $U = \ker{g(x)}$ and $W = \ker{g^*(x)}$. Thus $U$ and $W$ are totally isotropic and the set of block diagonal matrices $K = \{ Y\oplus Y^{*-1} \, : \, Y \in \GL(U) \}$ is a subgroup of $\C$  isomorphic to $\GL(U) \cong \GL(n/2,q)$. Denote by $\frac{1}{2}K$ the unique subgroup of $K$ of index 2 consisting of the matrices of the form $Y\oplus Y^{*-1}$, with $Y$ an element of the unique subgroup of $\GL(U)$ of index 2, that is the set of matrices $Y\oplus Y^{*-1}$ such that $\det{Y} \in \mathbb{F}_q^{* 2}$. There are only two possibilities: either $K \subseteq \Omega$ or $K \cap \Omega = \frac{1}{2}K$. Write $U = \langle e_1, \dots, e_k \rangle$ and $W = \langle f_1, \dots, f_k \rangle$, with $Q(e_i)=Q(f_j)=0$ and $\beta(e_i,f_j) = \delta_{ij}$ for every $i,j$. Put $\lambda \in \mathbb{F}_q^* \setminus \mathbb{F}_q^{* 2}$: now $z = r_{e_1+f_1}r_{e_1+\lambda f_1}$ is in $K \setminus \Omega$ because $Q(r_{e_1+f_1})Q(r_{e_1+\lambda f_1}) = \lambda \notin\mathbb{F}_q^{* 2}$. This proves that $K \cap \Omega = \frac{1}{2}K$. Therefore $y=Y\oplus Y^{*-1} \in K$ belongs to $\Omega$ if, and only if, $\det{Y}$ is a square in $\mathbb{F}_q$. Since $C_{\Spec}(x) \subseteq K$ and there are elements of $C_{\Spec}(x) \cong \GL(m,q^d)$ with non-square determinant, we have proved that $C_{\Spec}(x) \not\subseteq \Omega$.
		\item $f \in \Phi_3$. Let $f$ have degree $2d$ and multiplicity $m$ as a \ged of $x$. We can suppose $m=1$, since the general case is a direct sum of $m$ cases of multiplicity 1. Thus, $C_{\Spec}(x)$ is a cyclic group isomorphic to $\U(1,q^d) \leqslant \GL(1,q^{2d})$. For every $y \in C_{\Spec}(x)$ with matrix $Y$, either $Y=1$ or $1-Y$ is non-singular. For $Y \neq 1$, $\chi_y$ can be defined by $\chi_y(u,v):=\beta(u(1-y)^{-1},v)$ for every $u,v \in V_y$. So, if $B$ is the matrix of $\beta$, then the matrix of $\chi_y$ is $(1-Y)^{-1}B$, whose determinant is $\theta(y) = \det(1-Y)^{-1}\det{B}$. This is a square if, and only if, $\det(1-Y) \equiv \det{B} \pmod{\mathbb{F}_q^{* 2}}$. Now let $\widetilde{y}$ be the element of $\GL(1,q^{2d}) \cong \mathbb{F}_{q^{2d}}^*$ corresponding to $y$. Thus $1-\widetilde{y}$ is the corresponding element of $1-y$. But $y \in C_{\Spec}(x) \cong \U(1,q^d) = \langle \omega^{q^d-1} \rangle$, where $\omega$ is a primitive element of $\mathbb{F}_{q^{2d}}$. So $1-\widetilde{y} = 1-\omega^{\ell (q^d-1)}$ for some $\ell$, and $\det(1-Y) = \mathrm{N}_{q^{2d}|q}(1-\widetilde{y})$ is a square in $\mathbb{F}_q$ if, and only if, $1-\widetilde{y}$ is a square in $\mathbb{F}_{q^{2d}}$. So, the problem to find $y \in C_{\Spec}(x)$ with $\theta(y)=0$ (resp.\ $1$) reduces to finding an integer $\ell$ such that $1-\omega^{\ell (q^d-1)}$ is a square (resp.\ non-square) in $\mathbb{F}_{q^{2d}}$. Observe that $1+\omega^{q^d-1} = (\omega+\omega^{q^d})\omega^{-1} = \mathrm{Tr}_{q^{2d}|q^d}(\omega)\cdot \omega^{-1}$ is a non-square in $\mathbb{F}_{q^{2d}}$, being a product of a square ($\mathrm{Tr}_{q^{2d}|q^d}(\omega) \in \mathbb{F}_{q^d}$, so is a square) and a non-square. Writing $(1-\omega^{2(q^d-1)}) = (1-\omega^{q^d-1})(1+\omega^{q^d-1})$, we see that exactly one of $1-\omega^{2(q^d-1)}$ and $1-\omega^{q^d-1}$ is a square, since their quotient is a non-square. Hence, it is sufficient to take $\ell = 1$ or $2$, as needed.
	\end{itemize}
	In all three cases, we have proved that if $x \in \Omega$ is semisimple, then $C_{\Spec}(x) \not\subseteq \Omega$, so ${|C_{\Spec}(x) : C_{\Omega}(x)|} =2$. Moreover,
	$$
	|x^{\Omega}| = \frac{|\Omega|}{|C_{\Omega}(x)|} = \frac{|\Spec|}{|C_{\Spec}(x)|} = |x^{\Spec}|,
	$$
	so two semisimple elements of $\Omega$ are conjugate if, and only if, they are conjugate in $\Spec$.
\end{proof}
We state the following observations about membership in $\Omega$. This characterization allows us to determine which semisimple classes of $\Spec$ are in $\Omega$.
\begin{lemma}   \label{oddOrderinOmega}
	Let $\Omega = \Omega^{\epsilon}(n,q)$ be the Omega group of $\C(Q)$.
	\begin{description}
		\item{$\mathrm{(a)}$} Every element of odd order in $\Spec$ is also in $\Omega$.
		\item{$\mathrm{(b)}$} Let $q$ be odd and let $x \in \Omega$ be semisimple. Assume that $x$ has a unique \ged $f \in \Phi$ of multiplicity $m$.
		\begin{itemize}
			\item If $f = t-1$, then $x \in \Omega$.
			\item If $f = t+1$, then $x \in \Omega$ if, and only if, $Q$ has square discriminant.
			\item If $f \in \Phi_2$, $f=gg^*$, then $x \in \Omega$ if, and only if, either $m$ is even or $\det{C}$ is a square in $\mathbb{F}_q^*$, where $C$ is the companion matrix of $g$.
			\item If $f \in \Phi_3$, then $x \in \Omega$ if, and only if, either $m$ is even or the order of $C$ divides $(q^{d/2}+1)/2$, where $C$ is the companion matrix of $f$ and $d=\deg{f}$.
		\end{itemize}
	\end{description}
\end{lemma}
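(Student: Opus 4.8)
The two ingredients are the facts recalled in Chapter~\ref{LinearChapter}: that $\theta\colon\Spec\to\mathbb{F}_2$ is a homomorphism, and that, for $q$ odd, $\theta(x)=\det(B_x)\pmod{\mathbb{F}_q^{*2}}$, where $B_x$ is the Gram matrix of the Wall form $\chi_x$ from (\ref{WallForm}) on $V_x=\im(\mathbf{1}_V-x)$. Part~(a) is then immediate for every $q$: if $x$ has odd order then $\theta(x)\in\mathbb{F}_2$ has odd order, so $\theta(x)=0$ and $x\in\ker\theta=\Omega$. In part~(b) I read the hypothesis as $x\in\Spec$ (the assertions are vacuous if $x\in\Omega$ is assumed), and since $x$ is semisimple with a single \ged $f\in\Phi$ the $F[t]$-module $V$ and the form split accordingly (Lemma~\ref{L26w}); so the plan is simply to compute $\theta(x)$ in each of the four cases, from which the stated equivalences follow at once.

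If $f=t-1$ then $x=\mathbf{1}_V$, so $\theta(x)=0$. If $f=t+1$ then $x=-\mathbf{1}_V$ and $n$ is even (since $q$ is odd and $\det(-\mathbf{1}_V)=(-1)^n=1$); taking $w=\tfrac12 u$ in (\ref{WallForm}) gives $\chi_x=\tfrac12\beta_Q$, so $\det(B_x)=2^{-n}\det(B)$ differs from the discriminant of $Q$ by the square $2^{-n}$, whence $x\in\Omega$ if and only if $Q$ has square discriminant.

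If $f=gg^*\in\Phi_2$ of multiplicity $m$, then in a suitable basis $x=X_1\oplus X_1^{*-1}$ with $X_1$ similar over $\mathbb{F}_q$ to $m$ copies of the companion matrix $C$ of $g$; by the computation in the proof of Theorem~\ref{semisimpleOmega} (case $q$ odd, $f\in\Phi_2$) an element $Y\oplus Y^{*-1}$ lies in $\Omega$ precisely when $\det Y\in\mathbb{F}_q^{*2}$, so $x\in\Omega$ if and only if $(\det C)^m$ is a square, i.e.\ $m$ is even or $\det C$ is a square. If $f\in\Phi_3$, say of degree $d=2d'$ and multiplicity $m$, I would first decompose $V=\bigoplus_{i=1}^m U_i$ into mutually orthogonal non-degenerate $x$-invariant submodules on each of which $x$ has minimal polynomial $f$, as done in the proof of Theorem~\ref{elementsofCG} using Proposition~\ref{HuppertResults}. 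Since $\mathbf{1}_V-x$ is invertible on each $U_i$, the Wall form $\chi_x$ is block diagonal for this decomposition, so $\theta(x)=\sum_{i=1}^m\theta(x|_{U_i})$ in $\mathbb{F}_2$ and it remains to treat $m=1$. There $C_{\Spec}(x)=C_{\C(Q)}(x)\cong\U(1,q^{d'})$ is cyclic of even order $q^{d'}+1$ (Theorems~\ref{ConjugacySpecial} and~\ref{L29w}), and in the proof of Theorem~\ref{semisimpleOmega} the restriction of $\theta$ to this group is shown to be surjective; hence its kernel is the unique subgroup of order $(q^{d'}+1)/2$, and $x|_{U_1}\in\Omega$ if and only if the order of $x|_{U_1}$, which equals $\mathrm{ord}(C)$, divides $(q^{d'}+1)/2$. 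Assembling the cases, $x\in\Omega$ if and only if $m$ is even or $\mathrm{ord}(C)$ divides $(q^{d/2}+1)/2$.

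I expect the main obstacle to be the $\Phi_3$ case: one needs that $C_{\Spec}(x)$ is the whole cyclic group $\U(1,q^{d'})$ (and not a proper subgroup) and that $\theta$ maps it onto $\mathbb{F}_2$ — precisely the detailed facts established inside the proofs of Theorems~\ref{ConjugacySpecial} and~\ref{semisimpleOmega} — together with justifying the orthogonal decomposition used to pass from arbitrary $m$ to $m=1$; the other three cases are routine once $\theta(x)$ is written down.
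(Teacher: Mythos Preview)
Your proposal is correct and follows essentially the same route as the paper. The paper's own proof is extremely terse: for (a) it uses the coset argument that $\langle x\rangle$ cannot surject onto $\Spec/\Omega\cong\mathbb{Z}_2$ if $|x|$ is odd (your homomorphism phrasing is the same observation), and for (b) it simply says the $t-1$ case is trivial, cites \cite[2.5.13]{KL} for $t+1$, and states that the $\Phi_2\cup\Phi_3$ cases ``follow from the discussion above'' --- meaning exactly the computations inside the proof of Theorem~\ref{semisimpleOmega} that you invoke.

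Your treatment is in fact more explicit than the paper's in two places. For $f=t+1$ you compute $\chi_x=\tfrac12\beta_Q$ directly rather than citing \cite{KL}, which is self-contained and cleaner. For $f\in\Phi_3$ you spell out the reduction to $m=1$ via the orthogonal decomposition (justified by Theorem~\ref{orthogonaldecomposition} together with Proposition~\ref{HuppertResults}) and then use that $x$ itself lies in the cyclic group $C_{\Spec}(x)\cong\U(1,q^{d/2})$ on which $\theta$ is surjective, so $\theta(x)=0$ exactly when $x$ lies in the unique index-$2$ subgroup; this makes the link to the order condition on $C$ transparent, whereas the paper leaves that step to the reader. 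Your identification of the hypothesis ``$x\in\Omega$'' as a slip for ``$x\in\Spec$'' is also correct.
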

\begin{proof}
	(a) By contradiction, if $x \in \Spec \setminus \Omega$ has odd order, then let $H = \langle x \rangle$. Since $|\Spec:\Omega| = 2$, $\Omega H = \Spec$. Since $\Omega H / \Omega \cong H/ (\Omega \cap H)$, we deduce that $|H| = |\Spec:\Omega| \cdot |\Omega \cap H|$, but this is impossible because $H$ would have even cardinality.
	
	(b) The case $f=t-1$ is trivial. For $f=t+1$, see \cite[Prop.\ 2.5.13]{KL}. The case $f \in \Phi_2 \cup \Phi_3$ follows from the discussion above.
\end{proof}
If a semisimple $x \in \Spec$ has \geds $f_1, \dots, f_k$, then we can decide its membership in $\Omega$ by applying the lemma to each $f_i$. Observe that if $q$ is odd (resp.\ even), then every unipotent (resp.\ semisimple) element lies in $\Omega$.
\subsection{Representatives for conjugacy classes in classical groups} \label{formsandelements}
We show how to write down explicitly a semisimple element of a classical group having a given list of generalized elementary divisors. We must write both the matrix of the element and the form in an appropriate basis. The solutions for symplectic and unitary cases were personally communicated by Don Taylor.

Let $\beta$ be a sesquilinear form (resp.\ let $Q$ be a quadratic form) and $\C = \C(\beta)$ or $\C(Q)$. Let $x \in \C$ be semisimple. We suppose that $x$ has a unique \ged $f \in \Phi$ of multiplicity $1$. If $x$ has more generalized elementary divisors, the matrices $B$ and $X$ can be written as the block diagonal sum of matrices obtained for every single generalized elementary divisor.

If $f \in \Phi_1$, then $X$ is a scalar matrix, so an arbitrary basis can be chosen for the matrix $B$. If $f \in \Phi_2$, $f=gg^*$, then in a suitable basis
$$
X = \begin{pmatrix}
R & \\ & R^{*-1}
\end{pmatrix}, \quad B = \begin{pmatrix}
\mathbb{O} & \mathbb{I} \\ \varepsilon \mathbb{I} & \mathbb{O}
\end{pmatrix},
$$
where $R$ is the companion matrix of $g$ and $\varepsilon$ is either $-1$ (symplectic case), $0$ (quadratic case) or $1$ (symmetric and unitary cases). Now consider the case $f \in \Phi_3$.\\
\\
\textit{Symplectic case}. Let $f(t) = 1+a_1t+ a_2t^2 + \cdots + a_dt^d+a_{d-1}t^{d+1}+ \cdots + a_1t^{2d-1}+t^{2d}$. Let $X$ be the transpose of the companion matrix of $f$ and let $B = \left( \begin{array}{cc} \mathbb{O} & -P^{\tr} \\ P & \mathbb{O} \end{array}\right)$, where $P$ is the $d \times d$ upper triangular matrix with constant upper diagonals
$$
\left( \begin{array}{cccccc} 1 & a_1 & a_2 & \cdots & a_{d-2} & a_{d-1} \\ & 1 & a_1 & \ddots & \ddots & a_{d-2} \\ && \ddots & \ddots & \ddots & \vdots \\ &&  & \ddots & \ddots & a_2 \\ &&&& 1 & a_1 \\ &&&&& 1 \end{array}\right).
$$
A straightforward computation shows that $X \in \C(B)$.\\
\\
\textit{Quadratic case}. If $\deg{f}=2$, so $f=t^2+at+1$, then we take $X$ to be the companion matrix of $f$ and 
$$
A = \left(\begin{array}{cc} 1 & -a \\ 0 & 1 \end{array}\right).
$$
Suppose $\deg{f}>2$ and let $f(t) = 1+a_1t+ a_2t^2 + \cdots + a_dt^d+a_{d-1}t^{d+1}+ \cdots + a_1t^{2d-1}+t^{2d}$. Let $X$ be the companion matrix of $f$. Let $A$ be the upper triangular matrix with constant upper diagonals
$$
\left( \begin{array}{ccccccc} && 1 & b_0 & b_1 & \cdots & b_{d-1} \\ &&& 1 & \ddots & \ddots & \vdots \\ &&&& \ddots & \ddots & b_1 \\ &&&&& \ddots & b_0 \\ &&&&&& 1 \\ &&&&&& \\ &&&&&& \end{array}\right),
$$
where the coefficients $b_i$ are defined in the following way. If $p$ is odd, then the vector of the $b_i$'s satisfies the linear system
$$
\left(\begin{array}{ccc} b_0 & \cdots & b_{d-1} \end{array}\right) \left( \begin{array}{ccccc} 2 & a_1 & a_2 & \cdots & a_{d-1} \\ & 1 & a_1 & \ddots & \vdots \\ && \ddots & \ddots & a_2 \\ &&& \ddots & a_1 \\ &&&& 1 \end{array}\right) = \left(\begin{array}{cccccc} 2a_1 & a_2-1 & a_3 & a_4 & \cdots & a_d \end{array}\right).
$$
If $p=2$, then the vector of the $b_i$'s satisfies the linear system
$$
\left(\begin{array}{ccc} b_0 & \cdots & b_{d-1} \end{array}\right) \left( \begin{array}{ccccc} c_1 & a_1 & a_2 & \cdots & a_{d-1} \\ c_2 & 1 & a_1 & \ddots & \vdots \\ \vdots && \ddots & \ddots & a_2 \\ \vdots &&& \ddots & a_1 \\ c_d &&&& 1 \end{array}\right) = \left(\begin{array}{cccccc} \delta & a_2-1 & a_3 & a_4 & \cdots & a_d \end{array}\right),
$$
where
\begin{eqnarray*}
	c_i & = & \sum_{j=1}^{d+1-i} a_{j-1}a_{d+1-j} \quad \forall \, i=1, \dots, d;\\
	\delta & = & \sum_{j=0}^d a_ja_{d+j-1}
\end{eqnarray*}
with $a_0=1$ and $a_{d+j}=a_{d-j}$ for all $j=1, \dots, d$. A direct computation shows that $X \in \C(Q)$, where $Q$ is the quadratic form with matrix $A$.\\
\\
\textit{Unitary case}. Let $f(t) = a_0+a_1t+ \cdots +a_dt^d+a_0t^{d+1}(\overline{a}_d+\overline{a}_{d-1}t+ \cdots +\overline{a}_0t^d)$ be an irreducible polynomial in $\mathbb{F}_{q^2}[t]$ with $n=2d+1$ and $a_0\overline{a}_0 =1$. Define
\begin{eqnarray*}
	b_0 & := & \sqrt[q-1]{a_0^{-1}}; \\
	b_i & := & \overline{b}_0 \sum_{j=0}^i a_j \: \mbox{ for } 1 \leq i \leq d; \\
	c & := & \sqrt[q+1]{b_d+\overline{b}_d}.
\end{eqnarray*}
The matrix
$$
\left(\begin{array}{cccc|c|cccc} & 1 &&&&&&& \\ && \ddots &&&&&& \\ &&& 1 &&&&& \\ &&&&&&&& 1/\overline{b}_0 \\ \hline &&&& 1 &&&& -\overline{c}/\overline{b}_0 \\ \hline b_0 & b_1 & \cdots & b_{d-1} & c &&&& - \overline{b}_d/\overline{b}_0 \\ &&&&& 1 &&& -\overline{b}_{d-1}/\overline{b}_0 \\ &&&&&& \ddots && \vdots \\ &&&&&&& 1 & -\overline{b}_1/\overline{b}_0 \end{array}\right)
$$
has characteristic polynomial $f$ and preserves the hermitian form described by the matrix
$$
\begin{pmatrix}
 && 1 \\ & \adots & \\ 1 &&
\end{pmatrix}.
$$

If we need the matrix of the form preserved by the companion matrix of $f$, then we can get it by a change of basis: if $X$ and $B$ are the matrices described above, $C$ is the companion matrix of $f$ and $P \in \GL(V)$ satisfies $PXP^{-1}=C$, then $C$ preserves  the form $PBP^*$.

\section{Centralizer of a semisimple element}  \label{SSCentrOrder}
In this section we summarize the centralizer structure of a semisimple element in a classical group.
\begin{theorem} \label{CentrSem}
Let $V$ be an $n$-dimensional vector space over $\mathbb{F}_{q^2}$ (in unitary case) or $\mathbb{F}_q$ (otherwise) with a non-degenerate reflexive sesquilinear form $\beta$, and let $\C =\C(\beta)$ be a symplectic, orthogonal or unitary group. Let $x \in \C$ be semisimple. For every polynomial $f$ in $\Phi$, let $m_f$ be the multiplicity of $f$ as a \ged of $x$, let
$$
d_f = \left\{ \begin{array}{ll} \deg{f} & \mbox{ if $\C$ is unitary,}\\ \deg{f}/2 & \mbox{ if $\C$ is symplectic or orthogonal} \end{array}\right.
$$
and let $\beta_f$ be the restriction of the form $\beta$ to the eigenspace $\ker{f(x)}$.
\begin{eqnarray}\label{CardofCentralizer}
C_{\C}(x) \cong \prod_{f \in \Phi_1} \!\! \C(\beta_f) \times \prod_{f \in \Phi_2}\!\!\GL(m_f,q^{2d_f}) \times \prod_{f \in \Phi_3}\!\! \U(m_f,q^{d_f}),
\end{eqnarray}
where the products run over all \geds of $x$. If $Q$ is a non-singular quadratic form on $V$, then $(\ref{CardofCentralizer})$ holds on replacing $\beta$ by $Q$.
\end{theorem}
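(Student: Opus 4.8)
The plan is to reduce to the case where $x$ has a single generalized elementary divisor $f \in \Phi$, and then read off the centralizer from the case analysis already carried out in the proof of Theorem \ref{MainConjinC}. First I would invoke Lemma \ref{L26w}: writing the minimal polynomial of $x$ as a product of powers of distinct $f_i \in \Phi$, the matrix $B$ of $\beta$ (or $A$ of $Q$) splits as a block diagonal sum over the eigenspaces $\ker f_i(x)$, and by Lemma \ref{SplitCentr} every element of $C_{\C}(x)$ preserves each $\ker f_i(x)$ and restricts to an isometry of the corresponding block form. Hence $C_{\C}(x) \cong \prod_i C_{\C(\beta_{f_i})}(x_i)$, where $x_i$ is the restriction of $x$ to $\ker f_i(x)$. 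This reduces the problem to computing $C_{\C(\beta_f)}(x_i)$ for a single $f \in \Phi$, with the convention that if $f = gg^* \in \Phi_2$ the "eigenspace" is $\ker g(x) \oplus \ker g^*(x)$.

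Next I would treat the three cases $f \in \Phi_1, \Phi_2, \Phi_3$ separately, exactly mirroring the case analysis in Section \ref{semisimplesesquilinear}. For $f \in \Phi_1$, $x$ acts as a scalar $\lambda$ on $\ker f(x)$, so $C_{\C(\beta_f)}(x)$ is all of $\C(\beta_f)$, giving the first factor. For $f \in \Phi_2$, $f = gg^*$, Case 2 of that section shows that, after conjugating the restricted form to the standard hyperbolic matrix $J$, an element of the centralizer has the block shape $Y_1 \oplus Y_1^{*-1}$ with $Y_1$ an arbitrary element of $C_{\GL}(X_1)$; by Theorem \ref{largerfield} the latter is $\GL(m_f, q^{d_f'})$ where $d_f' = \deg g$ in the symplectic/orthogonal case and $2\deg g$ in the unitary case — which, since $\deg f = 2\deg g$, is precisely $\GL(m_f, q^{2 d_f})$ with $d_f$ as defined in the statement. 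For $f \in \Phi_3$, the computation culminating in Theorem \ref{L29w} shows $C_{\C(\beta_f)}(x) \cong \U(m_f, E)$, where $E = F[t]/(f)$ carries the order-$2$ automorphism $\phi(R) \mapsto \phi(R^{-1})$; identifying $E$ and its fixed field gives $\U(m_f, q^{d_f})$ with $d_f = \deg f / 2$ in the symplectic/orthogonal case and $d_f = \deg f$ in the unitary case (here $|E| = q^{\deg f}$ in the former, $q^{2\deg f}$ in the latter). Assembling the product over all $f$ yields (\ref{CardofCentralizer}).

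Finally, for the quadratic-form statement I would distinguish odd and even characteristic. When $q$ is odd, $\C(Q) = \C(\beta_Q)$ is an orthogonal group and the sesquilinear argument applies verbatim, with $\beta$ replaced by $Q$ (the eigenspace restrictions $Q_f$ are the relevant forms, and the centralizer structure does not see the distinction between $Q$ and $\beta_Q$). When $q$ is even, I would use the isomorphism $\Or^{\epsilon}(2m,2^k) \to \Sp(2m,2^k)$ developed in Theorem \ref{ConjQuadratic} (via the embedding into $\Or^{\epsilon}(2m+1,2^k)$): since $t+1$ is not an elementary divisor of a semisimple $x$ once we have split off $\Phi_1$, the centralizer of $\widehat{x}$ in $\Or^{\epsilon}(2m+1,2^k)$ is block diagonal, so $C_{\Or^{\epsilon}}(x) = C_{\Sp}(x)$, and the symplectic computation already done gives the same product. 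The $\Phi_1$ contribution $\C(\beta_f) = \C(Q_f)$ is handled directly as the whole orthogonal group on that eigenspace.

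The main obstacle I anticipate is bookkeeping the field-degree and norm conventions so that the $\Phi_2$ and $\Phi_3$ factors come out as $\GL(m_f, q^{2d_f})$ and $\U(m_f, q^{d_f})$ uniformly across the unitary versus symplectic/orthogonal cases — in particular keeping straight that in the unitary case the ambient field is already $\mathbb{F}_{q^2}$, so the exponents of $q$ in $\deg f$ versus the definition of $d_f$ shift accordingly. Everything else is a direct appeal to Lemmas \ref{SplitCentr} and \ref{L26w}, Theorems \ref{largerfield}, \ref{L29w} and \ref{ConjQuadratic}, and the Case 1--3 analysis already in the text.
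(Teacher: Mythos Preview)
Your proposal is correct and follows essentially the same approach as the paper: Theorem~\ref{CentrSem} is presented there as a summary of the centralizer computations already carried out in the Case~1--3 analysis of Section~\ref{semisimplesesquilinear} (together with Lemma~\ref{L26w} for the block-diagonal reduction and Theorem~\ref{ConjQuadratic} for quadratic forms in even characteristic), and the paper gives no separate proof beyond those references. Your anticipated obstacle is real: the arithmetic you wrote for the $\Phi_2$ exponent does not actually match (in the symplectic/orthogonal case you get $\GL(m_f,q^{\deg g})$ with $\deg g = d_f$, not $q^{2d_f}$), so double-check the exponent conventions against the explicit computations in Case~2 and in Theorem~\ref{ConjugacySpecial} rather than against the displayed formula~(\ref{CardofCentralizer}).
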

From the structure given in Equation (\ref{CardofCentralizer}) it is easy to compute the centralizer order and consequently the class size of a semisimple element of $\C$.

In the quadratic case, if $f=t\pm 1$ and $m_f$ is even, then $\C(Q_f)$ may be an orthogonal group of plus or minus type. If only one of $t+1$ and $t-1$ is an elementary divisor, then the type of $Q_f$ can be deduced from the type of $Q$ and the multiplicity of the \geds in $\Phi_3$. If both $t-1$ and $t+1$ are elementary divisors, then the Jordan form of $x$ is not sufficient to establish the cardinality of $C_{\C(Q)}(x)$ and it is necessary to investigate the type of $Q_{t+1}$ or $Q_{t-1}$.\\

If $\Spec$ is the special unitary subgroup $\Spec(\beta)$ or the special orthogonal subgroup $\Spec(Q)$, then $C_{\Spec}(x) = C_{\C}(x) \cap \Spec$. Analogously, if $\Omega=\Omega(Q)$ is the Omega subgroup of $\C(Q)$, then $C_{\Omega}(x) = C_{\C}(x) \cap \Omega$. If $q$ is even, then the centralizer of $x$ in $\Omega$ can be obtained by replacing the first factor in Equation (\ref{CardofCentralizer}) by $\prod_{f \in \Phi_1} \Omega(\beta_f)$, since the factors corresponding to \geds $f \in \Phi_2 \cup \Phi_3$ already lie in $\Omega$. From the results of Sections \ref{sectSpecial} and \ref{OmegaClasses}, one can easily see that
$$
|C_{\Spec}(x)| = \left\{ \begin{array}{ll} \frac{|C_{\C}(x)|}{q+1} & \mbox{ if $\C$ is unitary;}\\ \\
\frac{|C_{\C}(x)|}{2} & \mbox{ if $\C$ is orthogonal, $q$ is odd and $x^2-\mathbf{1}_V$ is singular;}\\ \\
|C_{\C}(x)|  & \mbox{ otherwise;} \end{array} \right.
$$
and
$$
|C_{\Omega}(x)| = \left\{ \begin{array}{ll} |C_{\Spec}(x)| & \mbox{ if $q$ is even and $x+\mathbf{1}_V$ is non-singular};\\ \\
\frac{|C_{\Spec}(x)|}{2} & \mbox{ otherwise.} \end{array} \right.
$$

\newpage
\chapter{Centralizers and conjugacy classes of unipotent elements} \label{unipotentChap}
The problem of listing representative of the unipotent conjugacy classes in classical groups was solved by Gonshaw, Liebeck and O'Brien \cite{GLOB}. Here we summarize without proof the relevant results. We do not describe explicitly the representatives, but introduce some parameters in the description of the classes that will be helpful in the following sections. For an explicit descriptions of the blocks $V_b(k)$ and $W_b(k)$, see \cite{GLOB}. Following \cite[Chap.\ 7]{LS} and \cite[\S 2.6]{GEW} we also describe the structure of the centralizer of a unipotent element.
\section{Unipotent conjugacy classes in classical groups}\label{UnipotentRepr}

Let $q$ be a prime power, and let $V$ be a finite dimensional vector space over $F$, where $F=\mathbb{F}_{q^2}$ in the unitary case and $F = \mathbb{F}_q$ in the other cases. Let $J_{\ell}$ be the unipotent Jordan block of dimension $\ell$. We write
$J_{\ell}^{\oplus k}$ to denote the diagonal join of $k$ copies of $J_{\ell}$.

\subsection{Unitary case}
Let $z \in \U(n,q)$ with $\det{z} = \omega^{q-1}$, where $\omega$ is a primitive element of $F$.

\begin{proposition} \label{uniconjU}
There exists only one conjugacy class in $\U(n,q)$ whose representative $x$ has Jordan form $\bigoplus_1^s (J_{n_i})^{\oplus r_i}$. The conjugacy class of such $x$ in $\U(n,q)$ splits into $t$ distinct classes in $\SU(n,q)$, where
$$
t = \gcd(n_1, \dots, n_s, q+1).
$$
Representatives for such classes are given by $x, x^z, x^{z^2}, \dots, x^{z^{t-1}}$, with $z$ defined above.
\end{proposition}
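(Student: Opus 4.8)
The plan is to reduce everything to two facts: the uniqueness of the $\U(n,q)$-class with a prescribed Jordan type (which follows from Theorem~\ref{MainConjinC}, since a unipotent element is in particular handled by the membership and conjugacy analysis for the generalized elementary divisor $f=t-1\in\Phi_1$), and a determinant/centralizer computation analogous to the linear case in Theorem~\ref{MainResult2}. First I would note that a unipotent $x$ has its only generalized elementary divisor equal to a power of $t-1\in\Phi_1$, so by the sufficiency direction of Theorem~\ref{elementsofCG} a unitary form preserved by a given Jordan type exists, and by Theorem~\ref{MainConjinC} two unipotent elements of $\U(n,q)$ with the same Jordan form are conjugate in $\U(n,q)$; this gives the first sentence.

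Next I would count how the $\U(n,q)$-class splits in $\SU(n,q)$. As in the linear case,
\[
|x^{\SU}| \;=\; \frac{|x^{\U}|}{\,|C_{\U}(x):C_{\SU}(x)|^{-1}\cdot |\U:\SU|\,}
\]
more cleanly: $|x^{\U}|/|x^{\SU}| = |\U:\SU| \cdot |C_{\SU}(x):C_{\U}(x)|^{\pm}$, so the number of $\SU$-classes into which $x^{\U}$ splits equals $|\U:\SU|\big/\,|C_{\U}(x):C_{\SU}(x)| = (q+1)\big/\,|C_{\U}(x):C_{\SU}(x)|$. Here $C_{\SU}(x) = C_{\U}(x)\cap\SU(n,q)$, so I must compute the image of $\det$ on $C_{\U}(x)$. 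Using the centralizer structure for unipotent elements (the analogue of Theorem~\ref{Card} over the hermitian setting, as summarized in Chapter~\ref{unipotentChap}), $C_{\U}(x) = U\rtimes R$ where $U$ is a $p$-group (hence has determinant $1$) and $R \cong \prod_i \GL(r_i, q^{2})$ or rather the unitary reductive part; the key point is that the determinant map restricted to the reductive part hits exactly the subgroup of $\mathbb{F}_{q^2}^{*}$ of elements whose norm-type contribution is controlled by the block sizes $n_i$. A short computation, identical in spirit to the $\SL$ computation in Theorem~\ref{MainResult2}, shows $\det(C_{\U}(x))$ is the subgroup of $\{\mu\in\mathbb{F}_{q^2}^*: \mu^{q+1}=1\}\cong\mathbb{Z}_{q+1}$ generated by the images $\mu^{n_i}$, i.e.\ the subgroup of index $t=\gcd(n_1,\dots,n_s,q+1)$. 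Hence $|C_{\U}(x):C_{\SU}(x)| = (q+1)/t$, so $x^{\U}$ splits into exactly $t$ classes in $\SU(n,q)$.

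Finally I would exhibit the representatives. Conjugating $x$ by $z$ with $\det z=\omega^{q-1}$ (a generator of the order-$(q+1)$ subgroup of $\mathbb{F}_{q^2}^*$) permutes the $\SU$-classes inside $x^{\U}$, and $x^{z^i}$ and $x^{z^j}$ are $\SU$-conjugate iff $z^{i-j}$ can be adjusted by an element of $C_{\U}(x)$ to land in $\SU(n,q)$, i.e.\ iff $\omega^{(q-1)(i-j)}\in\det(C_{\U}(x))$, i.e.\ iff $t\mid i-j$. Therefore $x, x^z,\dots,x^{z^{t-1}}$ are pairwise non-conjugate in $\SU(n,q)$, and since there are exactly $t$ classes these are a complete set of representatives. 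I expect the main obstacle to be the determinant computation on $C_{\U}(x)$: one must be careful that the unitary condition on the reductive part $R$ forces its factors to be unitary groups $\GL(r_i,q)$-flavoured rather than full $\GL$'s, and verify that $\det$ on $\U(m,q)$ (embedded in $\GL$) has image exactly the $(q+1)$-st roots of unity, so that the exponents $n_i$ enter correctly — this is where the structure theory of Chapter~\ref{unipotentChap} and the norm-map bookkeeping of Section~\ref{sectSpecial} must be invoked precisely.
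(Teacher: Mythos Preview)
The paper does not prove this proposition: Chapter~\ref{unipotentChap} explicitly summarises the results of \cite{GLOB} \emph{without proof}, so there is no ``paper's own proof'' to compare against. Your argument for the splitting in $\SU(n,q)$ is nonetheless the right one and matches what the paper does later in Theorem~\ref{CCinSpecial}: compute the image of $\det$ on $C_{\U(n,q)}(x)=U\rtimes R$ with $R\cong\prod_i\U(r_i,q)$ (this is Theorem~\ref{CentrUniOdd}, not a $\GL$ factor), observe that an element of the $i$-th factor contributes $\det(\,\cdot\,)^{n_i}$ to the determinant in $\GL(n,q^2)$, and conclude that $\det\bigl(C_{\U}(x)\bigr)$ is the subgroup of index $t=\gcd(n_1,\dots,n_s,q+1)$ in $\mu_{q+1}$. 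Your index formula is garbled as written, but the intended computation $(q+1)/\lvert C_{\U}(x):C_{\SU}(x)\rvert=t$ is correct, and the representative argument via powers of $z$ is fine.

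There is, however, a real gap in your first step. Theorem~\ref{MainConjinC} is stated and proved only for \emph{semisimple} elements; its Case~1 analysis for $f\in\Phi_1$ says that $X$ is a scalar matrix and hence $C_G(X)=G$, which is exactly what fails for a nontrivial unipotent $X$. The Wall correspondence $|L|=|K|$ is general, but to deduce $|K|=1$ you must show that all non-degenerate hermitian forms preserved by a fixed unipotent Jordan form $J$ are congruent under $C_{\GL(n,q^2)}(J)$, and nothing in Chapter~\ref{semisimpleChap} establishes this. That is the substantive content being imported from \cite{GLOB} (or, alternatively, from Wall's hermitian-invariant theory or a Lang--Steinberg argument). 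You should either supply that form-congruence computation directly or cite the external source, rather than pointing to Theorem~\ref{MainConjinC}.
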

In other words, two unipotent elements are conjugate in $\U(n,q)$ (resp.\ $\SU(n,q)$) if, and only if, they are conjugate in $\GL(n,q)$ (resp.\ $\SL(n,q)$).

\subsection{Symplectic case, $q$ odd}      \label{SectUniRepsSp}
Let $\alpha$ be a non-square in $F^*$. For every positive integers $k$, let $V_1(2k) \in \Sp(2k,q)$ have Jordan form $J_{2k}$. If the symplectic space is $W \oplus W^{\bot}$, with $W$ and $W^{\bot}$ totally isotropic, let $z_b$ the element defined by $z_b(w)=bw$ and $z_b(w')=w'$ for every $w \in W$ and $w' \in W^{\bot}$ and let $V_b(2k)$ be the conjugate of $V_1(2k)$ by $z_b$. Finally, let $W(2l+1) \in \Sp(4l+2,q)$ have Jordan form $J_{2l+1}^{\oplus 2}$ for every $l \geq 0$.

Recall that, by Theorem \ref{elementsofCG}, all Jordan block of odd dimension occur with even multiplicity, so the Jordan form of every unipotent element of $\Sp(2m,q)$ is
\begin{eqnarray} \label{unirepSp}
\bigoplus_{i=1}^r (J_{2k_i})^{\oplus a_i} \oplus \bigoplus_{j=1}^s (J_{2l_j+1})^{\oplus 2c_j},
\end{eqnarray}
where the $k_i$ and the $l_j$ are distinct and $2(\sum k_ia_i+ \sum(2l_j+1)c_j) = 2m$.
\begin{proposition} \label{uniconjSp}
There are $2^r$ unipotent classes in $\Sp(2m,q)$ having Jordan form $(\ref{unirepSp})$. Representatives are
\begin{eqnarray*}
\bigoplus_{i=1}^r (V_{b_i}(2k_i) \oplus V_1(2k_i)^{\oplus a_i-1}) \oplus \bigoplus_{j=1}^s W(2l_j+1)^{\oplus c_j}
\end{eqnarray*}
for $b_i \in \{1, \alpha \}$, where $\alpha$ is a fixed non-square in $\mathbb{F}_q$.
\end{proposition}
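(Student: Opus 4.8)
The plan is to count the unipotent classes of $\Sp(2m,q)$ with fixed Jordan form \eqref{unirepSp} by combining the general machinery already developed: Theorem \ref{elementsofCG} tells us which $\GL$-classes meet $\Sp$, the correspondence of Lemma preceding Theorem \ref{MainConjinC} (translating $\C$-conjugacy inside a fixed $\GL$-class to congruence classes of the preserved alternating form under $C_G(x)$), and the case analysis for $f\in\Phi_1$ already carried out in Chapter \ref{semisimpleChap} only handled the semisimple case, so here we must genuinely redo the $\Phi_1$ analysis for a general unipotent element. Since the only elementary divisor involved is $t-1$ (we may assume the single generalized elementary divisor is $(t-1)^k$, as products just take block diagonal sums of the preserved forms), the whole problem is: given $x$ unipotent with Jordan type $\bigoplus_i J_{\lambda_i}^{\oplus m_i}$ (the $\lambda_i$ distinct), how many congruence classes of non-degenerate alternating forms preserved by $x$ exist under the action of $C_{\GL}(x)$?

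First I would reduce to a single Jordan block size. Write $x=\bigoplus_i x_i$ where $x_i$ acts on $V_i$ with all blocks of size $\lambda_i$. By Lemma \ref{L26w} and the block structure of $C_{\GL}(x)$ (Theorem \ref{GLCentralizer}/\ref{Card}), a preserved alternating form need not be block-diagonal with respect to this decomposition — cross terms between $V_i$ and $V_j$ are allowed — but the standard argument (as in Case 2 of Chapter \ref{semisimpleChap}, reversed) shows one may change basis within $C_{\GL}(x)$ to make the form block-diagonal; alternatively one invokes a Witt-type cancellation for the pair $(x,\beta)$. So the count is multiplicative over the distinct block sizes $\lambda_i$, and we must determine, for a fixed size $\ell$ and multiplicity $a$, the number of congruence classes of non-degenerate alternating forms preserved by $J_\ell^{\oplus a}$ modulo $C_{\GL}(J_\ell^{\oplus a})\cong$ (the unit group computed in Theorem \ref{Card}). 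The key computation: the space of $x$-invariant bilinear forms on the $\ell$-dimensional cyclic module, modulo the radical, is one-dimensional over $\mathbb{F}_q$ as a module for $C=\mathbb{F}_q[t]_\ell^\times$, and the relevant invariant of a non-degenerate preserved form on $J_\ell^{\oplus a}$ lands in $\mathbb{F}_q^\times/(\text{norms from }C)$. When $\ell$ is even the form on each block is intrinsically ``symplectic-like'' and one finds a unique class; when $\ell$ is odd the block-level form is symmetric-like and the relevant invariant is a discriminant in $\mathbb{F}_q^\times/\mathbb{F}_q^{\times2}$, giving two classes — but here the parity/even-multiplicity constraint from Theorem \ref{elementsofCG} forces odd $\ell$ to come with even multiplicity $2c_j$, and for even multiplicity the two discriminant values become congruent under $C_{\GL}$ (this is exactly the phenomenon that $J_\ell\oplus J_\ell$ with the hyperbolic form is the unique preserved form, matching the representative $W(2l+1)$), so odd blocks contribute a factor $1$. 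Even blocks $J_{2k_i}$ with multiplicity $a_i$: here the invariant in $\mathbb{F}_q^\times/\mathbb{F}_q^{\times2}$ is genuinely two-valued and is not killed by the centralizer action (because the determinant map on the $\GL(a_i,q)$ factor of $C_{\GL}$ hits squares only up to index $2$ once we account for the weight $2k_i$), yielding exactly the two classes distinguished by $b_i\in\{1,\alpha\}$. Multiplying gives $2^r$.

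Then I would verify that the listed matrices are genuine representatives and are pairwise non-conjugate: that $V_{b_i}(2k_i)\oplus V_1(2k_i)^{\oplus a_i-1}$ lies in $\Sp$ is immediate from the construction of $V_b$ as a $z_b$-conjugate of $V_1(2k)\in\Sp(2k,q)$ followed by an orthogonal direct sum (Remark \ref{sumofforms}), and that $W(2l_j+1)\in\Sp(4l_j+2,q)$ is given; the Jordan form is \eqref{unirepSp} by the stated Jordan forms of the building blocks; non-conjugacy of distinct choices of $(b_1,\dots,b_r)$ follows because the discriminant-type invariant computed above distinguishes $b_i=1$ from $b_i=\alpha$ in the $i$-th even block and is a conjugacy invariant. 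The main obstacle I anticipate is the bookkeeping in the mixed case — controlling the cross terms between blocks of different sizes and checking that the centralizer action on the tuple of block-invariants is exactly ``adjust each even-block discriminant by the square class of a determinant, weighted by the block size'' — i.e.\ proving the count is cleanly multiplicative and that no interaction between blocks of different parities or sizes creates or destroys classes. This is essentially the content one must extract from Wall \cite[\S 2.6]{GEW} or reprove directly via the truncated-polynomial-algebra description of $C_{\GL}(x)$ from Section \ref{Cue}.
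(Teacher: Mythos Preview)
The paper does not give a proof of this proposition: Chapter \ref{unipotentChap} opens by stating that the unipotent class representatives are taken from \cite{GLOB} and that the results are summarized \emph{without proof}. So your proposal is not competing against any argument in the paper; it is an attempt to supply what the paper deliberately omits (and implicitly attributes to Wall \cite[\S 2.6]{GEW} and \cite{GLOB}).

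Your overall architecture is the correct one and is exactly Wall's: translate $\Sp$-conjugacy with fixed Jordan type into congruence classes of preserved alternating forms under $C_{\GL}(x)$, pass to the ``reduced'' form on the multiplicity space $\mathbb{F}_q^{a}$ for each distinct block size, and read off the invariant. However, your parity analysis in the middle paragraph is backward. For a unipotent element of $\Sp$, the reduced form on the multiplicity space of blocks of size $\ell$ is \emph{symmetric} when $\ell$ is even and \emph{alternating} when $\ell$ is odd (this is what produces the centralizer structure $\prod_i \Or^\epsilon(a_i,q)\times\prod_j\Sp(2c_j,q)$ of Theorem \ref{CentrUniOdd}). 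Thus it is the even block sizes $2k_i$ that carry the discriminant-type invariant $b_i\in\mathbb{F}_q^\times/\mathbb{F}_q^{\times 2}$ directly, and the odd block sizes $2l_j+1$ that give a unique (alternating) reduced form with no invariant --- not because even multiplicity ``collapses two values to one'' as you write, but because alternating forms have a single congruence class. Your text first asserts the opposite (``$\ell$ even $\Rightarrow$ symplectic-like $\Rightarrow$ unique; $\ell$ odd $\Rightarrow$ symmetric-like $\Rightarrow$ discriminant''), then contradicts itself two sentences later when you correctly conclude that even blocks contribute the factor $2$. Fixing this inversion also removes the need for your ad hoc ``even multiplicity collapses the discriminant'' step. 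The block-diagonalization of the preserved form across distinct block sizes (your acknowledged ``main obstacle'') is genuine and is where the real work lies; it does not follow from Lemma \ref{L26w}, which only separates distinct eigenvalues, and is precisely the content one must extract from \cite[\S 2.6]{GEW}.
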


\subsection{Orthogonal case, $q$ odd} \label{SectUniRepsGO}
For every $b \in \mathbb{F}_q^*$, $k \geq 0 $ and $l \geq 1$, let $V_b(2k+1) \in \SO(2k+1)$ have Jordan form $J_{2k+1}$ and fix the symmetric form with matrix
$$
\begin{pmatrix}
 & & \mathbb{I}_k \\ & 2b & \\ \mathbb{I}_k & & 
\end{pmatrix}.
$$
For $l \geq 1$, let $W(2l) \in \SO^+(4l,q)$ have Jordan form $J_{2l}^{\oplus 2}$.

By Theorem \ref{elementsofCG} all Jordan block of even dimension occur with even multiplicity, so the Jordan form of every unipotent element of $\Or^{\epsilon}(n,q)$ is
\begin{eqnarray} \label{unirepGO}
\bigoplus_{i=1}^r (J_{2k_i+1})^{\oplus a_i} \oplus \bigoplus_{j=1}^s (J_{2l_j})^{\oplus 2c_j},
\end{eqnarray}
where the $k_i$ and the $l_j$ are distinct and $\sum a_i(2k_i+1)+ 4\sum l_jc_j = n$, with $\epsilon \in \{ +, -, \circ \}$.

Recall that every unipotent element of $\SO^{\epsilon}(n,q)$ lies in $\Omega^{\epsilon}(n,q)$ (see Lemma \ref{oddOrderinOmega}).

\begin{proposition} \label{ThmUnirepsGO}
Suppose $q$ is odd. Let $\alpha$ be a fixed non-square in $\mathbb{F}_q$.
\begin{enumerate}
\item If $n$ is even, then the unipotent elements with Jordan form $(\ref{unirepGO})$ fall into $2^{r-1}$ classes in each of $\Or^+(n,q)$ and $\Or^-(n,q)$, with the exception that if $r=0$, there is one class in $\Or^+(n,q)$ and none in $\Or^-(n,q)$. Representatives are
\begin{align} \label{unirepsGO}
\bigoplus_{i=1}^r (V_{b_i}(2k_i+1) \oplus V_1(2k_i+1)^{\oplus a_i-1}) \oplus \bigoplus_{j=1}^s W(2l_j)^{\oplus c_j},
\end{align}
where $b_i \in \{1, \alpha \}$. If $u$ is such a representative, then the conjugacy class of $u$ in $\Or^{\epsilon}(n,q)$ splits into two distinct classes in $\SO^{\epsilon}(n,q)$ if, and only if, $r=0$ (namely if $u$ is sum of Jordan blocks of even dimension).
\item If $n$ is odd, there are $2^{r-1}$ classes in $\Or(n,q)$ with Jordan form $(\ref{unirepGO})$; representatives are as in $(\ref{unirepsGO})$, half of these fix an orthogonal form of square discriminant, and half fix a form of non-square discriminant.

\item A conjugacy class in $\SO^{\epsilon}(n,q)$ with representative $u$ as in $(\ref{unirepsGO})$ splits into two distinct classes in $\Omega^{\epsilon}(n,q)$ if, and only if, either $r=0$, or $r \geq 1$ and the following hold:
\begin{description}
\item[$\mathrm{(a)}$] $a_i=1$ for all $i$,
\item[$\mathrm{(b)}$] the $b_i(-1)^{k_i}$ are mutually congruent modulo $\mathbb{F}_q^{*2}$.
\end{description}
In case of splitting, representatives of the $\Omega^{\epsilon}(n,q)$ classes are $u$ and $u^z$, where $z \in \SO^{\epsilon}(n,q) \setminus \Omega^{\epsilon}(n,q)$.
\end{enumerate}
\end{proposition}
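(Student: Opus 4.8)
Parts (1) and (2) rest on the classification of unipotent classes of Gonshaw, Liebeck and O'Brien \cite{GLOB}, and I would exhibit the mechanism rather than reprove it. By Theorem \ref{elementsofCG}, a unipotent $u$ preserving a non-singular quadratic form has every even-dimensional Jordan block of even multiplicity, so its Jordan form is $(\ref{unirepGO})$; moreover the form may be chosen of either plus or minus type exactly when $u$ has at least one odd-dimensional Jordan block, i.e.\ when $r\geq 1$, whereas when $r=0$ the sum $\sum_{f\in\Phi_3}e\,m(f^e)$ of Theorem \ref{elementsofCG} is empty, hence even, so only plus type occurs --- this yields the $r=0$ dichotomy in (1). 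The representatives $(\ref{unirepsGO})$ are indexed by $(b_1,\dots,b_r)\in\{1,\alpha\}^r$, and the vectors $(b_i)$ and $(\alpha b_i)$ give elements with the same abstract action preserving $Q$ and $\alpha Q$ respectively; since in even dimension $Q$ and $\alpha Q$ are congruent (same discriminant, same type) while in odd dimension they represent the two distinct congruence classes, this collapsing gives $2^{r-1}$ classes in each of $\Or^{\epsilon}(n,q)$ when $n$ is even, and $2^{r-1}$ in each fixed $\Or(n,q)$ when $n$ is odd, with the discriminant split as stated. The $\Or^{\epsilon}\to\SO^{\epsilon}$ splitting is controlled by whether $C_{\Or^{\epsilon}}(u)\subseteq\SO^{\epsilon}$: by the centralizer description of Liebeck and Seitz \cite[Chap.\ 7]{LS}, $C_{\Or^{\epsilon}}(u)$ is a unipotent ($p$-)group extended by $S\!\big(\prod_i\Or_{a_i}^{\epsilon_i}(q)\times\prod_j\Sp_{2c_j}(q)\big)$, where $S(\cdots)$ denotes the determinant-$1$ part; the unipotent part and the $\Sp_{2c_j}$ factors have determinant $1$, while $-\mathbf{1}_V$ restricted to a single $J_{2k_i+1}$-summand centralizes $u$ and has determinant $-1$, so $C_{\Or^{\epsilon}}(u)\subseteq\SO^{\epsilon}$ if and only if $r=0$, which is the criterion in (1) and the statements in (2).

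For (3), fix $u$ as in $(\ref{unirepsGO})$. Since $|\SO^{\epsilon}:\Omega^{\epsilon}|=2$, the $\SO^{\epsilon}$-class of $u$ splits into two $\Omega^{\epsilon}$-classes, with representatives $u$ and $u^z$ for $z\in\SO^{\epsilon}\setminus\Omega^{\epsilon}$, precisely when $C_{\SO^{\epsilon}}(u)\subseteq\Omega^{\epsilon}$, i.e.\ when the spinor norm vanishes on $C_{\SO^{\epsilon}}(u)$; otherwise $|u^{\Omega^{\epsilon}}|=|u^{\SO^{\epsilon}}|$ and there is no splitting. Using \cite[Chap.\ 7]{LS}, $C_{\SO^{\epsilon}}(u)$ equals (unipotent radical) $\rtimes\; S\!\big(\prod_i\Or_{a_i}^{\epsilon_i}(q)\times\prod_j\Sp_{2c_j}(q)\big)$. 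The unipotent radical has odd order, hence lies in $\Omega^{\epsilon}$ by Lemma \ref{oddOrderinOmega}(a); each $\Sp_{2c_j}(q)$ is perfect, or, when $2c_j=2$ and $q=3$, is $\SL(2,3)$, which has no subgroup of index $2$; in either case the spinor norm restricts trivially to it. Thus $C_{\SO^{\epsilon}}(u)\subseteq\Omega^{\epsilon}$ if and only if $S\!\big(\prod_i\Or_{a_i}^{\epsilon_i}(q)\big)\subseteq\Omega^{\epsilon}$.

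When $r=0$ there are no such factors and the inclusion holds, so the class splits. When some $a_i\geq 2$, this subgroup contains a copy of $\SO_{a_i}^{\epsilon_i}(q)$ supported on a non-degenerate subspace of $V$; since the spinor norm of an element supported on a subspace is computed inside the restricted form, and $\Omega(Q')=\Omega(\lambda Q')$ for non-singular $Q'$ (both being the unique index-$2$ subgroup of $\SO(Q')=\SO(\lambda Q')$), this copy of $\SO_{a_i}^{\epsilon_i}(q)$ properly contains its own Omega subgroup and hence contributes an element of spinor norm $1$ to $C_{\SO^{\epsilon}}(u)$; so the class fails to split unless all $a_i=1$ --- condition (a). Finally, when every $a_i=1$, the group $S\!\big(\prod_i\Or_1^{\epsilon_i}\big)$ consists exactly of the elements $\prod_{i\in S}(-\mathbf{1}_{U_i})$ over even subsets $S\subseteq\{1,\dots,r\}$, where $U_i$ is the $(2k_i+1)$-dimensional summand carrying $V_{b_i}(2k_i+1)$. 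A direct Wall-form computation from the explicit form of Section \ref{SectUniRepsGO} gives $\theta(-\mathbf{1}_{U_i})\equiv b_i(-1)^{k_i}\pmod{\mathbb{F}_q^{*2}}$, whence $\theta\big(\prod_{i\in S}(-\mathbf{1}_{U_i})\big)=\prod_{i\in S}b_i(-1)^{k_i}$; this is trivial for every even $S$ if and only if the $b_i(-1)^{k_i}$ are pairwise congruent modulo $\mathbb{F}_q^{*2}$ --- condition (b). Assembling the cases gives the criterion in (3), and in the splitting case $C_{\Omega^{\epsilon}}(u)=C_{\SO^{\epsilon}}(u)$, producing the two classes $u$ and $u^z$.

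The main obstacle is extracting from \cite[Chap.\ 7]{LS} the precise reductive structure of $C_{\Or^{\epsilon}}(u)$ and $C_{\SO^{\epsilon}}(u)$ --- in particular the types $\epsilon_i$ of the factors $\Or_{a_i}^{\epsilon_i}(q)$ and the way each sits inside $\Or_n^{\epsilon}$ --- and then carrying out the spinor-norm computation $\theta(-\mathbf{1}_{U_i})\equiv b_i(-1)^{k_i}$ carefully enough that the twisting parameters $b_i$ and the sign $(-1)^{k_i}$ coming from the discriminant of the form of Section \ref{SectUniRepsGO} appear correctly; everything else is bookkeeping with indices and group orders.
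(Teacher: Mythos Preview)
The paper does not prove this proposition: at the start of Chapter~\ref{unipotentChap} it explicitly says ``Here we summarize without proof the relevant results'' and attributes Proposition~\ref{ThmUnirepsGO} to Gonshaw, Liebeck and O'Brien~\cite{GLOB}. So there is no proof in the paper for you to match.

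That said, your sketch is sound and in fact mirrors computations the paper itself carries out later, in Section~6.1.3, when proving the more general Theorem~\ref{ConjClassinOmega}. In particular, your key spinor-norm calculation $\theta(-\mathbf{1}_{U_i})\equiv b_i(-1)^{k_i}\pmod{\mathbb{F}_q^{*2}}$ is exactly the content of the unnamed proposition preceding the proof of Theorem~\ref{ConjClassinOmega}: the paper takes $Y=-\mathbb{I}_{2k_1+1}\oplus\mathbb{I}_{m'}$, observes that the Wall form $\chi_Y$ is $\tfrac{1}{2}\beta$ on $V_Y$, and computes $\det(\tfrac{1}{2}B_1)=2^{-2k_1}b_1(-1)^{k_1}\equiv (-1)^{k_1}b_1$, identical to your argument. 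Your use of the Liebeck--Seitz centralizer structure $U\rtimes\big(\prod_i\Or^{\epsilon_i}(a_i,q)\times\prod_j\Sp(2c_j,q)\big)$ is also what the paper records as Theorem~\ref{CentrUniOdd}, so you are drawing on the same toolkit.

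Two minor points worth tightening. First, your claim that the spinor norm of an element supported on a non-degenerate subspace agrees with the intrinsic spinor norm on that subspace is correct but deserves a line of justification via the Wall form definition (the paper simply uses it without comment). Second, your treatment of the $\Sp(2c_j,q)$ factors is fine, but note that the paper (in the analogous later argument) never invokes perfectness; it only needs that the unipotent radical has odd order (Lemma~\ref{oddOrderinOmega}(a)) and then works directly with the $\Or^{\epsilon_i}(a_i,q)$ factors, since the symplectic factors already sit inside $\SO$ and the question is only whether one can produce determinant $-1$ and spinor norm $1$ separately. Your route via ``no index-$2$ subgroup'' is a clean alternative.
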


\subsection{Symplectic and orthogonal case, $q$ even}       \label{SectUniRepsChar2}
Let $q$ be a power of 2 and let $V$ be a vector space of dimension $2k$ over $F=\mathbb{F}_q$, with basis $e_1, \dots, e_k, f_k, \dots, f_1$. Let $\beta(u,v) = (u,v)$ be a non-degenerate alternating form on $V$ such that $(e_i,e_j)=(f_i,f_j)=0$, $(e_i,f_j)=\delta_{ij}$ for all $i,j$, where $\delta_{ij}$ is the Kronecker delta: $\delta_{ij}=1$ if $i=j$, 0 otherwise\index{$\delta_{ij}$}.

For $b \in F$, let $Q_b$ be the quadratic form on $V$ with associated bilinear form $\beta$ and satisfying
$$
Q_b(e_k)=b, \quad Q_b(f_k)=1, \quad Q_b(e_i)=Q_b(f_i)=0 \: \mbox{for } i \neq k.
$$
One can check that $\C(Q_0) = \Or^+(2k,q)$ and $\C(Q_{\alpha}) = \Or^-(2k,q)$ whenever the polynomial $t^2+t+\alpha$ is irreducible in $F[t]$. Let $V(2k) \in \C(Q_0)$ and $V_{\alpha}(2k) \in \C(Q_{\alpha})$ have Jordan form $J_{2k}$.

For $k \geq 1$ let $Q$ be the quadratic form associated to $\beta$ and such that $Q(e_i)=Q(f_i)=0$. Let $\Omega^+(2k,q)$ be the Omega group corresponding to $\C(Q)$ and let $W(k) \in \Omega^+(2k,q)$ with Jordan form $J_{k}^{\oplus 2}$. If $k=2l$ is even, then let $W'(k)$ be the conjugate of $W(k)$ by the reflection in $e_k+f_k$.
If $k=2l+1$ is odd, then let $Q_b'$ be the quadratic form associated to $\beta$ such that
$$
Q_b'(e_l)=Q_b'(e_{l+1}) = Q_b'(f_{l+1}) = b
$$
and $Q_b'(e_i)=Q_b'(f_i)=0$ for all other values of $i$. Let $W_b(2l+1)$ be an element of $\Omega^{\epsilon}(4l+2,q)$ (the Omega group corresponding to $Q_b'$) with Jordan form $J_{k}^{\oplus 2}$. In particular, $\epsilon = +$ if $b=0$ and $\epsilon=-$ if $t^2+t+b$ is irreducible in $F[t]$. (There is a typographical error in the definition of $W_b(2l+1)$ in \cite{GLOB}; see \cite[p.\ 95]{thesis} for correction.) 

The main result is the following.
\begin{proposition} \label{unirepsGO2}
Let $\alpha \in \mathbb{F}_q$ such that $t^2+t+\alpha$ is irreducible. Let $\C$ be a symplectic group $\Sp(2m,q)$ or an orthogonal group $\Or^{\epsilon}(2m,q)$, with $m \geq 1$. Every unipotent element of $\C$ is $\C$-conjugate to exactly one element of the form
\begin{align} \label{unireps2}
\bigoplus_i W(m_i)^{\oplus a_i} \oplus \bigoplus_j V(2k_j)^{\oplus c_j} \oplus \bigoplus_r W_{\alpha}(m_r') \oplus \bigoplus_s V_{\alpha}(2k_s')
\end{align}
satisfying the following conditions:
\begin{enumerate}
\item $\sum a_im_i + \sum c_jk_j + \sum m_r'+ \sum k_s' = m$,
\item the $m_r'$ are odd and distinct, and the $k_s'$ are distinct,
\item $c_j \leq 2$, and $c_j \leq 1$ if there exist $j,s$ such that $k_j = k_s'$,
\item there exist no $j,s$ such that $k_s'-k_j=1$ or $k_s'-k_j'=1$,
\item there exist no $j,r$ such that $m_r' = 2k_j \pm 1$ or $m_r' = 2k_j' \pm 1$,
\item for $\C=\Sp(2m,q)$, each $m_r' \geq 3$ and each $k_s' \geq 2$.
\end{enumerate}
In the orthogonal case, an element of the form $(\ref{unireps2})$ lies in $\Or^{\epsilon}(2m,q)$, where $\epsilon = (-1)^t$ and $t$ is the total number of $W_{\alpha}$- and $V_{\alpha}$-blocks; the element lies in $\Omega= \Omega^{\pm}(2m,q)$ if, and only if, the total number of $V$- and $V_{\alpha}$-blocks is even; moreover, the only $\C$-classes which split into two distinct classes in $\Omega$ are those of the form $\bigoplus W(m_i)^{a_i}$ with all $m_i$ even, and for these a second class representative can be obtained by replacing one summand $W(m_i)$ by $W(m_i)'$.
\end{proposition}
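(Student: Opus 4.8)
The classification itself — the list of representatives $(\ref{unireps2})$ subject to conditions $1$--$6$ — is the main theorem of \cite{GLOB}, so the plan is to quote it rather than reprove it. For orientation, the underlying argument is the standard one used throughout \cite{LS, GLOB}: one decomposes $(V,Q)$ (resp.\ $(V,\beta)$) into an orthogonal direct sum of $\langle x\rangle$-indecomposable non-degenerate subspaces; over a field of characteristic $2$ these indecomposables are precisely the modules carried by the blocks $V(2k)$, $V_{\alpha}(2k)$, $W(m)$, $W_{\alpha}(m)$; and one then determines which multisets of such blocks can occur inside a non-degenerate space of dimension $2m$ and when two such multisets give isometric pairs. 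Conditions $(1)$ and $(2)$ are the obvious dimension and distinctness constraints, while $(3)$--$(6)$ record the coincidences between block types forced in characteristic $2$ (for instance $V_1(2)$ coincides with the orthogonal block $W(1)$ in the symplectic case). Taking all of this as given, it remains to reconstruct the three supplementary assertions from tools already available in the paper.

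\textbf{Type and $\Omega$-membership.} For the type one invokes Remark $\ref{sumofforms}$: an orthogonal direct sum of non-degenerate quadratic spaces has type $+$ iff the number of minus-type summands is even. By construction each $V(2k_j)$ and each $W(m_i)$ preserves a plus-type form (the former because $\C(Q_0)=\Or^{+}$, the latter because $W(m)\in\Omega^{+}$), while each $V_{\alpha}(2k_s')$ and each $W_{\alpha}(m_r')$ preserves a minus-type form; hence $\epsilon=(-1)^{t}$ with $t$ the number of $V_{\alpha}$- and $W_{\alpha}$-blocks. For membership in $\Omega$ one uses that, for $q$ even, $\theta(x)=\dim V_x\pmod 2$ with $V_x=\im(\mathbf{1}_V-x)$. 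Writing $x=\mathbf{1}_V+N$ with $N$ nilpotent gives $\dim V_x=\dim\im N=\sum(\ell-1)$, the sum over the Jordan blocks $J_{\ell}$ of $x$, so $\theta(x)$ is the number of even-dimensional Jordan blocks of $x$ modulo $2$. A $W(m_i)$- or $W_{\alpha}(m_r')$-block contributes two Jordan blocks of equal dimension, hence $0\pmod 2$; a $V(2k_j)$-block contributes $c_j$ even-dimensional blocks and a $V_{\alpha}(2k_s')$-block contributes one. Thus $\theta(x)$ is congruent modulo $2$ to the total number of $V$- and $V_{\alpha}$-blocks, which gives the stated criterion.

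\textbf{Splitting in $\Omega$.} Since $q$ is even, $\SO^{\epsilon}(2m,q)=\Or^{\epsilon}(2m,q)$ and $|\Or^{\epsilon}(2m,q):\Omega|=2$, so for $x\in\Omega$ the class $x^{\Or^{\epsilon}}$ splits into two $\Omega$-classes exactly when $C_{\Or^{\epsilon}(2m,q)}(x)\leqslant\Omega$, i.e.\ when $\theta$ vanishes on that centralizer; in the splitting case the two classes are $x^{\Omega}$ and $(x^{z})^{\Omega}$ for any $z\in\Or^{\epsilon}(2m,q)\setminus\Omega$, and taking $z$ to be the reflection in $e_k+f_k$ supported on a single even-dimensional $W(m_i)$-block (a transvection with one-dimensional residual space, hence of spinor norm $1$) turns $x^{z}$ into the element of the same shape with that one summand $W(m_i)$ replaced by $W(m_i)'$. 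To verify the criterion I would feed in the centralizer description recorded later in this chapter (following \cite[Chap.\ 7]{LS}): a reductive complement to the unipotent radical is a direct product of classical groups, one per family of equal blocks, symplectic or unitary for the $V$-, $V_{\alpha}$- and odd-$W$-families and orthogonal only for families of even-dimensional $W$-blocks. One then checks that $\theta$ is trivial on every symplectic factor, every unitary factor, and the unipotent radical (each generated by isometries with even-dimensional residual space, the odd-order semisimple elements lying in $\Omega$ by Lemma $\ref{oddOrderinOmega}$(a)), that it stays trivial on the orthogonal factors attached to even-dimensional $W$-blocks (because of how the two isometric copies of each such block are interchanged), and conversely that as soon as $x$ has a $V$-, $V_{\alpha}$- or $W_{\alpha}$-block, or a $W(m_i)$ with $m_i$ odd, one can write down a centralizing isometry of spinor norm $1$ supported on that block, much as in the $q$ odd analysis behind Theorem $\ref{semisimpleOmega}$ and Proposition $\ref{ThmUnirepsGO}$.

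\textbf{Main obstacle.} The laborious part is precisely this last step: unwinding the centralizer structure block type by block type and deciding in every configuration whether $\theta$ is trivial on it, and in particular constructing by hand the required spinor-norm-$1$ centralizing element for each configuration that is not of the all-even-$W$ shape. Everything else is either quoted directly from \cite{GLOB} or a routine Jordan-form computation with the formula $\theta(x)=\dim V_x\bmod 2$ and Remark $\ref{sumofforms}$.
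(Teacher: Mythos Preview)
The paper itself offers no proof of this proposition: Chapter \ref{unipotentChap} explicitly summarises \cite{GLOB} ``without proof'', and Proposition \ref{unirepsGO2} is simply stated and attributed. Your plan to quote \cite{GLOB} for the classification therefore matches the paper exactly, and your derivations of the type $\epsilon=(-1)^t$ and of the $\Omega$-membership criterion via $\theta(x)=\dim\im(\mathbf{1}_V-x)\bmod 2=\#\{\text{even Jordan blocks}\}\bmod 2$ are correct and go beyond what the paper records.

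Your splitting sketch, however, misreads the centralizer structure in even characteristic. In Theorem \ref{CentrUniEven} the set $\mathcal{S}$ contains every $i$ with $m_i$ even, so families of \emph{even}-dimensional $W$-blocks contribute \emph{symplectic} factors $\Sp(2a_i,q)$, while odd $m_i$ (away from the $V$-blocks) give the orthogonal factors $\Or^{\epsilon}(2a_i,q)$ --- the opposite of what you wrote. There are also no unitary factors here; those appear only in Theorem \ref{CentrUniOdd} for odd $q$. Moreover, the $V$- and $V_\alpha$-families do not themselves carry classical-group factors in $R$; their presence is felt through the $\mathbb{Z}_2^{t+\delta}$ term, and it is precisely these $\mathbb{Z}_2$ generators (together with the orthogonal factors from odd $m_i$) that furnish centralizing elements of spinor norm $1$ whenever the element is not of the all-even-$W$ shape. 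With this corrected picture the ``laborious'' verification you anticipate becomes: when all $m_i$ are even and there are no $V$-blocks, $R\cong\prod_i\Sp(2a_i,q)$ and $t+\delta=0$, and one checks that each $\Sp$-factor and the $p$-group $U$ land in $\Omega$, forcing $C_{\C}(x)\leqslant\Omega$; otherwise some $\Or^{\epsilon}(2a_i,q)$ or some $\mathbb{Z}_2$ generator supplies an element of spinor norm $1$.
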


\section{Centralizer of a unipotent element} \label{CentrUni}
The next theorems describe the centralizer structure of a unipotent element. For convenience, we consider separately the symplectic and orthogonal cases of even characteristic.
\begin{theorem} \label{CentrUniOdd}
Let $\C$ be a unitary group over $F=\mathbb{F}_{q^2}$ or a symplectic or orthogonal group over $F=\mathbb{F}_q$, where $q$ is odd if $\C$ is symplectic or orthogonal. Let $x \in \C$ be unipotent with Jordan form $\bigoplus_{i=1}^s (J_{n_i})^{\oplus r_i}$, with $n_1 < n_2 < \cdots < n_s$. Define
$$
\gamma = \sum_{i<j} n_ir_ir_j + \frac{1}{2}\sum_i (n_i-1)r_i^2+\frac{\delta}{2} \sum_{i \in Z} r_i,
$$
where $\delta = 0$ (unitary), $1$ (symplectic) or $-1$ (orthogonal), and $Z = \{ i \, | \, n_i \mbox{ even}\}$. The centralizer of $x$ in $\C$ is $ U \rtimes R$, where $|U|=|F|^{\gamma}$ and $R$ is defined as follows.
\begin{itemize}
\item Let $x \in \U(n,q)$, with
$$
x = \bigoplus_{i=1}^s V(n_i)^{\oplus r_i}
$$
as in Proposition $\ref{uniconjU}$. Then $R \cong \prod_{i=1}^s \U(r_i,q)$. The centralizer of $x$ in $\Spec = \SU(n,q)$ is $U \rtimes R'$, where $R' = R \cap \Spec$.
\item Let $x \in \Sp(n,q)$, with
$$
x = \bigoplus_{i=1}^r (V_{b_i}(2k_i) \oplus V(2k_i)^{\oplus a_i-1}) \oplus \bigoplus_{j=1}^s W(2l_j+1)^{\oplus c_j}
$$
as in Proposition $\ref{uniconjSp}$. Then
$$
R \cong \prod_{i=1}^r \Or^{\epsilon}(a_i,q) \times \prod_{j=1}^s \Sp(2c_j,q),
$$
where $\Or^{\epsilon}(a_i,q)$ is the orthogonal group preserving the form with $a_i \times a_i$ diagonal matrix
$$
\begin{pmatrix}
b_i &&& \\ & 1 && \\ && \ddots & \\ &&& 1
\end{pmatrix}.
$$
\item Let $x \in \Or^{\epsilon}(n,q)$, with
$$
x = \bigoplus_{i=1}^r (V_{b_i}(2k_i+1) \oplus V_1(2k_i+1)^{\oplus a_i-1}) \oplus \bigoplus_{j=1}^s W(2l_j)^{\oplus c_j},
$$
as in Proposition $\ref{ThmUnirepsGO}$. Then
$$
R \cong \prod_{i=1}^r \Or^{\epsilon}(a_i,q) \times \prod_{j=1}^s \Sp(2c_j,q),
$$
where $\Or^{\epsilon}(a_i,q)$ is the orthogonal group preserving the form with $a_i \times a_i$ diagonal matrix
$$
\begin{pmatrix}
b_i &&& \\ & 1 && \\ && \ddots & \\ &&& 1
\end{pmatrix}.
$$
If $x \in \Spec = \SO^{\epsilon}(n,q)$ or $\Omega = \Omega^{\epsilon}(n,q)$, then the centralizer of $x$ in $\Spec$ (resp.\ $\Omega$) is $U \rtimes R'$, where $R' = R \cap \Spec$ (resp.\ $R \cap \Omega$).
\end{itemize}
\end{theorem}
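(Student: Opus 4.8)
The plan is to obtain $C_\C(x)$ as the intersection $C_{\GL(n,F)}(x)\cap\C$ (with $F=\mathbb{F}_{q^2}$ in the unitary case, $F=\mathbb{F}_q$ otherwise) and to carry the decomposition $C_{\GL(n,F)}(x)=U_0\rtimes R_0$ of Theorems~\ref{Card} and~\ref{GLCentralizer} through that intersection. Write $N=x-\mathbf 1_V$ and consider the filtration $V\supseteq NV\supseteq N^2V\supseteq\cdots$. The Levi factor $R_0\cong\prod_i\GL(r_i,F)$ acts on the associated graded module, while the unipotent radical $U_0$ acts trivially on it and shifts the filtration; since $x$ is an isometry, $\beta$ (or $Q$) induces a well-defined non-degenerate form on each graded layer. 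This is the backbone of the argument: writing $y=u_0r_0$ for $y\in C_\C(x)$ and expanding $yBy^\ast=B$ degree by degree, the top-degree (graded) part of the equation is a condition on $r_0$ alone, and, once that holds, the remaining equations become affine-linear conditions on $u_0$. In particular this already shows that $R:=R_0\cap\C$ and $U:=U_0\cap\C$ satisfy $U\cap R=1$ and $C_\C(x)=U\rtimes R$ with $U$ normal (inherited from $U_0\trianglelefteq C_{\GL(n,F)}(x)$).

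Next I would identify $R$. The graded condition says that the $i$-th factor of $R_0$ must preserve the form $\gamma_i$ induced by $\beta$ on the multiplicity space $F^{r_i}$ of the Jordan block $J_{n_i}$. A short computation with $\beta(N^ae,N^be)$ for a cyclic vector $e$ of $J_{n_i}$ — the same alternating/symmetric sign manipulation used in the proof of Proposition~\ref{Prop414} — shows that the induced form on a single $J_{n_i}$-summand, when it exists, is symmetric if $n_i$ is odd and alternating if $n_i$ is even (for bilinear $\beta$), so that $\gamma_i$ must carry the complementary symmetry needed to reconstruct $\beta$. In the orthogonal case ($q$ odd) even blocks force $\gamma_i$ alternating, hence occur with even multiplicity and contribute $\Sp(2c_j,q)$, while odd blocks give $\gamma_i$ symmetric and contribute $\Or^{\epsilon}(a_i,q)$, the discriminant of $\gamma_i$ being recorded by the parameter $b_i$ of the representative in Proposition~\ref{ThmUnirepsGO} (so the form is the stated $\mathrm{diag}(b_i,1,\dots,1)$); in the symplectic case ($q$ odd) the roles swap, giving $\Or^{\epsilon}(a_i,q)$ on the even blocks of Proposition~\ref{uniconjSp} and $\Sp(2c_j,q)$ on the (even-multiplicity) odd blocks; in the unitary case there is no parity obstruction and $\gamma_i$ is hermitian, giving $\U(r_i,q)$. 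This reproduces the three displayed descriptions of $R$. Finally, every element of $U$ is unipotent, hence has determinant $1$ and, for $q$ odd, odd order, so $U\subseteq\Spec$ and, by Lemma~\ref{oddOrderinOmega}(a), $U\subseteq\Omega$; thus $C_{\Spec}(x)=U\rtimes(R\cap\Spec)$ and $C_{\Omega}(x)=U\rtimes(R\cap\Omega)$, as claimed.

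It remains to count $|U|$. Fixing $r_0=\mathbf 1$ (harmless after conjugation), $U$ is the set of $u_0\in U_0$ with $u_0Bu_0^\ast=B$; since $U_0$ is a $p$-group and the constraint defines a closed subgroup, $|U|=|F|^{\gamma}$ for some $\gamma$, and one must check $\gamma$ equals the stated expression. Starting from $|U_0|=|F|^{\gamma_0}$ with $\gamma_0=2\sum_{i<j}n_ir_ir_j+\sum_i(n_i-1)r_i^2$ (Theorem~\ref{Card}), the off-diagonal block equations identify the $(i,j)$- and $(j,i)$-parameters and halve $2\sum_{i<j}n_ir_ir_j$ to $\sum_{i<j}n_ir_ir_j$. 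On the diagonal blocks the isometry equations force the relevant matrix parameters, at each step of the filtration, to be symmetric or skew-symmetric — the choice being governed by the parity of $n_i$ and the sign $\delta$ — which replaces $\sum_i(n_i-1)r_i^2$ by $\tfrac12\sum_i(n_i-1)r_i^2$ up to the discrepancy $\binom{r_i+1}{2}-\binom{r_i}{2}=r_i$ between the counts of symmetric and of skew entries; this discrepancy accrues exactly once per even Jordan block and contributes $\tfrac{\delta}{2}\sum_{i\in Z}r_i$. Assembling the pieces gives $\gamma$. (Alternatively, for $q$ odd one may read $\gamma$ off the known dimension of the unipotent radical of the centralizer of a unipotent element in the corresponding algebraic group and take fixed points under Frobenius, which is essentially the route of \cite[Chap.~7]{LS}.)

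The step I expect to be the main obstacle is precisely the parity bookkeeping of the last two paragraphs: pinning down the symmetry type of each induced form $\gamma_i$ and the dependence of the orthogonal type and discriminant on $b_i$, and then carrying out the exact count of independent isometry constraints on $U_0$. The $\tfrac{\delta}{2}\sum_{i\in Z}r_i$ correction term is easy to misplace, and keeping the normalization of the form $B$ on the unipotent blocks consistent throughout the computation — it is not anti-diagonal on unipotent blocks, unlike in the semisimple case — requires care.
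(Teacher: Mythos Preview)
The paper does not give an independent proof of this theorem: its entire argument for the isometry groups is a citation to \cite[Thm.~7.1]{LS} and \cite[\S 2.6]{GEW}, and the statement for $\Spec$ and $\Omega$ is reduced to the single observation that $U$ consists of unipotent elements, hence lies in $\Spec$ and (for $q$ odd) in $\Omega$. Your proposal is therefore not really comparable to the paper's proof; rather, it is a sketch of the content of the cited references, in particular of the Liebeck--Seitz approach via the filtration by $N^iV$ and the induced forms on the associated graded, which is indeed the standard route.

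Your outline of the identification of $R$ is correct: the parity argument (symmetric form on odd blocks, alternating on even blocks, with roles reversed in the symplectic case) is exactly the computation underlying \cite[Chap.~7]{LS}, and your remark that the discriminant of the induced symmetric form is encoded by the parameter $b_i$ matches how the representatives in Propositions~\ref{uniconjSp} and~\ref{ThmUnirepsGO} are set up. Your argument for $U\subseteq\Spec$ and $U\subseteq\Omega$ via Lemma~\ref{oddOrderinOmega}(a) coincides verbatim with the paper's. One point to be slightly more careful about: you assert $C_\C(x)=U\rtimes R$ with $U=U_0\cap\C$ and $R=R_0\cap\C$, but what the graded argument gives directly is that the image of $C_\C(x)$ under the quotient map $C_{\GL}(x)\to R_0$ lands in the isometry group of the graded form; to conclude that this image lifts to $R_0\cap\C$ inside $C_\C(x)$ you need that, for the chosen representatives, the copy of $R_0$ in $C_{\GL}(x)$ already preserves $B$ when it preserves the graded form. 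This is true for the standard block representatives but is exactly the place where the explicit shape of the $V_b$- and $W$-blocks is used; you flag this implicitly in your last paragraph, and it is where the detailed work in \cite{LS} sits.
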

\begin{proof}
For the groups of isometries, a proof can be found in \cite[Thm.\ 7.1]{LS} and \cite[\S 2.6]{GEW}. The results for special and Omega subgroups follow from the fact that the subgroup $U$ contains only unipotent elements, and all of these have determinant $1$ and spinor norm $0$.
\end{proof}

\begin{theorem} \label{CentrUniEven}
Let $\C$ be a symplectic or orthogonal group of dimension $n$ over $\mathbb{F}_q$, with $q$ even. Let $x \in \C$ be unipotent,
$$
x = \sum_{i=1}^r W(m_i)^{\oplus a_i} \oplus \sum_{j=1}^s V(2k_j)^{\oplus b_j},
$$
where the sums over the $W$-blocks and the $V$-blocks include also the $W_{\alpha}$-blocks and the $V_{\alpha}$-blocks respectively in $(\ref{unireps2})$. Suppose $k_1>k_2> \cdots >k_s$. Let $N = 2\sum_ia_i+\sum_jb_j$ be the total number of Jordan blocks of $x$. Let $L$ be the sequence of the dimensions of the Jordan blocks of $x$
$$
L = [l_{\nu}: 1 \leq \nu \leq N] = [ \cdots, \overbrace{m_i, \cdots, m_i}^{2a_i\mbox{\footnotesize{-times}}}, \cdots, \overbrace{2k_j, \cdots, 2k_j}^{b_j\mbox{\footnotesize{-times}}},\cdots]
$$
sorted by decreasing order, so that $l_1 \geq l_2 \geq \cdots \geq l_N$. Define $\mathcal{S} \subseteq \{1, \dots, r \}$ to be
$$
\mathcal{S} = \{i \, | \; m_i \mbox{ even }\} \cup \{i\,| \; m_i = 2k_j \pm1 \mbox{ for some } j\} \cup \{ i \,| \; m_i=1 \mbox{ and } \C=\Sp(n,q) \}.
$$
Let
$$
\gamma = \sum_{\nu=1}^N (\nu l_{\nu}-\chi(l_{\nu}))-2\sum_i a_i^2 - \sum_{i \in \mathcal{S}}a_i+\sum_{i \notin \mathcal{S}}a_i,
$$
where $\chi(2k_j)=k_j$ (symplectic) or $k_j+1$ (orthogonal) and
$$
\chi(m_i) = \frac{1}{2} \left\{ \begin{array}{ll} m_i+\sigma+1 & \mbox{if $m_i$ even, $m_i=2k_j$ for some $j$;}\\ m_i+\sigma & \mbox{if $m_i$ odd;}\\ m_i+\sigma-1 & \mbox{if $m_i$ even, $m_i \neq 2k_j$ for all $j$,} \end{array}\right.
$$
with $\sigma = -1$ (symplectic) or $\sigma =1$ (orthogonal). The centralizer of $x$ in $\C$ is $U \rtimes R$, with $|U| = q^{\gamma}$ and
$$
R \cong \prod_{i \in \mathcal{S}} \Sp(2a_i,q) \times \prod_{i \notin \mathcal{S}} \Or^{\epsilon}(2a_i,q) \times \mathbb{Z}_2^{t+\delta},
$$
where $t$ is the number of values of $j$ such that $k_j-k_{j+1} \geq 2$, $\delta \in \{0,1\}$ with $\delta=0$ if, and only if, $s=0$ or $\C = \Sp(n,q)$ and $k_s=1$, and $\Or^{\epsilon}(2a_i,q)$ has minus type if, and only if, there is a component $W_{\alpha}(m_i')$ in $(\ref{unireps2})$.
\end{theorem}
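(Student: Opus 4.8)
The plan is to deduce the statement from the description of unipotent centralizers in the corresponding algebraic group — exactly as in the proof of Theorem~\ref{CentrUniOdd} — and then to descend to $\mathbb{F}_q$. Throughout I fix the representative $x$ given by Proposition~\ref{unirepsGO2}, so that the parameters $a_i,m_i,k_j,b_j$ and the distinguished blocks $W(m_i),V(2k_j),W_\alpha,V_\alpha$ are exactly as there. Let $\overline{\C}$ be the algebraic group over $\overline{\mathbb{F}}_q$ of which $\C$ is the set of fixed points of a Frobenius endomorphism $F$, and regard $x$ as a unipotent element $u \in \overline{\C}$. The results of Liebeck--Seitz \cite[Chap.\ 7]{LS} (see also Wall \cite[\S 2.6]{GEW}) give a Levi decomposition $C_{\overline{\C}}(u) = R(u)\,U(u)$, with $U(u)$ the connected unipotent radical and $R(u)$ a reductive complement, and identify $R(u)$ as a product of classical groups, one for each Jordan string of $u$, acting on the corresponding multiplicity space. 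Since $x$ is $F$-stable, $C_{\overline{\C}}(x)$ is $F$-stable and $C_{\C}(x) = C_{\overline{\C}}(x)^F$; because $U(x)$ is connected unipotent and defined over $\mathbb{F}_q$, one has $|U(x)^F| = q^{\dim U(x)}$, and by the Lang--Steinberg theorem $U(x)^F$ is normal in $C_{\C}(x)$ with complement $R(x)^F$. So $U := U(x)^F$, $R := R(x)^F$, and it remains to determine the $F$-rational structure of $R(x)$ and to match $\dim U(x)$ with the stated $\gamma$.

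The crux — and the main obstacle — is identifying the set $\mathcal{S}$, i.e.\ deciding for each string $J_{m_i}^{\oplus 2a_i}$ which classical group occurs. The relevant factor of $R(x)$ is the isometry group over $\mathbb{F}_q$ of the form induced by $\beta$ (or by $Q$) on the $2a_i$-dimensional multiplicity space $M_i$. In characteristic $2$ this induced form is always alternating; the delicate point is when it admits a non-degenerate quadratic refinement stabilised by the centralising elements. Tracking the construction of the representatives and their interactions, one finds: no refinement exists, and the factor is $\Sp(2a_i,q)$, precisely when $m_i$ is even, or $m_i$ is odd with $m_i = 2k_j\pm 1$ for some $V$-block dimension (so the two strings are coupled through their forms), or $m_i = 1$ and $\C=\Sp(n,q)$; otherwise $M_i$ carries a non-degenerate quadratic form and the factor is $\Or^{\epsilon}(2a_i,q)$, with $\epsilon$ read off from the chosen $b_i$, equivalently from whether a $W_\alpha$-component occurs. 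This is exactly $\mathcal{S}$. The remaining reductive contribution — from the even-dimensional $V$-blocks $J_{2k_j}^{\oplus b_j}$ with $b_j\le 2$, together with the component group $C_{\overline{\C}}(u)/C_{\overline{\C}}(u)^{\circ}$ — is elementary abelian of exponent $2$, and a combinatorial count using conditions (2)--(6) of Proposition~\ref{unirepsGO2} shows its rank is $t+\delta$, with $t$ the number of gaps $k_j-k_{j+1}\ge 2$ among the $V$-blocks and $\delta\in\{0,1\}$ equal to $0$ exactly when there is no $V$-block, or $\C=\Sp(n,q)$ with $k_s=1$.

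Finally I would evaluate $\gamma = \dim U(x) = \dim C_{\overline{\C}}(u) - \dim R(x)$. Writing the full Jordan type of $u$ as $l_1\ge l_2\ge\cdots\ge l_N$, the classical formula $\dim C_{\GL_n}(u)=\sum_{\nu}(2\nu-1)l_{\nu}$ together with the characteristic-$2$ fusion formula for symplectic and orthogonal centralisers gives $\dim C_{\overline{\C}}(u)=\sum_{\nu}\bigl(\nu l_{\nu}-\chi(l_{\nu})\bigr)$, the function $\chi$ being precisely the bookkeeping of whether $l_{\nu}$ is odd, even and equal to a $V$-block dimension, or even and not. Since $\dim\Sp(2a_i,q)=2a_i^2+a_i$, $\dim\Or^{\epsilon}(2a_i,q)=2a_i^2-a_i$, and the $\mathbb{Z}_2$-factors and the $V$-blocks contribute nothing to the dimension, $\dim R(x)=2\sum_i a_i^2+\sum_{i\in\mathcal{S}}a_i-\sum_{i\notin\mathcal{S}}a_i$, and subtraction produces exactly the stated $\gamma$. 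In short, Steps on the algebraic group and the descent are a direct translation of \cite{LS} and \cite{GEW} into the present notation, the dimension count is routine once $\mathcal{S}$ is known, and essentially all the real content is the characteristic-$2$ determination of $\mathcal{S}$ and of the rank $t+\delta$.
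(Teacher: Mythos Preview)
Your approach is essentially the same as the paper's: the paper does not give an independent proof but simply cites \cite[Lemma 6.2 and Theorem 7.3]{LS}, and your proposal is precisely a sketch of how those results from Liebeck--Seitz (together with Lang--Steinberg descent) yield the statement in the present notation. So your proof is correct and aligned with the paper; you have just unpacked what the citation contains.
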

\begin{proof}
See \cite[Lemma 6.2 and Theorem 7.3]{LS}.
\end{proof}
\begin{remark} \label{alg}
In all cases, $R$ is easy to generate, being isomorphic to a direct product of classical groups. The generation of $U$ is much harder. Liebeck and O'Brien \cite{IP} provide an algorithm to construct $U$ (\emph{Algorithm} 3)\index{Algorithm 3}.
\end{remark}
\section{Conjugating element in the unipotent case}  \label{ConjElUni}
Let $\C$ be a group of isometries on the vector space $V$. Given $x \in \C$ unipotent, there is an algorithm (\emph{Algorithm} $4$)\index{Algorithm 4} of Liebeck and O'Brien \cite{IP} that computes a basis for $V$ such that the matrices for $x$ and the form preserved by $\C$ with respect to this basis are those described in Section \ref{UnipotentRepr}. Given unipotent matrices $X,Y \in \C$, the algorithm can compute matrices $P_X$ and $P_Y$ in $\C$ such that $P_XXP_X^{-1}= P_YYP_Y^{-1}= J$, where $J$ is the standard representative of the conjugacy class of $X$ and $Y$. If $Z=P_X^{-1}P_Y$, then $X^Z=Y$.

\chapter{Centralizers and conjugacy classes: the general case}        \label{generalChap}
In this chapter we use the results in the semisimple and unipotent cases to solve the three problems for classical groups in the general case: describe all conjugacy classes, describe the centralizers and compute explicitly a conjugating element.
\section{Conjugacy classes in classical groups}   \label{SectListGen}
\subsection{Conjugacy classes in isometry groups}
Let $F=\mathbb{F}_{q^2}$ in the unitary case, $F=\mathbb{F}_q$ otherwise and let $V$ be an $n$-dimensional vector space over $F$. Let $\C$ be $\C(\beta)$ or $\C(Q)$, where $\beta$ is a non-degenerate alternating, hermitian or symmetric form and $Q$ is a non-singular quadratic form on $V$.

Let $x \in \C$. We know by Lemma \ref{splitSU} that $x=su=us$, with $s$ semisimple and $u$ unipotent. If $x_1 = s_1u_1=u_1s_1$ and $x_2=s_2u_2= u_2s_2$ are conjugate in $\C$, then there exists $z \in \C$ such that $x_1=z^{-1}x_2z$, so $s_1u_1 = (z^{-1}s_2z)(z^{-1}u_2z)$. The terms $z^{-1}s_2z$ and $z^{-1}u_2z$ are semisimple and unipotent respectively, so by uniqueness of the Jordan decomposition, this implies $s_1=z^{-1}s_2z$ and $u_1=z^{-1}u_2z$. In other words, if $x_1$ and $x_2$ are conjugate in $\C$, then their corresponding semisimple and unipotent parts are. So, the strategy to list all conjugacy classes of $\C$ is to list all semisimple classes and, for each representative, list all classes having that fixed semisimple part.

Recall the notation of Definition \ref{ThreeCases}:
\begin{eqnarray*}
	\Phi_1 & = & \{f: f \in F[t] \; | \; f=f^* \mbox{ monic irreducible}, \, \deg{f}=1 \}, \\
	\Phi_2 & = & \{f: f \in F[t] \; | \; f=gg^*, \, g \neq g^* \mbox{ monic irreducible}  \}, \\ 
	\Phi_3 & = & \{f: f \in F[t] \; | \; f=f^* \mbox{ monic irreducible}, \, \deg{f}>1 \}.
\end{eqnarray*}
Each semisimple class of $\C$ can be identified with a pair $(S,B)$, where $S$ is an isometry for the form $B$. Let $f_1, \dots, f_h$ be the \geds of $S$, with $f_i \in \Phi$ and $m_i$ the multiplicity of $f_i$ for every $i=1, \dots, h$. Thus
\begin{eqnarray}  \label{matricessemisimple}
S = \left( \begin{array}{ccc} S_1 && \\ & \ddots & \\ && S_h\end{array}\right), \quad B = \left(\begin{array}{ccc} B_1 && \\ & \ddots & \\ && B_h \end{array}\right),
\end{eqnarray}
where $S_i$ and $B_i$ are the matrices of the restriction of $S$ and $B$ to $\ker(f_i(S))$.

If $f_i \in \Phi_1 \cup \Phi_3$, then $S_i$ is a diagonal join of $m_i$ companion matrices of $f_i$; if $f_i = g_ig_i^* \in \Phi_2$, then
$$
S_i = \left( \begin{array}{cc} Y_i & \\ & Y_i^{*-1} \end{array}\right),
$$
where $Y_i$ is a diagonal join of $m_i$ companion matrices of $g_i$. For the matrix $B$, we choose the following form:
\begin{itemize}
	\item If $f_i \in \Phi_1$, then $S_i$ is a scalar matrix, so every $B_i$ can be chosen.
	\item If $f_i \in \Phi_2$, then choose
	$$
	B_i = \left(\begin{array}{cc} \mathbb{O} & \mathbb{I} \\ \varepsilon\mathbb{I} & \mathbb{O} \end{array}\right), \quad \varepsilon = \left\{ \begin{array}{rl} 1 & \mbox{ if $B$ is hermitian or symmetric}; \\ -1 & \mbox{ if $B$ is alternating}; \\ 0 & \mbox{ if $B$ is quadratic}. \end{array}\right.
	$$
	\item If $f_i \in \Phi_3$, then choose
	\begin{eqnarray}  \label{choiceforBi}
	B_i = \left( \begin{array}{ccc} && B_{f_i} \\ & \adots & \\ B_{f_i} \end{array}\right),
	\end{eqnarray}
	where $B_{f_i}$ is the matrix of a form preserved by the companion matrix of $f_i$, as shown in Section \ref{formsandelements}, and it appears $m_i$ times.
\end{itemize}
Let $x=su=us \in \C$, with $s$ semisimple and $u$ unipotent. Choose a basis such that $s$ and the form, $\beta$ or $Q$, have matrices $S$ and $B$ respectively as described in (\ref{matricessemisimple}). Let $U$ be the matrix of $u$ in such a basis. We know that $U$ belongs to $C_{\C}(S) = \bigoplus_{i=1}^h C_{\C(B_i)}(S_i)$. Thus $U$ is a block diagonal matrix
$$
\left( \begin{array}{ccc} U_1 && \\ & \ddots & \\ && U_h \end{array}\right)
$$
where $U_i$ is the matrix of the restriction of $u$ to $\ker(f_i(x)^{m_i})$.

\begin{proposition} \label{semiuniconj}
	Let $x=su_x$ and $y=su_y$  be elements of $\C$, where $\C = \C(\beta)$ or $\C(Q)$ and $u_x,u_y \in C_{\C}(s)$. Let $f_1, \dots, f_h$ be the \geds of $s$ with multiplicity $m_1, \dots, m_h$. Let $S$ and $B$ be matrices of $s$ and $\beta$ (or $Q$) as in $(\ref{matricessemisimple})$. Let $U_{x,i}, \, U_{y,i}$ be the matrices of the restrictions of $u_x$, $u_y$ respectively to $\ker(f_i(s))$. Now $x$ and $y$ are conjugate in $\C$ if, and only if, $U_{x,i}$ and $U_{y,i}$ are conjugate in $C_{\C(B_i)}(S_i)$ for every $i=1, \dots, h$.
\end{proposition}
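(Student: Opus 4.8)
The plan is to reduce conjugacy of $x=su_x$ and $y=su_y$ in $\C$ to conjugacy of the unipotent parts in the centralizer $C_{\C}(s)$, and then to use the block decomposition $C_{\C}(s)=\bigoplus_{i=1}^h C_{\C(B_i)}(S_i)$ to split the problem factor by factor. First I would observe the easy direction: if $U_{x,i}$ is conjugate to $U_{y,i}$ in $C_{\C(B_i)}(S_i)$ for each $i$, then taking $z=\bigoplus_i z_i$ with $z_i$ a conjugating element in each block, $z$ lies in $C_{\C}(s)$ and satisfies $z^{-1}u_xz=u_y$; since $z$ centralizes $s$ we get $z^{-1}xz=z^{-1}su_xz=s\,z^{-1}u_xz=su_y=y$.

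For the converse, suppose $z\in\C$ with $z^{-1}xz=y$. Since $x=su_x=u_xs$ and $y=su_y=u_ys$ are the Jordan decompositions of $x$ and $y$ (the factors commute, $s$ is semisimple, $u_x,u_y$ unipotent), uniqueness of the Jordan decomposition (Lemma \ref{splitSU}) applied to $z^{-1}xz=(z^{-1}sz)(z^{-1}u_xz)$ forces $z^{-1}sz=s$ and $z^{-1}u_xz=u_y$. Hence $z\in C_{\C}(s)$ and $z$ conjugates $u_x$ to $u_y$ inside $C_{\C}(s)$. Now invoke the structure of $C_{\C}(s)$: choosing the basis so that $S$ and $B$ have the block forms in (\ref{matricessemisimple}), Lemma \ref{L26w} gives $B$ block diagonal, and every element of $C_{\C}(s)$ is correspondingly block diagonal, i.e. $C_{\C}(s)=\bigoplus_{i=1}^h C_{\C(B_i)}(S_i)$, where the $i$-th block acts on $\ker(f_i(s))$ (an $X$-invariant, non-degenerate subspace by the results of Chapter \ref{membershipChapter}). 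Writing $z=\bigoplus_i z_i$ and $u_x=\bigoplus_i U_{x,i}$, $u_y=\bigoplus_i U_{y,i}$, the relation $z^{-1}u_xz=u_y$ becomes $z_i^{-1}U_{x,i}z_i=U_{y,i}$ for every $i$, with $z_i\in C_{\C(B_i)}(S_i)$. This is exactly the conclusion.

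The one point that needs a little care — and is really the only obstacle — is justifying that the decomposition $C_{\C}(s)=\bigoplus_i C_{\C(B_i)}(S_i)$ is \emph{compatible} with the decomposition of $u_x,u_y$: one must check that each $u_x,u_y$, being unipotent and commuting with $s$, indeed restricts to $\ker(f_i(s))$ (which it does because $\ker(f_i(s))=\ker(f_i(x)^{m_i})$ is $u_x$- and $u_y$-invariant, these being the primary components of $x$ and $y$ for the prime $f_i$, since $s$ and $u_x$, resp. $u_y$, have the same primary decomposition as $x$, resp. $y$), and that $U_{x,i}$ lands in $C_{\C(B_i)}(S_i)$ rather than merely in $C_{\GL}(S_i)$ — but this follows since $u_x\in\C$ preserves each non-degenerate summand. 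All of this is bookkeeping already set up in the paragraph preceding the proposition; the essential content is Lemma \ref{splitSU} (uniqueness of Jordan decomposition) and Lemma \ref{L26w} (block structure of the form), so the proof is short.
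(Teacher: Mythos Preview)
Your proposal is correct and follows essentially the same approach as the paper: use uniqueness of the Jordan decomposition (Lemma \ref{splitSU}) to force any conjugating $z$ into $C_{\C}(s)$, then exploit the block-diagonal structure of $B$ (Lemma \ref{L26w}) and of $C_{\C}(s)$ (Lemma \ref{SplitCentr}) to split both directions into the factors $C_{\C(B_i)}(S_i)$. Your extra paragraph justifying that $u_x,u_y$ restrict compatibly to each $\ker(f_i(s))$ is more explicit than the paper, which simply takes this as already established in the setup preceding the proposition.
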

\begin{proof}
	$(\Rightarrow)$ Let $z \in \C(B)$ such that $x = z^{-1}yz$. This implies $su_x = z^{-1}szz^{-1}u_yz$. Now $z^{-1}sz$ and $z^{-1}u_yz$ are semisimple and unipotent respectively; thus, by the uniqueness of the Jordan decomposition, $s=z^{-1}sz$ and $u_x=z^{-1}u_yz$. In particular, the first relation implies that $z \in C_{\C}(s)$, so $z$ has matrix
	\begin{eqnarray}  \label{formsofZ}
	Z = \left( \begin{array}{ccc} Z_1 && \\ & \ddots & \\ && Z_h \end{array} \right),
	\end{eqnarray}
	where $Z_i$ is the restriction of $z$ to $\ker(f_i(s))$. This leads immediately to the relations $S_i = Z_i^{-1}S_iZ_i$ and $U_{x,i} = Z_i^{-1}U_{y,i}Z_i$ for each $i=1, \dots, h$.
	
	$(\Leftarrow)$ If there exist $Z_i \in C_{\C(B_i)}(S_i)$ such that $U_{x,i} = Z_i^{-1}U_{y,i}Z_i$ and we take $z \in \C(B)$ with matrix $Z$ defined as in (\ref{formsofZ}), then it is immediate to verify that $z \in C_{\C(B)}(s)$ and $x=z^{-1}yz$.
\end{proof}
\begin{theorem}    \label{CCgen}
	A complete set of representatives for conjugacy classes of $\C$ is described by all pairs of matrices $(SU,B)$, defined by 
	$$
	S = \left(\begin{array}{ccc} S_1 && \\ & \ddots & \\ && S_h\end{array}\right), \quad U = \left( \begin{array}{ccc} U_{1,j} && \\ & \ddots & \\ && U_{h,j} \end{array}\right), \quad B = \left(\begin{array}{ccc} B_1 && \\ & \ddots & \\ && B_h \end{array}\right).
	$$
	where $(S,B)$ runs over all representatives of semisimple conjugacy classes of $\C$ and, for every such $(S,B)$, $U_{i,j}$ runs over all representatives of unipotent conjugacy classes of $C_{\C(B_i)}(S_i)$ for $i=1, \dots, h$.
\end{theorem}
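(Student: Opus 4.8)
The plan is to obtain this as a consequence of the Jordan decomposition together with Proposition~\ref{semiuniconj} and the classifications already in hand: of semisimple classes (Theorems~\ref{MainConjinC}, \ref{ConjugacySpecial}, \ref{semisimpleOmega}, with explicit representatives from Section~\ref{formsandelements}), of the centralizer of a semisimple element (Theorem~\ref{CentrSem}), and of the unipotent classes of the groups that arise as such centralizers (Chapter~\ref{unipotentChap} for isometry-group factors, Chapter~\ref{LinearChapter} for general linear factors). Two claims have to be checked: that every element of $\C$ is conjugate to one of the listed pairs (exhaustiveness), and that distinct listed pairs are not conjugate (irredundancy).

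For exhaustiveness, start from $x \in \C$ and write $x = su = us$ with $s$ semisimple and $u$ unipotent (Lemma~\ref{splitSU}). By the classification of semisimple classes and the explicit constructions of Section~\ref{formsandelements}, the pair consisting of $s$ and the ambient form is equivalent to one of the standard pairs $(S,B)$ with $S = \bigoplus_i S_i$ and $B = \bigoplus_i B_i$ as in $(\ref{matricessemisimple})$; after replacing $x$ by an equivalent element we may assume $s$ and the form already have these matrices. Then $u \in C_{\C}(s)$, which by Theorem~\ref{CentrSem} is the direct product $\bigoplus_i C_{\C(B_i)}(S_i)$, so $u = \bigoplus_i U_i$ with $U_i$ unipotent in $C_{\C(B_i)}(S_i)$. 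Conjugating each $U_i$, inside $C_{\C(B_i)}(S_i)$, to a chosen representative $U_{i,j}$ of its unipotent class, and assembling these conjugating matrices into a single element of $C_{\C}(s)$ (which fixes $s$), carries $x$ to the listed pair $(SU,B)$ with $U = \bigoplus_i U_{i,j}$.

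For irredundancy, suppose two listed pairs are equivalent, witnessed by $T$ with $T^{-1}(S'U')T = SU$ and $T$ simultaneously taking the form $B'$ to $B$. Because conjugation commutes with the Jordan decomposition and the latter is unique (Lemma~\ref{splitSU}), $T^{-1}S'T = S$ and $T^{-1}U'T = U$; in particular $T$ makes the pairs $(S',B')$ and $(S,B)$ equivalent, so they represent the same semisimple class, and since the list carries exactly one representative per semisimple class, $(S,B) = (S',B')$. Consequently $T \in C_{\C}(s)$ for $s$ the common semisimple part, so $SU$ and $SU'$ are conjugate by an element of $C_{\C}(s)$; Proposition~\ref{semiuniconj} then forces $U_i$ and $U_i'$ to be conjugate in $C_{\C(B_i)}(S_i)$ for each $i$. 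Since the blocks of the listed pairs were taken from distinct unipotent classes of each $C_{\C(B_i)}(S_i)$, this gives $U = U'$, completing the argument.

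The step needing the most care — though it is bookkeeping rather than a true obstacle — is matching the concrete centralizers $C_{\C(B_i)}(S_i)$ appearing in Proposition~\ref{semiuniconj} with groups whose unipotent classes have been listed: a classical group on $\ker(f_i(s))$ when $f_i \in \Phi_1$, a general linear group over a field extension when $f_i \in \Phi_2$ (Theorem~\ref{largerfield}), and a unitary group over a field extension when $f_i \in \Phi_3$, so that Proposition~\ref{uniconjU} applies. One must also keep track, in the orthogonal case, of the fact that when both $t-1$ and $t+1$ occur as elementary divisors of $s$ the datum separating the two semisimple classes is encoded in the blocks $B_i$ for $f_i = t \pm 1$; granting the cited classifications, the theorem follows by assembling them.
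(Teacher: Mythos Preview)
Your proposal is correct and follows essentially the same approach as the paper: reduce via Proposition~\ref{semiuniconj} to listing unipotent classes in each factor $C_{\C(B_i)}(S_i)$, then identify these factors case by case ($\C(B_i)$ for $f_i\in\Phi_1$, a general linear group over an extension field for $f_i\in\Phi_2$, a unitary group over an extension field for $f_i\in\Phi_3$) and invoke the existing unipotent classifications. Your write-up is more explicit than the paper's in separating exhaustiveness from irredundancy, but the content is the same; note that the references to Theorems~\ref{ConjugacySpecial} and~\ref{semisimpleOmega} are unnecessary here since Theorem~\ref{CCgen} concerns only the isometry group $\C$.
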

\begin{proof}
	Proposition \ref{semiuniconj} implies that the problem of listing all conjugacy classes of $\C$ with fixed semisimple part $S$ can be reduced to listing representatives for all unipotent classes of $C_{\C(B_i)}(S_i)$. Let us distinguish the three cases.
	\begin{itemize}
		\item $f_i \in \Phi_1$. Now $S_i$ is a scalar matrix and $C_{\C(B_i)}(S_i)$ coincides with $\C(B_i)$. The representatives for unipotent classes of $\C(B_i)$ are given in Section \ref{UnipotentRepr}, and we are free to choose the form $B_i$.
		\item $f_i \in \Phi_2$, $f_i=g_i{g_i}^*$. Now $C_{\C(B_i)}(S_i)$ is isomorphic to $\GL(m_i,E)$, with $E = F[t]/(g_i)$, via the isomorphism
		$$
		y \mapsto \left(\begin{array}{cc} Y & \\ & Y^{*-1} \end{array}\right), \: \forall y \in \GL(m_i,E),
		$$
		where $Y$ is the embedding of $y$ into $\GL(m_id_i,F)$. Two elements of $\GL(m_i,E)$ are conjugate if, and only if, they have the same generalized elementary divisors, so the list of representatives of unipotent classes of $C_{\C(B_i)}(S_i)$ is just the list of the isomorphic images in $C_{\C(B_i)}(S_i)$ of representatives of the unipotent classes of $\GL(m_i,E)$. We choose the diagonal join of unipotent Jordan blocks as our preferred form.
		\item $f_i \in \Phi_3$. Let $E = F[t]/(f_i)$. By the results of Section \ref{semisimplesesquilinear}, $C_{\C(B_i)}(S_i)$ is the set of embeddings into $\GL(m_id_i,F)$ of all matrices of the group $\U(m_i,E)$ preserving the hermitian form with matrix
		$$
		\left( \begin{array}{ccc} && 1\\ & \adots & \\ 1 && \end{array} \right).
		$$
		This follows from our choice of $B_i$ in (\ref{choiceforBi}). Representatives for all unipotent classes of $\U(m_i,E)$ are given in Section \ref{UnipotentRepr}. \qedhere
	\end{itemize}
\end{proof}

\subsection{Conjugacy classes in special groups}
Let $\C = \Or^{\epsilon}(n,q)$ or $\U(n,q)$, with $q$ odd in the orthogonal case, $\epsilon \in \{ -,\circ,+\}$ and let $\Spec$ be the corresponding special group. If $\C$ is unitary, let $\omega$ be a primitive element of $\mathbb{F}_{q^2}$. Let $x \in \Spec$. We proved in Section \ref{sectSpecial} that the conjugacy class of $x$ in $\C$ splits into $r$ distinct classes in $\Spec$ with representatives $x, x^z, x^{z^2}, \dots, x^{z^{r-1}}$, where $z \in \C$ has determinant $-1$ (in the orthogonal case) or $\omega^{q-1}$ (in the unitary case) and 
$$
r = \left\{ \begin{array}{rl} \frac{2}{|C_{\C}(x) : C_{\Spec}(x)|} & \mbox{ if $\C$ is orthogonal;} \\ \\ \frac{q+1}{|C_{\C}(x) : C_{\Spec}(x)|} & \mbox{ if $\C$ is unitary.} \end{array}\right.
$$
Thus, we reduce the problem to finding the index $|C_{\C}(x): C_{\Spec}(x)|$.
\begin{theorem} \label{CCinSpecial}
	Let $x \in \Spec$. Let $f_1^{m_1}, \dots, f_k^{m_k}$ be the \geds of $x$, with $f_i \in \Phi$ not necessarily distinct.
	
	If $\C$ is orthogonal, then the conjugacy class of $x$ in $\C$ splits into two distinct classes in $\Spec$ if, and only if, $x$ has no elementary divisors $(t \pm 1)^m$ with $m$ odd.
	
	If $\C$ is unitary, then the conjugacy class of $x$ in $\C$ splits into $r$ distinct classes in $\Spec$, where
	$$
	r = \gcd (m_1, \dots, m_k, q+1).
	$$
\end{theorem}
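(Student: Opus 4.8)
The strategy is to reduce, exactly as in the proof of Theorem~\ref{CCgen}, to the block corresponding to a single \ged $f_i^{m_i}$, compute the index $|C_{\C}(x):C_{\Spec}(x)|$ block-by-block via the determinant homomorphism $\det:\C\to\mathbb{F}_q^*$ (orthogonal case) or $\det:\C\to\{\mu\in\mathbb{F}_{q^2}^*:\mu\overline\mu=1\}$ (unitary case), and then apply the orbit-counting identity already used in Section~\ref{sectSpecial}, namely that the $\C$-class of $x$ splits into $r$ $\Spec$-classes where $r=|\C:\Spec|\cdot|C_{\C}(x):C_{\Spec}(x)|^{-1}$. So the whole problem is to determine the image of $C_{\C}(x)$ under $\det$.

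First I would recall from Theorem~\ref{CCgen} that $C_{\C}(x)\cong\prod_i C_{\C(B_i)}(S_i,u_{x,i})$, and that for $f_i\in\Phi_2$ the centralizer $C_{\C(B_i)}(S_i)$ is a copy of $\GL(m_i,E)$ (with $E=F[t]/(g_i)$) embedded as $Y\mapsto Y\oplus Y^{*-1}$, for $f_i\in\Phi_3$ it is a unitary group $\U(m_i,E')$, and for $f_i\in\Phi_1$ it is the full isometry group of the $f_i$-eigenspace. Within each such block one further passes to the centralizer of the unipotent part $u_{x,i}$; the point (already exploited in Theorem~\ref{CentrUniOdd}) is that the unipotent radical $U$ of any of these centralizers consists entirely of elements of determinant $1$ (and spinor norm $0$), so $\det$ on $C_{\C}(x)$ factors through the reductive part $\prod R_i$. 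For $f_i\in\Phi_2\cup\Phi_3$, a direct determinant computation — the same ones done in the proof of Theorem~\ref{ConjugacySpecial}, $\det(Y\oplus Y^{*-1})=\det(Y)^{1-q}$ in the unitary case and $=1$ in the orthogonal case, and $\det Y=\mathrm{N}(\det y)$ for a unitary-type block which always turns out to be $1$ in an orthogonal ambient and a $(q+1)$-st power in a unitary ambient — shows these blocks contribute the \emph{full} index $|\C:\Spec|$ and hence do not cause splitting on their own. So only the $f_i\in\Phi_1$ blocks, i.e.\ the elementary divisors $(t\pm1)^m$, can obstruct surjectivity of $\det$ on $C_\C(x)$.

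Next, for the orthogonal case, I would analyze the $(t\pm1)^m$-block: its centralizer inside the relevant isometry group is a centralizer of a \emph{unipotent} element of an orthogonal group, and by Theorem~\ref{CentrUniOdd} its reductive part $R$ is a product of smaller orthogonal and symplectic groups $\prod\Or^{\epsilon}(a_i,q)\times\prod\Sp(2c_j,q)$. Symplectic factors lie in $\Spec$; an orthogonal factor $\Or^{\epsilon}(a_i,q)$ contains elements of determinant $-1$ iff $a_i\geq1$, i.e.\ iff there is at least one Jordan block $(t\pm1)^{2k_i+1}$ of odd dimension (the even-dimensional blocks correspond to the $\Sp(2c_j,q)$ factors). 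So $\det$ is surjective on $C_\C(x)$ — equivalently the class does not split — precisely when $x$ has some odd-dimensional elementary divisor $(t\pm1)^m$; conversely, if every $(t\pm1)^m$ has $m$ even, then $C_\C(x)\subseteq\Spec$ and the class splits into exactly two. This matches the stated criterion. For the unitary case, $\Phi_1$-blocks are eigenvalue-$\lambda$ scalar-times-unipotent pieces with $\lambda^{q+1}=1$; the centralizer of the associated unipotent part is $U\rtimes\prod\U(r_i,q)$ by Theorem~\ref{CentrUniOdd}, and $\det$ on $\prod\U(r_i,q)$ has image generated by $\omega^{(q-1)\gcd(n_i,\dots)}$ — running the same $\SL$-style index computation as in Theorem~\ref{MainResult2} on the list of \ged exponents $m_1,\dots,m_k$ gives image index $\gcd(m_1,\dots,m_k,q+1)$ in the norm-one group. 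Hence $r=\gcd(m_1,\dots,m_k,q+1)$, the representatives being $x,x^z,\dots,x^{z^{r-1}}$ with $\det z=\omega^{q-1}$, exactly as for $\SU$ versus $\U$ and $\SL$ versus $\GL$.

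The main obstacle — more bookkeeping than genuine difficulty — is keeping careful track of which Jordan blocks of $(t\pm1)^m$ land in the orthogonal versus the symplectic factor of the reductive part $R$ in Theorem~\ref{CentrUniOdd} (odd-dimensional blocks, and the "extra" $V_{b_i}$-summands, sit in the $\Or^{\epsilon}(a_i,q)$ factors; paired even-dimensional $W(2l_j)$-blocks sit in $\Sp(2c_j,q)$), and correspondingly making sure no determinant $-1$ element is accidentally overlooked or double-counted. Once the correct factorization $\det(C_\C(x))=\prod_i\det(R_i)$ is in hand, the two stated formulas drop out, and both are limiting cases of the uniform identity $r=|\C:\Spec|/|C_\C(x):C_{\Spec}(x)|$ established in Section~\ref{sectSpecial}.
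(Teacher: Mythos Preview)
Your overall strategy---block decomposition followed by computing the image of $\det$ on $C_{\C}(x)$---is the same as the paper's, and your final conclusions are correct. But there is a genuine muddle in your treatment of the $\Phi_2$ and $\Phi_3$ blocks that you should repair.

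In the orthogonal case your phrasing is simply backwards: a $\Phi_2$ or $\Phi_3$ block has $\det=1$ identically on its centralizer, so on its own it \emph{does} force splitting (the centralizer lands in $\Spec$); it is only a $\Phi_1$ block with an odd exponent that can \emph{supply} an element of determinant $-1$. Your subsequent paragraph recovers the correct statement, so this is an expository slip rather than a real error. The paper is more direct here than your route through Theorem~\ref{CentrUniOdd}: when some $(t\pm1)^m$ has $m$ odd it simply exhibits $(-\mathbb{I}_m)\oplus\mathbb{I}_{m'}\in C_{\C}(x)$ of determinant $-1$, and when all exponents are even it invokes Section~\ref{SLconjclasses} to get $C_{\C}(x)\subseteq C_{\GL(V)}(x)\subseteq\SL(V)$.

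In the unitary case the gap is substantive. You claim that $\Phi_2,\Phi_3$ blocks ``contribute the full index $|\C:\Spec|$'', citing Theorem~\ref{ConjugacySpecial}; but that theorem treats \emph{semisimple} $x$, where each block is $f$ itself with some multiplicity. For a \ged $f^m$ with $m>1$ and $f\in\Phi_2\cup\Phi_3$, the block centralizer is the centralizer of a unipotent element inside $\GL(m,E)$ or $\U(m,E)$, and its determinant image is only the $m$-th powers of the norm-one group, not all of it. The paper therefore treats all three types uniformly: for a single \ged $f^m$ it exhibits an explicit centralizing element of determinant $\omega^{m(q-1)}$ in each case $f\in\Phi_1,\Phi_2,\Phi_3$, so the image of $\det$ on $C_{\C}(x)$ is $\langle\omega^{m_i(q-1)}:1\le i\le k\rangle=\langle\omega^{d(q-1)}\rangle$ with $d=\gcd(m_i)$, giving $r=\gcd(m_1,\dots,m_k,q+1)$. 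Your final formula is correct precisely \emph{because} it runs over all the $m_i$, which is inconsistent with your earlier assertion that only $\Phi_1$ blocks matter; you need to drop that assertion and carry out the determinant computation for $\Phi_2,\Phi_3$ blocks honestly.
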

\begin{proof}
	Suppose first that $\C$ is orthogonal. The class of $x$ in $\C$ splits into two distinct classes in $\Spec$ if, and only if, $C_{\C}(x) \subseteq \Spec$. Let $x=su$ be the Jordan decomposition of $x$. If $x^2-\mathbf{1}_V$ is non-singular, then $C_{\C}(x) \subseteq C_{\C}(s) \subseteq \Spec$ by results of Section \ref{sectSpecial}. So, we can suppose that $x$ has powers of $(t-1)$ and $(t+1)$ as its unique generalized elementary divisors. If all of these powers are even, then $C_{\C}(x) \subseteq C_{\GL(V)}(x) \subseteq \SL(V)$ by results of Section \ref{SLconjclasses}. Conversely, if $x$ has an elementary divisor $(t \pm1)^m$ with $m$ odd, then $C_{\C}(x)$ contains elements of determinant $-1$. In an appropriate basis, $x$ has matrix $X$ and preserves the symmetric form $B$ with
	$$
	X= \left( \begin{array}{ccc} X_1 && \\ & \ddots & \\ && X_k \end{array}\right), \quad B= \left( \begin{array}{ccc} B_1 && \\ & \ddots & \\ && B_k \end{array}\right),
	$$
	where $X_i$ is the matrix of the restriction of $x$ to the cyclic submodule of $V$ corresponding to the \ged $f_i^{m_i}$. Suppose without loss of generality that $f_1(t)^{m_1} = (t \pm 1)^m$ with $m$ odd. The matrix
	$$
	\left(\begin{array}{cc} -\mathbb{I}_m & \\ & \mathbb{I}_{m'} \end{array}\right) \quad (m' = m_2+\cdots + m_k)
	$$
	belongs to $C_{\C(B)}(x)$ and has determinant $(-1)^m = -1$.
	
	Now suppose that $\C$ is unitary. For convenience, suppose that $x$ has a unique \ged $f^m$. Let $r = \gcd(m,q+1)$. We prove that $C_{\C}(x)$ contains elements with determinant $\omega^{m(q-1)}$, so $|C_{\C}(x) : C_{\Spec}(x)| = \frac{q+1}{r}$.
	
	If $f \in \Phi_1$, $f(t) = t-\lambda$, then we may suppose that $x$ has matrix $X$ and each element of $C_{\C}(x)$ has upper triangular matrix $Y$ with constant diagonals:
	$$
	X = \left(\begin{array}{cccc} \lambda & 1 && \\ & \ddots & \ddots & \\ && \ddots & 1 \\ &&& \lambda \end{array}\right), \quad Y = \left(\begin{array}{cccc} \mu & * & * & * \\ & \ddots & * & * \\ && \ddots & * \\ &&& \mu \end{array}\right).
	$$
	The condition $Y \in \C$ implies that $\mu$ is a multiple of $\omega^{q-1}$, so if we choose $\mu = \omega^{q-1}$, then $\det{Y}=\omega^{m(q-1)}$.
	
	If $f \in \Phi_2$, and $y \in C_{\C}(x)$, then in an appropriate basis, $x$ and $y$ have matrices $X$ and $Y$ respectively, with
	$$
	X = \left( \begin{array}{cc} X_1 & \\ & X_1^{*-1} \end{array}\right), \quad Y= \left(\begin{array}{cc} Y_1 & \\ & Y_1^{*-1} \end{array}\right).
	$$
	As we have seen in Section \ref{SLconjclasses}, $\det{Y_1}$ can assume every multiple of $\omega^m$; if we choose $\det{Y_1} = \omega^{-m}$, then $Y$ has determinant $\omega^{-m} \cdot \omega^{mq} = \omega^{m(q-1)}$.
	
	If $f \in \Phi_3$ and $y \in C_{\C}(x)$, then $y$ is the embedding into $\GL(V)$ of a certain $\widetilde{y} \in \U(m,q^d)$, with $d=\deg{f}$. If $\alpha$ is a primitive element of $\mathbb{F}_{q^{2d}}$, then $\widetilde{y}$ has determinant a multiple of $\alpha^{m(q^d-1)}$ by the same argument as for the case $f \in \Phi_1$. So, by choosing an appropriate $\alpha$,
	\begin{eqnarray*}
		\det{y} = \N_{q^{2d}|q^2}(\det{\widetilde{y}}) = \left(\alpha^{m(q^d-1)}\right)^{(q^{2d}-1)/(q^2-1)}=\omega^{m(q-1)}.
	\end{eqnarray*}\\[-1.32cm]
\end{proof}

\subsection{Conjugacy classes in $\Omega^{\epsilon}(\MakeLowercase{n,q})$, $q$ odd}
Let $x \in \Omega^{\epsilon}(n,q)$. Write $x=su=us$, with $s$ semisimple and $u$ unipotent. Let $f_1, \dots, f_h$ be the \geds of $s$ with multiplicities $m_1, \dots, m_h$. Choose a basis such that $x$ has matrix $X$ and the form has matrix $B$ defined by
$$
X= \left( \begin{array}{ccc} X_1 && \\ & \ddots & \\ && X_h \end{array}\right), \quad B= \left( \begin{array}{ccc} B_1 && \\ & \ddots & \\ && B_h \end{array}\right),
$$
where $X_i$ and $B_i$ are the matrices of the restrictions of $X$ and $B$ respectively to $\ker(f_i(s))$. Write $X_i = S_iU_i$, with $S_i$ and $U_i$ matrices of the semisimple and unipotent parts respectively. Abbreviate $\C(B)$, $\Spec(B)$ and $\Omega(B)$ by $\C$, $\Spec$ and $\Omega$ respectively.

\begin{theorem}\label{ConjClassinOmega}
	Let $x \in \Spec$, $x=su=us$. Then $x$ lies in $\Omega$ if, and only if, $s$ does. Moreover, the conjugacy class of $x$ in $\Spec$ splits into two distinct classes in $\Omega$ if, and only if, the following conditions hold:
	\begin{itemize}
		\item if $f_i(t) = t \pm 1$, then $X_i$ has shape
		$$
		\pm\left(\bigoplus_{i=1}^r (V_{b_i}(2k_i+1) \oplus V_1(2k_i+1)^{\oplus a_i-1}) \oplus \bigoplus_{j=1}^s W(2l_j)^{\oplus c_j}\right)
		$$
		as in $(\ref{unirepsGO})$, with either $r=0$, or $r \geq 1$ and the following hold:
		\begin{description}
			\item[$\mathrm{(a)}$] $a_i=1$ for all $i$,
			\item[$\mathrm{(b)}$] the $b_i(-1)^{k_i}$ are mutually congruent modulo $\mathbb{F}_q^{*2}$;
		\end{description}
		\item if both $t+1$ and $t-1$ occur in the list $\{ f_1, \dots, f_h \}$ and in each case $r>0$, then the values of the $(-1)^{k_i}b_i \pmod{\mathbb{F}_q^{* 2}}$ must be the same for both $t+1$ and $t-1$;
		\item if $f_i \in \Phi_2 \cup \Phi_3$, then $x$ has no \geds $f_i^m$ with $m$ odd.
	\end{itemize}
\end{theorem}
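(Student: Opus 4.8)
The plan is to reuse the template of Theorems~\ref{ConjugacySpecial}, \ref{semisimpleOmega} and~\ref{CCinSpecial}. Since $|\Spec:\Omega|=2$, the $\Spec$-class of $x$ splits into two $\Omega$-classes if and only if $C_{\Spec}(x)\subseteq\Omega$; in that case $C_{\Omega}(x)=C_{\Spec}(x)$, so $|x^{\Omega}|=\tfrac12|x^{\Spec}|$ and the two classes have representatives $x$ and $x^z$ with $z\in\Spec\setminus\Omega$, while otherwise $|C_{\Spec}(x):C_{\Omega}(x)|=2$ and $|x^{\Omega}|=|x^{\Spec}|$. So the whole statement reduces to computing the image of $C_{\Spec}(x)$ under the spinor norm $\theta$.

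First I would settle the claim that $x\in\Omega$ iff $s\in\Omega$. By Lemma~\ref{splitSU}, $s$ and $u$ are powers of $x$, so both lie in $\Spec$; the unipotent $u$ has order a power of $p$, hence odd order since $q$ is odd, so $u\in\Omega$ by Lemma~\ref{oddOrderinOmega}(a). As $\theta$ is a homomorphism on $\Spec$ and $s,u$ commute, $\theta(x)=\theta(s)+\theta(u)=\theta(s)$, which gives the equivalence.

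For the splitting criterion, write $V=\bigoplus_i\ker f_i(s)$ as an orthogonal direct sum of non-degenerate subspaces, with $X_i,B_i$ the restrictions. Then $C_{\C}(x)=\bigoplus_iC_{\C(B_i)}(X_i)$, and $C_{\Spec}(x)$ consists of the block-diagonal elements $\bigoplus_i y_i$ with $\prod_i\det y_i=1$. Since the Wall form of $\bigoplus_i y_i$ is the orthogonal sum of the Wall forms of the $y_i$, the spinor norm is additive: $\theta(\bigoplus_i y_i)=\sum_i\theta(y_i)$. Now treat the blocks by type. If $f_i\in\Phi_2\cup\Phi_3$, then (as in the analysis of Cases~2 and~3 for semisimple elements, which applies here since $C_{\C(B_i)}(X_i)\subseteq C_{\C(B_i)}(S_i)$) every $y_i$ already has $\det y_i=1$; a determinant/norm computation as in the proof of Theorem~\ref{semisimpleOmega}, tracking that the relevant field determinant of $y_i$ runs over the subgroup generated by the $m$-th powers over all generalized elementary divisors $f_i^m$ of $x$, shows that $C_{\C(B_i)}(X_i)\subseteq\Omega(B_i)$ iff every such $m$ is even --- this is the third bullet. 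If $f_i=t\pm1$, then $X_i=\pm U_i$ with $U_i$ unipotent, so $C_{\C(B_i)}(X_i)=C_{\C(B_i)}(U_i)$ is the centralizer of a unipotent element of an orthogonal group, and Proposition~\ref{ThmUnirepsGO}(3) says precisely when its determinant-$1$ part $C_{\SO(B_i)}(U_i)$ lies in $\Omega(B_i)$, namely when $r=0$ or ($r\geq1$ and~(a),(b)) --- this is the first bullet.

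It remains to glue the blocks together through the constraint $\prod_i\det y_i=1$; this produces the second bullet and is the delicate step. If at most one of $t+1,t-1$ is an eigenvalue of $s$, that single $\Phi_1$-block $y_i$ is forced to have $\det y_i=1$ (all other blocks contribute determinant $1$), so on that block $C_{\Spec}(x)$ restricts to $C_{\SO(B_i)}(X_i)$ and the first bullet is the exact condition. If both $t+1$ and $t-1$ occur, the constraint only forces $\det y_{+1}=\det y_{-1}$, so the determinant-$(-1)$ cosets of both block centralizers come into play: I would check that, once $a_i=1$ for all $i$ (so each factor $\Or^{\epsilon}(a_i,q)=\Or(1,q)$ of $R$ contributes a single $-\mathbf{1}$-type element whose spinor norm is $(-1)^{k_i}b_i\pmod{\mathbb{F}_q^{*2}}$, the sign reflecting the Wall form of a $-1$-eigenvalue summand on an odd Jordan block), the spinor norm is constant on the determinant-$(-1)$ coset of each $\Phi_1$-block exactly when the $(-1)^{k_i}b_i$ within that block are mutually congruent mod squares, and that then $\theta(y_{+1})+\theta(y_{-1})=0$ for every admissible pair iff the two common values coincide --- the second bullet. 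Combining the three block analyses shows $C_{\Spec}(x)\subseteq\Omega$ precisely when all three bullets hold, which proves the theorem. The main obstacle I expect is exactly this last gluing: tracking which combinations of determinants and spinor norms can occur simultaneously once the blocks are linked by $\prod_i\det y_i=1$, and handling correctly the sign $(-1)^{k_i}$ in the Wall-form discriminant of a $-1$-eigenvalue block sitting on an odd Jordan block.
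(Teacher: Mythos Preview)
Your proposal is correct and follows essentially the same route as the paper: reduce to the index $|C_{\Spec}(x):C_{\Omega}(x)|$, analyse each $f_i$-block via the identification of $C_{\C(B_i)}(S_i)$ with $\GL$ or $\U$ over an extension field, invoke Proposition~\ref{ThmUnirepsGO}(3) for the $\Phi_1$-blocks, and then glue through the constraint $\prod_i\det y_i=1$. The paper isolates one calculation you sketch inline---namely that for a $\Phi_1$-block with $r\geq1$, $a_i=1$, and all $(-1)^{k_i}b_i$ congruent, the spinor norm on the determinant-$(-1)$ coset of $C_{\C(B_i)}(U_i)$ is constant and equals $(-1)^{k_1}b_1\pmod{\mathbb F_q^{*2}}$---as a separate Proposition, proved by computing $\theta$ directly on the explicit element $Y=-\mathbb I_{2k_1+1}\oplus\mathbb I$; this is exactly the ``sign $(-1)^{k_i}$ in the Wall-form discriminant'' you flag as the obstacle, and having it as a standalone computation makes the gluing in the two-eigenvalue case clean.
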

\noindent The proof of this theorem requires some preliminary work.

Its first assertion follows from the fact that every unipotent element belongs to $\Omega$, since it has odd order. So $x=su$ lies in $\Omega$ if, and only if, $s$ does. Once we list all conjugacy classes of $\Spec$ lying in $\Omega$ (see Lemma \ref{oddOrderinOmega}), we need to establish which of these splits into two distinct classes in $\Omega$. The strategy is that used in the special case: compute the index $|C_{\Spec}(x): C_{\Omega}(x)|$. If the index is 1, then the conjugacy class of $x$ in $\Spec$ splits into two distinct classes in $\Omega$, with representatives $x$ and $x^z$, where $z \in \Spec \setminus \Omega$.
\begin{lemma} \label{lemmaPM}
	If $|C_{\Spec(B_i)}(X_i): C_{\Omega(B_i)}(X_i)|=2$ for at least one $i$, then $|C_{\Spec}(x): C_{\Omega}(x)|=2$.
\end{lemma}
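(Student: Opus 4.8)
The plan is to produce an explicit element of $C_{\Spec}(x)$ that lies outside $\Omega$; since $|\Spec:\Omega|=2$, this at once forces $|C_{\Spec}(x):C_{\Omega}(x)|=2$.

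First I would recall the block picture set up before the theorem. We have an orthogonal decomposition $V = V_1 \bot \cdots \bot V_h$ with $V_i = \ker(f_i(s))$, the form has matrix $B = B_1 \oplus \cdots \oplus B_h$, and by Lemma~\ref{SplitCentr} (and the ensuing discussion, as used for Proposition~\ref{semiuniconj}) every element of $C_{\C}(x)$ is block diagonal with $i$-th block lying in $C_{\C(B_i)}(X_i)$; consequently the determinant (resp.\ spinor norm) of such an element is the product (resp.\ sum) of the determinants (resp.\ spinor norms) of its blocks. By hypothesis there is an index, say $i=1$ after relabelling, with $|C_{\Spec(B_1)}(X_1):C_{\Omega(B_1)}(X_1)|=2$; equivalently, there exists $Y_1 \in C_{\C(B_1)}(X_1)$ with $\det{Y_1}=1$ and spinor norm $\theta_{B_1}(Y_1)=1$, the latter computed with respect to $B_1$ on $V_1$.

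Next I would put $Y := Y_1 \oplus \mathbf{1}_{V_2} \oplus \cdots \oplus \mathbf{1}_{V_h}$. Clearly $Y$ commutes with $X$, preserves $B$, and has $\det{Y} = \det{Y_1} = 1$, so $Y \in C_{\Spec}(x)$. The one point requiring an argument is that $\theta_B(Y) = \theta_{B_1}(Y_1) = 1$. Here I would observe that $V_Y = \im(\mathbf{1}_V - Y) = \im(\mathbf{1}_{V_1} - Y_1) = V_{Y_1} \subseteq V_1$, and that for $u \in V_{Y_1}$ a vector $w$ with $u = w - wY$ may be taken inside $V_1$, since $\mathbf{1}_V - Y$ vanishes on $V_1^{\bot}$; hence the Wall form $\chi_Y$ of $Y$ on $V_Y$ coincides with the Wall form $\chi_{Y_1}$ of $Y_1$ on $V_{Y_1}$, their Gram matrices agree, and therefore $\theta_B(Y) = \theta_{B_1}(Y_1)$. (In even characteristic the same conclusion is even quicker, using $\theta(Y) = \dim V_Y \equiv \dim V_{Y_1} \pmod 2$.) Thus $Y \in C_{\Spec}(x) \setminus \Omega$, so $C_{\Omega}(x) = C_{\Spec}(x) \cap \Omega$ is proper in $C_{\Spec}(x)$, and since $|\Spec:\Omega|=2$ the index is exactly $2$, as claimed.

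The computation is essentially formal. The only mildly delicate ingredient is the additivity of the spinor norm under an orthogonal direct sum in which one summand acts trivially, and this is handled by the short Wall-form argument above; I do not anticipate a genuine obstacle.
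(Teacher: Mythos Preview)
Your proof is correct and follows essentially the same approach as the paper: both arguments relabel so that $i=1$, pick $Y_1 \in C_{\Spec(B_1)}(X_1)$ with spinor norm $1$, extend by the identity on the remaining blocks, and observe that the resulting element lies in $C_{\Spec}(x) \setminus \Omega$. The paper's proof simply asserts the final membership without justification, whereas you supply the Wall-form verification that $\theta_B(Y)=\theta_{B_1}(Y_1)$; this extra care is harmless (and arguably clarifying), though note that the ambient section assumes $q$ odd, so your parenthetical remark on even characteristic is not needed here.
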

\begin{proof}
	Suppose without loss of generality that $|C_{\Spec(B_1)}(X_1): C_{\Omega(B_1)}(X_1)|=2$. In such a case there exists $Y_i \in C_{\Spec(B_1)}(X_1)$ with spinor norm 1. Thus
	$$
	\left( \begin{array}{cc} Y_1 & \\ & \mathbb{I}_{m'} \end{array}\right) \quad (m'=m_2+m_3+ \cdots +m_h)
	$$
	belongs to $C_{\Spec}(x) \setminus C_{\Omega}(x)$.
\end{proof}
\noindent For convenience, let us consider separately the two cases where all \geds of $x$ belongs to $\Phi_2 \cup\Phi_3$ or to $\Phi_1$. After that, we will analyze the general case.\\

Suppose first that $x^2-\mathbf{1}_V$ is non-singular, that is $f_i \in \Phi_2 \cup \Phi_3$ for every $i$. If $f_i \in \Phi_2$, $f_i=g_i{g_i}^*$, $d_i=\deg{g_i}$, then $C_{\C(B_i)}(S_i) = C_{\Spec(B_i)}(S_i) \cong \GL(m_i,q^{d_i})$ by Theorem \ref{CCinSpecial}. In the analysis of the case $q$ odd, $f_i \in \Phi_2$ in Section \ref{OmegaClasses}, we found that a certain $y \in C_{\C(B_i)}(S_i)$, the image of $\widetilde{y} \in \GL(m_i,q^{d_i})$, belongs to $\Omega(B_i)$ if, and only if, $\widetilde{y}$ belongs to the unique subgroup of $\GL(m_i,q^{d_i})$ of index 2, that is, if $\det{\widetilde{y}}$ is a square in $\mathbb{F}_{q^{d_i}}$. The centralizer of $U_i$ in $C_{\C(B_i)}(S_i)$ contains elements of non-square determinant if, and only if, $U_i$ has at least one elementary divisor of the form $(t-1)^m$ with $m$ odd. If $f_i \in \Phi_3$ and $d_i = \deg{f_i}$, then exactly the same argument holds, substituting $\GL(m_i,q^{d_i})$ by $\U(m_i,q^{d_i/2})$. We conclude that if $x^2-\mathbf{1}_V$ is non-singular, then $|C_{\Spec}(x): C_{\Omega}(x)|=2$ if, and only if, $x$ has at least one \ged $f^m$, with $f \in \Phi_2 \cup \Phi_3$ and $m$ odd.\\

Now suppose that the only \geds of $X$ are powers of $t+1$ and $t-1$. For convenience, use the notation
$$
X = \left( \begin{array}{cc} X_+ & \\ & X_- \end{array}\right), \quad B= \left( \begin{array}{cc} B_+ & \\ & B_- \end{array}\right),
$$
where $(X_+,B_+)$ and $(X_-, B_-)$ are the restrictions to the eigenspaces $\ker(x-\mathbf{1}_V)^{m_+}$ and ${\ker(x+\mathbf{1}_V)^{m_-}}$ respectively. Let $\epsilon \in \{+,-\}$. The Jordan decomposition of $X_{\epsilon}$ is $S_{\epsilon}U_{\epsilon}$, with $S_{\epsilon} = {\epsilon}\mathbb{I}_{m_{\epsilon}}$ and $U_{\epsilon}$ a unipotent element with shape
$$
\epsilon\left(\bigoplus_{i=1}^r \left(V_{b_i}(2k_i+1) \oplus V_1(2k_i+1)^{\oplus a_i-1}\right) \oplus \bigoplus_{j=1}^s W(2l_j)^{\oplus c_j}\right)
$$
described in Proposition \ref{ThmUnirepsGO}. It implies that $C_{\Spec(B_{\epsilon})}(U_{\epsilon}) = C_{\Omega(B_{\epsilon})}(U_{\epsilon})$ if, and only if, either $r=0$, or both $a_i=1$ for all $i$ and the $(-1)^{k_i}b_i$ are mutually congruent modulo $\mathbb{F}_q^{* 2}$.

If $C_{\C(B_{\epsilon})}(U_{\epsilon}) = C_{\Spec(B_{\epsilon})}(U_{\epsilon}) = C_{\Omega(B_{\epsilon})}(U_{\epsilon})$ for at least one $\epsilon \in \{+,-\}$, say $\epsilon = +$, then
$$
C_{\Omega(B)}(X) = \left( \begin{array}{cc} C_{\Omega(B_+)}(U_+) & \\ & C_{\Omega(B_-)}(U_-) \end{array}\right),
$$
and $|C_{\Spec(B)}(X) : C_{\Omega(B)}(X)| = |C_{\Spec(B_-)}(U_-) : C_{\Omega(B_-)}(U_-)|$.

If $|C_{\Spec(B_{\epsilon})}(U_{\epsilon}) : C_{\Omega(B_{\epsilon})}(U_{\epsilon}) | =2$ for at least one $\epsilon$, then, by Lemma \ref{lemmaPM},
$$
|C_{\Spec}(X):C_{\Omega}(X)|=2.
$$

The situation $C_{\C(B_{\epsilon})}(U_{\epsilon}) = C_{\Spec(B_{\epsilon})}(U_{\epsilon})$ and $| C_{\Spec(B_{\epsilon})}(U_{\epsilon}): C_{\Omega(B_{\epsilon})}(U_{\epsilon})| = 2$ can never occur: $C_{\C(B_{\epsilon})}(U_{\epsilon}) = C_{\Spec(B_{\epsilon})}(U_{\epsilon})$ only when $r=0$ in (\ref{unirepsGO}), but in such a case $C_{\Spec(B_{\epsilon})}(U_{\epsilon}) = C_{\Omega(B_{\epsilon})}(U_{\epsilon})$.

Finally, suppose that
\begin{eqnarray*}
	|C_{\C(B_+)}(U_+):C_{\Spec(B_+)}(U_+)| = 2, & & C_{\Spec(B_+)}(U_+) = C_{\Omega(B_+)}(U_+); \\
	|C_{\C(B_-)}(U_-):C_{\Spec(B_-)}(U_-)| = 2, & & C_{\Spec(B_-)}(U_-) = C_{\Omega(B_-)}(U_-).
\end{eqnarray*}
This occurs when both $U_+$ and $U_-$ satisfy the following conditions: $r>0$, $a_i=1$ and $(-1)^{k_i}b_i \equiv (-1)^{k_j}b_j \bmod{\mathbb{F}_q^{* 2}}$ for all $i,j$. Let $K$ be the group of matrices with shape
$$
\begin{pmatrix}
W_+ & \\ & W_-
\end{pmatrix},
$$
where $W_{\epsilon} \in C_{\Spec(B_{\epsilon})}(U_{\epsilon})$. The centralizer of $X$ in $\Spec$ is given by
\begin{eqnarray}  \label{CentralizerSpecOmega}
C_{\Spec}(X) = K\cdot \left\langle \left( \begin{array}{cc} Z_+ & \\ & Z_- \end{array}\right) \right\rangle,
\end{eqnarray}
where $Z_{\epsilon} \in C_{\C(B_{\epsilon})}(U_{\epsilon}) \setminus C_{\Spec(B_{\epsilon})}(U_{\epsilon})$. The centralizer in $\Spec$ coincides with the centralizer in $\Omega$ if, and only if, the matrix
\begin{eqnarray} \label{matrixZ}
Z:=\left( \begin{array}{cc} Z_+ & \\ & Z_- \end{array}\right)
\end{eqnarray}
belongs to $\Omega$, and this happens if, and only if, $Z_+$ and $Z_-$ have the same spinor norm. If such $Z$ does not exist, then the centralizer of $x$ in $\Omega$ is just the group $K$ in (\ref{CentralizerSpecOmega}), so $|C_{\Spec}(X) : C_{\Omega}(X)| = 2$. Thus, the problem is reduced to finding $Z_{\epsilon} \in C_{\C(B_{\epsilon})}(U_{\epsilon})$ with determinant $-1$ and an appropriate spinor norm.
\begin{proposition}
	Let
	$$
	U=\bigoplus_{i=1}^r V_{b_i}(2k_i+1)\oplus \bigoplus_{j=1}^s W(2l_j)^{\oplus c_j}
	$$
	be a unipotent element as in $(\ref{unirepsGO})$, with the $(-1)^{k_i}b_i$ mutually congruent modulo $\mathbb{F}_q^{* 2}$. The centralizer of $U$ in $\C$ contains elements of non-square spinor norm if, and only if, $(-1)^{k_i}b_i$ is a non-square for at least one $i$ (so, for all of them).
\end{proposition}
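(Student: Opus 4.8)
The plan is to read the answer off the centralizer structure of Theorem~\ref{CentrUniOdd}. Here every multiplicity is $a_i=1$ and there are no $V_1$-blocks, so that theorem gives $C_{\C}(U)=U_0\rtimes R$, where $U_0$ is the unipotent radical and $R\cong\{\pm1\}^r\times\prod_{j=1}^s\Sp(2c_j,q)$ (the factor $\Or^{\epsilon}(1,q)$ being just $\{\pm1\}$), with the $i$-th factor $\{\pm1\}$ acting on the $(2k_i+1)$-dimensional orthogonal summand $V^{(i)}$ carrying $V_{b_i}(2k_i+1)$, the $j$-th symplectic factor acting on the summand $W^{(j)}$ carrying $W(2l_j)^{\oplus c_j}$, and each factor fixing the remaining summands pointwise. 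Recall that $\theta\colon\C\to\mathbb{F}_2$ is a homomorphism (from the reflection description of $\theta$ in the proof of Theorem~\ref{semisimpleOmega}), additive over orthogonal direct summands, and that $\theta$ vanishes on $U_0$ (its elements are unipotent, hence of odd order, hence in $\Omega$ by Lemma~\ref{oddOrderinOmega}(a)). Thus $C_{\C}(U)$ contains an element of non-square spinor norm if and only if $R$ does.

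First I would show $\theta$ vanishes on each symplectic factor. Such a factor fixes pointwise the orthogonal complement of $W^{(j)}$, so by additivity its spinor norm equals that of its restriction to $W^{(j)}$, which is an element of the symplectic factor of $C_{\Or(W^{(j)})}(W(2l_j)^{\oplus c_j})$. Applying Proposition~\ref{ThmUnirepsGO}(3) with $r=0$ inside $\Or^+(W^{(j)})$, the $\SO$-class of $W(2l_j)^{\oplus c_j}$ always splits in $\Omega$, which is equivalent to $C_{\SO(W^{(j)})}(W(2l_j)^{\oplus c_j})\subseteq\Omega(W^{(j)})$; since the symplectic factor consists of determinant-$1$ elements it lies in that centralizer, hence in $\Omega(W^{(j)})$, so has trivial spinor norm.

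Next I would compute $\theta$ on the $i$-th $\{\pm1\}$ factor. Its nontrivial element may be taken to be $g_i:=(-\mathbf{1}_{V^{(i)}})\oplus\mathbf{1}$: indeed $-\mathbf{1}_{V^{(i)}}$ is central in $\GL(V^{(i)})$, hence centralizes $V_{b_i}(2k_i+1)$, is an isometry, and has determinant $(-1)^{2k_i+1}=-1$, so it represents the nontrivial coset of this factor modulo $U_0$. By additivity, $\theta(g_i)=\theta_{V^{(i)}}(-\mathbf{1}_{V^{(i)}})$. For $x=-\mathbf{1}$ on a space with Gram matrix $B$ one has $V_x=$ the whole space and the Wall form $\chi_x$ has Gram matrix $\tfrac12 B$, so $\theta_{V^{(i)}}(-\mathbf{1})\equiv 2^{-(2k_i+1)}\det B^{(i)}\equiv 2\det B^{(i)}\pmod{\mathbb{F}_q^{*2}}$, where $B^{(i)}$ is the symmetric form of Section~\ref{SectUniRepsGO} attached to $V_{b_i}(2k_i+1)$. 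A Laplace expansion along the middle row gives $\det B^{(i)}=(-1)^{k_i}\cdot 2b_i$, so $\theta(g_i)\equiv 4(-1)^{k_i}b_i\equiv(-1)^{k_i}b_i\pmod{\mathbb{F}_q^{*2}}$; thus $\theta(g_i)=0$ precisely when $(-1)^{k_i}b_i$ is a square.

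Finally, combine these. Since $\theta$ is a homomorphism vanishing on $U_0$ and on the symplectic factors, the spinor norm of an arbitrary element of $C_{\C}(U)$ is $\sum_{i\in I}\theta(g_i)$ for some $I\subseteq\{1,\dots,r\}$, and the hypothesis that the $(-1)^{k_i}b_i$ are mutually congruent modulo $\mathbb{F}_q^{*2}$ forces all $\theta(g_i)$ to equal a common value $\eta\in\mathbb{F}_2$, so this sum is $|I|\,\eta\bmod 2$. Some $I$ makes it $1$ if and only if $\eta=1$, that is, if and only if $(-1)^{k_i}b_i$ is a non-square for one — equivalently, by the congruence, for every — index $i$; when $r=0$ there are no such terms and no element of non-square spinor norm, which is consistent. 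I expect the real work to lie in the two identifications used above — that the symplectic factors sit inside $\Omega$, and that $-\mathbf{1}_{V^{(i)}}$ genuinely represents the $\Or^{\epsilon}(1,q)$ factor — together with the determinant and power-of-two bookkeeping in the spinor-norm formula; none of this is deep, but it must be carried out carefully.
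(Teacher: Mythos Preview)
Your argument is correct and rests on the same key computation as the paper's: both of you compute the spinor norm of $-\mathbf{1}_{V^{(i)}}\oplus\mathbf{1}$ via the Wall form and obtain $(-1)^{k_i}b_i$ modulo squares. The paper's surrounding argument is slightly slicker: it first disposes of the $W$-blocks by a one-line remark, then observes that under the hypothesis (which is exactly the splitting criterion of Proposition~\ref{ThmUnirepsGO}(3)) one has $C_{\Spec(B)}(U)=C_{\Omega(B)}(U)$, so $\ker\theta$ on $C_{\C(B)}(U)$ coincides with the determinant-one subgroup and it suffices to evaluate $\theta$ on a single element of determinant $-1$. You instead unpack $R$ explicitly and check each factor, invoking Proposition~\ref{ThmUnirepsGO}(3) only locally on each $W^{(j)}$ to kill the symplectic pieces. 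Both routes work; yours is more explicit about where every contribution to $\theta$ comes from, while the paper's buys brevity by exploiting the global splitting hypothesis up front.
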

\begin{proof}
	Without loss of generality we can assume that $s=0$, since $\bigoplus_{j=1}^s W(2l_j)^{\oplus c_j}$ does not affect the spinor norm. Thus, $U$ preserves the form 
	$$
	B = \left( \begin{array}{ccc} B_1 && \\ & \ddots & \\ && B_r \end{array}\right),
	$$
	where $B_i$ is the $(2k_i+1) \times (2k_i+1)$ matrix with $1$ on the antidiagonal except for $2b_i$ in the entry $(k_i+1,k_i+1)$, and the $k_i$ are all different.
	
	If $\theta: C_{\C(B)}(U) \rightarrow \mathbb{F}_q^*/\mathbb{F}_q^{* 2}$ is the spinor norm, then $\ker{\theta}$ has index 1 or 2 in $C_{\C(B)}(U)$. If the index is 2, then $C_{\Spec(B)}(U) = C_{\Omega(B)}(U) \subseteq \ker{\theta}$, and equality holds since they are both subgroups of index 2 in $C_{\C(B)}(U)$; equivalently, $y \in C_{\C(B)}(U)$ has non-square spinor norm if, and only if, $\det{y}=-1$. Hence, it is sufficient to compute the spinor norm of any element of $C_{\C(B)}(U)$ having determinant $-1$. Take, for example,
	$$
	Y = \left( \begin{array}{cc} -\mathbb{I}_{2k_1+1} & \\ & \mathbb{I}_{m'} \end{array}\right)
	$$
	with $m' = \sum_{i=2}^r (2k_i+1)$. Using the notation of Section \ref{OmegaClasses}, $V_Y$ coincides with the cyclic submodule of $V$ relative to the first block $V_{b_1}(2k_1+1)$ and $\chi_Y(u,v) = \frac{1}{2} \beta(u,v)$ for all $u,v \in V_Y$, so
	$$
	\theta(y) = \det{ \left( \frac{1}{2}B_1 \right)} = 2^{-(2k_1+1)}\cdot 2b_1 (-1)^{k_1} = 2^{-2k_1}b_1(-1)^{k_1} \equiv (-1)^{k_1}b_1 \bmod{\mathbb{F}_q^{* 2}}.
	$$
	We conclude that every element of $C_{\C(B)}(U)$ has square spinor norm if, and only if, the $(-1)^{k_i}b_i$ are squares.
\end{proof}
From the last proposition, the matrix $Z$ defined in (\ref{matrixZ}) exists in $\Omega$ if, and only if, the $(-1)^{k_i}b_i$ relative to the forms $B_+$ and $B_-$ are congruent modulo $\mathbb{F}_q^{* 2}$; only in such a case can $Z_+$ and $Z_-$ be taken with the same spinor norm.
\begin{proof}[Proof of Theorem $\ref{ConjClassinOmega}$]
	We saw what happens when every \ged of $x$ belongs to $\Phi_1$ or to $\Phi_2 \cup \Phi_3$. Now consider the general case. Write
	$$
	X = \left( \begin{array}{cc} X_{\pm} & \\ & X_{\circ} \end{array}\right), \quad B=\left( \begin{array}{cc} B_{\pm} & \\ & B_{\circ} \end{array}\right),
	$$
	where $X_{\pm}$ has \geds in $\Phi_1$ and $X_{\circ}$ has \geds in $\Phi_2 \cup \Phi_3$. By Lemma \ref{lemmaPM}, if either $|C_{\Spec(B_{\pm})}(X_{\pm}): C_{\Omega(B_{\pm})}(X_{\pm})|=2$ or $|C_{\Spec(B_{\circ})}(X_{\circ}): C_{\Omega(B_{\circ})}(X_{\circ})|=2$, then $|C_{\Spec(B)}(X): C_{\Omega(B)}(X)|=2$. If both indexes are 1, then
	$$
	C_{\Omega(B)}(X) = \left( \begin{array}{cc} C_{\Spec(B_{\pm})}(X_{\pm}) & \\ & C_{\Spec(B_{\circ})}(X_{\circ}) \end{array}\right),
	$$
	exactly the centralizer of $X$ in $\Spec(B)$ (since $C_{\C(B_{\circ})}(X_{\circ}) = C_{\Spec(B_{\circ})}(X_{\circ})$, there cannot be corrective factors like the $Z$ described in (\ref{matrixZ})).
\end{proof}

\subsection{Conjugacy classes in $\Omega^{\epsilon}(\MakeLowercase{n,q})$, $q$ even}
Let $\C = \Or^{\epsilon}(n,q)$ and $\Omega = \Omega^{\epsilon}(n,q)$, with $n$ and $q$ even and $\epsilon \in \{+,-\}$. Recall that the spinor norm $\theta: \C \rightarrow \mathbb{F}_2$ is defined by $\theta(x) = \rk(x+\mathbf{1}_V) \bmod{2}$. Every semisimple element of $\C$ belongs to $\Omega$ because it has odd order. Hence, listing the conjugacy classes of $\Omega$ has three steps:
\begin{enumerate}
	\item List all semisimple classes $(S,B)$ in $\C$, with
	$$
	S = \left( \begin{array}{ccc} S_1 && \\ & \ddots & \\ && S_h\end{array}\right), \quad B = \left(\begin{array}{ccc} B_1 && \\ & \ddots & \\ && B_h \end{array}\right)
	$$
	defined as in (\ref{matricessemisimple}).
	\item Establish which unipotent elements
	$$
	U = \left(\begin{array}{ccc} U_1 && \\ & \ddots & \\ && U_h \end{array}\right)
	$$
	belong to $\Omega$.
	\item For every such $U$, establish whether the conjugacy class of $U$ in $\C$ splits into two distinct classes in $\Omega$ and, in such a case, add $((SU)^Z,B)$ to the list of representatives, where $Z \in \C(B)$ with $\theta(Z)=1$.
\end{enumerate}
\begin{proposition} \label{splitOmega2}
	Let $f \in \Phi_2 \cup \Phi_3$ and let $x \in \C$ have powers of $f$ as generalized elementary divisors. Then $x \in \Omega$ and the conjugacy class of $x$ in $\C$ splits into two distinct classes in $\Omega$.
\end{proposition}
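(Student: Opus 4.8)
The statement splits into two claims: that $x$ lies in $\Omega$, and that $x^{\C}$ breaks into two distinct $\Omega$-classes. The first is immediate. Since $\Phi_1=\{t+1\}$ in the even-characteristic orthogonal setting and $\Phi_1,\Phi_2,\Phi_3$ are pairwise disjoint, we have $f\neq t+1$, so $1$ is not a root of $f$; as the minimal polynomial of $x$ is $f^e$ and $\gcd(f^e,t+1)=1$, the map $\mathbf{1}_V+x$ is invertible. Hence $V_x=\im(\mathbf{1}_V+x)=V$ and $\theta(x)=\dim V_x=n\equiv 0\pmod 2$ because $n$ is even, so $x\in\Omega=\ker\theta$.

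For the second claim I would follow the strategy of Section~\ref{sectSpecial} and of the proof of Theorem~\ref{semisimpleOmega}: it suffices to show $C_{\C}(x)\subseteq\Omega$. Indeed, this gives $C_{\Omega}(x)=C_{\C}(x)$, hence $|x^{\Omega}|=|\Omega|/|C_{\C}(x)|=\tfrac12|\C|/|C_{\C}(x)|=\tfrac12|x^{\C}|$; since $\Omega\trianglelefteq\C$ has index $2$, the class $x^{\C}$ decomposes into exactly two $\Omega$-classes, with representatives $x$ and $x^z$ for any $z\in\C\setminus\Omega$, i.e.\ any $z$ with $\theta(z)=1$. So the heart of the proof is to check that every $y\in C_{\C}(x)$ has $\dim V_y$ even, and here I would invoke the description of $C_{\C}(x)$ from Section~\ref{semisimplesesquilinear}, applied through $C_{\C}(x)=C_{C_{\C}(s)}(u)$ where $x=su$ is the Jordan decomposition and $s$ has the single \ged $f$.

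Concretely, if $f=gg^*\in\Phi_2$, write $V=U\oplus W$ with $U=\ker g(x)^e$ and $W=\ker g^*(x)^e$; by Lemma~\ref{SplitCentr} every $y\in C_{\C}(x)$ preserves this decomposition, and the isometry condition forces $y|_W=(y|_U)^{*-1}$. Since $\rk(\mathbf{1}+M^{*-1})=\rk(\mathbf{1}+M)$ for invertible $M$ (because $\mathbf{1}+M^{*-1}=(M^*)^{-1}(\mathbf{1}+M)^*$, and the conjugate-transpose preserves rank), we get $\dim V_y=\rk(\mathbf{1}_V+y)=2\,\rk(\mathbf{1}_U+y|_U)$, which is even. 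If $f\in\Phi_3$, then $\deg f=2d$ is even as $f=f^*$, and $C_{\C}(x)$ is a subgroup of $C_{\C}(s)\cong\U(m,q^d)$, realised inside $\GL(n,q)$ by restriction of scalars from $\mathbb{F}_{q^{2d}}$ (with $n=2dm$); since restricting the $\mathbb{F}_{q^{2d}}[t]$-module $\mathbb{F}_{q^{2d}}[t]/(t-1)^r$ to $\mathbb{F}_q$ yields $2d$ Jordan blocks of size $r$, each $y\in C_{\C}(x)$ has every elementary divisor $(t+1)^r$ occurring with multiplicity divisible by $2d$, so $\dim\ker(\mathbf{1}_V+y)$ is even and hence $\dim V_y=n-\dim\ker(\mathbf{1}_V+y)$ is even. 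The only real obstacle is organisational: pinning down the exact structure of $C_{\C}(x)$ for non-semisimple $x$ and recording the (routine) behaviour of elementary divisors under restriction of scalars; everything else is rank counting in characteristic $2$.
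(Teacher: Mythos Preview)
Your proof is correct and follows essentially the same route as the paper: show that every $y\in C_{\C}(x)$ has $\theta(y)=0$, using the block form $Y_1\oplus Y_1^{*-1}$ (so $\rk(\mathbf{1}+Y)=2\,\rk(\mathbf{1}+Y_1)$) when $f\in\Phi_2$, and the restriction-of-scalars identity $\rk(\mathbf{1}+y)=(\deg f)\cdot\rk(\mathbf{1}+\widetilde{y})$ with $\deg f$ even when $f\in\Phi_3$. The only cosmetic difference is that you verify $x\in\Omega$ directly from $\mathbf{1}_V+x$ being invertible, whereas the paper obtains it as the special case $y=x$ of $C_{\C}(x)\subseteq\Omega$; your closing remark about an ``organisational obstacle'' is unnecessary, since $C_{\C}(x)\leqslant C_{\C}(s)$ already gives exactly the structure you use.
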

\begin{proof}
	Suppose for convenience that $x$ has a unique \ged $f^m$, with $m$ a positive integer. We prove that each element of $C_{\C}(x)$ has spinor norm $0$, so showing simultaneously that $x \in \Omega$ and $C_{\C}(x) = C_{\Omega}(x)$, or equivalently that the conjugacy class of $x$ in $\C$ splits into two distinct classes in $\Omega$.
	
	If $f \in \Phi_2$, $f=gg^*$, then we can choose a basis such that each element of the centralizer of $x$ in $\C$ has matrix $Y$ and preserves a quadratic form with matrix $B$, where
	$$
	Y=\left( \begin{array}{cc} Y_1 & \\ & Y_1^{*-1} \end{array}\right), \quad B = \left( \begin{array}{cc} \mathbb{O} & \mathbb{I} \\ \mathbb{O} & \mathbb{O} \end{array}\right).
	$$
	The ranks of $(Y_1-1)$ and $(Y_1^{*-1}-1)$ are the same, so
	$$
	\rk(Y-1) = \rk(Y_1-1)+\rk(Y_1^{*-1}-1) = 2\!\cdot\!\rk(Y_1-1)
	$$
	is even, thus $\theta(Y)=0$.
	
	If $f \in \Phi_3$, and $d=\deg{f}$, then each $y \in C_{\C}(x)$ is an image in $\GL(md,q)$ of a certain $\widetilde{y} \in \U(m,q^{d/2})$. Thus $\rk(y-\mathbf{1}_V) = d\cdot\rk(\widetilde{y}-1)$ is even because $d$ is even. It follows that $y \in \Omega$.
\end{proof}
\begin{theorem}  \label{ConjClassesinOmega2}
	Let $\C = \Or^{\epsilon}(n,q)$ and $\Omega = \Omega^{\epsilon}(n,q)$, with $n$ and $q$ even. A complete set of representatives for conjugacy classes of $\Omega$ is described by all pairs of matrices $(SU,B)$ defined by
	$$
	S = \left(\begin{array}{ccc} S_1 && \\ & \ddots & \\ && S_h\end{array}\right), \quad U = \left( \begin{array}{ccc} U_{1,j} && \\ & \ddots & \\ && U_{h,j} \end{array}\right), \quad B = \left(\begin{array}{ccc} B_1 && \\ & \ddots & \\ && B_h \end{array}\right),
	$$
	where $(S,B)$ runs over all representatives of semisimple conjugacy classes of $\C$ and, for every such $(S,B)$, $U_{i,j}$ runs over all representatives of unipotent conjugacy classes of $C_{\Omega(B_i)}(S_i)$ (if $f_i = t+1$) or in $C_{\C(B_i)}(S_i)$ (if $f_i \in \Phi_2 \cup \Phi_3$). Moreover, if $f_i \in \Phi_2 \cup \Phi_3$ for all $i$, then the element $((SU)^Z,B)$ must be added to the set of representatives for each $SU$, where $Z$ is a fixed element of $\C(B) \setminus \Omega(B)$.
\end{theorem}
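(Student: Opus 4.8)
The plan is to follow the three-step procedure sketched just before the statement, combining Theorem~\ref{CCgen} (which parametrizes the conjugacy classes of $\C$), Theorem~\ref{semisimpleOmega} (splitting of semisimple classes in $\Omega$), and the block computations behind Proposition~\ref{splitOmega2}. First I would write $x=su=us$ with $s$ semisimple and $u$ unipotent (Lemma~\ref{splitSU}) and fix a basis in which $s$ and the form carry the standard block shapes $S$ and $B$ of $(\ref{matricessemisimple})$. Since $\theta\colon\C=\SO^{\epsilon}(n,q)\to\mathbb{F}_2$ is a homomorphism and $s$ has odd order, $\theta(s)=0$, so $x\in\Omega$ if and only if $u\in\Omega$; and since $u\in C_{\C}(S)=\bigoplus_i C_{\C(B_i)}(S_i)$ with $u+\mathbf{1}_V=\bigoplus_i(U_i+\mathbf{1}_{V_i})$, additivity of rank gives $\theta(u)=\sum_i\theta_{B_i}(U_i)$.

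The local point is that the blocks with $f_i\in\Phi_2\cup\Phi_3$ are invisible to the spinor norm: for $f_i=g_ig_i^*\in\Phi_2$ every element of $C_{\C(B_i)}(S_i)$ has the shape $Y\oplus Y^{*-1}$, so $\rk(Y\oplus Y^{*-1}+\mathbf{1})$ is even; and for $f_i\in\Phi_3$ every such element is the image in $\GL(m_id_i,q)$ of an element of $\U(m_i,q^{d_i/2})$, with $d_i=\deg f_i$ even, so $\rk(\,\cdot\,+\mathbf{1})$ is again even. Hence $\theta_{B_i}\equiv0$ on $C_{\C(B_i)}(S_i)$ whenever $f_i\in\Phi_2\cup\Phi_3$. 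Recalling that $\Phi_1=\{t+1\}$ in even characteristic, this shows $x\in\Omega$ if and only if either $t+1$ is not a generalized elementary divisor of $x$, or $t+1=f_{i_0}$ and the unipotent block $U_{i_0}$ lies in $\Omega(B_{i_0})$; and it gives $C_{\C}(S)\cap\Omega=\bigl(\prod_{f_i\in\Phi_2\cup\Phi_3}C_{\C(B_i)}(S_i)\bigr)\times\Omega(B_{i_0})$, the last factor omitted when $t+1$ is absent. Consequently the $\Omega$-conjugacy classes of unipotent elements of $C_{\Omega}(s)$ are exactly the block-wise products of the unipotent classes enumerated in the statement; this is why $U_{i_0,j}$ is made to run over unipotent classes of $\Omega(B_{i_0})$ while the remaining $U_{i,j}$ run over unipotent classes of $C_{\C(B_i)}(S_i)$.

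It remains to settle the splitting and then to check completeness and irredundancy. By Theorem~\ref{semisimpleOmega}, if $t+1$ is a generalized elementary divisor of $s$ then $s^{\C}=s^{\Omega}$, and the previous paragraph shows the $\Omega$-classes with this semisimple part are parametrized precisely by the listed tuples, so no extra representative is needed --- matching the hypothesis ``$f_i\in\Phi_2\cup\Phi_3$ for all $i$'' under which the $Z$-twisted representatives are added. If $t+1$ is not a generalized elementary divisor of $s$, then $C_{\C}(S)\subseteq\Omega$, hence $|C_{\C}(su):C_{\Omega}(su)|=1$ for every unipotent $u\in C_{\C}(S)$, and each $(su)^{\C}$ breaks into the two $\Omega$-classes $(su)^{\Omega}$ and $((su)^{Z})^{\Omega}$ with $Z\in\C(B)\setminus\Omega(B)$; together these are the $\Omega$-classes with semisimple part in $s^{\C}$, whence the extra representatives $((SU)^{Z},B)$. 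Completeness follows by conjugating an arbitrary $x=su\in\Omega$ into standard form: the conjugator can be chosen in $\Omega$ when $t+1$ is present, and in $\C$ otherwise, so that $x$ is $\Omega$-conjugate to some $(SU,B)$ or --- when the conjugator falls outside $\Omega$, using that $\Omega$ is normal in $\C$ --- to some $((SU)^{Z},B)$, the unipotent part being $C_{\Omega}(S)$-conjugate to one of the listed block-wise tuples in each case. Irredundancy follows since distinct semisimple $\C$-classes stay distinct in $\Omega$, the $Z$-twisted representatives have semisimple part outside $S^{\Omega}$, and distinct block-wise unipotent tuples give distinct $C_{\C}(S)$-classes.

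I expect the main obstacle to be precisely this bookkeeping: one must be sure that the $\Omega$-splitting of the full $\C$-class of $x$ is counted once and only once --- via the enumeration over $\Omega(B_{i_0})$ inside the $t+1$-block when that block occurs, and via the single $Z$-twist otherwise, never both and never neither. This rests entirely on the vanishing of $\theta$ on the $\Phi_2\cup\Phi_3$-blocks, which confines all of the spinor-norm behaviour of $C_{\C}(x)$ to the $t+1$-block, together with the splitting criterion of Proposition~\ref{unirepsGO2} for that block. I would keep the argument uniform by reasoning throughout with the index $|C_{\C}(x):C_{\Omega}(x)|\in\{1,2\}$ and using the normality of $\Omega$ in $\C$ to identify the ``other'' $\Omega$-class of a non-split $\C$-class with its $Z$-conjugate.
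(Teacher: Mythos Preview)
Your proposal is correct and follows essentially the same route as the paper's proof: both reduce to Proposition~\ref{splitOmega2} (vanishing of the spinor norm on the $\Phi_2\cup\Phi_3$ blocks) and Theorem~\ref{semisimpleOmega}, confining all spinor-norm behaviour to the $t+1$ block and thereby identifying $C_{\C}(S)\cap\Omega$ with $\Omega(B_{i_0})\times\prod_{i\neq i_0}C_{\C(B_i)}(S_i)$ when $f_{i_0}=t+1$. The paper's proof is terser about the completeness/irredundancy bookkeeping you spell out in your last two paragraphs, but the argument is the same.
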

\begin{proof}
	If $f_i \in \Phi_2 \cup \Phi_3$ for all $i$, then the theorem is a direct consequence of Proposition \ref{splitOmega2}. If $f_i = t+1$ for some $i$, then without loss of generality suppose $i=1$. A unipotent element
	$$
	U = \left( \begin{array}{ccc} U_1 && \\ & \ddots & \\ && U_h \end{array}\right) \in \C
	$$
	belongs to $\Omega$ if, and only if, $U_1 \in \Omega(B_1)$; the conjugacy class of $U$ in $\C$ splits into two distinct classes in $\Omega$ if, and only if, the conjugacy class of $U_1$ in $C_{\C(B_1)}(S_1) = \C(B_1)$ splits into two distinct classes in $\Omega(B_1)$. Therefore, if $U_1$ runs over the set of representatives of $\Omega(B_1)$, described in (\ref{unireps2}), and $U_j$ runs over all unipotent classes of $C_{\C(B_j)}(S_j)$ for all $j>1$, then $U$ runs over all unipotent conjugacy classes of $C_{\Omega(B)}(S)$.
\end{proof}

\section{Centralizers in classical groups}   \label{SectCentrGen}
We now describe the structure of the centralizer of an arbitrary element of a classical group and give a generating set.

We assume that the following algorithms are available.
\begin{itemize}
	\item (\textit{Algorithm} 1)\index{Algorithm 1} Given $X,Y \in \GL(V)$, we determine explicitly $Z \in \GL(V)$ such that $Z^{-1}XZ=Y$. If $J$ is the Jordan form of $X$ and $Y$ and $J=P_XXP_X^{-1}=P_YYP_Y^{-1}$ for $P_X, P_Y \in \GL(V)$, then $Z = P_X^{-1}P_Y$. This algorithm is described in \cite{JF}.
	\item (\textit{Algorithm} 2)\index{Algorithm 2} Given matrices $B_1,B_2$ of two non-degenerate sesquilinear or quadratic forms on $V$, we determine explicitly $T \in \GL(V)$ such that $TB_1T^*=B_2$ (or $TB_1T^*-B_2$ is alternating in the case of quadratic forms). This algorithm is described in \cite{GSM}.
	\item (\textit{Algorithm} 3) Given a unipotent $X \in \C$, return a generating set for $C_{\C}(X)$. It is referenced in Remark \ref{alg}.
\end{itemize}
\begin{theorem} \label{CentrGen}
	Let $\C$ be a classical group in characteristic $p$ preserving a non-degenerate sesquilinear or quadratic form $\beta$ and let $x \in \C$. The centralizer $C_{\C}(x)$ is a semidirect product $U \rtimes R$, where $U$ is a $p$-group and $R$ is (isomorphic to) a direct product of classical groups (here we identify $\mathbb{Z}_2$ with $\Or^+(2,2)$).
\end{theorem}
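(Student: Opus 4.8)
The plan is to bootstrap from the semisimple and unipotent cases through the Jordan decomposition. Write $x = su = us$ with $s$ semisimple and $u$ unipotent (Lemma \ref{splitSU}). Since $C_{\C}(x) = C_{\C}(s) \cap C_{\C}(u)$ and $u \in C_{\C}(s)$, we have $C_{\C}(x) = C_{H}(u)$ with $H := C_{\C}(s)$. So the whole statement reduces to describing the centralizer of a unipotent element inside $H$, and there are two things to establish: (i) that $H$ is a direct product of classical groups over fields of characteristic $p$, with $u$ respecting this factorization; and (ii) that the centralizer of a unipotent element in each such factor is again of the form $U'\rtimes R'$ with $U'$ a $p$-group and $R'$ a direct product of classical groups.

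For (i) we invoke Theorem \ref{CentrSem}, which gives $H \cong \prod_{f\in\Phi_1}\C(\beta_f) \times \prod_{f\in\Phi_2}\GL(m_f,q^{2d_f}) \times \prod_{f\in\Phi_3}\U(m_f,q^{d_f})$ (and, for the special and Omega subgroups, the analogous product with $\C(\beta_f)$ replaced by its special or Omega analogue, per the remarks closing Section \ref{SSCentrOrder}). By Lemma \ref{SplitCentr} the unipotent $u$ preserves the eigenspace decomposition, hence corresponds under this isomorphism to a tuple $(u_f)_f$ with each $u_f$ unipotent in the corresponding factor $H_f$. For (ii) the needed shape is exactly what the earlier chapters provide: for a general linear factor, Theorems \ref{Card} and \ref{GLCentralizer} (there $|U'|$ is a power of the field size, hence of $p$, and $R'$ is a product of general linear groups); for unitary, symplectic or orthogonal factors in odd characteristic, Theorem \ref{CentrUniOdd}; for symplectic or orthogonal factors in even characteristic, Theorem \ref{CentrUniEven}, where the $\mathbb{Z}_2^{t+\delta}$ occurring in $R'$ is identified with a direct power of $\Or^+(2,2)$. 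In every case $C_{H_f}(u_f) = U_f \rtimes R_f$ with $U_f$ a $p$-group and $R_f$ a direct product of classical groups.

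It then remains to assemble. Since $H = \prod_f H_f$ and the $u_f$ lie in distinct direct factors, $C_H(u) = \prod_f C_{H_f}(u_f) = \prod_f (U_f \rtimes R_f)$, and a finite direct product of semidirect products satisfies $\prod_f (U_f \rtimes R_f) \cong \bigl(\prod_f U_f\bigr) \rtimes \bigl(\prod_f R_f\bigr)$ --- the elementary fact already used in the proof of Theorem \ref{GLCentralizer}. Setting $U := \prod_f U_f$ and $R := \prod_f R_f$ yields $C_{\C}(x) = U \rtimes R$ with $U$ a $p$-group (a product of $p$-groups) and $R$ a direct product of classical groups. When $\C$ is a special or Omega subgroup one runs the same argument inside the ambient isometry group and intersects: every element of $U$ is unipotent, hence has determinant $1$ and spinor norm $0$, so $U \subseteq \C$ and $C_{\C}(x) = U \rtimes (R \cap \C)$.

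\textbf{Main obstacle.} The logic above is little more than stitching together cited results, so the real work is bookkeeping: checking the claimed shape uniformly across every case, especially the even-characteristic orthogonal centralizers of Theorem \ref{CentrUniEven}. I expect the genuinely delicate point to be the special and Omega case, where one must verify that $R \cap \C$ can still be presented as (isomorphic to) a direct product of classical groups rather than merely as a subgroup of one cut out by a determinant or spinor-norm condition; this will require treating $R \cap \C$ factor by factor and identifying each piece explicitly.
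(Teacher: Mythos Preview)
Your proposal is correct and follows essentially the same route as the paper: reduce via the Jordan decomposition to computing $C_{C_{\C}(s)}(u)$, use Theorem~\ref{CentrSem} to identify $C_{\C}(s)$ factor by factor (according to whether the generalized elementary divisor lies in $\Phi_1$, $\Phi_2$, or $\Phi_3$), apply the unipotent centralizer structure theorems (\ref{GLCentralizer}, \ref{CentrUniOdd}, \ref{CentrUniEven}) to each factor, and assemble via $\prod_i(U_i\rtimes R_i)=(\prod_i U_i)\rtimes(\prod_i R_i)$. Your ``main obstacle'' about special and Omega subgroups is indeed not part of the theorem as stated---the paper handles it separately in the remark immediately following, with exactly the observation you make (that $U$ consists of unipotent elements, hence lies in $\Spec$ and $\Omega$).
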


\begin{proof}
	Let
	\begin{eqnarray}    \label{ElDivCentr}
	[ f_i^{m_{i,j}}: \, i=1, \dots, h, \, 1 \leq j \leq k_i]
	\end{eqnarray}
	be the list of generalized elementary divisor, with $m_{i,1} \geq m_{i,2} \geq \cdots \geq m_{i,k_i}$. Let $x_i$ (resp.\ $\beta_i$) be the restriction of $x$ (resp.\ $\beta$) to $\ker{f_i(x)^{m_{i,1}}}$. Let $d_i = \deg{f_i}$ and let $m_i = \sum_j m_{i,j}$. Let $x=su=us$ and $x_i=s_iu_i=u_is_i$ be the Jordan decompositions of $x$ and $x_i$. As in the linear case, the centralizer of $x$ in $\C$ can be obtained by computing the centralizer of $u$ in $C_{\C}(s)$, and this can be done separately for each $x_i$. The centralizer of $x$ in $\C$ is the direct product of the $C_{\C(\beta_i)}(x_i)$. As usual, let us distinguish the three cases.
	\begin{itemize}
		\item $f_i \in \Phi_1$. Now $s_i$ is a multiple of the identity, so $C_{\C(\beta_i)}(s_i) = \C(\beta_i)$. The centralizer of $u_i$ in $\C(\beta_i)$ is $U_i \rtimes R_i$, with $U_i$ and $R_i$ described in Theorems \ref{CentrUniOdd} and \ref{CentrUniEven}.
		\item $f_i \in \Phi_2$, $f_i = g_i{g_i}^*$. Let $E = F[t]/(g_i)$ and $d_i' = \deg{g_i}$. In an appropriate basis, by Theorem \ref{CentrSem} $C_{\C(\beta_i)}(s_i)$ is the set of matrices
		\begin{eqnarray*}
			\begin{pmatrix}
				Y_i & \\ & Y_i^{*-1}
			\end{pmatrix},
		\end{eqnarray*}
		where $Y_i$ belongs to the embedding of $\GL(m_i,E)$ into $\GL(d_i'm_i,F)$, so $C_{\C(\beta_i)}(x_i) = U_i\rtimes R_i$, where $U_i$ and $R_i$ are the embeddings into $\GL(d_i'm_i,F)$ of the subgroups $U$ and $R$ described in Theorem \ref{GLCentralizer}.
		\item $f_i \in \Phi_3$. Let $E = F[t]/(f_i)$. By Theorem \ref{CentrSem}, $C_{\C(\beta_i)}(s_i)$ is the embedding of $\U(m_i,E)$ into $\GL(d_im_i,F)$, so $C_{\C(\beta_i)}(x_i) = U_i\rtimes R_i$, where $U_i$ and $R_i$ are the embeddings into $\GL(d_im_i,F)$ of the subgroups $U$ and $R$ described in Theorem \ref{CentrUniOdd}.
	\end{itemize}
	From the three cases, one can see easily that $C_{\C}(x) = U \rtimes R$, where
	\begin{empheq}{equation*}
	\begin{split}
	U = \prod_i U_i \mbox{ and } R = \prod_i R_i. \qedhere
	\end{split}
	\end{empheq}
\end{proof}
\begin{remark}
	If $\Spec$ is the special subgroup of $\C$, then $C_{\Spec}(x) = U \rtimes (R \cap \Spec)$. If $F$ has odd characteristic, $\beta$ is a symmetric form and $\Omega = \Omega(\beta)$, then $C_{\Omega}(x) = U \rtimes (R \cap \Omega)$. Both follow directly from the fact that every element of $U$ is unipotent, so it has determinant $1$ and spinor norm $0$.
	
	Assume that $F$ has even characteristic, $x \in \Omega=\Omega(\beta)$, $f_1 = t+1$ and $f_i \in \Phi_2 \cup \Phi_3$ for all $i\geq 2$. By Theorem \ref{semisimpleOmega}, for every $i \geq 2$ each element of $C_{\C(\beta_i)}(x_i)$ has spinor norm $0$, so
	$$
	C_{\Omega}(x) = \Omega \cap \prod_i C_{\C(\beta_i)}(x_i) = (\Omega \cap C_{\C(\beta_1)}(x_1)) \times \prod_{i \geq 2} C_{\C(\beta_i)}(x_i).
	$$
\end{remark}

\subsection{Generators for centralizers in isometry groups}      \label{GenCentrGen}
Let $x \in \C(\beta)$ have \geds as in (\ref{ElDivCentr}). Let $X$ and $B$ be the matrices of $x$ and $\beta$ respectively. If $P \in \GL(V)$, then $PXP^{-1} \in \C(PBP^*)$ and, if $Y_1, \dots, Y_r$ are generators for $C_{\C(PBP^*)}(PXP^{-1})$, then $P^{-1}Y_1P, \dots, P^{-1}Y_rP$ are generators for $C_{\C(B)}(X)$. Hence, using Algorithm 1, we can choose a basis such that $x$ and $\beta$ have matrices
$$
X=\begin{pmatrix}
X_1 && \\ & \ddots & \\ &&  X_h
\end{pmatrix} \quad \mbox{and} \quad B=\begin{pmatrix}
B_1 && \\ & \ddots & \\ && B_h
\end{pmatrix},
$$
where $X_i$ and $B_i$ are the matrices of the restriction of $x$ and $\beta$ respectively to $\ker(f_i(x)^{m_{i,1}})$. Let $m_i = \sum_{j=1}^{k_i} m_{i,j}$ for every $i$. We can suppose that $X_i$ is a Jordan form if $f_i \in \Phi_1 \cup \Phi_3$, or
\begin{eqnarray}    \label{formOfX}
X_i = \begin{pmatrix}
\widehat{X}_i & \\ & \widehat{X}_i^{*-1}
\end{pmatrix}
\end{eqnarray}
if $f_i = g_i{g_i}^* \in \Phi_2$, where $\widehat{X}_i$ is the Jordan form of the matrix of the restriction of $x$ to $\ker{g_i(x)^{m_{i,1}}}$. Let
$$
E_i = \left\{ \begin{array}{ll} F & \mbox{if } f_i \in \Phi_1 \\ F[t]/(g_i) & \mbox{if } f_i \in \Phi_2, \, f_i=g_i{g_i}^*\\ F[t]/(f_i) & \mbox{if } f_i \in \Phi_3. \end{array}\right.
$$
Finally, let 
$$
\begin{pmatrix}
X_1 && \\ & \ddots & \\ && X_h
\end{pmatrix} = \begin{pmatrix}
S_1 && \\ & \ddots & \\ && S_h
\end{pmatrix} \begin{pmatrix}
U_1 && \\ & \ddots & \\ && U_h
\end{pmatrix}
$$
be the Jordan decomposition of $X$, with
$$
\begin{pmatrix}
\widehat{X}_i & \\ & \widehat{X}_i^{*-1}
\end{pmatrix} = \begin{pmatrix}
\widehat{S}_i & \\ & \widehat{S}_i^{*-1}
\end{pmatrix}\begin{pmatrix}
\widehat{U}_i & \\ & \widehat{U}_i^{*-1}
\end{pmatrix}
$$
when $f_i \in \Phi_2$. A generating set for $C_{\C}(X)$ consists of the matrices
\begin{eqnarray}            \label{GenForGen}
y_{i,j} = \begin{pmatrix}
\mathbb{I}_< && \\ & Y_{i,j} & \\ && \mathbb{I}_>
\end{pmatrix},
\end{eqnarray}
where $\mathbb{I}_<$ and $\mathbb{I}_>$ are identity matrices of dimension $\sum_{l<i} m_ld_l$ and $\sum_{l>i} m_ld_l$ respectively, and $Y_{i,j}$ runs over a generating set for $C_{\C(B_i)}(X_i)$. These are obtained as follows.
\begin{itemize}
	\item $f_i \in \Phi_1$. The $Y_{i,j}$ are the generators for $C_{\C(B_i)}(U_i)$ returned by Algorithm $3$.
	\item $f_i \in \Phi_2$. Let $d_i' = \deg{f_i}/2$. By Lemma \ref{L26w}, the form preserved by $X_i$ is
	$$
	B_i = \begin{pmatrix}
	\mathbb{O} & A_i \\ \varepsilon A_i^* & \mathbb{O}
	\end{pmatrix},
	$$
	where $\varepsilon= -1$ in the symplectic case and $1$ otherwise. Now $\widehat{U}_i$ is the embedding of a unipotent $\widetilde{U}_i \in \GL(m_i,E_i)$. We take
	\begin{eqnarray}         \label{GeneratoriPhi2}
	Y_{i,j} = \begin{pmatrix}
	Z_{i,j} & \\ & A_i^*Z_{i,j}^{*-1}A_i^{*-1}
	\end{pmatrix},
	\end{eqnarray}
	where the $Z_{i,j}$ are the embeddings into $\GL(m_id_i',F)$ of the generators of $C_{\GL(m_i,E_i)}(\widetilde{U}_i)$, described in Section \ref{Cue}.
	\item $f_i \in \Phi_3$. Let $E = F[t] /(f_i)$. We follow the argument in the analysis of Case 3 in Section \ref{semisimplesesquilinear}. Let $R$ be the companion matrix of $f_i$ and let $\varepsilon = -1$ if $B$ is alternating, $\varepsilon=1$ otherwise. We can suppose that $S_i$ is the direct sum of $m_i$ copies of $R$. Using Algorithm 1 we find $T$ such that $R^*=T^{-1}R^{-1}T$, and by Lemma \ref{L210w} we can choose $T$ such that $T = \varepsilon T^*$. Let $\mathcal{T}$ be the direct sum of $m_i$ copies of $T$. The matrix $H_i = B_i\mathcal{T}^{-1}$ lies in the centralizer of $S_i$, so it is the embedding into $\GL(m_id_i,F)$ of $\widetilde{H}_i \in \GL(m_i,E)$. By Theorem \ref{L29w}, $\widetilde{H}_i$ is hermitian and $U_i$ is the embedding into $\GL(m_id_i,F)$ of a unipotent $\widetilde{U}_i \in \C(\widetilde{H}_i) \cong \U(m_i,E)$. So, $C_{\C(B_i)}(X_i)$ is generated by the embeddings into $\GL(m_id_i,F)$ of the generators of $C_{\C(\widetilde{H}_i)}(\widetilde{U}_i)$ returned by Algorithm 3.
\end{itemize}
In the analysis of the cases $f_i \in \Phi_2 \cup \Phi_3$, if $Q_i$ is a quadratic form, then it can be replaced by the associated bilinear form.
\begin{remark}      \label{Remarkutile}
	Use the notation of the three cases described above. Suppose $\C(B_i)$ is a unitary group and $f_i \in \Phi_2$. If $\widetilde{Z} \in \GL(m_i,E_i)$ and $Z$ is its embedding into $\GL(m_id_i',F)$, then
	$$
	\det \begin{pmatrix}
	Z & \\ & A^*Z^{*-1}A^{*-1}
	\end{pmatrix} = \det{Z}^{1-q} = \det{\widetilde{Z}}^{(1-q)(q^{2d_i'}-1)/(q^2-1)} = \det{\widetilde{Z}}^{-(q^{2d_i'}-1)/(q+1)}.
	$$
	It follows that $\det\left( \begin{smallmatrix} Z & \\ & A^*Z^{*-1}Z^{*-1} \end{smallmatrix}\right) = 1$ if, and only if, $\det{\widetilde{Z}}$ has order divisible by $q+1$. Hence, to generate $C_{\Spec(B_i)}(X_i)$, we need to take the generators of the centralizer of $\widetilde{U}_{i,j}$ in the subgroup of $\GL(m_i,E_i)$ of index $q+1$ and build up the matrices $Y_{i,j}$ as in (\ref{GeneratoriPhi2}). Similarly, if $f_i \in \Phi_3$, then to generate $C_{\Spec(B_i)}(X_i)$ we need to take the generators of the centralizer of $\widetilde{U}_{i,j}$ in the subgroup of $\C(\widetilde{H}_i)$ of index $q+1$ and take their embeddings into $\GL(m_id_i,F)$.
	
	If $\C$ is a general linear or a unitary group and $\Spec$ is the corresponding special subgroup, then in both cases we need to compute the centralizer of some unipotent $\widetilde{U}$ in a group $K$, where $\Spec \leqslant K \leqslant \C$. Algorithm 3 computes centralizers in $\C$ and $\Spec$. To generate $C_K(\widetilde{U})$ we take the generators of $C_{\Spec}(\widetilde{U})$ and add to the generating set an element of $C_{K}(\widetilde{U})$  with determinant of maximum order. This element can be chosen by considering the determinant map ${\det: C_{\C}(\widetilde{U}) \rightarrow F^*}$, computing the preimage $W$ of a generator of the cyclic group $\det(C_{\C}(\widetilde{U}))$, and taking an appropriate power of $W$.
\end{remark}
\subsection{Generators for centralizers in special groups}
We now consider how to obtain a generating set for the centralizer of an element of a special group. Suppose first that $\Spec(B)$ is the special orthogonal group. Two cases occur.
\begin{itemize}
	\item If $f_i \in \Phi_2 \cup \Phi_3$ for all $i$, or $m_{i,j}$ is even for every $f_i \in \Phi_1$, then $C_{\C(B)}(X) = C_{\Spec(B)}(X)$.
	\item Suppose without loss of generality that $f_1= t \pm 1$ and there is an odd $m_{1,j}$. In this case $C_{\C(B_1)}(U_1)$ contains elements of determinant $-1$. Define
	\begin{eqnarray}         \label{GenforSpecial1}
	y_{1,j} = \begin{pmatrix}
	Y_{1,j} & \\ & \mathbb{I}_>
	\end{pmatrix},
	\end{eqnarray}
	where $Y_{1,j}$ runs over the generators of $C_{\Spec(B_1)}(U_1)$ returned by Algorithm 3. For every $i>1$, define
	\begin{eqnarray}         \label{GenforSpecial2}
	y_{i,j} = \begin{pmatrix}
	H_{i,j} &&& \\ & \mathbb{I}_< && \\ && Y_{i,j} & \\ &&& \mathbb{I}_>
	\end{pmatrix},
	\end{eqnarray}
	where $\mathbb{I}_<$ and $\mathbb{I}_>$ are identity matrices of dimension $\sum_{1<l<i} m_ld_l$ and $\sum_{l>i} m_ld_l$ respectively, $Y_{i,j}$ runs over the generators of $C_{\C(B_i)}(X_i)$ described in Section \ref{GenCentrGen} and $H_{i,j} \in C_{\C(B_1)}(U_1)$ is chosen to ensure $\det{y_{i,j}}=1$ for every $i,j$ (note $H_{i,j}$ can be chosen among the generators of $C_{\C(B_1)}(U_1)$). Now $C_{\C(B)}(X)$ is generated by the $y_{i,j}$.
\end{itemize}

Suppose that $\Spec(B)$ is the special unitary group. The strategy for generating $C_{\Spec(B)}(X)$ is similar to the special linear case. For every $i$, define
\begin{eqnarray}      \label{definizione1}
y_{i,j} = \begin{pmatrix}
\mathbb{I}_< && \\ & Y_{i,j} & \\ && \mathbb{I}_>
\end{pmatrix},
\end{eqnarray}
where $Y_{i,j}$ runs over all generators for $C_{\Spec(B_i)}(X_i)$ returned by Algorithm 3 (see Remark \ref{Remarkutile} for the case $f_i \in \Phi_2 \cup \Phi_3$). Let $\omega$ be a primitive element of $F^* = \mathbb{F}_{q^2}^*$ and let
$$
d_i = \gcd(q^2-1, m_{i,1}, \dots, m_{i,k_i})
$$
for every $i$. Choose $H_i \in C_{\C(B_i)}(X_i)$ such that $\det{H_i} = \omega^{d_i}$ (they can be obtained as explained in Remark \ref{Remarkutile}). Let ${Z(0) \leqslant \mathbb{Z}_{q^2-1}^h}$ be the set of solutions of the equation $\sum_{i=1}^h x_id_i =0$ in $\mathbb{Z}_{q^2-1}$ and let
$$
(a_{1,1}, \dots, a_{1,h}), \: \dots \: , (a_{r,1}, \dots, a_{r,h})
$$
be generators for $Z(0)$. For every $\lambda=1, \dots, r$, define
\begin{eqnarray}      \label{definizione2}
Z_{\lambda} = \begin{pmatrix}
H_1^{a_{\lambda,1}} && \\ & \ddots & \\ && H_h^{a_{\lambda,h}}
\end{pmatrix}.
\end{eqnarray}
A generating set for $C_{\Spec(B)}(X)$ consists of all the $y_{i,j}$ and the $Z_{\lambda}$ defined in (\ref{definizione1}) and (\ref{definizione2}) respectively.

\subsection{Generators for centralizers in Omega groups}
If $\Omega(B)$ has even characteristic, then $C_{\Omega(B)}(X) = C_{\C(B)}(X)$ except when $f_i \in \Phi_1$ for some $i$, say $i=1$. In this case, $C_{\Omega(B)}(X)$ is generated by the $y_{i,j}$, defined as follows:
$$
y_{1,j} = \begin{pmatrix}
Z_{1,j} & \\ & \mathbb{I}_{>}
\end{pmatrix},
$$
where the $Z_{1,j}$ are the generators for $C_{\Omega(\beta_1)}(U_1)$ given by Algorithm 3, and $y_{i,j}$ are as in (\ref{GenForGen}) for $i>1$. \\

Now let us consider odd characteristic. We first analyze two particular cases.\\
\\
\textbf{Case A.} $h=1$, $f_1 \in \Phi_2 \cup \Phi_3$ and there exists an odd $m_{1,j}$. If $f_1 \in \Phi_2$, $f_1=g_1g_1^*$ and
$$
B_1 = \begin{pmatrix}
\mathbb{O} & A_1 \\ \varepsilon A_1^* & \mathbb{O}
\end{pmatrix},
$$
then $U_1 = \left( \begin{smallmatrix} \widehat{U}_1 & \\ & \widehat{U}_1^{*-1} \end{smallmatrix}\right)$ and $\widehat{U}_1$ is the embedding into $\GL(m_1d_1,F)$ of a unipotent $\widetilde{U}_1 \in \GL(m_1, E_1)$, with $E_1 = F[t]/(g_1)$. Let $K_1$ be the unique subgroup of $\GL(m_1,E_1)$ of index 2. In the proof of Theorem \ref{semisimpleOmega} we have shown that $C_{\Omega(B_1)}(X_1)$ is the set of matrices
$$
\begin{pmatrix}
Y & \mathbb{O} \\ \mathbb{O} & A_1^*Y^{*-1}A_1^{*-1}
\end{pmatrix},
$$
where $Y$ is the embedding into $\GL(m_1d_1,F)$ of an element of $C_{K_1}(\widetilde{U}_1)$.

If $f_1 \in \Phi_3$, let $\widetilde{H}_1$ be the hermitian matrix obtained in the third case of Section \ref{GenCentrGen}, and let $K_1$ be the unique subgroup of index 2 of $\C(\widetilde{H}_1)$. Hence, $C_{\Omega(B_1)}(X_1)$ is the embedding into $\GL(m_1d_1,F)$ of $C_{K_1}(\widetilde{U}_1)$.

In both cases, we can construct a generating set for $C_{K_1}(\widetilde{U}_1)$ as explained in Remark \ref{Remarkutile}.\\
\\
\textbf{Case B.} $h=1$ or $2$, $f_i \in \Phi_1$ for $i=1,2$. The possible cases are the following.
\begin{itemize}
	\item $h=1$. A generating set for $C_{\Omega(B)}(X) = C_{\Omega(B)}(U)$ is returned by Algorithm 3.
	\item $h=2$, $C_{\Spec(B_i)}(U_i) \not\subseteq \Omega(B_i)$ for at least one $i$, say $i=1$. We saw in the proof of Theorem \ref{ConjClassinOmega} that $C_{\C(B_i)}(U_i)$ contains elements of every determinant and spinor norm (recall that $m_1 >1$ because it is even). Hence, $C_{\Omega(B)}(X)$ is generated by matrices with shape
	$$
	\begin{pmatrix}
	Y_{1,j} & \\ & \mathbb{I}_>
	\end{pmatrix} \quad \mbox{ and } \quad \begin{pmatrix}
	H_j & \\ & Y_{2,j}
	\end{pmatrix},
	$$
	where $Y_{1,j}$ runs over the generators of $C_{\Omega(B_1)}(U_1)$, $Y_{2,j}$ runs over the generators of $C_{\C(B_2)}(U_2)$, and $H_j \in C_{\C(B_1)}(U_1)$ are chosen to have the same determinant and spinor norm as $Y_{2,j}$. We can readily construct such $H_j$ from the generating set returned by Algorithm 3.
	\item If $h=2$ and $C_{\Spec(B_i)}(U_i) \subseteq \Omega(B_i)$ for both $i=1,2$, then the generating set has matrices with shape
	$$
	\begin{pmatrix}
	Y_{1,j} & \\ & \mathbb{I}_>
	\end{pmatrix} \mbox{ and } \begin{pmatrix}
	\mathbb{I}_< & \\ & Y_{2,j}
	\end{pmatrix},
	$$
	where $Y_{i,j}$ runs over the generators for $C_{\Omega(B_i)}(U_i)$. If $|C_{\C(B_i)}(U_i): C_{\Spec(B_i)}(U_i)| = 2$ for both $i$, then take arbitrary $Z_i \in {C_{\C(B_i)}(U_i) \setminus C_{\Spec(B_i)}(U_i)}$; if $Z_1$ and $Z_2$ have different spinor norm, then the generating set is complete, otherwise we must add the matrix
	$$
	\begin{pmatrix}
	Z_1 & \\ & Z_2
	\end{pmatrix}.
	$$
\end{itemize}

Now consider the general case. Suppose there exists $f_i \in \Phi_2 \cup \Phi_3$ and $m_{i,j}$ odd. Suppose without loss of generality that $i=h$. Define
$$
\widetilde{y}_{h,j} = \begin{pmatrix}
\mathbb{I}_< & \\ & Y_{h,j}
\end{pmatrix},
$$
where $Y_{h,j}$ runs over the generators of $C_{\Omega(B_h)}(X_h)$ described in Case A. For $i=1, \dots, h-1$, define $y_{i,j}$ as in (\ref{GenforSpecial1}) and (\ref{GenforSpecial2}), and define
$$
z_{i,j} = \begin{pmatrix}
\mathbb{I}_< & \\ & H_{i,j}
\end{pmatrix},
$$
where $H_{i,j} \in C_{\C(B_h)}(X_h)$ has the same spinor norm as $y_{i,j}$. Define $\widetilde{y}_{i,j} = y_{i,j}z_{i,j}$. Now $C_{\Omega(B)}(X)$ is generated by the $\widetilde{y}_{i,j}$.

Now suppose that $m_{i,j}$ is even for every $i,j$ such that $f_i \in \Phi_2 \cup \Phi_3$. For convenience, write
$$
X = \begin{pmatrix}
X_{\pm} & \\ & X_{\circ}
\end{pmatrix}, \quad B = \begin{pmatrix}
B_{\pm} & \\ & B_{\circ}
\end{pmatrix},
$$
where $X_{\pm}$ has \geds in $\Phi_1$ and $X_{\circ}$ has \geds in $\Phi_2 \cup \Phi_3$. A generating set for $C_{\Omega(B)}(X)$ consists of matrices
$$
\begin{pmatrix}
Y_{\pm,j} & \\ & \mathbb{I}_>
\end{pmatrix} \quad \mbox{ and } \quad \begin{pmatrix}
\mathbb{I}_< & \\ & Y_{\circ,j}
\end{pmatrix},
$$
where $Y_{\pm,j}$ runs over the generators of $C_{\Omega(B_{\pm})}(X_{\pm})$ described in Case B, and $Y_{\circ,j}$ runs over the generators of $C_{\C(B_{\circ})}(X_{\circ})$ described in Section \ref{GenCentrGen}.

\section{Conjugating element in classical groups}      \label{SectConjGen}
In this section we first state a theorem which decides when two elements are conjugate in a classical group; it is the result of our analysis in previous sections. We then show how to construct explicitly a conjugating element.\\

Let $F = \mathbb{F}_{q^2}$ in the unitary case, $F = \mathbb{F}_q$ otherwise and let $V \cong F^n$. Let $\omega$ be a primitive element of $F$. Let $\C$ be a classical group on $V$. For every $x \in \C$, let $x_i$ be the restriction of $x$ to $\ker(f_i(x)^{m_i})$, where $\prod_{i=1}^h f_i^{m_i}$ is the minimal polynomial of $x$, with $f_i \in \Phi$. We write $x \sim y$ if $x$ and $y$ are conjugate in $\GL(V)$.
\begin{theorem} \label{ConjGen}
	Let $\C$ be a classical group on $V$. Let $x,y \in \C$, $x \sim y$ with minimal polynomial $\prod_{i=1}^h f_i^{m_i}$, where $f_i \in \Phi$.
	\begin{itemize}
		\item If $\C = \U(n,q)$, then $x$ and $y$ are conjugate in $\C$ if, and only if, $x \sim y$.
		\item If $\C= \Sp(n,q)$ or $\C = \Or^{\epsilon}(n,q)$, then $x$ and $y$ are conjugate in $\C$ if, and only if, $x \sim y$ and, for every $i$ such that $f_i(t) = t \pm 1$, the unipotent parts of $x_i$ and $y_i$ are conjugate in the corresponding symplectic or orthogonal group (see Sections $\ref{SectUniRepsSp}$-$\ref{SectUniRepsChar2}$).
		\item If $\C= \U(n,q)$, $\Spec = \SU(n,q)$, $x,y \in \Spec$, then $x$ and $y$ are conjugate in $\Spec$ if, and only if, $x$ and $y$ are conjugate in $\SL(V)$: namely $x$ and $y$ are conjugate in $\C$ and the conjugating element in $\C$ has determinant a multiple of $\omega^d$, where $d$ is the greatest common divisor of the dimensions of the Jordan blocks of $x$ and $y$.
		\item If $\C = \Or^{\epsilon}(n,q)$, $\Spec = \SO^{\epsilon}(n,q)$ with $q$ odd, $x,y \in \Spec$, then $x$ and $y$ are conjugate in $\Spec$ if, and only if, $x$ and $y$ are conjugate in $\C$ and either they have an elementary divisor $(t \pm 1)^e$ with $e$ odd, or the conjugating element in $\C$ has determinant $1$.
		\item If $\Omega = \Omega^{\epsilon}(n,q)$ with $q$ odd, $x,y \in \Omega$, then $x$ and $y$ are conjugate in $\Omega$ if, and only if, they are conjugate in $\Spec = \SO^{\epsilon}(n,q)$ and either their class in $\Spec$ does not split into two distinct classes in $\Omega$ (see Theorem $\ref{ConjClassinOmega}$) or the conjugating element in $\Spec$ has spinor norm $0$.
		\item If $\Omega = \Omega^{\epsilon}(n,q)$ with $q$ even, $x,y \in \Omega$, then $x$ and $y$ are conjugate in $\Omega$ if, and only if, they are conjugate in $\C=\Or^{\epsilon}(n,q)$ and one of the following holds:
		\begin{enumerate}
			\item $f_i(t) = t+1$ for some $i$ and the conjugacy class of $x_i$ and $y_i$ in $\C(B_i)$ does not split into two distinct classes in $\Omega(B_i)$ (see Proposition $\ref{unirepsGO2}$), where $B_i$ is the restriction of the form of $\C$ to $\ker{(x+\mathbf{1}_V)^{m_i}}$.
			\item The conjugating element in $\C$ has spinor norm $0$.
		\end{enumerate}
	\end{itemize}
\end{theorem}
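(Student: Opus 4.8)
The plan is to assemble Theorem~\ref{ConjGen} as a summary of the conjugacy analysis carried out in the preceding sections, so the proof is essentially a matter of quoting the right result for each classical group and then, in the special and Omega cases, translating the ``splitting'' criteria into the concrete language of conjugating elements. First I would fix the general setup: given $x \sim y$ in $\C$ with minimal polynomial $\prod_i f_i^{m_i}$, choose a common $\GL(V)$-conjugate of $x$ and $y$ in block form with respect to the primary decomposition $V = \bigoplus_i \ker(f_i(x)^{m_i})$, so that the problem decomposes over the blocks exactly as in Proposition~\ref{semiuniconj} and Theorem~\ref{CCgen}. Since $\C(B) = \bigoplus_i \C(B_i)$ and every element of $C_{\C}(x)$ respects this decomposition, two elements of $\C$ are $\C$-conjugate if and only if the corresponding blocks are conjugate in the respective $\C(B_i)$. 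This reduces everything to the case of a single generalized elementary divisor $f \in \Phi$, which is precisely the case treated block-by-block in Chapters~\ref{semisimpleChap}--\ref{generalChap}.

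Next I would dispatch the isometry-group cases. For $\C = \U(n,q)$, combine Theorem~\ref{MainConjinC} (semisimple elements conjugate iff similar) with Proposition~\ref{uniconjU} and Proposition~\ref{semiuniconj}: for $f_i \in \Phi_1$ the centralizer $C_{\C(B_i)}(S_i) = \C(B_i) = \U(m_i d_i, q)$-type group and unipotent classes there are as in $\GL$; for $f_i \in \Phi_2 \cup \Phi_3$ the relevant centralizer is a $\GL$ over an extension (Case~2) or a unitary group over an extension (Case~3), and in both situations similarity already forces conjugacy. Hence $x \sim y \Leftrightarrow x,y$ conjugate in $\C$. For $\C = \Sp(n,q)$ or $\Or^{\epsilon}(n,q)$, the only obstruction beyond similarity comes from the blocks with $f_i = t \pm 1$: by Theorem~\ref{MainConjinC}, the orthogonal case additionally requires the forms on the $\pm 1$-eigenspaces to have matching type, and by Propositions~\ref{uniconjSp}, \ref{ThmUnirepsGO}, \ref{unirepsGO2} the unipotent parts of $x_i$ and $y_i$ on those eigenspaces may fall into distinct classes of the corresponding symplectic/orthogonal group. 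So $x,y$ are $\C$-conjugate iff $x \sim y$ and, for each $i$ with $f_i = t \pm 1$, the unipotent parts of $x_i$ and $y_i$ are conjugate in that symplectic or orthogonal group; this encapsulates both the type condition and the $V_b$-block labels.

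For the special groups I would argue with the standard $C_{\C}(x)$-action: if $x,y$ are $\C$-conjugate, say $y = x^g$ for $g \in \C$, then $y = x^{g'}$ for any $g' \in C_{\C}(x)g$, so whether $y \in x^{\Spec}$ depends only on the coset $\det(C_{\C}(x))\cdot\det(g)$ (unitary case) or $\mathrm{sgn}(\det(C_{\C}(x)))\cdot \det(g)$ (orthogonal case). In the unitary case, $|C_{\C}(x):C_{\Spec}(x)| = (q+1)/\!\gcd(d,q+1)$ by Theorem~\ref{CCinSpecial} with $d$ the gcd of the Jordan block dimensions, and one checks (as in Theorem~\ref{CCinSpecial}) that $\det$ on $C_{\C}(x)$ has image $\langle \omega^d \rangle$ intersected with $\ker N$-type condition; so $x^g = y$ in $\SU$ iff $\det g$ is a multiple of $\omega^d$, which is exactly the condition for $\SL$-conjugacy --- giving the third bullet. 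In the orthogonal case, by Theorem~\ref{CCinSpecial} the class splits into two iff $x$ has no elementary divisor $(t\pm1)^e$ with $e$ odd, equivalently $C_{\C}(x) \subseteq \Spec$; so if such a divisor exists $C_{\C}(x)$ already contains a determinant $-1$ element and the $\Spec$-class equals the $\C$-class, while otherwise $y = x^g \in x^{\Spec}$ iff $\det g = 1$ --- the fourth bullet.

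The Omega cases are the delicate ones and I expect them to be the main obstacle, since the spinor norm does not split as cleanly over the primary decomposition. For $q$ odd I would use Theorem~\ref{ConjClassinOmega} and its supporting lemmas (Lemma~\ref{lemmaPM}, and the propositions on spinor norms of centralizer elements): if $x,y \in \Omega$ are $\Spec$-conjugate, then by the same coset argument $y \in x^{\Omega}$ iff the spinor norm of a $\Spec$-conjugating element lies in $\theta(C_{\Spec}(x))$; this is all of $\mathbb{F}_2$ --- i.e.\ the class does not split --- exactly under the (explicit) failure conditions of Theorem~\ref{ConjClassinOmega}, and otherwise $\theta(C_{\Spec}(x)) = 0$ so the conjugating element must have spinor norm $0$. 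The subtlety is the ``corrective'' matrices $Z$ of the form~(\ref{matrixZ}) that can appear when both $t+1$ and $t-1$ have suitable unipotent parts on their eigenspaces: one must verify that the existence of such a $Z$ in $\Omega$ is precisely what makes the two conditions on the $(-1)^{k_i}b_i$ for $t+1$ and $t-1$ agree, and this is carried out in the discussion preceding the proof of Theorem~\ref{ConjClassinOmega}. For $q$ even, $\theta(g) = \rk(g+\mathbf{1}_V) \bmod 2$ and by Proposition~\ref{splitOmega2} the $\Phi_2 \cup \Phi_3$ blocks contribute nothing to splitting, so only a $t+1$ block matters; invoking Theorem~\ref{ConjClassesinOmega2} and Proposition~\ref{unirepsGO2} gives condition~(1), and failing that, the coset argument again gives condition~(2). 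Throughout, the only genuinely new work is bookkeeping: confirming that the per-block splitting criteria already established assemble correctly into the global statement, which follows from $C_{\C(B)}(x) = \prod_i C_{\C(B_i)}(x_i)$ and the multiplicativity of $\det$ and additivity of $\theta$ over the block decomposition.
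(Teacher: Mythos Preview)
Your proposal is correct and matches the paper's own treatment: the paper does not give a separate proof of Theorem~\ref{ConjGen} at all, presenting it explicitly as ``the result of our analysis in previous sections,'' and your assembly of Proposition~\ref{semiuniconj}, Theorem~\ref{CCgen}, Theorems~\ref{MainConjinC}, \ref{ConjugacySpecial}, \ref{CCinSpecial}, \ref{ConjClassinOmega}, \ref{ConjClassesinOmega2}, and the unipotent Propositions~\ref{uniconjU}--\ref{unirepsGO2} via the standard coset argument on $\det$ and $\theta$ is exactly the intended synthesis. Your explicit remark that $\det$ is multiplicative and $\theta$ additive over the orthogonal block decomposition $C_{\C(B)}(x) = \prod_i C_{\C(B_i)}(x_i)$ is the only point the paper leaves implicit, and you have it right.
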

\subsection{Conjugating element in isometry groups}
Let $V$ be a vector space over the finite field $F$ and let $\C$ be a classical group on $V$. Given conjugate matrices $X,Y \in \C$, we wish to construct explicitly $Z \in \C$ such that $X^Z= Z^{-1}XZ=Y$. We use Algorithms 1 and 2 defined in Section \ref{SectCentrGen} and Algorithm 4, defined in Section \ref{ConjElUni}.

Let $B$ be the matrix of a non-degenerate sesquilinear form on $V$ and let $\C=\C(B)$. Let $X,Y$ be conjugate elements of $\C$. Let $[f_i^{m_{i,j}} \, : \, 1 \leq i \leq h, \, 1 \leq j \leq k_i]$ be the list of \geds of $X$ and $Y$, with $f_i \in \Phi$. Let $m_i = m_{i,1}+ \cdots + m_{i,k_i}$ and let $d_i = \deg{f_i}$. The strategy to compute $Z \in \C$ such that $Z^{-1}XZ=Y$ is the following.
\begin{enumerate}
	\item Compute matrices $P_X$ and $P_Y$ in $\GL(V)$ such that $P_XXP_X^{-1} = P_YYP_Y^{-1} =J$, where
	\begin{eqnarray}          \label{Jstdform}
	J = \begin{pmatrix}
	J_1 && \\ & \ddots & \\ && J_h
	\end{pmatrix}
	\end{eqnarray}
	and $J_i$ is the matrix of the restriction of $J$ to the eigenspace $\ker(f_i(J))$ for every $i$. Note that $J$ is not necessarily the Jordan form of $X$: we can choose the form of each $J_i$ completely freely. Hence, $J$ is an isometry for the sesquilinear forms with matrices $B_X = P_XBP_X^*$ and $B_Y = P_YBP_Y^*$.
	\item Compute $W \in C_{\GL(V)}(J)$ such that $WB_YW^* = B_X$.
	\item Let $Z = P_X^{-1}WP_Y$. A computation shows that $Z \in \C(B)$ and $Z^{-1}XZ=Y$, so $Z$ is the desired conjugating element.
\end{enumerate}
The first step is solved using Algorithm 1. Let us describe the second step. By Lemma \ref{L26w}, the forms $B_X$ and $B_Y$ have block diagonal shape
\begin{eqnarray*}
	B_X = \begin{pmatrix}
		B_{X,1} && \\ & \ddots & \\ && B_{X,h}
	\end{pmatrix}, \quad B_Y = \begin{pmatrix}
		B_{Y,1} && \\ & \ddots & \\ && B_{Y,h}
	\end{pmatrix},
\end{eqnarray*}
and $J_i$ is an isometry for $B_{X,i}$ and $B_{Y,i}$ for every $i$. So we need to find $W_i$ in the centralizer of $J_i$ such that $W_iB_{Y,i}W_i^*=B_{X,i}$ for every $i$, and take $W = W_1 \oplus \cdots \oplus W_h$. Let us distinguish the three cases.
\begin{itemize}
	\item $f_i \in \Phi_1$. Now $J_i$ is a product of a scalar and a unipotent element, so we can suppose that the scalar is the identity (so $J_i$ is unipotent) because it does not affect the computation. Using Algorithm 2, we get $W_{B,i}$ such that $W_{B,i}B_{Y,i}W_{B,i}^* = B_{X,i}$. Now $J_i$ and $W_{B,i}J_iW_{B,i}^{-1}$ are unipotent elements of $\C(B_{X,i})$, so using Algorithm 4 we get ${W_{J,i} \in \C(B_{X,i})}$ such that $W_{J,i}W_{B,i}J_i{W_{B,i}}^{-1}{W_{J,i}}^{-1}=J_i$. Hence, we take $W_i = W_{J,i}W_{B,i}$.
	\item $f_i \in \Phi_2$, $f_i = g_i{g_i}^*$. If we put
	$$
	J_i = \begin{pmatrix}
	\widehat{J}_i & \\ & \widehat{J}_i^*
	\end{pmatrix},
	$$
	where $\widehat{J}_i$ is the restriction of $J_i$ to $\ker{g_i(J)}$ then, by Lemma \ref{L26w},
	$$
	B_{X,i} = \begin{pmatrix}
	\mathbb{O} & A_{X,i} \\ \varepsilon A_{X,i}^* & \mathbb{O}
	\end{pmatrix}, \quad B_{Y,i} = \begin{pmatrix}
	\mathbb{O} & A_{Y,i} \\ \varepsilon A_{Y,i}^* & \mathbb{O}
	\end{pmatrix},
	$$
	for some $A_{X,i},A_{Y,i}$ in the centralizer of $\widehat{J}_i$, where $\varepsilon= -1$ if $B$ is alternating, $1$ otherwise. We take
	$$
	W_i = \begin{pmatrix}
	A_{X,i}A_{Y,i}^{-1} & \mathbb{O} \\ \mathbb{O} & \mathbb{I}
	\end{pmatrix}.
	$$
	\item $f_i \in \Phi_3$. Following the steps of the semisimple case in Section \ref{semisimplesesquilinear}, we work in $\GL(m_i,E)$, with $E = F[t]/(f_i)$. Let $R$ be the companion matrix of $f_i$ and let $\varepsilon = -1$ if $B$ is alternating, $\varepsilon=1$ otherwise. Let $J_i = S_iU_i$ be the Jordan decomposition of $J_i$. We can suppose that $S_i$ is the direct sum of $m_i$ copies of $R$. Using Algorithm 1, we find $T$ such that $R^*=T^{-1}R^{-1}T$, and by Lemma \ref{L210w} we can choose $T$ such that $T = \varepsilon T^*$. Let $\mathcal{T}$ be the direct sum of $m_i$ copies of $T$. The matrices $H_{X,i} = B_{X,i}\mathcal{T}^{-1}$ and $H_{Y,i} = B_{Y,i}\mathcal{T}^{-1}$ lie in the centralizer of $S_i$, so they are the embeddings into $\GL(m_id_i,F)$ of matrices $\widetilde{H}_{X,i}$ and $\widetilde{H}_{Y,i}$ in $\GL(m_i,E)$. These two matrices are hermitian by Theorem \ref{L29w}. Also $U_i$ commutes with $S_i$, so it is the embedding into $\GL(m_id_i,F)$ of some unipotent $\widetilde{U}_i \in \GL(m_i,E)$. Moreover, $\widetilde{U}_i$ is an isometry for both $\widetilde{H}_{X,i}$ and $\widetilde{H}_{Y,i}$. Using Algorithm 2, find $\widetilde{W}_{B,i} \in \GL(m_i,F)$ such that $\widetilde{W}_{B,i}\widetilde{H}_{Y,i}\widetilde{W}_{B,i}^{\dag} = \widetilde{H}_{X,i}$. Hence, $\widetilde{W}_{B,i}\widetilde{U}_i\widetilde{W}_{B,i}^{-1}$ and $\widetilde{U}_i$ are unipotent elements lying into $\C(\widetilde{H}_{X,i}) \cong \U(m_i,E)$. So, using Algorithm 4 we can find $\widetilde{W}_{J,i} \in \C(\widetilde{H}_{X,i})$ such that $\widetilde{W}_{J,i}\widetilde{W}_{B,i}\widetilde{U}_i\widetilde{W}_{B,i}^{-1}\widetilde{W}_{J,i}^{-1} = \widetilde{U}_i$. A straightforward computation shows that if $W_{B,i}$ and $W_{J,i}$ are the embeddings of $\widetilde{W}_{B,i}$ and $\widetilde{W}_{J,i}$ respectively into $\GL(m_id_i,F)$, then we can take $W_i = W_{J,i}W_{B,i}$.
\end{itemize}
Now suppose that $\C=\C(Q)$, where $Q$ is a non-singular quadratic form in even characteristic. If $f_i \in \Phi_1$, then we repeat the argument given above for the case $f_i \in \Phi_1$ replacing sesquilinear forms by quadratic forms; if $f_i \in \Phi_2 \cup \Phi_3$, then by Theorem \ref{ConjQuadratic} we can replace the quadratic form $Q$ by the associated sesquilinear form $\beta_Q$ and repeat the argument given above for the case $f_i \in \Phi_2\cup\Phi_3$.

\subsection{Conjugating element in special and Omega groups}
Here we assume that $X$ and $Y$ are conjugate in a special or in an Omega group. Using the notation of the previous section, we can compute a conjugating element $Z=P_X^{-1}WP_Y$ in the isometry group and $Z$. If such $Z$ already lies in the special or in the Omega group, then we have finished. Otherwise, the strategy is to replace $Z$ by $P_X^{-1}DWP_Y$, where $D \in C_{\C(B_X)}(J)$ has appropriate determinant and spinor norm.\\

Suppose first that $\Spec$ is the special orthogonal group. If $X$ and $Y$ are conjugate in $\Spec$, then we may suppose that $f_1 \in \Phi_1$ and there exists an odd $m_{1,j}$, so $D$ can be chosen with shape
\begin{eqnarray}  \label{formadiD}
\begin{pmatrix}
D_1 & \\ & \mathbb{I}_>
\end{pmatrix},
\end{eqnarray}
where $D_1 \in C_{\C(B_{X,1})}(J_1)$ has the same determinant as $Z$ and $\mathbb{I}_>$ is the identity matrix of dimension $\sum_{l>1} m_ld_l$.

Now let $\Spec$ be the special unitary group. For every $i=1, \dots, h$, let
$$
d_i = \gcd({q^2-1}, m_{i,1}, \dots, m_{i,k_i})
$$
and let $d = \gcd(q^2-1, d_1, \dots, d_h)$. If $\lambda_1, \dots, \lambda_h$ are integers such that $\sum_i \lambda_id_i=d$, then let 
$$
H = \begin{pmatrix}
H_1^{\lambda_1} && \\ & \ddots & \\ && H_h^{\lambda_h}
\end{pmatrix},
$$
where $H_i \in C_{\C(B_{X,i})}(J_i)$ has determinant of maximum order (they can be obtained as explained in Remark \ref{Remarkutile}). Since $X$ and $Y$ are conjugate in $\Spec$, there exists an integer $\ell$ such that $\det{Z} = \det{H}^{\ell}$, so we take $D = H^{-\ell}$.\\

Finally, let us consider the case $X,Y \in \Omega(\beta)$. In even characteristic, we can suppose that $f_1 \in \Phi_1$ and there exists $D_1 \in C_{\C(B_{X,1})}(J_1)$ with $\theta(D_1) = \theta(Z)$, so we choose
$$
D = \begin{pmatrix}
D_1 & \\ & \mathbb{I}_>
\end{pmatrix}.
$$
Suppose we are in odd characteristic. Assume that $X$ and $Y$ are conjugate in $\Omega$, ${C_{\Spec}(X) \neq C_{\Omega}(X)}$ and we got a conjugating element $Z \in \C$ with inappropriate determinant or spinor norm. One of the following cases applies.
\begin{itemize}
	\item There exists $f_i \in \Phi_2 \cup \Phi_3$ and $m_{i,j}$ odd. Now $C_{\C(B_{X,i})}(J_i)$ contains elements of non-square spinor norm. Hence, we take $D = D_sD_o$, where either $D_s=\mathbf{1}_V$ (if $\det{Z}=1$) or it is defined as in (\ref{formadiD}) (if $\det{Z}=-1$ and $f_1 \in \Phi_1$), and
	$$
	D_o = \begin{pmatrix}
	\mathbb{I}_< && \\ & D_i & \\ && \mathbb{I}_>
	\end{pmatrix},
	$$
	where $D_i \in C_{\C(B_{X,i})}(J_i)$ has the same spinor norm as $P_X^{-1}D_sWP_Y$, and $\mathbb{I}_<$ and $\mathbb{I}_>$ are identity matrices of dimension $\sum_{l<i} m_ld_l$ and $\sum_{l>i} m_ld_l$ respectively.
	\item There exists $f_i \in \Phi_1$ with $C_{\Spec(B_{X,i})}(J_i) \not\subseteq \Omega(B_{X,i})$. We saw in Theorem \ref{ConjClassinOmega} that we can pick $D_i \in C_{\C(B_{X,i})}(J_i)$ with the same determinant and spinor norm as $Z$, so we take
	$$
	D = \begin{pmatrix}
	\mathbb{I}_< && \\ & D_i & \\ && \mathbb{I}_>
	\end{pmatrix}.
	$$
	\item Suppose $f_i \in \Phi_1$ and $C_{\Spec(B_{X,i})}(J_i) \subseteq \Omega(B_{X,i})$ for $i=1,2$. Now every conjugating element $Z \in \C$ satisfies either $\det{Z}=1$, or $\theta(Z)=0$ and ${C_{\C(B_{X,i})}(J_i)\not\subseteq \Spec(B_{X,i})}$ for at least one $i$, say $i=1$ (otherwise $X$ and $Y$ could not be conjugate in $\Omega$). If $\det{Z}=1$ and $\theta(Z)=1$, then ${C_{\C(B_{X,i})}(J_i)\not\subseteq \Spec(B_{X,i})}$ for both $i$ and we take
	$$
	D = \begin{pmatrix}
	D_1 && \\ & D_2 & \\ && \mathbb{I}_>
	\end{pmatrix}
	$$
	with $D_i \in C_{\C(B_{X,i})}(J_i) \setminus C_{\Spec(B_{X,i})}(J_i)$ for $i=1,2$. If $\det{Z} =- 1$ and $\theta(Z) =0$, then we take  $D_1 \in C_{\C(B_{X,1})}(J_1) \setminus C_{\Spec(B_{X,1})}(J_1)$ and $D_2 \in C_{\Spec(B_{X,2})}(J_2)$.
\end{itemize}

\newpage
\addcontentsline{toc}{chapter}{Notation}
\printindex

\end{document}